\documentclass[a4paper]{amsart}

\usepackage{array}              
\usepackage{amssymb}              
\usepackage{lmodern,bm}              
\usepackage{longtable}
\usepackage{tikz}
\usepackage[all]{xy}
\usepackage{dsfont}
\usepackage{pdflscape}
\usepackage{bigdelim}
\usepackage{multirow}
\usepackage{wasysym}

\CompileMatrices

\setcounter{tocdepth}{1}

\newtheorem{theorem}{Theorem}[section]
\newtheorem{introthm}{Theorem}
\newtheorem{lemma}[theorem]{Lemma}
\newtheorem{proposition}[theorem]{Proposition}
\newtheorem{corollary}[theorem]{Corollary}

\theoremstyle{definition}

\newtheorem{introconstr}{Construction}
\newtheorem{definition}[theorem]{Definition}
\newtheorem{construction}[theorem]{Construction}

\newtheorem{example}[theorem]{Example}
\newtheorem{remark}[theorem]{Remark}

\newtheorem{introex}{Example}

\numberwithin{equation}{theorem}

% \scriptscriptstyle

\def\vector2#1#2{\left(\begin{array}{c} #1 \\ #2 \end{array}\right)}
\def\Cl{{\rm Cl}}

\def\CC{{\mathbb C}}

\def\TT{{\mathbb T}}
\def\ZZ{{\mathbb Z}}

\def\QQ{{\mathbb Q}}
\def\PP{{\mathbb P}}
\def\XX{{\mathbb X}}

\def\Chi{{\mathbb X}}
\def\Tau{{\rm T}}

\def\div{{\rm div}}
\def\quot{/\!\!/}
\def\mal{\! \cdot \!}

\def\im{{\rm im}}

\def\bangle#1{{\langle #1 \rangle}}
\def\rq#1{\widehat{#1}}
\def\t#1{\widetilde{#1}}
\def\b#1{\overline{#1}}

\def\Aut{{\rm Aut}}

\def\GL{{\rm GL}}
\def\SL{{\rm SL}}

\def\Lin{{\rm Lin}}

\def\GL{{\rm GL}}

\def\Supp{{\rm Supp}}
\def\Spec{{\rm Spec}}
\def\Proj{{\rm Proj}}

\def\cone{{\rm cone}}

\def\lcm{{\rm lcm}}
\def\im{{\rm im}}

\def\trop{{\rm trop}}
\def\lin{{\rm lin}}

\title[Log terminal singularities, platonic tuples, iteration of Cox rings]%
{Log terminal singularities, platonic tuples \\ and iteration of Cox rings}

\author[I.~Arzhantsev, L.~Braun, J.~Hausen and M.~Wrobel]{Ivan Arzhantsev, Lukas Braun, J\"urgen~Hausen and Milena Wrobel}

\address{National Research University Higher School of Economics, 
Faculty of Computer Science, Kochnovskiy Proezd 3, Moscow, 125319 Russia} 

\email{arjantsev@hse.ru}

\address{Mathematisches Institut, Universit\"at T\"ubingen,
Auf der Morgenstelle 10, 72076 T\"ubingen, Germany}
\email{juergen.hausen@uni-tuebingen.de}

\address{Mathematisches Institut, Universit\"at T\"ubingen,
Auf der Morgenstelle 10, 72076 T\"ubingen, Germany}
\email{braun@math.uni-tuebingen.de}

\address{Mathematisches Institut, Universit\"at T\"ubingen,
Auf der Morgenstelle 10, 72076 T\"ubingen, Germany}
\email{wrobel@math.uni-tuebingen.de}

\subjclass[2010]{14L30, 14M25, 14B05, 13A05, 13F15}

\sloppy
\begin{document}

\begin{abstract}
Looking at the well understood case of log terminal 
surface singularities,
one observes that each of them is the quotient 
of a factorial one by a finite solvable group. 
The derived series of this group reflects an iteration of 
Cox rings of surface singularities. 
We extend this picture to log terminal singularities in
any dimension coming with a torus action of 
complexity one. In this setting, the previously finite
groups become solvable torus extensions.
As explicit examples, we investigate compound du Val 
threefold singularities.
We give a complete classification and exhibit all 
the possible chains of iterated Cox rings.
\end{abstract}

\maketitle

\section{Introduction}

We begin with a brief discussion of the well known 
surface case~\cite{Ar,Br,dV}.
The two-dimensional log terminal singularities 
are exactly the quotient singularities 
$\CC^2 / G$, where $G$ is a finite subgroup 
of the general linear group $\GL(2)$.
The particular case that  $G$ is a subgroup 
of $\SL(2)$ leads to the du Val singularities
$A_n$, $D_n$, $E_6$, $E_7$ and~$E_8$, named 
according to their resolution graphs.
They are precisely the rational double points,
and are also characterized by being the canonical 
surface singularities.
The du Val singularities fill the middle row 
of the following commutative diagram involving 
\emph{all} 
two-dimensional log terminal singularities:

\medskip

$$ 
\xymatrix@C-5pt{
&
{\CC^2}
\ar[dl]
\ar[dr]^{\mathrm{CR}}
\ar@/^1pc/[drrr]
\ar@/^2pc/[drrrrr]
\ar@/^3pc/[drrrrrrr]
&
&
&
& 
&
&
&
\\
E_8
\ar[d]
&
&
A_n
\ar[rr]^{\mathrm{CR} \quad}
\ar[d]
\ar[dr]^{n \text{ odd}}
&
&
D_{n+3}
\ar[rr]^{\mathrm{CR}}_{n=1}
\ar[d]
\ar[dr]^{n=1}
&
&
E_6
\ar[rr]^{\mathrm{CR}}
\ar[d]
&
&
E_7
\ar[d]  
\\
E_8^{\imath}
&
&
A_{n,k}^{\imath}
&
D_{(n+3)/2}^{2,\imath}
&
D_{n+3}^{\imath}
&
E_6^{3,\imath}
&
E_6^{\imath}
&
&
E_7^{\imath}
}
$$
Here, all arrows indicate quotients by finite groups.
The label ``CR'' tells us that this quotient represents
a Cox ring of a du Val surface singularity; 
recall that the Cox rings of (the resolutions of) these
have been computed in~\cite{DB,FGL}, 
see also the example given below.
So, $E_6$ is the spectrum of the Cox ring of~$E_7$ etc..
In fact, the chain of Cox rings reflects the derived 
series of the binary octahedral group $\t{S}_4 \subseteq \SL(2)$, 
producing the $E_7$ singularity:
$$ 
\t{S}_4 
\ \supseteq \ 
\t{A}_4 
\ \supseteq \ 
\t{D}_4
\ \supseteq \ 
\{\pm I_2\}
\ \supseteq \
\{I_2\},
$$
where $\t{A}_4$ is the binary tetrahedral group,
$\t{D}_4$ the binary dihedral group, 
and $I_2$ stands for the $2 \times 2$ unit matrix.
The respective CR labelled arrows stand for 
quotients by the factors of this derived series.
The arrows passing from the middle to the lower row
indicate index-one covers: 
the upper surface is Gorenstein, one divides 
by a cyclic group of order $\imath$ and the lower
surface is of Gorenstein index~$\imath$. 
Finally, the superscripts~$2$ in~$D_{(n+3)/2}^{2,\imath}$ 
and~$3$ in~$E_6^{3,\imath}$ denote the ``canonical
multiplicity'' of the singularity, 
generalizing the ``exponent'' discussed 
in~\cite{Do,Eb}; see~\ref{def:canonmult}.
For a discussion of the surface case, including 
the determination of all Cox rings, based 
on the methods provided in this article, see 
Example~\ref{ex::lt-surfaces}.

\goodbreak

Another feature of the log terminal surface 
singularities is that, as quotients $\CC^2 / G$
by a finite subgroup $G \subseteq \GL(2)$, they all 
come with a non-trivial $\CC^*$-action, induced by scalar 
multiplication on $\CC^2$.
The higher dimensional analogue of  $\CC^*$-surfaces 
are $T$-varieties~$X$ of complexity one, that means 
varieties $X$ with an effective action of an algebraic torus 
$T$ which is of dimension one less than $X$.
The notion of log terminality is defined in general
via discrepancies in the ramification formula;
see Section~\ref{sec:acancomp} for a brief reminder.
In higher dimensions, log terminal singularities 
form a larger class than the quotient singularities 
$\CC^n/G$ with $G$ a finite subgroup of $\GL(n)$.
Our aim is, however, to extend the picture drawn at 
the beginning for the surface case to log terminal 
singularities with a torus action of complexity one
in any dimension.

We use the Cox ring based approach developed 
in~\cite{HaHe,HaSu,HaWr}.
Recall that the Cox ring of a normal variety $X$ 
with finitely generated divisor class group $\Cl(X)$ 
and only constant globally invertible functions is
$$ 
\mathcal{R}(X) 
\ := \ 
\bigoplus_{\Cl(X)} \Gamma(X,\mathcal{O}_X(D)),
$$
where we refer to~\cite{ArDeHaLa} for the 
necessary background.
If $X$ comes with a torus action of complexity 
one, then the Cox ring $\mathcal{R}(X)$ admits 
an explicit description in terms of generators 
and very specific trinomial relations. 
Vice versa, one can abstractly write down all
rings that arise as the Cox ring of some $T$-variety 
$X$ of complexity one. 
Let us briefly summarize the procedure; 
see~Section~\ref{sec:rat-tvar} and~\cite{HaHe,HaWr} 
for the details.

\begin{introconstr}
\label{constr:ringR1}
Fix integers $m \ge 0$, $\iota \in \left\{0,1\right\}$ 
and $r,n >0$ and a partition $n=n_\iota + \dots + n_r$. 
For every $i = \iota, \ldots, r$ let 
$l_i := (l_{i1}, \ldots , l_{in_i}) \in \ZZ_{>0}^{n_i}$
with 
$l_{i1} \ge \ldots \ge l_{in_i}$
and 
$l_{\iota 1} \ge \ldots \ge l_{r1}$
and define a monomial 
$$
T_{i}^{l_{i}}
\ := \
T_{i1}^{l_{i1}} \cdots T_{in_{i}}^{l_{in_{i}}}.
$$
Denote the polynomial ring 
$\CC[T_{ij},S_k; \, i = \iota, \ldots, r, \, 
j = 1, \ldots, n_i, \, k = 1, \ldots, m]$
for short by $\CC[T_{ij},S_k]$.
We distinguish two types of rings:

\medskip
\noindent
\emph{Type 1.}
Take $\iota =1$ and pairwise different scalars 
$\theta_1 = 1, \theta_{2}, \ldots, \theta_{r-1} \in \CC^*$ 
and define for each $i = 1, \ldots, r-1$ a trinomial 
$$
g_i  
\ := \   
T_{i}^{l_{i}} - T_{i+1}^{l_{i+1}} - \theta_i.
$$
Then we obtain a factor ring
$$ 
R  
\ = \ 
\CC[T_{ij},S_k] / \bangle{g_1, \ldots, g_{r-1}}.
$$

\medskip
\noindent
\emph{Type 2.}
Take $\iota =0$ and pairwise different scalars 
$\theta_0 = 1, \theta_{1}, \ldots, \theta_{r-2} \in \CC^*$ 
and define for each $i = 0, \ldots, r-2$ a trinomial 
$$
g_i  
\ := \   
\theta_i T_{i}^{l_{i}} + T_{i+1}^{l_{i+1}} + T_{i+2}^{l_{i+2}}.
$$
Then we obtain a factor ring
$$ 
R  
\ = \ 
\CC[T_{ij},S_k] / \bangle{g_0, \ldots, g_{r-2}}.
$$
\end{introconstr}

\goodbreak

As we explain later, the rings $R$ come 
with a natural grading by a finitely generated 
abelian group $K_0$ and suitable downgradings 
$K_0 \to K$ give us Cox rings of rational, 
normal varieties $X$ with $\Cl(X) = K$ that 
come with a torus action of complexity one. 
More geometrically, $X$ arises as a quotient 
of an open set $\rq{X} \subseteq \b{X}$ 
of the total coordinate space $\b{X} = \Spec \, R$ 
by the quasitorus $H$ having $K$ 
as its character group.
Conversely, basically every rational, normal
variety $X$ with a torus action of complexity 
one can be presented this way.

Geometrically speaking, Type~1 leads to 
the $T$-varieties of complexity one that admit 
non-constant global invariant functions 
and Type~2 to those having only constant global 
invariant functions, or, equivalently, having 
an attractive fixed point.
The varieties of Type~1 turn out to be locally 
isomorphic to toric varieties.
In particular, they are all log terminal 
and the study of their singularities is essentially 
toric geometry, see Corollary~\ref{cor:sing-type1} 
for a precise formulation.
We therefore mainly concentrate on Type~2. There, 
the true non-toric phenomena occur, as for instance 
the singularities $D_n$, $E_6$, $E_7$ and $E_8$ 
in the surface case.

Characterizing log terminality for a $T$-variety 
of complexity one of Type~2 involves platonic 
triples, that means, triples of the form 
$$
(5,3,2),\quad
(4,3,2),\quad
(3,3,2),\quad
(x,2,2),\quad
(x,y,1),\quad
$$
where $x \ge y \in \ZZ_{\ge 1}$. 
We say that positive integers 
$a_0, \ldots, a_r$ form a \emph{platonic tuple}
if, after reordering decreasingly, the first
three numbers are a platonic triple and all
others equal one.
Moreover, in the setting of Construction~\ref{constr:ringR1},
we say that a ring $R$ of Type~2 is \emph{platonic} 
if every $(l_{0j_0}, \ldots, l_{rj_r})$ is a platonic 
tuple.

\begin{introex}
%\label{ex:ltsurf1}
The platonic rings of Type~2 in dimension two are the 
polynomial ring $\CC[T_1,T_2]$ and the factor rings 
$\CC[T_1,T_2,T_3] / \bangle{f}$, where $f$ is one of
$$ 
T_1^y + T_2^2 + T_3^2, \ y \in \ZZ_{>1},
\quad
T_1^3 + T_2^3 + T_3^2,
\quad
T_1^4 + T_2^3 + T_3^2,
\quad
T_1^5 + T_2^3 + T_3^2.
$$
Endowed with a suitable grading, %the polynomial ring 
$\CC[T_1,T_2]$ is the Cox ring of $A_n$,
and the other rings, according to the above order of listing, 
of $D_{y-2}$, $E_6$, $E_7$ and $E_8$. 
\end{introex}

Our first result says that a rational, normal variety $X$ 
with a torus action of complexity one of Type~2
has at most log terminal singularities if and only if 
there occur enough platonic tuples 
$(l_{0j_0}, \ldots, l_{rn_r})$ in the Cox ring~$R$; 
see Theorem~\ref{thm:logtermchar} for the precise 
meaning of ``enough''. 
In the affine case, the result specializes to the 
following; compare also~\cite[Ex.~2.20]{FlZa} for 
an earlier result in a particular case 
and~\cite[Cor.~5.8]{LiSu} for a 
related characterization.

\begin{introthm}
\label{thm:affltcharintro}
An affine, normal, $\QQ$-Gorenstein, rational 
variety $X$ with torus action of complexity one 
of Type~2 has at most log terminal singularities 
if and only if its Cox ring~$R$ is a platonic ring.
\end{introthm}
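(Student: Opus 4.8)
The plan is to derive this from the general characterization in Theorem~\ref{thm:logtermchar} by specializing to the affine situation. Recall that a Type~2 variety $X$ together with its torus action is recovered from the Cox ring $R = \CC[T_{ij},S_k]/\bangle{g_0,\ldots,g_{r-2}}$ as a good quotient $\rq{X} \git H$ of an open subset of $\b{X} = \Spec R$ by the quasitorus $H$ with character group $K = \Cl(X)$, and the geometry of $X$ is encoded by a fan of cones in the relevant lattice. First I would observe that $X$ is affine precisely when this describing fan reduces to a single full-dimensional cone $\sigma$; this is the cone whose associated quotient chart contains the unique $T$-fixed point of $X$, which exists because $X$ is affine with only constant global invariant functions and thus contracts to this vertex.

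The second step is to read off what "enough platonic tuples" from Theorem~\ref{thm:logtermchar} amounts to for this single cone. The discrepancy computation underlying that theorem attaches, to each relevant cone of the fan, a collection of tuples $(l_{0j_0},\ldots,l_{rj_r})$ whose platonicity is forced by the requirement that the corresponding exceptional discrepancies exceed $-1$. The arithmetic heart of this is the classical fact that a triple $(a,b,c)$ is platonic exactly when $\tfrac{1}{a}+\tfrac{1}{b}+\tfrac{1}{c}>1$; equivalently, a tuple of positive integers is platonic precisely when the sum of the reciprocals of its entries exceeds the number of entries minus two. I would then show that the single maximal cone $\sigma$ of the affine case sees every index choice $(j_0,\ldots,j_r)$, so that the abstract "enough" condition collapses into the requirement that every tuple $(l_{0j_0},\ldots,l_{rj_r})$ be platonic, which is exactly the definition of $R$ being a platonic ring.

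The main obstacle is the bookkeeping in this collapse: one must track precisely which cones of the ambient fan impose which platonic constraints and verify that the affine cone $\sigma$ simultaneously imposes all of them. Concretely, this reduces to checking that the $T$-fixed point of the affine $X$ lies in the closure of every orbit whose local structure is governed by a tuple $(l_{0j_0},\ldots,l_{rj_r})$, so that log terminality at this single worst point is equivalent to platonicity of all tuples at once. Once this combinatorial identification is in place, both implications of the statement follow at once from the equivalence established in Theorem~\ref{thm:logtermchar}.
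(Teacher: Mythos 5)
Your proposal is correct and follows essentially the same route as the paper: the paper's Remark~\ref{rem:affcase} derives Theorem~\ref{thm:affltcharintro} from Theorem~\ref{thm:logtermchar} by noting that for affine $X$ the fan of the minimal toric ambient variety consists of the faces of the single cone $\sigma$ generated by the columns of $P$ (Proposition~\ref{prop:affchar}), so every $P$-elementary cone lies in $\sigma$ and the platonicity condition applies to all tuples $(l_{0j_0},\ldots,l_{rj_r})$ simultaneously, which is exactly your collapse argument. The only small point you leave implicit is that $-\mathcal{K}_X$ is ample in the affine $\QQ$-Gorenstein case (immediate from the triviality of the Picard group, Corollary~\ref{cor:affpic}), which is needed to invoke Theorem~\ref{thm:logtermchar} at all.
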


Set for the moment $\mathfrak{l}_i := \gcd(l_{i1}, \ldots, l_{in_i})$.
Then, by~\cite{HaWr}, a ring $R$ of
Type~1 is factorial if and only if $\mathfrak{l}_i=1$ 
holds for all $i= 1, \dots, r$.
Moreover, a ring $R$ of Type~2 is factorial if and 
only if the~$\mathfrak{l}_i$ are pairwise coprime for 
$i = 0, \ldots, r$, 
see~\cite[Thm.~1.1]{HaHe}.

\begin{introex}
In dimension two, the factorial platonic rings $R$ 
of Type~2 are the polynomial ring $\CC[T_1,T_2]$ and 
the ring $\CC[T_1,T_2,T_3] / \bangle{T_1^5 + T_2^3 + T_3^2}$.
\end{introex}

To extend the iteration of Cox rings 
$\CC^2 \to A_1 \to D_4 \to E_6 \to E_7$ observed 
in the surface case to higher dimensions,
we have to allow instead of only finite abelian groups
also non-finite abelian groups 
in the respective quotients.

\goodbreak

\begin{introthm}
\label{theorem::CoxIteration}
Let $X_1$ be a rational, normal, affine variety 
with a torus action of complexity one of Type~2
and at most log terminal singularities.
Then there is a unique chain of quotients
$$
\xymatrix{ 
X_p 
\ar[r]^{ \quot H_{p-1} \ } 
&
X_{p-1} 
\ar[r]^{\quot H_{p-2} \ } 
&
\quad \dots \quad 
\ar[r]^{ \quot H_3 \ } 
&
X_3 
\ar[r]^{\quot H_2 \ }
&
X_2 
\ar[r]^{\quot H_1 \ }
&
X_1 },
$$ 
where $X_i= \Spec(R_i)$ holds with a platonic 
ring $R_i$ for $i \ge 2$, 
the ring $R_p$ is factorial 
and each $X_{i}\rightarrow X_{i-1}$ is the total 
coordinate space.
\end{introthm}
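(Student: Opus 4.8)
The plan is to construct the chain by iterating the Cox ring construction and to verify at each step that the resulting ring is again of the required type, so that the process can continue until factoriality forces it to stop. First I would set $X_1 = \Spec(R_1)$ with $R_1$ the Cox ring from Construction~\ref{constr:ringR1}; since $X_1$ is affine, normal, rational, $\QQ$-Gorenstein, with a torus action of complexity one of Type~2 and at most log terminal, Theorem~\ref{thm:affltcharintro} tells us that $R_1$ is a platonic ring. The total coordinate space of $X_1$ is $X_2 = \Spec(\RRR(X_1)) = \Spec(R_2)$, the quotient being by the quasitorus $H_1$ with character group $\Cl(X_1)$, so that $X_1 = X_2 \git H_1$. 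The key point is that $X_2$ again carries a torus action of complexity one of Type~2, with Cox ring $R_2$, and the passage to the total coordinate space strictly increases the divisor class group in a way that is eventually trivialized.

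The main structural step is to show that each $R_i$ for $i \ge 2$ is a platonic ring and that the construction terminates. Here I would argue that each platonic tuple $(l_{0j_0}, \ldots, l_{rj_r})$ occurring in $R_i$ corresponds, after forming the total coordinate space, to a platonic tuple in $R_{i+1}$ whose entries are obtained by dividing out the greatest common divisors $\mathfrak{l}_i$; concretely, passing to the Cox ring replaces the exponent vectors $l_i$ by $l_i/\mathfrak{l}_i$. Since the underlying triples remain platonic under this reduction (a platonic triple divided entrywise by a common factor stays in the platonic list, or collapses toward the trivial tuples with entries equal to one), the platonic property propagates. Because the total $\sum_i \mathfrak{l}_i$ strictly decreases whenever some $\mathfrak{l}_i > 1$, after finitely many steps all the $\mathfrak{l}_i$ become pairwise coprime, at which point the factoriality criterion of~\cite[Thm.~1.1]{HaHe} applies and $R_p$ is factorial. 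This gives termination and the index $p$.

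For \emph{uniqueness} of the chain, I would use that the total coordinate space is an intrinsic invariant of a normal variety with finitely generated class group and only constant invertible global functions: $\RRR(X)$ is determined up to isomorphism by $X$, together with the grading by $\Cl(X)$ and the quotient presentation $X = \rq{X} \git H$. Thus each $X_{i+1}$ and each quasitorus $H_i$ is canonically determined by $X_i$, so the chain is forced step by step; the factorial ring $R_p$ is reached precisely when $\Cl$ becomes free, i.e. when no further nontrivial total coordinate space exists. It remains to confirm that each intermediate $X_i$ is itself affine, normal, rational, with a torus action of complexity one of Type~2 and at most log terminal, so that Theorem~\ref{thm:affltcharintro} keeps applying; this follows because the total coordinate space of such a variety inherits all these properties, the complexity-one torus action lifting along the characteristic-space map and log terminality being preserved under the Cox construction in this setting.

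I expect the main obstacle to be the precise bookkeeping in the second step: verifying that the exponents of the trinomial relations transform exactly by division through the $\mathfrak{l}_i$ when passing to the Cox ring, and that this operation sends platonic tuples to platonic tuples while strictly decreasing the relevant complexity measure. This requires unwinding the explicit grading $K_0 \to K$ and the downgrading data of Construction~\ref{constr:ringR1}, and checking that the reduced exponent data again satisfies the monotonicity conditions $l_{i1} \ge \cdots \ge l_{in_i}$ and $l_{\iota 1} \ge \cdots \ge l_{r1}$ that define a ring of Type~2. The log terminality and Type-2 persistence, by contrast, should follow formally from the cited characterizations once the combinatorial reduction is in place.
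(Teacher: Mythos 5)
Your overall strategy coincides with the paper's: apply Theorem~\ref{thm:affltcharintro} to get that the Cox ring is platonic, iterate, use the factoriality criterion of~\cite[Thm.~1.1]{HaHe} as the stopping condition, and get uniqueness from the intrinsic nature of the total coordinate space. However, two of your key steps have genuine gaps. First, your combinatorial rule for how the defining data transforms under passage to the Cox ring --- ``the exponent vectors $l_i$ are replaced by $l_i/\mathfrak{l}_i$'' --- is not what happens. The actual computation (Proposition~\ref{prop::isotropy}, Remark~\ref{rem:ithelp}) shows that each block of variables is \emph{duplicated} $c(i)$ times, where $c(i)$ is the number of irreducible components of $V(\b{X},T_{ij})$ (computed via the invariant factors in Lemma~\ref{lem:numbercomp}), and the new exponents are the orders of the $H_0^0$-isotropy groups along the corresponding divisors, i.e.\ gcd's of columns of the saturated matrix $P_1$ of Lemma~\ref{lemma::Ptors} --- not entrywise quotients by $\mathfrak{l}_i$. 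For instance, a leading platonic triple $(4,3,2)$ passes to $(3,3,2)$ with the $l_1$-block appearing twice, which your rule cannot produce. Consequently your termination measure $\sum_i \mathfrak{l}_i$ is not known to decrease under the true transformation; the paper instead reads termination off the explicit table of leading platonic triples in Proposition~\ref{prop::isotropy}~(ii).

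Second, you assert that each $X_i=\Spec(R_i)$ ``inherits'' the properties needed to iterate, but the crucial one --- finite generation of $\Cl(X_i)$, equivalently rationality of the total coordinate space, without which the next Cox ring does not exist --- is not automatic and does not follow formally. The paper has to prove it via the genus formula of Theorem~\ref{theo::curve} and Corollary~\ref{cor:rat} (platonic rings of Type~2 have rational spectra); Remark~\ref{rem:rattcs} exhibits a rational $\CC^*$-surface whose second iterated total coordinate space is \emph{not} rational, so this step genuinely uses the platonic hypothesis. Similarly, the $\QQ$-Gorenstein and canonical (hence log terminal) properties of $\Spec(R_i)$ are supplied by Proposition~\ref{lemm::GorenstCan}, not by a general inheritance principle. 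Your uniqueness argument and the overall architecture are fine, but without the correct transformation rule and the rationality input the induction does not close.
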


Note that iteration of Cox rings requires
in each step finite generation of the divisor 
class group $\Cl(\b{X})$ of the total coordinate 
space of $X$.
The latter merely means that the curve $Y$ with 
function field $\CC(\b{X})^{H_0^0}$ is of genus zero,
where $H_0^0 \subseteq H_0$ is the unit component
of the quasitorus $H_0$ with character group $\Cl(X)$.
In Theorem~\ref{theo::curve}, we establish a formula for 
the genus of $Y$ in terms of the entries~$l_{ij}$ of 
the defining matrix $P$ of $R = \mathcal{R}(X)$,
generalizing the case of $\CC^*$-surfaces
settled in~\cite[Prop.~3, p.~64]{OrWa}.
This allows us to conclude that for log terminal 
affine $X$, the total coordinate space is always 
rational.
Together with the fact that the total coordinate space
of a log terminal affine $X$ is canonical, 
see Proposition~\ref{lemm::GorenstCan}, 
we obtain that Cox ring iteration is possible
in the log terminal case; 
see Remark~\ref{rem:rattcs} for a discussion of 
a non log terminal example with rational Cox ring.
The final step in proving Theorem~\ref{theorem::CoxIteration}
is to show that the Cox ring iteration even stops 
after finitely many steps. 
For this, we compute explicitly in 
Proposition~\ref{prop::isotropy} the equations 
of the iterated Cox ring.
It seems to be interesting to study Cox ring iteration 
also more generally; note that a $\QQ$-factorial variety 
has a log terminal Cox ring if and only if it is 
log Fano~\cite{BrM,Go}.

The next result shows that, in a large sense, the 
log terminal singularities with torus action of 
complexity one still can be regarded as quotient 
singularities: the affine plane $\CC^2$ and the 
finite group $G \subseteq \GL(2)$ of the surface
case have to be replaced with a factorial affine 
$T$-variety of complexity one and a solvable reductive 
group.

\begin{introthm}
\label{theorem::quotient}
Let $X$ be a rational, normal, affine variety of 
Type~2 with a torus action of complexity one
and at most log terminal singularities.
\begin{enumerate}
\item
$X$ is a quotient $X = X' \quot G$ 
of a factorial affine variety $X' := \Spec(R')$ 
by a solvable reductive group $G$, 
where $R'$ is a factorial platonic ring.  
\item
The presentation of Theorem~\ref{theorem::CoxIteration} 
is regained by $H_i:= G^{(i-1)}/G^{(i)}$ and $X_i:=X'/G^{(i-1)}$, 
where $G^{(i)}$ is the $i$-th derived subgroup of~$G$.
\end{enumerate}
\end{introthm}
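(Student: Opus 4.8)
The plan is to deduce Theorem~\ref{theorem::quotient} from the iterated Cox ring picture established in Theorem~\ref{theorem::CoxIteration}, by assembling the individual quotient presentations into a single quotient by a solvable reductive group whose derived series recovers the chain. The main conceptual point is that each arrow $X_i \to X_{i-1}$ is a total coordinate space morphism, hence realizes $X_{i-1}$ as a quotient $X_i \quot H_{i-1}$ by the quasitorus $H_{i-1}$ with character group $\Cl(X_{i-1})$. So I would first set $X' := X_p = \Spec(R_p)$ with $R_p$ the factorial platonic ring produced by Theorem~\ref{theorem::CoxIteration}, and then exhibit $G$ as an iterated (solvable) extension built from the $H_i$.

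\medskip

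First I would recall the characteristic quasitorus formalism from~\cite{ArDeHaLa}: for a normal affine variety $Y$ with finitely generated class group and only constant invertible global functions, its total coordinate space $\t{Y} = \Spec(\RRR(Y))$ carries an action of the quasitorus $H_Y$ with character group $\Cl(Y)$, and $Y = \t{Y} \quot H_Y$. Applying this at each stage gives $X_{i-1} = X_i \quot H_{i-1}$, where $H_{i-1}$ is the quasitorus with character group $\Cl(X_{i-1}) = K_{i-1}$. The key structural input I then need is that these stagewise quotients \emph{compose} to a single good quotient by a group $G$ acting on $X' = X_p$. Concretely, I would build $G$ as the group of all lifts of the composed quotient map $X_p \to X_1$ to $X_p$, i.e. as an extension
$$
1 \ \longrightarrow \ H_{p-1} \ \longrightarrow \ G \ \longrightarrow \ G/H_{p-1} \ \longrightarrow \ 1,
$$
proceeding inductively so that $G$ is reductive (each $H_i$ is a quasitorus, hence reductive, and an extension of reductive by reductive is reductive) and solvable (extensions of abelian by solvable are solvable).

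\medskip

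For part~(ii) I would verify the derived-series identification by an induction on $i$. The point is that $H_{i-1} = \Cl(X_{i-1})^\vee$ is abelian, so each quotient $X_i = X'/G^{(i-1)} \to X_{i-1} = X'/G^{(i-2)}$ has abelian structure group $G^{(i-2)}/G^{(i-1)}$; matching this with $H_{i-1}$ forces $G^{(i-1)}$ to be precisely the kernel of the composed map $G \to H_1 \times \cdots \times H_{i-1}$ onto the abelianized quotients. Because the abelianization of $G$ is the largest abelian quotient and each $H_j$ is abelian, the commutator subgroup $G' = G^{(1)}$ is exactly the group acting in the factorial cover of $X_2$; iterating the same argument on $G^{(1)}$ in place of $G$ yields $H_i = G^{(i-1)}/G^{(i)}$ and $X_i = X'/G^{(i-1)}$ for all $i$. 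Uniqueness of the chain in Theorem~\ref{theorem::CoxIteration} guarantees this identification is forced and not merely possible.

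\medskip

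The hard part will be producing the group $G$ itself as a genuine algebraic group acting on $X'$ with $X = X' \quot G$, rather than merely as an abstract iterated extension of the $H_i$. The total coordinate space construction gives each quotient separately, but one must check that the lifts of $H_{i-2}$-action up to $X_p$ are algebraic and that the resulting $G$-action is a good quotient action realizing $X$ on the nose; this requires knowing that $\Cl(X_i)$ is finitely generated at every stage (supplied by the genus-zero result behind Theorem~\ref{theorem::CoxIteration}) and that the composition of good quotients by reductive groups is again a good quotient by a reductive group. I would handle this by invoking the explicit equations of the iterated Cox ring computed in Proposition~\ref{prop::isotropy}, which make the $G$-action and its solvable structure concrete, so that reductivity and the identification $X = X' \quot G$ can be read off directly.
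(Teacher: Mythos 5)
Your overall strategy coincides with the paper's: build $G$ as an iterated extension of the quasitori $H_i$ acting on $X' = X_p$, then identify the derived series with the chain of characteristic quasitori. However, there are two genuine gaps at exactly the points you flag as delicate, and your proposed resolutions would not close them.

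First, the construction of $G$ as an algebraic group. Saying ``$G$ is the group of all lifts of the composed quotient map'' does not by itself produce a linear algebraic group acting on $X_p$, and the explicit equations of Proposition~\ref{prop::isotropy} describe the iterated Cox \emph{ring}, not the lifted group action, so reductivity and the quotient presentation cannot simply be ``read off'' from them. The mechanism the paper uses is specific: by~\cite[Thm.~2.4.3.2]{ArDeHaLa} the torus part $G_{i}^0$ of the action lifts to the total coordinate space commuting with $H_{i+1}$, and~\cite[Thm.~5.1]{ArGa} supplies an exact sequence $1 \to H_{i+1} \to \Aut(X_{i+2},H_{i+1}) \to \Aut(X_{i+1}) \to 1$ through the normalizer of the characteristic quasitorus; one then sets $G_{i+1} := \pi^{-1}(G_i)$ and checks that its unit component $H_{i+1}^0\mathcal{G}_i$ is a torus of finite index, whence $G_{i+1}$ is solvable and reductive. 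Without invoking (or reproving) these lifting and extension results, your $G$ remains an abstract extension rather than a group acting on $X'$.

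Second, part~(ii). Abelianness of $H_{i} = \mathcal{D}_{i-1}/\mathcal{D}_{i}$ only gives the inclusion $[\mathcal{D}_{i-1},\mathcal{D}_{i-1}] \subseteq \mathcal{D}_{i}$; your claim that matching with $H_{i-1}$ ``forces'' equality is precisely the nontrivial direction, and the appeal to uniqueness of the chain in Theorem~\ref{theorem::CoxIteration} is premature, because uniqueness only applies once one knows that $X' \quot [\mathcal{D}_{i-1},\mathcal{D}_{i-1}] \to X' \quot \mathcal{D}_{i-1}$ is itself a total coordinate space morphism. Establishing that is the substance of the paper's argument: one propagates up the tower the fact that each $H_j$ acts freely on an invariant open set with complement of codimension at least two (starting from~\cite[Prop.~1.6.1.6]{ArDeHaLa} on the smooth locus of $X_1$), concludes that each $\mathcal{D}_j$ acts strongly stably on the factorial (hence $G$-factorial) $X_p$, and then applies~\cite[Prop.~3.5]{ArGa} to see that both $X_p \quot [\mathcal{D}_i,\mathcal{D}_i]$ and $X_p \quot \mathcal{D}_{i+1}$ are total coordinate spaces of $X_p \quot \mathcal{D}_i$, so the comparison morphism between them is an isomorphism and $\mathcal{D}_{i+1} = [\mathcal{D}_i,\mathcal{D}_i]$. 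Your proposal does not supply this strongly-stable/codimension-two input, and without it the identification of the derived series does not follow.
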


\begin{introex}
Every log terminal affine $\CC^*$-surface 
is a quotient of $\CC^2$ or the $E_8$-singular 
surface 
$V(T_1^5 + T_2^3 + T_3^2) \subseteq \CC^3$
by a finite solvable group.   
\end{introex}

A natural three-dimensional generalization of du Val 
singularities are the \emph{compound du Val singularities},
introduced in~\cite{Re}:
these are normal, canonical Gorenstein threefold singularities 
$x \in X$ such that a general hypersurface section 
through~$x$ has a du Val (surface) singularity at $x$.
The isolated compound du Val singularities are precisely 
the terminal Gorenstein singularities.
If a threefold $X$ admits at most compound du Val 
singularities,
then, for a given singular point $x \in X$, we have 
possible one-dimensional irreducible components 
$C_1, \ldots, C_r$ of the singular locus that 
contain $x$.
The compound du Val singularity type (cDV-type) 
of $x$ is denoted by 
$S(x_1), \ldots, S(x_r) \to cS(x)$, where $S(x_i)$ 
stands for the type of the du Val surface singularity 
obtained by a general hypersurface section 
through a general point of $x_i \in C_i$ and 
$S(x)$ for that through $x$; the $c$ just 
indicates compound du Val.
The following result goes one step beyond 
the known~\cite{Da} case of toric compound 
du Val singularities.

\goodbreak

\begin{introthm}
\label{thm:cdv-class-intro}
The following table provides the equations 
for the affine threefolds with at most 
compound du Val singularities which are 
toric (nos. 1 -- 3) or non-toric with a
torus action of complexity one of Type~2 
(nos. 4 -- 18).

\goodbreak

\renewcommand{\arraystretch}{1.8}

\begin{longtable}{c|c|c}
No. 
& 
cDV-type 
& 
equation in $\mathbb{C}^4$   
\\
\hline 
1 
& 
$A_{l} \times \mathbb{C}$ 
& 
$T_1T_2+T_3^{l+1}$ 
\\
\hline 
2 
& 
$A_{l_1-1}, A_{l_2-1} \to cA_{l_1+l_2-1}$ 
& 
$T_1T_2+T_3^{l_1}T_4^{l_2}$   
\\
\hline 
3 
& 
$A_1,A_1,A_1 \to  cD_4$ 
& 
$T_1^2+T_2T_3T_4$  
\\
\hline 
4 
& 
$D_{l+3} \times \mathbb{C}$
& 
$T_1^2 + T_2^2T_3 +T_3^{l+2}$  
\\
\hline
5 
& 
$A_1, A_{l-1} \to cD_{l+4}$ 
& 
$T_1^2 + T_2^2T_3 +T_3T_4^{l+2}$   
\\
\hline
6 
& 
$E_{6}\times\mathbb{C}$
& 
$T_1^2 + T_2^3 +T_3^4$  
\\
\hline
7 
& 
$E_{7}\times\mathbb{C}$
& 
$T_1^2 + T_2^3 +T_2T_3^3$  
\\
\hline
8 
& 
$E_{8}\times\mathbb{C}$
& 
$T_1^2 + T_2^3 +T_3^5$  
\\
\hline
9a
& 
$A_{l-1} \to cA_{L}$ 
& \tiny{
$
\begin{array}{l}
T_1T_2+\left(T_3^{L_1+1} + T_4^{L_2+1}\right)^{l},  
\\
L= \min(L_1+1,L_2+1)l - 1
\end{array}
$}
\\
\hline
9b
& 
$A_{l_j-1} \to cA_{L}$ 
& \tiny{
$
\begin{array}{l}
T_1T_2 + \prod_{j=1}^{r-1}\left(jT_3^{L_1+1} + (2j-1)T_4^{L_2+1}\right)^{l_j},    
\\
L=\min(L_i+1)\sum l_j -1
\end{array} 
$}
\\
\hline
9c
& 
$A_{L_3-1}, A_{l_j-1} \to cA_{L}$ 
&  \tiny{
$
\begin{array}{l}
T_1T_2 + T_3^{L_3}\prod_{j=1}^{r-1}\left(jT_3^{L_1} + (2j-1)T_4^{L_2+1}\right)^{l_j},
\\
L=\min(L_3+L_1\sum l_j -1,L_2 \sum l_j -1) 
\end{array}
$}
\\
\hline
9d
& 
$A_{L_3-1},A_{L_4-1}, A_{l_j-1} \to cA_{L}$ 
& \tiny{
$
\begin{array}{l}
T_1T_2 + T_3^{L_3}T_4^{L_4}\prod_{j=1}^{r-1}\left(jT_3^{L_1} 
+ (2j-1)T_4^{L_2}\right)^{l_j},    
\\
L=\min_{k=3,4}(L_k + l_{k-2}\sum l_j -1) 
\end{array}
$}
\\
\hline
10
& 
$A_{l+1} \to cD_{l+3}$ 
& 
$T_1^2 + T_2^2T_3+T_4^{l+2}$  
\\
\hline
11
& 
$A_{2l+1} \to cD_{2l+2}$ 
& 
$T_1^2 + T_2^2T_3+T_2T_4^{l+1}$   
\\
\hline
12
& 
$A_{l_2-1}, D_{l_1+2} \to cD_{l_1+l_2+2}$ 
& 
$T_1^2 + T_2^2T_3+T_3^{l_1+1}T_4^{l_2}$ 
\\
\hline
13
& 
$A_{1}, A_{1} \to cD_{l+3}$ 
& 
$T_1^2 + T_2T_3T_4+T_4^{l+2}$   
\\
\hline
14
& 
$A_1, A_{1}, A_2 \to cE_6$ 
& 
$T_1^2 + T_2^3+T_3^2T_4^2$  
\\
\hline
15
& 
$D_{4} \to cE_6, cE_7$ 
& 
$T_1^2 + T_2^3+T_3^3T_4$  
\\
\hline
16
& 
$A_1, D_4 \to cE_7$ 
& 
$T_1^2 + T_2^3+T_2T_3T_4^2$   
\\
\hline
17
& 
$A_2, D_4 \to cE_8$ 
& 
$T_1^2 + T_2^3 + T_3^2T_4^3$   
\\
\hline
18
& 
$E_6 \to cE_8$ 
& 
$T_1^2 + T_2^3 + T_3T_4^4$    
\end{longtable}

\renewcommand{\arraystretch}{1} 

\noindent
Here, parameters are integers greater than zero with the 
exponents containing $L_1$, $L_2$ in nos. 9a to 9d 
being coprime, $A_0$ means that there is no singularity 
and $D_l \cong A_l$ for $l \leq 3$. 
\end{introthm}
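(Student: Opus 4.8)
The plan is to combine Reid's characterization \cite{Re} of compound du Val points as precisely the canonical Gorenstein threefold singularities with the log terminal classification established above. Since canonical singularities are in particular log terminal, any affine threefold $X$ carrying a complexity one torus action and having at most compound du Val singularities is already covered by our earlier analysis. If $X$ is of Type~1, then Corollary~\ref{cor:sing-type1} shows it to be locally toric, and the classification of toric compound du Val threefolds due to Dais~\cite{Da} produces exactly nos.~1--3 of the table. For Type~2 we invoke Theorem~\ref{thm:affltcharintro}: the Cox ring $R$ of $X$ is a platonic ring. Thus the proof reduces to singling out, among the Type~2 platonic rings that define threefolds, those whose associated $X$ is Gorenstein and canonical, and then reading off the equation and the cDV-type in each case.

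For the combinatorial reduction I would first impose $\dim X = 3$ together with effectivity of the two-dimensional torus action; this restricts the defining data $(r; n_0, \ldots, n_r; m)$ of Construction~\ref{constr:ringR1} to a short list. On top of the platonic condition one then imposes the Gorenstein property, that is Gorenstein index one, and the strengthening of log terminal to canonical. Both conditions translate, via the anticanonical and discrepancy computations recalled in Section~\ref{sec:acancomp}, into explicit numerical conditions on the exponents $l_{ij}$ and on the downgrading $K_0 \to K = \Cl(X)$. Running through the finitely many platonic triples $(5,3,2)$, $(4,3,2)$, $(3,3,2)$, $(x,2,2)$, $(x,y,1)$ subject to these conditions yields a finite, explicitly enumerable list of admissible data. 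For each entry I would then compute the invariant ring realizing $X$ as a quotient $\b{X} \quot H$ of the total coordinate space: the factorial cases, where the $\mathfrak{l}_i$ are pairwise coprime, give $X = \b{X}$ directly, and in the remaining cases the quotient is worked out from the grading. This produces the hypersurface equation in $\CC^4$ displayed in the third column; the product shapes $\prod_j(\ldots)^{l_j}$ and the mixed monomials $T_2^2 T_3$, $T_3^{l_1+1} T_4^{l_2}$ are precisely what the downgradings of the trinomial relations create.

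It remains to determine the cDV-type of each threefold. Here I would locate the one dimensional components $C_1, \ldots, C_r$ of the singular locus as the relevant torus orbit closures, take a general hypersurface section through a general point of each $C_i$ to obtain the du Val type $S(x_i)$, and a general hypersurface section through the distinguished most degenerate point to obtain $cS(x)$. The platonic triple attached to the defining trinomial governs $S(x)$ through the standard dictionary between $A$, $D$, $E$ and the platonic triples, while the $A$-type along each branch $C_i$ is read off from the local multiplicity of the corresponding factor; the conventions $A_0 = {}$smooth and $D_l \cong A_l$ for $l \le 3$ settle the boundary cases. The minimum formulas for $L$ in nos.~9a--9d then arise by comparing the two ends $T_3$ and $T_4$ of the chain, with the coprimality of $L_1, L_2$ guaranteeing normality and the asserted global $cA_L$ type.

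The main obstacle I anticipate is twofold. First, the passage in the second step from the abstract canonical and Gorenstein conditions to the closed explicit equations, in particular carrying out the $H$-invariant computation in the non-factorial cases without losing track of normalizations, is where the bulk of the calculation sits. Second, and more delicate, is the cDV-type bookkeeping for the $cA$ families 9a--9d: several singular branches meet at the origin, the equation is a product over $j$, and one must verify both that the claimed minimum really gives the du Val type at the origin and that no extra singularities are introduced, using the coprimality hypothesis on $L_1, L_2$ in an essential way.
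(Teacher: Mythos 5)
Your overall architecture --- reduce to platonic Cox rings via Theorem~\ref{thm:affltcharintro}, impose Gorenstein index one, enumerate the finitely many admissible defining data, and recover the equations in $\CC^4$ by computing the invariant ring (the degree zero Veronese subalgebra of $R(A,P)$) --- is exactly how the paper proceeds in Propositions~\ref{prop:Qfac}, \ref{prop:nonQfac} and~\ref{prop:zeta>1}, and the toric cases are indeed nos.~1--3 via Proposition~\ref{prop:toriccDV}. However, your opening reduction contains a genuine error that propagates through the whole plan: compound du Val singularities are \emph{not} the same as canonical Gorenstein threefold singularities. By Definition~\ref{def:cDV}~(iii)--(iv), a canonical Gorenstein point is compound du Val precisely when every exceptional prime divisor lying over that point has \emph{strictly positive} discrepancy; canonical only guarantees discrepancy $\ge 0$. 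So filtering the platonic list by ``Gorenstein and canonical'' produces a strictly larger class, and your enumeration would retain spurious entries. Concretely: in the paper's case analysis, a candidate such as the leading block $(5,3,2;0,0,0)$ with an extra column $(0,0,2,1)$ is discarded because the lattice point $(0,0,1,1)$ lies in the relative interior of $\partial A_X^c$; that point sits on the boundary of the anticanonical complex, so it corresponds to a crepant divisor over the fixed point --- perfectly compatible with canonical, hence invisible to your filter, but fatal for the compound du Val property.

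The missing ingredient is the criterion of Proposition~\ref{prop:cpl1hollow}: an affine Gorenstein log terminal threefold $X(A,P)$ of Type~2 is compound du Val if and only if there are no integral points in the relative interior of the boundary $\partial A_X^c$ of the anticanonical complex. This is the complexity-one analogue of the hollow-polygon condition in the toric case, and it is what actually drives the exclusion arguments in Propositions~\ref{prop:Qfac}--\ref{prop:zeta>1}; it rests on Proposition~\ref{lemm::discrepancy} (discrepancies read off from leaving points of rays) together with the observation, via \cite[Cor.~5.4]{KoMo}, that the non-fixed singular points lie on curves of singularities and are automatically compound du Val, so only the divisors over the torus fixed point need to be tested. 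Your final paragraph on determining the cDV-types by hyperplane sections is fine as a labelling step, but it cannot substitute for this filter unless you explicitly turn ``the general section through the worst point is du Val'' into a discarding criterion applied to every enumerated candidate --- at which point you would be reproving Proposition~\ref{prop:cpl1hollow} case by case. A secondary omission, more organisational than logical, is that the enumeration must be stratified by the canonical multiplicity $\zeta_X$ (Propositions~\ref{prop:leadingblockdata} and~\ref{prop:zetaexcep}): the series no.~9, 13 and 14 only arise for $\zeta_X>1$ and would be missed by an analysis that normalises $P$ as in Corollary~\ref{cor::Gore} throughout.
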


The defining data as toric or $T$-varieties of complexity one
for the varieties listed in Theorem~\ref{thm:cdv-class-intro}
are provided in Section~\ref{sec:cDV}.
Finally, we study the possible Cox ring 
iterations of the compound du Val singularities.

\begin{introthm}
\label{thm:cdv-graph-intro}
For the singularities from 
Theorem~\ref{thm:cdv-class-intro},
one has the following Cox ring iterations;
the respective total coordinate spaces are 
indicated by the downward arrows:

$$
\xymatrix@C8pt@R15pt{
& 
\mathbb{C}^3 \ar[d] \ar[ld]% \ar@/_0.5pc/[ldd]_{G}  
&
\mathbb{C}^4  \ar[d]  
& (10\text{-}o) \ar[d] 
& X_1 \ar[d] 
& X_2 \ar[d] 
& X_3 \ar[d] 
& X_4 \ar[d] 
& X_5 \ar[d] 
& (9a_{l=1}) \ar[d] 
\\
(3)
& (1) \ar[d] \ar[ld] 
& (2) \ar[d]  
& (11\text{-}o)  
& (10\text{-}e) \ar[ld] \ar[d] 
&   (13\text{-}e) 
& (9b) 
& (9c)  \ar[d] 
& (9d)  \ar[d] 
& (9a_{l\geq 2}) 
\\
(5) 
& (4)  \ar[d] 
&  (12) 
& (11\text{-}e) 
& (16) 
& & & (13\text{-}o) 
& (14) \\
&(6) \ar[d] 
\\
&(7)
}
$$

\goodbreak

\noindent
Here $10\text{-}e$ ($10\text{-}o$) denotes the singularity $10$ with even (odd) 
parameter; similarly in the other cases. 
Moreover with the respective parameters from Theorem~\ref{thm:cdv-class-intro}:
$$
X_1
\ = \
V(T_1^2T_2 + T_3^2T_4+T_5^{\frac{l+2}{2}}), \,
\qquad
X_2  
\ = \
V(T_1T_2+T_3T_4+T_5^{l-1}),
$$
\begin{align*}
X_5
\, = \,  
V(&
T_1^{L_1}T_2^{L_1}+T_3^{L_2}T_4^{L_2} + T_5T_6, 
\,
T_3^{L_2}T_4^{L_2} +2T_5T_6 + T_7T_8, 
\\
&T_5T_6 + 3T_7T_8 + T_9T_{10},
\, \ldots, \, 
T_{2r-3}T_{2r-2} +(r-1)T_{2r-1}T_{2r} + T_{2r+1}T_{2r+2} ).
\end{align*}
To obtain $X_4$, set $T_4=1$ and for $X_3$ in addition $T_2=1$ 
in the equations of $X_5$.
The singularities 8, 15, 17 and 18 are factorial. 
\end{introthm}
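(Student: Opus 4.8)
The plan is to run, for each of the eighteen types, the Cox ring iteration guaranteed by Theorem~\ref{theorem::CoxIteration} and to match the outcome with the displayed chain. Compound du Val singularities are canonical, hence log terminal, and the non-toric ones are of Type~2, so Theorem~\ref{theorem::CoxIteration} applies and yields for each $X$ a unique finite chain terminating in a factorial ring; by uniqueness it suffices to exhibit, step by step, one chain with the stated properties. For the toric cases~1--3 the Cox ring is a polynomial ring ($\mathbb{C}^3$ or $\mathbb{C}^4$) and the iteration is immediate. For every remaining $X$ I would read off the defining matrix $P$ from Section~\ref{sec:cDV}, present $\mathcal{R}(X)$ as a trinomial ring in the sense of Construction~\ref{constr:ringR1}, and then apply Proposition~\ref{prop::isotropy} to compute the successive total coordinate spaces, at each stage testing factoriality by the criterion after Theorem~\ref{thm:affltcharintro} (the block-gcds $\mathfrak{l}_i$ pairwise coprime) and stopping as soon as it holds.

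Two observations organise part of the work. First, nos.~8,~15,~17 and~18 are factorial: their equations are already disjoint-block trinomials, with gcd-data $(5,3,2)$ for no.~8 and $(2,3,1)$ for the three blocks of each of nos.~15,~17,~18, and these are pairwise coprime. Second, the singularities carrying a free factor $\mathbb{C}$ --- nos.~1,~4,~6,~7 --- satisfy $\mathcal{R}(Y\times\mathbb{C})=\mathcal{R}(Y)\otimes\mathbb{C}[T]$, so their iterations are the surface chains of Example~\ref{ex::lt-surfaces} tensored with $\mathbb{C}[T]$; tracking the parameters, this produces the spine $\mathbb{C}^3\to(1)\to(4)\to(6)\to(7)$. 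The toric no.~3 attaches to $\mathbb{C}^3$, while the attachment $(1)\to(5)$ and all right-hand chains require the explicit computation below.

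For the genuinely three-dimensional types I would carry out Proposition~\ref{prop::isotropy} case by case, the point being to identify each first Cox ring with the node above it. The spaces $X_1$, $X_2$ and $X_5$ appear as the factorial tops of their chains: each is a disjoint-block trinomial whose block-gcds --- $(1,1,(l+2)/2)$ for $X_1$, $(1,1,l-1)$ for $X_2$, and $(L_1,L_2,1,1,\dots)$ for $X_5$ --- are pairwise coprime, using the coprimality of the $L_i$ and the unit gcds, so the iteration halts there; the specialisations $T_4=1$ and $T_4=T_2=1$ then present $X_4=\mathcal{R}(9c)$ and $X_3=\mathcal{R}(9b)$, factorial for the same reason. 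The parity splits $10\text{-}e/10\text{-}o$, $11\text{-}e/11\text{-}o$ and $13\text{-}e/13\text{-}o$ emerge because Proposition~\ref{prop::isotropy} produces a common factor governed by a gcd with~$2$: in nos.~10 and~11 the relevant exponent is $l+2$ and the even parameter is the non-factorial one, its chain passing through $X_1$, while the odd parameter already yields a factorial ring; in no.~13 the relevant exponent is $l-1$, so the roles reverse, the even parameter giving the factorial $X_2$ and the odd one the deeper chain through the 9c-family to $X_4$.

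The main obstacle is the family 9a--9d. Here the Cox ring is no longer a single trinomial: the exponents involve the coprime pair $(L_1,L_2)$ and the equations carry the products $\prod_{j}(jT_3^{L_1}+(2j-1)T_4^{L_2})^{l_j}$, so one must follow how the entire block structure, and with it the collection of gcds controlling factoriality, transforms under each application of Proposition~\ref{prop::isotropy}. The hardest single verification is that $\mathcal{R}(9d)$ equals precisely the multi-trinomial $X_5$, i.e.\ that the successive downgradings of the grading group unfold the long product into the stated relations $T_1^{L_1}T_2^{L_1}+T_3^{L_2}T_4^{L_2}+T_5T_6$, $\ T_3^{L_2}T_4^{L_2}+2T_5T_6+T_7T_8,\ \dots$, and that the final gcds are pairwise coprime so the iteration terminates where claimed; the cases 9a,~9b,~9c then follow by the same bookkeeping, 9a additionally requiring the coprimality $\gcd(L_1+1,L_2+1)=1$ to place $(9a_{l=1})$ at the top.
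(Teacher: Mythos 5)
Your proposal follows essentially the paper's own route: the published proof is a two-line pointer to Theorem~\ref{thm:cdv-class-intro} for the defining data and to Remark~\ref{rem:ithelp} (i.e.\ Proposition~\ref{prop::isotropy}) for the successive total coordinate spaces, which is exactly the case-by-case computation you outline, with termination tested by pairwise coprimality of the block gcds as in~\cite{HaHe}. Your organizing shortcuts --- reducing the $Y\times\CC$ cases to the surface chain, the direct factoriality check for nos.~8, 15, 17, 18, and the parity analysis for nos.~10, 11, 13 --- are consistent with the paper's data and with the displayed diagram.
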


The varieties $X_1,\ldots,X_5$ in Theorem~\ref{thm:cdv-graph-intro} 
are of dimension four or higher. 
They enjoy a generalized compound du Val property in 
the sense that the hyperplane section
$X_i \cap V(T_4-T_3)$ has at most canonical singularities. 
For instance, for~$X_2$, the hyperplane section 
gives a compound du Val singularity of Type~9a.
The composition $\CC^3 \to (1) \to (5)$ is a quotient 
by the dihedral group $D_{2l+4}$, which is not a subgroup 
of $\SL(2)$.

We would like thank the referee for carefully reading 
the manuscript and for many helpful remarks.

\tableofcontents

\section{Rational varieties with torus 
action of complexity one}
\label{sec:rat-tvar}

We recall the basic concepts and facts on normal 
rational \emph{$T$-varieties $X$ of complexity one}, 
i.e., the variety $X$ is endowed with an effective  
action $T \times X \to X$ of an algebraic torus $T$ 
such that $\dim(T) = \dim(X)-1$ holds.
We work over the field~$\CC$ of complex numbers.
For the proofs and full details, we refer 
to~\cite{ArDeHaLa,HaHe,HaSu,HaWr}.

The approach follows the general philosophy 
behind~\cite[Chap.~3]{ArDeHaLa}: one starts 
with a Cox ring $R = \mathcal{R}(X)$
and then obtains $X$ as a quotient $X = \rq{X} \quot H$ 
of an open subset $\rq{X} \subseteq  \b{X}$ of the 
\emph{total coordinate space} 
$\b{X} = \Spec \, R$ by the action of the 
\emph{characteristic quasitorus} $H = \Spec \, \CC[K]$,
where $K \cong \Cl(X)$ is the divisor class 
group of~$X$. The quotient map $\rq{X} \to X$ is 
called the \emph{characteristic space} over $X$.
In our concrete case of $T$-varieties of complexity 
one, the total coordinate space $\b{X}$ will be acted on 
by a larger quasitorus $H_0 = \Spec \, \CC[K_0]$ 
containing the characteristic quasitorus $H$ as 
a closed subgroup and the torus action on 
$X = \rq{X} \quot H$ will be the induced action 
of $T = H_0/H$.

Our first step provides $K_0$-graded rings $R$, 
which after suitable downgrading become 
prospective Cox rings of our $T$-varieties.
The construction depends on continuous data $A$ and 
discrete data $P_0$ introduced below.
There are two types of input data $(A,P_0)$:
for Type~1, we will have the affine line as a
generic quotient of the action of $H_0$ on $\b{X}$ 
and Type~2 will lead to the projective line.

\begin{construction}
\label{constr:RAP0}
Fix integers $r,n > 0$, $m \ge 0$ and a partition 
$n = n_\iota+\ldots+n_r$ starting at $\iota \in \{0,1\}$.
For each $\iota \le i \le r$, fix a tuple
$l_{i} \in \ZZ_{> 0}^{n_{i}}$ and define a monomial
$$
T_{i}^{l_{i}}
\ := \
T_{i1}^{l_{i1}} \cdots T_{in_{i}}^{l_{in_{i}}}
\ \in \
\CC[T_{ij},S_{k}; \ \iota \le i \le r, \ 1 \le j \le n_{i}, \ 1 \le k \le m].
$$
We will also write $\CC[T_{ij},S_{k}]$ for the 
above polynomial ring. 
We distinguish two settings for the input data 
$A$ and $P_0$ of the graded $\CC$-algebra $R(A,P_0)$.

\medskip
\noindent
\emph{Type~1.} Take $\iota = 1$.
Let $A := (a_1, \ldots, a_r)$ be a list of pairwise 
different elements of~$\CC$.
Set $I := \left\{1, \ldots, r-1\right\}$
and define for every $i \in I$ a polynomial
$$
g_{i} 
\ := \ 
T_i^{l_i} - T_{i+1}^{l_{i+1}} - (a_{i+1}-a_i) 
\ \in \ 
\CC[T_{ij}, S_k].
$$
We build up an $r \times (n+m)$ matrix 
from the exponent vectors $l_1, \ldots, l_r$ of these 
polynomials:
$$
P_{0}
\ := \
\left[
\begin{array}{cccccc}
l_{1} &  & 0 & 0  &  \ldots & 0
\\
\vdots  & \ddots & \vdots & \vdots &  & \vdots
\\
 0 &  & l_{r} & 0  &  \ldots & 0
\end{array}
\right].
$$

\medskip
\noindent
\emph{Type~2. } Take $\iota = 0$.
Let $A:= (a_0, \ldots, a_r)$ be a $2 \times (r+1)$-matrix with pairwise 
linearly independent columns $a_i \in \CC^2$. 
Set $I := \left\{0, \ldots, r-2\right\}$ and for every $i \in I$
define
$$
g_{i}
\ :=  \
\det
\left[
\begin{array}{lll}
T_i^{l_i} & T_{i+1}^{l_{i+1}} & T_{i+2}^{l_{i+2}}
\\
a_i & a_{i+1}& a_{i+2}
\end{array}
\right]
\ \in \
\CC[T_{ij},S_{k}].
$$
We build up an $r \times (n+m)$ matrix 
from the exponent vectors $l_0, \ldots, l_r$ of these 
polynomials:
$$
P_{0}
\ := \
\left[
\begin{array}{ccccccc}
-l_{0} & l_{1} &  & 0 & 0  &  \ldots & 0
\\
\vdots & \vdots & \ddots & \vdots & \vdots &  & \vdots
\\
-l_{0} & 0 &  & l_{r} & 0  &  \ldots & 0
\end{array}
\right].
$$
We now define the ring $R(A,P_0)$ simultaneously 
for both types in terms of the data $A$ and $P_0$.
Denote by $P_0^*$ the transpose of $P_0$ and consider 
the projection
$$
Q \colon \ZZ^{n+m} 
\ \to \ 
K_{0} 
\ := \ 
\ZZ^{n+m}/\mathrm{im}(P_{0}^{*}).
$$
Denote by $e_{ij},e_{k} \in \ZZ^{n+m}$ the canonical
basis vectors corresponding to the variables 
$T_{ij}$, $S_{k}$.
Define a $K_0$-grading on $\CC[T_{ij},S_{k}]$ 
by setting
$$
\deg(T_{ij}) \ := \ Q(e_{ij}) \ \in \ K_{0},
\qquad
\deg(S_{k}) \ := \ Q(e_{k}) \ \in \ K_{0}.
$$
This is the coarsest possible grading of
$\CC[T_{ij},S_{k}]$ leaving the variables 
and the $g_i$ homogeneous.
In particular, we have  a $K_{0}$-graded 
factor algebra
$$
R(A,P_{0})
\ := \
\CC[T_{ij},S_{k}] / \bangle{g_{i}; \ i \in I}.
$$
\end{construction}

The  $\CC$-algebra $R(A,P_0)$ just constructed 
is an integral normal complete intersection of 
dimension $n+m+1 - r$ admitting only constant 
invertible homogeneous elements.
Moreover, $R(A,P_0)$ is $K_0$-factorial in the 
sense that every non-zero homogeneous non-unit
is a product of $K_0$-primes.
The latter merely means that on 
$\b{X} = \Spec\, R(A,P_0)$ every $H_0$-invariant 
divisor is the divisor of an $H_0$-homogeneous rational 
function. 
Moreover, every affine variety with a quasitorus 
action of complexity one having this property and 
admitting only constant invertible homogeneous functions
arises from Construction~\ref{constr:RAP0},
see~\cite[Sec.~4.4.2]{ArDeHaLa}.

In the second construction step, we introduce the 
downgradings $K_0 \to K$ that will turn 
$R(A,P_0)$ into a Cox ring.
More geometrically speaking, we figure out the 
possible characteristic quasitori $H \subseteq H_0$.
This is achieved by suitably enhancing the matrix 
$P_0$.

\begin{construction}
\label{constr:RAPdown}
Let integers $r$, $n = n_\iota + \ldots + n_r$, $m$ 
and data $A$ and $P_0$ of Type~1 or Type~2 as 
in Construction~\ref{constr:RAP0}. 
Fix $1 \le s \le n + m - r$, choose an integral
$s \times (n + m)$ matrix $d$ and build the 
$(r+s) \times (n + m)$ stack matrix
$$
P 
\ := \
\left[
\begin{array}{c}
P_0
\\
d
\end{array}
\right].
$$
We require the columns of $P$ to be pairwise 
different primitive vectors generating
$\QQ^{r+s}$ as a vector space. 
Let $P^*$ denote the transpose of $P$ and 
consider the projection
$$
Q \colon 
\ZZ^{n+m} 
\ \to \ 
K 
\ := \ 
\ZZ^{n+m} / \mathrm{im}(P^*).
$$
Denoting as before by $e_{ij}, e_k \in \ZZ^{n+m}$ the 
canonical basis vectors corresponding to
the variables $T_{ij}$ and $S_k$, we obtain a 
$K$-grading on $\CC[T_{ij}, S_k]$ by setting
$$
\deg(T_{ij}) \ := \ Q(e_{ij} ) \ \in \ K,
\qquad\qquad
\deg(S_k) \ := \ Q(e_k) \ \in \ K.
$$
This $K$-grading coarsens the $K_0$-grading of 
$\CC[T_{ij},S_k ]$ given in Construction~\ref{constr:RAP0}. 
In particular, we have the $K$-graded factor algebra
$$
R(A,P)
\ := \
\CC[T_{ij},S_{k}] / \bangle{g_{i}; \ i \in I}.
$$
\end{construction}

So, as algebras $R(A,P_0)$ and $R(A,P)$ coincide,
but the latter comes with the coarser $K$-grading. 
Again, $R(A,P)$ is $K$-factorial, i.e., 
for the action of $H = \Spec \, \CC[K]$ 
on $\b{X} = \Spec \, R(A,P)$, 
every $H$-invariant divisor is the divisor of 
an $H$-homogeneous function. 

\begin{remark}
\label{remark:admissibleops}
Consider the defining matrix~$P$ of a $K$-graded 
ring $R(A,P)$ as in Construction~\ref{constr:RAPdown}.
Write $v_{ij} = P(e_{ij})$ and $v_k = P(e_k)$ 
for the columns of $P$.
The $i$-th column block of $P$ is
$(v_{i1}, \ldots, v_{in_i})$ and by
the data of this block we mean $l_i$ and 
the $s \times n_i$ block $d_i$ 
of $d$.
We introduce \emph{admissible operations}
on $P$:
\begin{enumerate}
\item
swap two columns inside a block
$v_{i1}, \ldots, v_{in_i}$,
\item
exchange the data $l_{i_1},d_{i_1}$ and 
$l_{i_2},d_{i_2}$ of two column blocks,
\item
add multiples of the upper $r$ rows
to one of the last $s$ rows,
\item
any elementary row operation among the last $s$
rows,
\item
swapping among the last $m$ columns.
\end{enumerate}
The operations of type (iii) and (iv) do not change
the associated ring $R(A,P)$, whereas the 
types (i), (ii), (v)
correspond to certain renumberings of the variables
of $R(A,P)$ keeping the (graded) isomorphy type.
\end{remark}

\begin{remark}
\label{rem:redundant}
If $R(A,P)$ is not a polynomial ring, 
then we can always assume that~$P$ is 
\emph{irredundant} in the sense that 
$l_{i1} + \ldots + l_{in_i} > 1$ holds
for $i = 0, \ldots, r$.
Indeed, if $P$ is redundant, then we have 
$n_i = 1$ and $l_{i1} = 1$ for some $i$.
After an admissible operation of type~(ii),
we may assume $i=r$.
Now, erasing $v_{r1}$ and the $r$-th row 
of $P$ and the last column from $A$ 
produces new data defining a ring 
$R(A,P)$ isomorphic to the previous
one.
Iterating this procedure leads to 
an~$R(A,P)$ isomorphic to the 
initial one but with irredundant~$P$.   
\end{remark}

\begin{remark}
Construction~\ref{constr:RAPdown} allows 
more flexibility than the simpler version
presented in the introduction.
However, given any $R(A,P)$ as in
Construction~\ref{constr:RAPdown}, 
we can achieve $l_{i1} \ge \ldots \ge l_{in_i}$ 
for all $i$ and $l_{\iota 1} \ge \ldots \ge l_{r1}$
by means of admissible operations of type~(i) 
and~(ii).
Moreover, via suitable scalings of the variables~$T_{ij}$, 
we can turn the coefficients of the 
relations $g_{i}$ into those presented in the 
introduction.
\end{remark}

The algebras $R(A,P)$ will be our prospective 
Cox rings. 
The remaining task is to determine the 
open $H$-invariant sets
$\rq{X} \subseteq \b{X} = \Spec \, R(A,P)$ 
that give rise to suitable quotients 
$X = \rq{X} \quot H$.
This is done via geometric invariant theory:
the respective open sets $\rq{X} \subseteq \b{X}$ 
are in correspondence with ``bunches of cones'',
certain collections $\Phi$ of convex polyhedral cones 
in $K_{\QQ} := K \otimes_{\ZZ} \QQ$; 
we refer to~\cite[Sec.~3.2.1]{ArDeHaLa} for a detailed 
introduction.

\begin{construction}
\label{constr:RAPandPhi}
Let $R(A,P)$ be a $K$-graded ring as provided by 
Construction~\ref{constr:RAPdown} and 
$\mathfrak{F} = (T_{ij},S_k)$ the 
canonical system of generators.
Consider
$$
H \ := \ \Spec \, \CC[K],
\qquad\qquad
\b{X}(A,P) \ :=  \ \Spec \, R(A,P).
$$
Then $H$ is a quasitorus and the $K$-grading
of $R(A,P)$ defines an action of $H$ on~$\b{X}(A,P)$.
Any true $\mathfrak{F}$-bunch $\Phi$ defines 
an $H$-invariant open set and a good quotient
$$ 
\rq{X}(A,P,\Phi) \ \subseteq \ \b{X}(A,P),
\qquad\qquad
X(A,P,\Phi) \ := \ \rq{X}(A,P,\Phi) \quot H.
$$
The action of $H_0 = \Spec \, \CC[K_0]$ leaves 
$\rq{X}(A,P,\Phi)$ invariant and induces an 
action of the torus $T = \Spec \, \CC[\ZZ^s]$ 
on $X(A,P,\Phi)$.
\end{construction}

Recall from~\cite[Thm.~3.4.3.7]{ArDeHaLa}
that the resulting variety $X = X(A,P,\Phi)$ is 
rational, normal, admits only constant invertible 
functions and is of dimension $n+m+1-r-\dim(K_\QQ) = s+1$.
Moreover, the divisor class group of $X$ is 
isomorphic to $K$ and the Cox ring to 
$R(A,P)$.

\begin{remark}
In the important cases of affine or Fano varieties $X$, 
one may evade using the bunch of cones $\Phi$ due to 
the following observations:
\begin{enumerate}
\item
If $X$ is affine, then $\rq{X}(A,P,\Phi)  = \b{X}$ 
holds and we simply have $X = \b{X} \quot H$;
see also Proposition~\ref{prop:affchar} and the 
discussion thereafter.
\item
If $X$ is a Fano variety, then $\rq{X}(A,P,\Phi)$ 
equals the set of semistable points defined 
by the anticanonical class in the character group 
$K = \Cl(X)$ of $H$. 
\end{enumerate}
\end{remark}

The basic result of the approach via the data 
$A$, $P$ and $\Phi$ says that 
if $X$ is a rational, normal variety with a torus 
action of complexity one having only 
constant globally invertible functions and 
satisfies a certain maximality property with 
respect to embeddability into toric varieties,
then $X$ is equivariantly isomorphic to some 
$X(A,P,\Phi)$, see~\cite[Thm.~1.8]{HaWr}.

Toric embeddability is important in our subsequent 
considerations.
More specifically, there is even a canonical embedding 
$X \to Z$ into a toric variety such that~$X$ inherits 
many geometric properties from $Z$.
The construction makes use of the tropical variety 
of $X$.

\begin{construction}
\label{constr:tropvar}
Let $X = X(A,P,\Phi)$ be obtained from 
Construction~\ref{constr:RAPandPhi}.
The~\emph{tropical variety} of $X$ is 
the fan $\trop(X)$ in $\QQ^{r+s}$
consisting of the cones
$$ 
\lambda_i 
\ := \ 
\cone(v_{i1}) + \lin(e_{r+1}, \ldots, e_{r+s})
\text{ for } 
i \ = \ \iota, \ldots, r,
\qquad
\lambda 
\ := \ 
\lambda_\iota \cap \ldots \cap \lambda_r,
$$
where $v_{ij} \in \ZZ^{r+s}$ denote the first $n$ 
columns of $P$ and $e_k \in \ZZ^{r+s}$ the $k$-th 
canonical basis vector;
we call $\lambda_i$ a \emph{leaf} and $\lambda$ 
the \emph{lineality part} of $\trop(X)$. 

\begin{center}

\ \hfill
\begin{tikzpicture}[scale=0.4]
\draw[thick, draw=black, fill=gray!30!] (0,2) -- (2.4,3) -- (2.4,-1) -- (0,-2) -- cycle; 
\draw[thick, draw=black, fill=gray!60!, fill opacity=0.90] (0,2) -- (2.65,1) -- (2.65,-3) -- (0,-2) -- cycle; 
\node at (1.325,-4) {\tiny Type~1};
\end{tikzpicture}
\hfill 
\begin{tikzpicture}[scale=0.4]
\draw[thick, draw=black, fill=gray!90!] (0,2) -- (-3,2) -- (-3,-2) -- (0,-2) -- cycle; 
\draw[thick, draw=black, fill=gray!30!] (0,2) -- (2.4,3) -- (2.4,-1) -- (0,-2) -- cycle; 
\draw[thick, draw=black, fill=gray!60!, fill opacity=0.90] (0,2) -- (2.65,1) -- (2.65,-3) -- (0,-2) -- cycle; 
\node at (0,-4) {\tiny Type~2};
\end{tikzpicture}
\hfill \

\end{center}
\end{construction}

\begin{construction}
\label{constr:minamb}
Let $X = X(A,P,\Phi)$ be obtained from 
Construction~\ref{constr:RAPandPhi}.
For a face $\delta_0 \preceq \delta$ 
of the orthant $\delta \subseteq \QQ^{n+m}$,
let $\delta_0^* \preceq \delta$ denote the 
complementary face
and call $\delta_0$ \emph{relevant} if
\begin{itemize}
\item
the relative interior of $P(\delta_0)$ intersects $\trop(X)$, 
\item
the image $Q(\delta_0^*)$ comprises a cone of $\Phi$,
\end{itemize}
where  
$Q \colon \ZZ^{n+m} \to K = \ZZ^{n+m}/P^*(\ZZ^{r+s})$ 
is the projection. 
Then we obtain fans $\rq{\Sigma}$ in $\ZZ^{n+m}$ 
and~$\Sigma$ in $\ZZ^{r+s}$ of pointed cones 
by setting
$$ 
\rq{\Sigma}
\ := \ 
\{
\delta_1 \preceq \delta_0; \; 
\delta_0 \preceq \delta \text{ relevant} 
\},
\qquad
\Sigma 
\ := \ 
\{
\sigma \preceq P(\delta_0); \; 
\delta_0 \preceq \delta \text{ relevant} 
\}.
$$
The toric varieties $\rq{Z}$ and $Z$ 
associated with $\rq{\Sigma}$ and $\Sigma$,
respectively, and $\b{Z} = \CC^{n+m}$ 
fit into a commutative diagramm of 
characteristic spaces and total coordinate
spaces
$$ 
\xymatrix{
{\b{X}(A,P)}
\ar@{}[r]|{\qquad\subseteq}
\ar@{}[d]|{\rotatebox[origin=c]{90}{$\scriptstyle{\subseteq}$}}
&
{\b{Z}}
\ar@{}[d]|{\rotatebox[origin=c]{90}{$\scriptstyle{\subseteq}$}}
\\
{\rq{X}(A,P,\Phi)}
\ar@{}[r]|{\qquad\subseteq}
\ar[d]_{\quot H}
&
{\rq{Z}}
\ar[d]^{\quot H}
\\
X(A,P,\Phi)
\ar@{}[r]|{\qquad\subseteq}
&
Z
}
$$
The horizontal inclusions are $T$-equivariant closed 
embeddings, where $T$ acts on $Z$ 
as the subtorus of the $(r+s)$-torus 
corresponding to $0 \times \ZZ^{s} \subseteq  \ZZ^{r+s}$.
Moreover, $X(A,P,\Phi)$ intersects every closed toric 
orbit of $Z$.
\end{construction}

We call $Z$ from Construction~\ref{constr:minamb} 
the \emph{minimal toric ambient variety}
of $X = X(A,P,\Phi)$.
Observe that the rays of the fan $\Sigma$ of $Z$ 
have precisely the columns of the matrix $P$ as 
its primitive generators. 
In particular, every ray of $\Sigma$ lies on 
the tropical variety $\trop(X)$.
The minimal toric ambient variety is crucial for 
the resolution of singularities.
The following recipe for resolving singularities
directly generalizes~\cite[Thm.~3.4.4.9]{ArDeHaLa};
a related approach using polyhedral divisors is 
presented in~\cite{LiSu}.

\begin{construction}
\label{constr:ressing}
Let $X = X(A,P,\Phi)$ be obtained from 
Construction~\ref{constr:RAPandPhi}
and consider the canonical toric embedding
$X \subseteq Z$ and the defining fan $\Sigma$ 
of $Z$.
\begin{itemize}
\item
Let $\Sigma'  = \Sigma \sqcap \trop(X)$ be the 
coarsest common refinement.
\item
Let $\Sigma''$ be any regular subdivision of the 
fan $\Sigma'$.
\end{itemize}
Then $\Sigma'' \to \Sigma$ defines a proper toric 
morphism $Z'' \to Z$ and with the proper transform 
$X'' \subseteq Z''$ of $X \subseteq Z$, the morphism
$X'' \to X$ is a resolution of singularities.
\end{construction}

\begin{remark}
In the setting of Construction~\ref{constr:ressing},
the variety $X''$ has again a torus action 
of complexity one and thus is of the form 
$X'' = X(A'',P'',\Phi'')$.
We have $A'' = A$ and $P''$ is obtained from $P$ 
by inserting the primitive generators of $\Sigma''$ 
as new columns.
Moreover, $\Phi''$ is the Gale dual of $\Sigma''$, 
that means that with the corresponding projection $Q''$ 
and orthant $\delta''$ we have
$$ 
\Phi'' 
\ = \ 
\{
Q''(\delta_0^*); \; \delta_0 \preceq \delta''; 
\; P''(\delta_0) \in \Sigma''
\}.
$$
\end{remark}

\begin{proposition}
\label{prop:affchar}
Consider a variety $X = X(A,P,\Phi)$ of Type~2
as provided by Construction~\ref{constr:RAPandPhi}.
Then the following statements are equivalent.
\begin{enumerate}
\item
One has $\rq{X} = \b{X}$
\item
The variety $X$ is affine. 
\item
The minimal toric ambient variety $Z$ of $X$ is affine.
\item
One has $\rq{Z} = \b{Z} = \CC^{n+m}$.
\end{enumerate}
If one of these statements holds, 
then the columns of $P$ generate the extremal
rays of a full-dimensional cone 
$\sigma \subseteq \QQ^{r+s}$ and we have
$Z = \Spec \, \CC[\sigma^\vee \cap \ZZ^{r+s}]$.
\end{proposition}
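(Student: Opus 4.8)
The plan is to prove the four statements equivalent by establishing the implications (i)$\Rightarrow$(ii), (ii)$\Rightarrow$(i), (i)$\Rightarrow$(iv), (iv)$\Rightarrow$(iii) and (iii)$\Rightarrow$(ii); together these close the circle, and among them only (ii)$\Rightarrow$(i) carries real content. Throughout I would lean on the commutative diagram of Construction~\ref{constr:minamb}, in which $\b{X}(A,P)\subseteq\b{Z}=\CC^{n+m}$ and $X\subseteq Z$ are closed embeddings and $\rq{X}(A,P,\Phi)\subseteq\rq{Z}$ holds, and on the following standing facts: $R(A,P)$ is a normal $\CC$-algebra, $H=\Spec\,\CC[K]$ is a quasitorus and hence reductive, and the good quotient maps $\rq{X}\to X$ and $\rq{Z}\to Z$ are affine morphisms.

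The formal arrows I would dispatch as follows. For (i)$\Rightarrow$(ii): if $\rq{X}=\b{X}$ then $X=\b{X}\quot H$ is a good quotient of an affine variety by the reductive group $H$, hence affine. For (iii)$\Rightarrow$(ii): $X$ is a closed subvariety of the affine variety $Z$, hence affine. For (iv)$\Rightarrow$(iii) I argue torically: $\rq{Z}=\b{Z}$ forces the origin $0\in\CC^{n+m}$ to lie in $\rq{Z}$, and by the Cox construction of $Z$ this means that the fan $\Sigma$ contains a cone having \emph{all} columns of $P$ among its rays; since the primitive ray generators of $\Sigma$ are exactly the columns of $P$, this cone is $\sigma:=\cone(\text{columns of }P)$ and $\Sigma$ is its face fan, so $Z$ is affine. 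For (i)$\Rightarrow$(iv) I would use that we are in Type~2: each relation $g_i$ is a $\CC$-linear combination of the non-constant monomials $T_i^{l_i}$, $T_{i+1}^{l_{i+1}}$, $T_{i+2}^{l_{i+2}}$ and therefore vanishes at the origin, whence $0\in\b{X}$. Granting (i), we then get $0\in\b{X}=\rq{X}\subseteq\rq{Z}$, and as above $0\in\rq{Z}$ forces $Z$ affine and $\rq{Z}=\b{Z}$.

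The decisive step is (ii)$\Rightarrow$(i). Here the idea is that the good quotient $q\colon\rq{X}\to X$ is an affine morphism, so if $X$ is affine then $\rq{X}=q^{-1}(X)$ is affine as well. It then remains to see that an affine open subset $\rq{X}$ of the normal affine variety $\b{X}$ with $\codim_{\b{X}}(\b{X}\setminus\rq{X})\ge 2$ must already coincide with $\b{X}$: normality of $\b{X}$ and Hartogs' principle give $\Gamma(\rq{X},\OOO)=\Gamma(\b{X},\OOO)=R(A,P)$, so the open immersion $\rq{X}\hookrightarrow\b{X}=\Spec R(A,P)$ of affine varieties induces the identity on coordinate rings and is therefore an isomorphism. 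I expect this to be the main obstacle, the delicate input being precisely the codimension-two smallness of the characteristic space $\rq{X}\subseteq\b{X}$, combined with the passage through the affineness of $q$.

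For the closing assertion, once $Z$ is affine we have already seen that $\Sigma$ is the face fan of $\sigma=\cone(\text{columns of }P)$. As the columns of $P$ generate $\QQ^{r+s}$ by Construction~\ref{constr:RAPdown}, the cone $\sigma$ is full-dimensional, and since the cones of $\Sigma$ are pointed, so is $\sigma$; because the primitive ray generators of $\Sigma$ are exactly the columns of $P$, these columns are precisely the extremal rays of $\sigma$. Hence $Z$ is the affine toric variety attached to $\sigma$, that is $Z=\Spec\,\CC[\sigma^\vee\cap\ZZ^{r+s}]$.
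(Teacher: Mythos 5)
Your proposal is correct and follows essentially the same route as the paper: the key content in both is that for Type~2 the origin lies on $\b{X}$, so once $\rq{X}=\b{X}$ one gets $0\in\rq{Z}$ and hence $\rq{Z}=\b{Z}$ and $Z$ affine. The paper compresses everything else to "only (ii)$\Rightarrow$(iii) needs an argument," silently using (ii)$\Rightarrow$(i) as standard Cox-ring theory, whereas you spell that step out via affineness of the quotient morphism plus the codimension-two complement and Hartogs -- a correct filling-in of exactly the input the paper takes for granted.
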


\begin{proof}
Only for the implication ``(ii)$\Rightarrow$(iii)''
there is something to show.
As $X$ is of Type~2, we have 
$0 \in \b{X} \subseteq \b{Z} = \CC^{n+m}$.
Since $X$ is affine, we have
$\b{X} = \rq{X}$ and thus 
$0 \in \rq{Z}$. 
We conclude $\rq{Z} = \b{Z}$ and thus
$Z = \b{Z} \quot H$ is affine.
\end{proof}

The characterization~\ref{prop:affchar}~(i)
allows us to omit the bunch of cones $\Phi$ in 
the affine case:
we may just speak of the affine variety 
$X = X(A,P) := \b{X} \quot H$.

\begin{corollary}
\label{affQfact}
Let $X = X(A,P)$ be affine of Type~2.
Then the following statements are equivalent.
\begin{enumerate}
\item
The variety $X$ is $\QQ$-factorial.
\item
The variety $Z$ is $\QQ$-factorial.
\item
The columns of $P$ are linearly independent.
\end{enumerate}
\end{corollary}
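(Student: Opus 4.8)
The plan is to reduce everything to the affine toric variety $Z$ and then transfer back to $X$. Proposition~\ref{prop:affchar} is the key input: in the affine Type~2 situation the columns of $P$ generate the extremal rays of a full-dimensional cone $\sigma \subseteq \QQ^{r+s}$, and $Z = \Spec \CC[\sigma^\vee \cap \ZZ^{r+s}]$. Moreover, from Construction~\ref{constr:minamb} we have the canonical embedding $X \subseteq Z$ with $X$ meeting every closed toric orbit of $Z$, and both varieties share the divisor class group $\Cl(X) \cong K \cong \Cl(Z)$, where $K = \ZZ^{n+m}/\mathrm{im}(P^*)$ so that $\dim K_\QQ = (n+m)-(r+s)$.

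First I would settle the equivalence of (ii) and (iii) by pure toric geometry. An affine toric variety $\Spec \CC[\sigma^\vee \cap \ZZ^{r+s}]$ with $\sigma$ full-dimensional is $\QQ$-factorial if and only if $\sigma$ is simplicial, i.e.\ its extremal rays are generated by linearly independent vectors. By Construction~\ref{constr:RAPdown} the columns of $P$ are pairwise different primitive vectors spanning $\QQ^{r+s}$, so $P$ has rank $r+s$ and the columns generate exactly the $n+m$ extremal rays of $\sigma$; these are linearly independent precisely when $n+m = r+s$, which is exactly when $\sigma$ is simplicial. This yields (ii)$\Leftrightarrow$(iii). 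For the link to (i), I would first record the class-group version: since $Z$ is affine toric one has $\Pic(Z) = 0$, hence $Z$ is $\QQ$-factorial if and only if $\Cl(Z)_\QQ = 0$, i.e.\ $K$ is finite, which by the rank computation again matches (iii).

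It remains to prove (i)$\Leftrightarrow$(ii), and this transfer step is the main obstacle. The point is that $\QQ$-factoriality of $X$ must be shown to be genuinely equivalent to that of the ambient $Z$, not merely implied in one direction: a priori $\QQ$-Cartierness of Weil divisors need not restrict from $Z$ to the subvariety $X$. The key is to exploit that the embedding $X \subseteq Z$ is compatible with the good quotient presentations $X = \b{X}\quot H$ and $Z = \b{Z}\quot H$ sharing the common quasitorus $H$, and that $X$ intersects every closed toric orbit of $Z$. I would argue that this forces the cones of $\Sigma$ (equivalently of the bunch $\Phi$) to control $\QQ$-Cartierness of the $H$-linearized, $K$-graded divisors on $X$ exactly as on $Z$, so that $X$ inherits the simplicial criterion; concretely, since $X$ is normal, affine, with only constant invertible global functions and meeting all toric orbits, its local class groups are governed by the same combinatorial data, whence $X$ is $\QQ$-factorial if and only if $K$ is finite. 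Combining this with the toric characterization closes the chain (i)$\Leftrightarrow$(ii)$\Leftrightarrow$(iii).
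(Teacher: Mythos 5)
Your proposal is correct and follows essentially the same route as the paper: the equivalence (ii)$\Leftrightarrow$(iii) is the standard simpliciality criterion for affine toric varieties (the paper cites \cite[Thm.~3.1.19~(b)]{CLS}), and the transfer (i)$\Leftrightarrow$(ii) is exactly the content of \cite[Cor.~3.3.1.7]{ArDeHaLa}, which the paper simply cites. Your discussion of why the transfer works (shared divisor class group, $X$ meeting every closed toric orbit, the common quotient presentation by $H$) correctly identifies the structural inputs behind that citation, though as written it is a sketch of the cited result rather than a self-contained proof of it.
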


\begin{proof}
The equivalence of~(i) and~(ii) 
is~\cite[Cor.~3.3.1.7]{ArDeHaLa},
The equivalence of~(ii) and~(iii) 
is~\cite[Thm.~3.1.19~(b)]{CLS}.
\end{proof}

\begin{corollary}
\label{cor:affpic}
Let $X = X(A,P)$ be affine of Type~2.
Then the Picard group of~$X$ is trivial.
\end{corollary}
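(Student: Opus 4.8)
The plan is to realize $X$ as an affine cone, that is, to equip it with a $\CC^*$-action having an attractive fixed point, and then to use that the Picard group of such a variety vanishes. Concretely, I would produce a positive grading on the Cox ring $R = R(A,P)$ that descends through the quotient $X = \b X \quot H$.

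First I would build the grading. Since each $l_i \in \ZZ_{>0}^{n_i}$, I can choose rational numbers $w_{ij} > 0$ with $l_{i1}w_{i1} + \dots + l_{in_i}w_{in_i} = 1$ for every $i = 0, \dots, r$, together with arbitrary $w_k > 0$; clearing denominators yields an integral vector $w \in \ZZ_{>0}^{n+m}$. By construction every monomial $T_i^{l_i}$ acquires one and the same $w$-degree, so each Type~2 relation $g_i$ is $w$-homogeneous and $w$ defines a $\ZZ$-grading of $R$. As all weights are strictly positive, the only $w$-homogeneous elements of degree zero are the constants, so $R_0 = \CC$.

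The decisive point is that this grading is induced by the torus $T = H_0/H$ and hence lives on $X$. Reading off the $i$-th row of the matrix $P_0$ from Construction~\ref{constr:RAP0}, which is supported on the blocks $0$ and $i$ with entries $-l_0$ and $l_i$, I get
\[
(P_0 w)_i \ = \ -\sum_j l_{0j}w_{0j} + \sum_j l_{ij}w_{ij} \ = \ -1 + 1 \ = \ 0,
\]
so $P_0 w = 0$. Thus $w$ annihilates $\im(P_0^*)$, factors through $K_0 = \ZZ^{n+m}/\im(P_0^*)$, and defines a one-parameter subgroup of $H_0 = \Spec\,\CC[K_0]$. This subgroup descends to $T = H_0/H$ and acts on $X = \b X \quot H$; on coordinate rings it restricts the $w$-grading to $\OOO(X) = R^H$, which is therefore again nonnegatively graded with $\OOO(X)_0 = R_0 \cap R^H = \CC$.

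Finally I would conclude. The grading makes $X$ a normal affine variety whose coordinate ring is positively graded with degree-zero part $\CC$, so $X$ is an affine cone with vertex $x_0 = \V(\OOO(X)_+)$, the image of $0 \in \b X$. For such a cone the localization map $\Cl(X) \to \Cl(\OOO_{X,x_0})$ is injective: its kernel is generated by divisors of $w$-homogeneous rational functions that are units at $x_0$, and such a function has $w$-degree zero, hence is a nonzero constant with trivial divisor. Since every class in $\Pic(X) \subseteq \Cl(X)$ is principal at $x_0$ and so lies in this kernel, we obtain $\Pic(X) = 0$. I expect the only genuine work to be the verification that the positive grading survives passage to the $H$-invariants and that homogeneous units at the vertex are constant; both rest on the single identity $P_0 w = 0$.
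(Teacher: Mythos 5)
Your proof is correct, but it takes a genuinely different route from the paper's. The paper deduces the corollary from the ambient toric geometry: Proposition~\ref{prop:affchar} shows the minimal toric ambient variety $Z$ is affine, an affine toric variety has trivial Picard group by \cite[Prop.~4.2.2]{CLS}, and $\Pic(X)=\Pic(Z)$ by \cite[Cor.~3.3.1.12]{ArDeHaLa}. You instead turn $X$ itself into an affine cone by producing a positive grading, i.e.\ an attractive $\CC^*$-action. The identity $P_0w=0$, which holds precisely because of the $-l_0$ block characteristic of Type~2, is exactly what makes the $w$-grading a coarsening of the $K_0$-grading, hence compatible with the Veronese subalgebra $\OOO(X)=R^H$; it is also the reason the argument correctly fails for Type~1, where the relations carry constant terms and admit no positive grading. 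Your route is more self-contained and yields as a by-product an explicit contracting one-parameter subgroup of $T$ on every affine $X(A,P)$ of Type~2, whereas the paper's route is two citations once Proposition~\ref{prop:affchar} is available. The one step you should not leave at the level of a remark is the final vanishing of $\Pic$ for a normal affine cone: to see that a class in $\ker\bigl(\Cl(X)\to\Cl(\OOO_{X,x_0})\bigr)$ is the divisor of a \emph{homogeneous} rational function, one either passes first to a $\CC^*$-invariant representative $D$ and applies graded Nakayama to the divisorial module $\OOO_X(D)$, whose localization at the vertex is free, or cites the standard theorem on class groups of positively graded Krull domains (Fossum). Once that reduction is in place, your observation that a homogeneous function which is a unit at the attractive fixed point must have degree zero and hence be a nonzero constant does finish the proof.
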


\begin{proof}
Proposition~\ref{prop:affchar} says that 
the minimal toric ambient variety $Z$ is affine.
Thus, $Z$ has trivial Picard group;
see~\cite[Prop.~4.2.2]{CLS}.
According to~\cite[Cor.~3.3.1.12]{ArDeHaLa},
the Picard group of $X$ equals that of $Z$.
\end{proof}

More generally one can show that in fact 
every normal affine variety admitting a torus 
action with an attractive orbit has trivial 
Picard group: every bundle can be linearized 
and the non-vanishing loci of its homogeneous 
sections form an invariant trivializing open 
cover. As one of these covering sets contains 
the attractive fixed point, the bundle is 
trivial.

\section{The anticanonical complex and singularities}
\label{sec:acancomp}

First recall the basic singularity types arising 
in the minimal model programme.
Let $X$ be a $\QQ$-Gorenstein variety, i.e., some 
non-zero multiple of a canonical 
divisor $D_X$ on $X$ is an integral Cartier divisor.
Then, for any resolution of singularities 
$\varphi\colon X' \rightarrow X$, one has the 
ramification formula
$$
D_{X'} - \varphi^*(D_X) 
\ = \ 
\sum  a_i E_i,
$$
where the $E_i$ are the prime components of 
the exceptional divisors and the coefficients 
$a_i \in \QQ$ are the discrepancies of the 
resolution. 
The variety $X$ is said to have at most 
\emph{log terminal (canonical, terminal)} 
singularities, if for every resolution of 
singularities the discrepancies $a_i$ satisfy 
$a_i > -1$ ($a_i \geq 0$,  $a_i > 0$).

In~\cite{BeHaHuNi}, the ``anticanonical complex'' 
has been introduced for Fano varieties $X(A,P,\Phi)$
and served as a tool to study singularities of the 
above type. 
The purpose of this section is to extend this 
approach and to generalize results from~\cite{BeHaHuNi}
to the non-complete and non-$\QQ$-factorial 
cases.
As an application, we characterize log terminality
in Theorem~\ref{thm:logtermchar} via platonic 
triples occuring in the Cox ring. 
For the affine case, the result specializes to 
Theorem~\ref{thm:affltcharintro}.

Now, let $X = X(A,P,\Phi)$ be a rational $T$-variety 
of complexity one arising from 
Construction~\ref{constr:RAPandPhi}.
Consider the embedding $X \subseteq Z$ into the 
minimal toric ambient variety.
Then $X$ and $Z$ share the same divisor class group 
$$
K 
\ = \ 
\Cl(X) 
\ = \ 
\Cl(Z)
$$ 
and the same degree map $Q \colon \ZZ^{n+m} \to K$ 
for their Cox rings. 
Let $e_Z \in \ZZ^{n+m}$ denote the sum over 
the canonical basis vectors $e_{ij}$ and $e_k$ 
of $\ZZ^{n+m}$.
Then, with the defining relations $g_\iota, \ldots, g_{r-2}$ 
of the Cox ring $R(A,P)$, the canonical divisor 
classes of $Z$ and $X$ are given as
$$
\mathcal{K}_Z
\ = \ 
-Q(e_Z)
\ \in \ 
K,
\qquad\qquad
\mathcal{K}_{X}
\ = \ 
\sum_{i=\iota}^{r-2+\iota} \deg(g_i) + \mathcal{K}_{Z}
\ \in \ 
K,
$$
see~\cite[Prop.~3.3.3.2]{ArDeHaLa}.
Observe that if $X$ is of Type~$1$, then its canonical 
divisor class equals that of the minimal toric 
ambient variety~$Z$.
Define a (rational) polyhedron 
$$ 
B(-\mathcal{K}_X)
\ := \ 
Q^{-1}(-\mathcal{K}_X) \cap \QQ^{n+m}_{\ge 0}
\ \subseteq \ 
\QQ^{n+m}
$$
and let $B := B(g_\iota) + \ldots + B(g_{r-2+\iota})
\subseteq \QQ^{n+m}$ denote the 
Minkowski sum of the Newton polytopes $B(g_i)$
of the relations $g_\iota, \ldots, g_{r-2+\iota}$ of $R(A,P)$.

\begin{definition}
Let $X = X(A,P,\Phi)$ such that $-\mathcal{K}_X$ 
is ample and denote by $\Sigma$ the 
fan of the minimal toric ambient variety $Z$ of $X$.
\begin{enumerate}
\item
The \emph{anticanonical polyhedron} of $X$ 
is the dual polyhedron $A_X \subseteq \QQ^{r+s}$ 
of the polyhedron
$$
B_X
\ := \ 
(P^*)^{-1}(B(-\mathcal{K}_X) + B - e_{\Sigma}) 
\ \subseteq \ 
\QQ^{r+s}.
$$
\item
The \emph{anticanonical complex} of $X$ 
is the coarsest common refinement of polyhedral 
complexes
$$ 
A^c_X
\ := \ 
{\rm faces}(A_X) \sqcap \Sigma \sqcap \trop(X).
$$
\item
The \emph{relative interior} of $A_X^c$ 
is the interior of its support with respect 
to the intersection $\Supp(\Sigma) \cap \trop(X)$.
\item
The \emph{relative boundary} $\partial A_X^c$ 
is the complement of the relative interior of $A_X^c$ 
in $A_X^c$.
\end{enumerate}
\end{definition}

A first statement expresses the discrepancies of a given 
resolution of singularities via the anticanonical complex; 
the proof is a straightforward generalization 
of the one given in~\cite{BeHaHuNi} for the Fano case and 
will be made available elsewhere.

\begin{proposition}
\label{lemm::discrepancy}
Let $X = X(A,P,\Phi)$ such that 
$-\mathcal{K}_X$ is ample 
and $X'' \rightarrow X$ a resolution of 
singularities as in Construction~\ref{constr:ressing}.
For any ray $\varrho \in \Sigma''$, let 
$v_{\varrho}$ be its primitive generator, 
$v_{\varrho}'$ its leaving point of $A_X^c$
provided $\varrho \not \subseteq A_X^c$
and $D_{\varrho}$ the prime 
divisor on $X''$ obtained by intersecting 
$X''$ with the toric divisor of $Z''$ 
corresponding to $\varrho$.
Then the discrepancy $a_{\varrho}$ along 
$D_{\varrho}$ satisfies 
$$
a_{\varrho} 
\ = \ 
-1 + \frac{||v_{\varrho}||}{||v'_{\varrho}||}
\quad \text{if }
\varrho \not \subseteq A_X^c, 
\qquad  \qquad  
a_{\varrho} \leq -1 
\quad \text{if } \varrho \subseteq A_X^c.
$$
\end{proposition}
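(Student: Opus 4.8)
\emph{Setup and strategy.} The plan is to reduce the discrepancy of $D_\varrho$ over $X$ to the toric discrepancy of the corresponding divisor on the ambient resolution $Z'' \to Z$, corrected by the complete-intersection adjunction for the embedding $X \subseteq Z$ into the minimal toric ambient variety. Recall from Construction~\ref{constr:ressing} and the remark following Construction~\ref{constr:minamb} that $X'' \subseteq Z''$ is the proper transform under the toric morphism $Z'' \to Z$, that $\Sigma''$ refines $\Sigma' = \Sigma \sqcap \trop(X)$, and hence that every ray $\varrho \in \Sigma''$ lies on $\trop(X)$. This last fact is what guarantees that $D_\varrho = X'' \cap D_\varrho^{Z''}$ is a genuine prime divisor on $X''$ and that the divisorial valuation it defines on $\CC(X)$ is the restriction of the monomial valuation $u \mapsto \langle v_\varrho, u \rangle$ attached to the primitive generator $v_\varrho \in \ZZ^{r+s}$. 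First I would represent $\mathcal{K}_X$ and $\mathcal{K}_{X''}$ by $T$-invariant divisors and read off $a_\varrho$ as the difference of the orders of these representatives along $D_\varrho$, a difference that for the invariant choice is computed monomially through the pairing $\langle\,\cdot\,,v_\varrho\rangle$.

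\emph{From adjunction to the supporting functional.} On the toric side, the invariant canonical form on $Z$ has order $-1$ along every toric divisor, so the order of a suitable anticanonical representative along $D_\varrho^{Z''}$ is $\langle\,\cdot\,,v_\varrho\rangle - 1$ against the relevant exponent datum. I would then feed in the complete-intersection adjunction recorded just before the definition of $A_X$, namely $\mathcal{K}_X = \sum_i \deg(g_i) + \mathcal{K}_Z$, to express the invariant canonical datum of $X$ as the toric anticanonical datum twisted by the defining relations $g_i$. Taking orders along $D_\varrho$ and translating through the projection $Q$ and the transpose $P^*$, the toric $-1$'s, the shift $e_Z$ encoding $\mathcal{K}_Z$, and the Newton polytopes $B(g_i)$ whose Minkowski sum is $B$ should assemble into precisely the datum packaged in $B_X = (P^*)^{-1}(B(-\mathcal{K}_X) + B - e_\Sigma)$. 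Concretely, for the cone $\sigma$ of $\Sigma$ (equivalently of the complex $A_X^c$) containing $v_\varrho$, there should be a linear form $m_\sigma$ whose level set $\langle m_\sigma,\,\cdot\,\rangle = 1$ supports the facet of $A_X$ met by $\sigma$, and the computation collapses to $a_\varrho = -1 + \langle m_\sigma, v_\varrho\rangle$.

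\emph{Identifying the leaving point and the two cases.} Since the facet is $\{\langle m_\sigma,\,\cdot\,\rangle = 1\}$, the ray $\varrho$ meets $\partial A_X$ in $v'_\varrho = v_\varrho / \langle m_\sigma, v_\varrho\rangle$; as $v_\varrho$ and $v'_\varrho$ are positive multiples of one another this gives $\langle m_\sigma, v_\varrho\rangle = ||v_\varrho|| / ||v'_\varrho||$ and hence the asserted formula $a_\varrho = -1 + ||v_\varrho|| / ||v'_\varrho||$. Because every ray of $\Sigma''$ lies in $\trop(X) \cap \Supp(\Sigma)$, passing from $A_X$ to the refinement $A_X^c = {\rm faces}(A_X) \sqcap \Sigma \sqcap \trop(X)$ does not move this exit point, so $v'_\varrho$ is equally the point where $\varrho$ leaves $A_X^c$; this settles the case $\varrho \not\subseteq A_X^c$. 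If instead $\varrho \subseteq A_X^c$, then $v_\varrho$ is a recession direction of the unbounded complex and no finite leaving point exists; there the supporting functional satisfies $\langle m_\sigma, v_\varrho\rangle \le 0$, which yields $a_\varrho = -1 + \langle m_\sigma, v_\varrho\rangle \le -1$.

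\emph{Main obstacle.} The delicate point, and exactly where the passage from the Fano situation of~\cite{BeHaHuNi} to the present non-complete, non-$\QQ$-factorial setting must be checked, is the adjunction step: one has to verify that the twist by the relations $g_i$ contributes to the order along $D_\varrho$ precisely the Minkowski-sum correction $B$ appearing in $B_X$, that the Jacobian of the $g_i$ restricted to the proper transform $X''$ introduces no spurious components along $D_\varrho$, and that $D_\varrho$ is indeed irreducible on $X''$ (which is where $\varrho \subseteq \trop(X)$ is used). All of this is local on the toric ambient $Z$, and the ampleness of $-\mathcal{K}_X$ enters only to ensure that $A_X$ is a genuine polyhedron dual to $B_X$ with well-defined supporting forms $m_\sigma$; granting this, the reduction to the toric discrepancy formula and the identification of the valuation with $v_\varrho$ proceed as in the complete case.
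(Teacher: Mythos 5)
The paper does not actually supply a proof of Proposition~\ref{lemm::discrepancy}: it states that the proof is a straightforward generalization of the Fano case in~\cite{BeHaHuNi} and will appear elsewhere. Your outline follows exactly that intended route -- pass to the canonical toric embedding $X\subseteq Z$ and its resolution $Z''\to Z$, use the complete-intersection adjunction $\mathcal{K}_X=\sum_i\deg(g_i)+\mathcal{K}_Z$ to express an invariant canonical representative, and read off the discrepancy as $-1$ minus the value of the supporting vertex of $B_X$ on $v_\varrho$, which is then reinterpreted as the ratio $\|v_\varrho\|/\|v'_\varrho\|$ via the leaving point. This is consistent with how the paper itself uses the statement: compare Proposition~\ref{prop:acancompstruct}~(vi)--(viii) and Corollary~\ref{coro::disc}, where precisely this pairing computation is carried out for the rays $\varrho_\tau$ (note the sign convention there: the boundary facet is $\{\bangle{u,v}=-1\}$ with $u\in B_X$, so your $m_\sigma$ is $-u$). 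Your treatment of the case $\varrho\subseteq A_X^c$ is also correct.

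Two caveats. First, you explicitly flag but do not carry out the central computation, namely that the order of the adjunction-twisted canonical representative along $D_\varrho$ is computed by the vertex of $B(-\mathcal{K}_X)+B-e_Z$, and that $D_\varrho$ is a (reduced, irreducible) divisor on $X''$ whose valuation on $\CC(X)$ is the monomial valuation of $v_\varrho$; this is where the actual work lies, and the argument in the proof of Proposition~\ref{prop:acancompstruct}~(vi) shows how it goes for the rays $\varrho_\tau$. (If $X''\cap D_\varrho^{Z''}$ is reducible -- which can happen for rays in the lineality part -- all components carry the same discrepancy by $H_0$-equivariance, so the statement is unaffected, but this deserves a sentence.) Second, ampleness of $-\mathcal{K}_X$ is used not only to make $A_X$ a genuine dual polyhedron but also to guarantee, as in the proof of Proposition~\ref{prop:acancompstruct}~(ii), that $\Sigma$ is a subfan of the normal fan of $B(-\mathcal{K}_X)$, so that the minimizing vertex $u$ exists and vanishes on the relevant face; your sketch tacitly assumes this. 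Since the paper gives no proof to compare against, I can only say your plan matches the intended one and is correct in outline, with the verification of the order computation left open.
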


The next result characterizes the existence of at most 
log terminal (canonical, terminal) singularities in terms
of the anticanonical complex; again, this generalizes a result 
from~\cite{BeHaHuNi} and the proof will be made available 
elsewhere.

\begin{theorem}
\label{theo::singComplex}
Let $X = X(A,P,\Phi)$ be 
such that $-\mathcal{K}_X$ is ample.
Then the following statements hold.
\begin{enumerate}
\item
$A_X^c$ contains the origin in its relative
interior and all primitive generators of 
the fan $\Sigma$ are vertices of $A_X^c$.
\item 
$X$ has at most log terminal singularities 
if and only if the anticanonical complex 
$A_X^c$ is bounded.
\item
$X$ has at most canonical singularities
if and only if $0$ is the only 
lattice point in the relative interior of 
$A_X^c$.
\item
$X$ has at most terminal singularities
if and only if $0$ and the primitive generators 
$v_\varrho$ for $\varrho \in \Sigma^{(1)}$
are the only lattice points of $A_X^c$.
\end{enumerate}
\end{theorem}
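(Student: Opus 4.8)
The plan is to reduce parts~(ii)--(iv) entirely to the discrepancy formula of Proposition~\ref{lemm::discrepancy}, so that the real work concentrates in assertion~(i), the geometry of $A_X^c$. First I would verify that the dual-polyhedron construction behaves as expected under ampleness: since $-\mathcal{K}_X$ is ample, its class lies in the relative interior of the relevant GIT-cone, which forces the support data defining $B_X$ to be strictly positive along the rays of $\Sigma$; dually this places the origin in the relative interior of $A_X$, and after intersecting with $\Supp(\Sigma)\cap\trop(X)$ in the relative interior of $A_X^c$. For the second half of~(i) I would argue leaf by leaf: on each leaf $\lambda_i=\cone(v_{i1})+\lin(e_{r+1},\ldots,e_{r+s})$ of $\trop(X)$ the facet inequalities of $A_X$ cut out a boundary piece whose intersection with $\lambda_i$ passes exactly through the column $v_{ij}$ of $P$. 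This is the one genuinely computational step, where the explicit shape of $\mathcal{K}_X$, the Newton polytopes $B(g_i)$ and the correction term $e_\Sigma$ all enter; I expect this uniform verification to be the main obstacle, since it must be carried out in the non-complete and non-$\QQ$-factorial setting rather than in the Fano case of~\cite{BeHaHuNi}.

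Granting~(i), part~(ii) is read off directly. A ray $\varrho$ with $\varrho\not\subseteq A_X^c$ always satisfies $a_\varrho=-1+||v_\varrho||/||v_\varrho'||>-1$, whereas $\varrho\subseteq A_X^c$ yields $a_\varrho\le-1$. Hence log terminality is equivalent to the nonexistence of a resolution ray contained in $A_X^c$. If $A_X^c$ is bounded, no ray fits inside it, so all discrepancies exceed $-1$; conversely, if $A_X^c$ is unbounded I would choose a primitive lattice vector in an unbounded direction of its support, realize the corresponding ray inside a regular subdivision $\Sigma''$ as in Construction~\ref{constr:ressing}, and thereby produce an exceptional divisor with discrepancy $\le-1$.

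For~(iii) and~(iv) I would use that, as the resolution ranges over all regular subdivisions, the primitive generators $v_\varrho$ exhaust all primitive lattice vectors of $\Supp(\Sigma)\cap\trop(X)$. The formula $a_\varrho=-1+||v_\varrho||/||v_\varrho'||$ gives $a_\varrho\ge0$ precisely when $||v_\varrho||\ge||v_\varrho'||$, that is, when $v_\varrho$ does not lie in the relative interior of $A_X^c$; so canonicity is equivalent to the origin being the only interior lattice point, which is~(iii). Likewise $a_\varrho>0$ holds exactly when $v_\varrho\notin A_X^c$, and since the generators of $\Sigma^{(1)}$ are non-exceptional and appear as vertices of $A_X^c$ by~(i) with discrepancy $0$, terminality is equivalent to $0$ together with these vertices being the only lattice points of $A_X^c$, which is~(iv). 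The only subtleties beyond~(i) are to confirm that the sampling of primitive vectors by resolution rays is exhaustive and that the leaving-point assignment $v_\varrho\mapsto v_\varrho'$ is well defined on the whole support; both follow from the boundedness and face structure secured in~(i).
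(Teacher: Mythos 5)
A preliminary remark: the paper itself does not prove this theorem — it states explicitly that the proof ``will be made available elsewhere'' as a generalization of the Fano case of \cite{BeHaHuNi} — so there is no in-paper argument to compare you against, and your proposal has to stand on its own. Judged that way, your reduction of (ii)--(iv) to Proposition~\ref{lemm::discrepancy} is the right architecture, and the individual translations are essentially correct: boundedness of $A_X^c$ versus the existence of a resolution ray contained in it for (ii) (the unbounded direction is realizable because $A_X$ is a rational polyhedron containing $0$, so an unbounded $A_X^c$ contains a full rational ray of $\Supp(\Sigma)\cap\trop(X)$ which can be inserted into a subdivision $\Sigma''$), and the dictionary $a_\varrho\ge 0 \Leftrightarrow v_\varrho\notin (A_X^c)^{\circ}$, $a_\varrho>0 \Leftrightarrow v_\varrho\notin A_X^c$ for (iii) and (iv). Two steps you leave implicit should each get a sentence: that the singularity type may be tested on the resolutions of Construction~\ref{constr:ressing} alone, although the definitions quantify over all resolutions; and that a non-primitive lattice point of $A_X^c$ (resp.\ of its relative interior) forces its primitive generator into the relative interior, so that sampling only primitive vectors really controls all lattice points — this uses star-shapedness of $A_X^c$ about the origin, which is itself part of what (i) must deliver.

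The genuine gap is assertion (i), which you defer entirely (``the main obstacle'') even though every other part rests on it: without it the leaving points $v'_\varrho$ are not defined on all of the support, the star-shapedness used in (iii)/(iv) is unavailable, and the ``only if'' direction of (iv) does not even make sense, since one needs to know that the generators of $\Sigma^{(1)}$ actually lie in $A_X^c$. Moreover your sketch of (i) is not quite right as stated: the origin lies in the \emph{topological} interior of the dual polyhedron $A_X$ only when $B_X$ is bounded, which fails in general — in the paper's $E_6$ example $A_X$ has a nontrivial recession cone — and the theorem only claims interiority relative to $\Supp(\Sigma)\cap\trop(X)$. Establishing that, and the fact that the columns of $P$ are vertices, requires identifying on each cone $\sigma\cap\lambda_i$ of $\Sigma\sqcap\trop(X)$ the single supporting vertex $u_{\sigma,i}\in B_X$ that cuts out $\partial A_X^c$ there and evaluating it on the columns of $P$ and on the barycentric vectors $v_\tau$. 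That computation is exactly what the paper supplies in Propositions~\ref{prop:acancompstruct-type1} and~\ref{prop:acancompstruct} (notably parts (ii) and (vi)--(viii) of the latter); your proposal needs either to invoke those statements or to reproduce their proof, and until it does, parts (ii)--(iv) are conditional.
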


We describe the structure of the anticanonical 
complex in more detail, which generalizes
in particular statements on the $\QQ$-factorial 
Fano case obtained in~\cite{BeHaHuNi}. 
For Type~1, the situation turns out to be simple,
whereas Type~2 is more involved.

\begin{proposition}
\label{prop:acancompstruct-type1}
Let $X = X(A,P,\Phi)$ be of Type~1 such that $-\mathcal{K}_X$ 
is ample.
Let~$\Sigma$ be the fan of the minimal toric ambient
variety of $X$ and
denote by $\lambda_0,\ldots, \lambda_r$ the leaves 
of $\trop(X)$.
\begin{enumerate}
\item
Every cone $\sigma \in \Sigma$ is contained 
in a leaf $\lambda_i \subseteq \trop(X)$.
In particular, $\Sigma \sqcap \trop(X)$
equals $\Sigma$.
\item 
The boundary of $A_X^c$ is the union of all 
faces of $A_X$ that are contained in 
$\Supp(\Sigma)$.
\item
The non-zero vertices of $A_X^c$ are the primitive 
generators of $\Sigma$, i.e.~the columns of~$P$. 
\end{enumerate}
\end{proposition}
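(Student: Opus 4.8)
The plan is to exploit the fact that for Type~1 the tropical variety $\trop(X)$ degenerates into a particularly rigid shape, which forces the anticanonical complex to inherit the full combinatorial structure of the fan $\Sigma$. Recall that for Type~1 the matrix $P_0$ is block-diagonal with blocks $l_1, \ldots, l_r$ sitting on distinct rows, so the first $n$ columns $v_{ij}$ all have nonnegative entries in their upper $r$ coordinates, and the leaf $\lambda_i = \cone(v_{i1}) + \lin(e_{r+1}, \ldots, e_{r+s})$ is built from the single ray direction shared by an entire column block. First I would establish~(i): since each column $v_{ij}$ has support only in the $i$-th of the upper $r$ rows, the cone $\cone(v_{ij})$ lies in the $i$-th leaf $\lambda_i$, and because the last $s$ rows supply the lineality directions $e_{r+1}, \ldots, e_{r+s}$, any cone $\sigma \in \Sigma$ generated by columns from a \emph{single} block lands inside one leaf. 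The point to check is that no cone of $\Sigma$ can mix columns from different blocks; this follows from the description of $\Sigma$ in Construction~\ref{constr:minamb} via relevant faces $\delta_0$, since $P(\delta_0)$ must meet $\trop(X)$ in its relative interior, and the relative interior of $\trop(X)$ away from the lineality part lies in a single leaf. Consequently $\Sigma \sqcap \trop(X) = \Sigma$ already, so the refinement by $\trop(X)$ in the definition of $A_X^c$ is vacuous.

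Next I would use~(i) to prove~(ii). Since $\Sigma \sqcap \trop(X) = \Sigma$, the anticanonical complex reduces to $A_X^c = {\rm faces}(A_X) \sqcap \Sigma$, i.e.\ the common refinement of the face complex of the anticanonical polyhedron $A_X$ with the fan $\Sigma$. By Theorem~\ref{theo::singComplex}~(i), the origin lies in the relative interior of $A_X^c$ and all primitive generators of $\Sigma$ are vertices; the boundary $\partial A_X^c$ is then exactly the part of $A_X^c$ lying over $\Supp(\Sigma)$ but away from the origin. Because refining by $\Sigma$ only cuts the polyhedron $A_X$ along the walls of the fan, a point of $\partial A_X^c$ is precisely a point lying on a face of $A_X$ that is simultaneously contained in some cone of $\Sigma$, i.e.\ in $\Supp(\Sigma)$. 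I would phrase this as: $\partial A_X^c = \partial A_X \cap \Supp(\Sigma)$, and then identify $\partial A_X \cap \Supp(\Sigma)$ with the union of the faces of $A_X$ contained in $\Supp(\Sigma)$, using that $A_X$ is the dual polyhedron of $B_X$ and hence its proper faces are cut out by the supporting hyperplanes coming from the rays of $\Sigma$.

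Finally, part~(iii) should follow by combining~(ii) with Theorem~\ref{theo::singComplex}~(i). The primitive generators of $\Sigma$, which are the columns of $P$, are vertices of $A_X^c$ by that theorem; what remains is to rule out any \emph{other} nonzero vertices. A nonzero vertex of $A_X^c$ arises either as a vertex of $A_X$ itself or as an intersection of a face of $A_X$ with a lower-dimensional cone of $\Sigma$. Using the explicit form of $A_X$ for Type~1 — where $\mathcal{K}_X = \mathcal{K}_Z$ so the Minkowski summand $B$ is trivial and $A_X$ is governed purely by the toric anticanonical data — I would argue that each vertex of $A_X^c$ off the origin is the leaving point where a ray $\cone(v_{ij})$ of $\Sigma$ exits $A_X$, and that this leaving point is the primitive generator $v_{ij}$ itself. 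The main obstacle I anticipate is precisely this last normalization: showing that the vertices produced by refining with $\Sigma$ coincide with the primitive generators rather than some proper multiple of them. I expect this to come out of the Type~1 structure, where the $\QQ$-Gorenstein/ample hypothesis on $-\mathcal{K}_X$ pins down the supporting hyperplanes of $A_X$ so that each column $v_{ij}$ lies exactly on the relative boundary, giving leaving point equal to $v_{ij}$ and confirming that these are the only nonzero vertices.
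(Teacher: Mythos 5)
The paper itself prints no proof of this proposition, so the right benchmark is its proof of the Type~2 analogue, Proposition~\ref{prop:acancompstruct}. Your part~(i) is essentially correct: a cone $P(\delta_0)$ using columns from two different blocks has, in its relative interior, points with at least two positive coordinates among the first $r$, whereas every point of $\trop(X)=\lambda_1\cup\dots\cup\lambda_r$ has at most one, so such a $\delta_0$ is not relevant. You should replace the garbled phrase ``the relative interior of $\trop(X)$ away from the lineality part lies in a single leaf'' by this coordinate argument, and add the one-line check that for $\sigma\subseteq\lambda_i$ one has $\sigma\cap\lambda_j=\sigma\cap\{x_i=0\}$, a face of $\sigma$, so that the tropical refinement really is trivial.

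For (ii) and (iii) there are genuine gaps. The passage from $\partial A_X^c=\partial A_X\cap\Supp(\Sigma)$ to ``union of the faces of $A_X$ contained in $\Supp(\Sigma)$'' is precisely where ampleness enters, and you never use it: as in the proof of Proposition~\ref{prop:acancompstruct}~(ii), ampleness of $-\mathcal{K}_X$ forces $\Sigma=\Sigma\sqcap\trop(X)$ to be a subfan of the normal fan of $B_X$. Only with this in hand does one get, for $v\in\partial A_X\cap\sigma$ with $\sigma$ the normal cone of a face $G$ of $B_X$, that the minimal face of $A_X$ through $v$ (dual to $G_v=\{u\in B_X:\bangle{u,v}=-1\}\supseteq G$) lies inside $N(G_v)\subseteq\sigma\subseteq\Supp(\Sigma)$; your appeal to ``supporting hyperplanes coming from the rays of $\Sigma$'' is not the right mechanism, since the supporting hyperplanes of $A_X$ come from the vertices of $B_X$. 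In (iii), your claim that for Type~1 ``the Minkowski summand $B$ is trivial'' is false: $\mathcal{K}_X=\mathcal{K}_Z$ holds because each $g_i$ contains a constant term and hence has degree zero, but the Newton polytopes $B(g_i)$ are nontrivial, so $B=B(g_1)+\dots+B(g_{r-1})\neq\{0\}$ whenever $r\geq 2$, and $A_X$ is not governed by toric data alone. The correct route is: Theorem~\ref{theo::singComplex}~(i) already places every column of $P$ on $\partial A_X^c$; together with the subfan property this gives $\partial A_X\cap\sigma=\{v\in\sigma:\bangle{u_\sigma,v}=-1\}$ for a single $u_\sigma\in B_X$ taking the value $-1$ on all primitive generators of $\sigma$, so the vertices of this slice are exactly those generators; and since by~(i) the tropical refinement creates no new rays (in contrast to Type~2, where the rays $\varrho_\tau$ produce the extra vertices $v_\tau'$), no further non-zero vertices can occur.
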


\begin{corollary}
\label{cor:sing-type1}
Let $X = X(A,P,\Phi)$ be a $T$-variety of Type~1.
Then $X$ has at most log-terminal singularities.
Moreover, it has at most canonical (terminal)
singularities if and only if its minimal toric 
ambient variety $Z$ does so.
\end{corollary}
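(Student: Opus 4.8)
The plan is to reduce everything to the toric ambient variety $Z$ by showing that, for Type~1, the anticanonical complex of $X$ literally coincides with the toric anticanonical complex of $Z$, and then to read off the three assertions from the criteria of Theorem~\ref{theo::singComplex} together with the classical toric discrepancy formula.

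First I would dispose of the ampleness hypothesis of Theorem~\ref{theo::singComplex}, which is not assumed in the corollary. Log terminality, canonicity and terminality are local properties, so it suffices to verify them on the $T$-invariant affine charts of $X$, each of which is again a Type~1 variety $X(A,P,\Phi')$ for a sub-bunch $\Phi'$ corresponding to a single maximal cone of $\Sigma$ and is therefore affine. On such an affine $X$, Proposition~\ref{prop:affchar} tells us that $Z$ is affine with $\Sigma$ the face fan of $\sigma = \cone(\text{columns of } P)$; moreover $-\mathcal{K}_X$ is $\QQ$-Cartier (as $X$ is $\QQ$-Gorenstein) and ample, since on an affine variety ampleness is automatic for any $\QQ$-Cartier divisor. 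Thus Theorem~\ref{theo::singComplex} applies on every chart.

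The heart of the argument is the identification $A_X^c = A_Z^c$, where $A_Z^c$ denotes the standard toric anticanonical complex of $Z$, i.e.\ the polyhedral complex whose cell over a cone $\sigma \in \Sigma$ is $\{u \in \sigma : \langle m_\sigma, u\rangle \le 1\}$, with $m_\sigma$ the anticanonical exponent determined by $\langle m_\sigma, v_\varrho\rangle = 1$ on the rays $\varrho$ of $\sigma$. For Type~1 this identification is exactly what Proposition~\ref{prop:acancompstruct-type1} supplies: part~(i) shows that the tropical refinement is trivial, so $A_X^c = {\rm faces}(A_X) \sqcap \Sigma$; parts~(ii) and~(iii), together with Theorem~\ref{theo::singComplex}(i), show that over each cone the complex is the convex hull of the origin and the ray generators, cut out by a supporting hyperplane of $A_X$. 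Since $\mathcal{K}_X = \mathcal{K}_Z$ for Type~1 (recalled in Section~\ref{sec:acancomp}), these supporting hyperplanes are precisely the $\{m_\sigma = 1\}$, whence $A_X^c = A_Z^c$.

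With this identification the three claims follow at once. Each cell of $A_Z^c$ is bounded because $m_\sigma$ is strictly positive on $\sigma \setminus \{0\}$ (the cones of $\Sigma$ are pointed), so $A_X^c = A_Z^c$ is bounded and Theorem~\ref{theo::singComplex}(ii) gives that $X$ is always log terminal. For the remaining equivalences I would compare lattice points: by Theorem~\ref{theo::singComplex}(iii),(iv), $X$ is canonical (resp.\ terminal) iff the origin is the only lattice point in the relative interior of $A_X^c$ (resp.\ the origin and the ray generators are the only lattice points of $A_X^c$), while by the classical toric discrepancy criterion (cf.\ \cite{CLS}) the identical lattice-point conditions on $A_Z^c$ characterize $Z$ being canonical (resp.\ terminal). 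As $A_X^c = A_Z^c$, the conditions coincide. The main obstacle is the bookkeeping behind $A_X^c = A_Z^c$ — matching the supporting hyperplanes of $A_X$ with the toric anticanonical hyperplanes $\{m_\sigma = 1\}$ and confirming that the lattice structures agree — but Proposition~\ref{prop:acancompstruct-type1} is tailored precisely to deliver this.
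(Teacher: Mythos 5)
Your argument is correct and is exactly the route the paper intends: the corollary is stated without an explicit proof, as a direct consequence of Proposition~\ref{prop:acancompstruct-type1}, Theorem~\ref{theo::singComplex} and the observation that $\mathcal{K}_X=\mathcal{K}_Z$ for Type~1, which is precisely how you argue via the identification of $A_X^c$ with the toric anticanonical complex of $Z$. Your additional step of reducing to the $T$-invariant affine charts to secure the ampleness hypothesis of Theorem~\ref{theo::singComplex} addresses a detail the paper silently elides, and is sound since singularity types are local and any $\QQ$-Cartier divisor on an affine variety is ample.
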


\begin{construction}
Let $X = X(A,P,\Phi)$ be of Type~2 and 
$\Sigma$ the fan of the minimal toric ambient
variety of $Z$.
Write $v_{ij} := P(e_{ij})$ and $v_k := P(e_k)$ 
for the columns of~$P$.
Consider a pointed cone of the form
$$ 
\tau 
\ = \ 
\cone(v_{0j_0}, \ldots, v_{rj_r})
\ \subseteq \ 
\QQ^{r+s},
$$
that means that $\tau$ contains exactly one 
$v_{ij}$ for every $i = 0,\ldots,r$.
We call such $\tau$ a \emph{$P$-elementary cone}
and associate the following numbers with $\tau$:
$$
\ell_{\tau,i} 
\ := \ 
\frac{l_{0j_0} \cdots l_{rj_r}}{l_{ij_i}}
\text{ for } i = 0, \ldots, r,
\qquad
\ell_{\tau}
\ := \ 
(1-r) l_{0j_0} \cdots l_{rj_r} + \sum_{i=0}^r \ell_{\tau, i}.
$$
Moreover, we set 
$$ 
v(\tau) 
\ := \ 
\ell_{\tau,0} v_{0j_0} + \ldots +  \ell_{\tau,r} v_{rj_r}
\ \in \ 
\ZZ^{r+s},
\qquad
\varrho(\tau) 
\ := \ 
\QQ_{\ge 0} \cdot v(\tau)
\ \in \ 
\QQ^{r+s}.
$$
We denote by $\Tau(A,P,\Phi)$ the set of all 
$P$-elementary cones $\tau \in \Sigma$.
For a given $\sigma \in \Sigma$, we denote by 
$\Tau(\sigma)$ the set of all $P$-elementary faces
of~$\sigma$.
\end{construction}

\begin{remark}
Let $X = X(A,P,\Phi)$ be of Type~2.
Let $\Sigma$ be the fan of the minimal toric ambient
variety of $X$
and $\lambda_0, \ldots, \lambda_r \subseteq \trop(X)$
the leaves of the tropical variety of $X$.
As in~\cite[Def.~4.1]{BeHaHuNi}, we say that
\begin{enumerate}
\item
a cone $\sigma \in \Sigma$ is a 
\emph{leaf cone} if $\sigma \subseteq \lambda_i$ 
holds for some $i = 0, \ldots, r$,
\item
a cone $\sigma \in \Sigma$ is called \emph{big} 
if $\sigma \cap \lambda_i^\circ \ne \emptyset$ 
holds for all $i = 0, \ldots, r$.
\end{enumerate}
Observe that a given cone $\sigma \in \Sigma$ is big if and only 
if $\sigma$ contains some $P$-elementary cone as a subset.
\end{remark}

\begin{proposition}
\label{prop:acancompstruct}
Let $X = X(A,P,\Phi)$ be of Type~2 such that $-\mathcal{K}_X$ 
is ample.
Let~$\Sigma$ be the fan of the minimal toric ambient
variety of $X$,
denote by $\lambda_0,\ldots, \lambda_r$ the leaves 
of $\trop(X)$ and by 
$\lambda = \lambda_0 \cap \ldots \cap \lambda_r$
its lineality part.
\begin{enumerate}
\item
The fan $\Sigma \sqcap \trop(X)$ consists of 
the cones $\sigma \cap \lambda$ and 
$\sigma \cap \lambda_i$, where $\sigma \in \Sigma$
and $i = 0,\ldots,r$.
Here, one always has
$\sigma \cap \lambda \preceq \sigma \cap \lambda_i$.
\item 
The fan $\Sigma \sqcap \trop(X)$ is a subfan of the 
normal fan of the polyhedron $B_X$.
In particular, for every cone $\sigma \cap \lambda_i$, 
there is a vertex $u_{\sigma,i} \in B_X$ with 
$$ 
\partial A_X^c \cap \sigma \cap \lambda_i
\ = \ 
\{v \in \sigma \cap \lambda_i; \; \bangle{u_{\sigma,i},v} = -1\}.
$$
\item
If a $P$-elementary cone $\tau$ is contained in some 
$\sigma \in \Sigma$, 
then $\tau$ is simplicial, $v(\tau) \in \tau^\circ$ holds,
$\varrho(\tau)$ is a ray,
$\varrho(\tau) = \tau \cap \lambda$ holds
as well as $\QQ \varrho(\tau) = \QQ\tau \cap \lambda$.
\item
Let $\sigma \in \Sigma$ be any cone.
Then, for every $i = 0, \ldots, r$, 
the set of extremal rays of
$\sigma \cap \lambda_i \in \Sigma \sqcap \trop(X)$
is given by
$$ 
\qquad\qquad
(\sigma \cap \lambda_i)^{(1)}
\ = \
\{\varrho(\sigma_0); \ \sigma_0 \in \Tau(\sigma) \}
\cup 
\{\varrho \in \sigma^{(1)}; \ \varrho \subseteq \lambda_i\}.
$$
\item
The set of rays of $\Sigma \sqcap \trop(X)$ consists of the 
rays of $\Sigma$ and the rays $\varrho(\sigma_0)$, 
where $\sigma_0 \in \Tau(A,P,\Phi)$. 
\item 
If a $P$-elementary cone $\tau$ is contained in some 
$\sigma \in \Sigma$, then the minimum value 
among all $\bangle{u,v(\tau)}$, where $u \in B_X$, equals 
$-\ell_\tau$.
\item
Let the $P$-elementary cone $\tau$ be contained in 
$\sigma \in \Sigma$.
Then $\varrho(\tau) \not\subseteq A_X^c$ holds 
if and only if $\ell_\tau > 0$ holds; 
in this case, $\varrho(\tau)$ leaves $A_X^c$ at
$v(\tau)' = \ell_{\tau}^{-1} v(\tau)$.
\item
The vertices of $A_X^c$ are the primitive generators of 
$\Sigma$, i.e.~the columns of~$P$, and the points
$v(\sigma_0)' = \ell_{\sigma_0}^{-1} v(\sigma_0)$, where 
$\sigma_0 \in \Tau(A,P,\Phi)$ and $\ell_{\sigma_0} > 0$. 
\end{enumerate}
\end{proposition}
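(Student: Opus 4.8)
The plan is to combine the structural results established in the earlier parts of this very proposition, which already do almost all the work. By Theorem~\ref{theo::singComplex}~(i), all primitive generators of $\Sigma$ — that is, the columns of $P$ — are vertices of $A_X^c$, so these certainly appear in the claimed vertex list; what remains is to identify the \emph{remaining} vertices and to show they are exactly the leaving points $v(\sigma_0)' = \ell_{\sigma_0}^{-1} v(\sigma_0)$ coming from the $P$-elementary cones $\sigma_0 \in \Tau(A,P,\Phi)$ with $\ell_{\sigma_0} > 0$. First I would recall from part~(v) that the ray set of the refined fan $\Sigma \sqcap \trop(X)$ consists of the rays of $\Sigma$ together with the rays $\varrho(\sigma_0)$ for $\sigma_0 \in \Tau(A,P,\Phi)$. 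Since $A_X^c$ is the refinement ${\rm faces}(A_X) \sqcap \Sigma \sqcap \trop(X)$, every vertex of $A_X^c$ lies on one of these rays, so it suffices to locate, on each such ray, the point where the relative boundary $\partial A_X^c$ is met.

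Next I would treat the two kinds of rays separately. For a ray $\varrho \in \Sigma^{(1)}$, its primitive generator $v$ is a column of $P$ and hence a vertex of $A_X^c$ by part~(i); one must still check that no further vertex of $A_X^c$ lies in the relative interior of $\varrho$, which follows because on a leaf $\lambda_i$ the boundary $\partial A_X^c \cap \sigma \cap \lambda_i$ is cut out by a single linear equation $\langle u_{\sigma,i}, v\rangle = -1$ by part~(ii), so $\partial A_X^c$ meets each such ray in a single point, namely~$v$ itself. For the new rays $\varrho_{\sigma_0}$ arising from $P$-elementary cones, parts~(iii) and~(vii) are decisive: part~(iii) tells us $\varrho_{\sigma_0} = \sigma_0 \cap \lambda$ is genuinely a ray with $v_{\sigma_0} \in \sigma_0^\circ$, and part~(vii) states that when $\ell_{\sigma_0} > 0$ the ray leaves $A_X^c$ precisely at $v(\sigma_0)' = \ell_{\sigma_0}^{-1} v(\sigma_0)$. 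Thus this leaving point is the unique boundary point of $A_X^c$ on $\varrho_{\sigma_0}$, making it a vertex.

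The one thing I would argue with care is that these leaving points are honestly \emph{vertices} (zero-dimensional faces) of $A_X^c$ and not merely relative-interior points of some higher-dimensional face. Here I would use part~(iii) again: since $\varrho_{\sigma_0} = \sigma_0 \cap \lambda$ and $\QQ\varrho_{\sigma_0} = \QQ\sigma_0 \cap \lambda$, the ray $\varrho_{\sigma_0}$ is the intersection of the lineality part $\lambda$ with the cone $\sigma_0$, so it sits at the common boundary of the leaves $\lambda_0, \ldots, \lambda_r$; the refinement with $\trop(X)$ forces a vertex of $A_X^c$ exactly where several of the cutting hyperplanes $\langle u_{\sigma,i}, \cdot\rangle = -1$ from part~(ii) meet. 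Conversely, when $\ell_{\sigma_0} \le 0$ part~(vii) gives $\varrho_{\sigma_0} \subseteq A_X^c$, so the ray contributes no boundary vertex and is correctly excluded from the list. I expect the main obstacle to be the bookkeeping that establishes that every vertex of $A_X^c$ is captured in exactly one of these two cases and that no spurious vertices are introduced by the triple refinement with ${\rm faces}(A_X)$; this is where I would lean most heavily on part~(ii), which identifies $\Sigma \sqcap \trop(X)$ as a subfan of the normal fan of $B_X$ and thereby pins down the supporting hyperplanes of $\partial A_X^c$ leaf by leaf.
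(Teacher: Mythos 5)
Your proposal addresses only assertion~(viii) of the proposition, taking assertions~(i)--(vii) as already established; but those seven assertions are part of the statement to be proven, and they carry essentially all of the mathematical content of the paper's argument. In particular, nothing in your plan touches the two genuinely nontrivial steps: the verification of~(iv), which rests on the claim that a big cone $\sigma_0 \in \Sigma$ in which any two $P$-elementary subcones $\tau, \tau'$ satisfy $\varrho_\tau = \varrho_{\tau'}$ must itself be $P$-elementary (proved by comparing the primitive generators $c_\tau^{-1} v_\tau$ and $c_{\tau'}^{-1} v_{\tau'}$ and contradicting extremality of $v_{tj_t'}$ in $\sigma_0$), followed by a case analysis on the minimal face $\sigma_\varrho \preceq \sigma$ with $\varrho = \sigma_\varrho \cap \lambda$ or $\varrho = \sigma_\varrho \cap \lambda_i$; and the computation in~(vi) that the minimum of $\bangle{u,v_\tau}$ over $u \in B_X$ equals $-\ell_\tau$, which requires writing a minimizing vertex as $u = (P^*)^{-1}(e_\sigma + e_B - e_Z)$ and evaluating $\bangle{e_\sigma, e_\tau} = 0$ and $\bangle{e, e_\tau} = (r-1) l_{0j_0} \cdots l_{rj_r}$ for $e \in B$. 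Without these, what you have is a proof of one clause conditional on the other seven.

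For the clause you do treat, the reasoning is essentially correct and close to the paper, which dispatches~(viii) as a direct consequence of~(vi) and~(ii): on each cone $\sigma \cap \lambda_i$ the boundary $\partial A_X^c$ is a single hyperplane slice by~(ii), its vertices sit on the extremal rays listed in~(iv), and the slice points on those rays are the columns of~$P$ and the leaving points $v(\sigma_0)'$ by~(vii). Two smaller remarks. First, you import the fact that the columns of $P$ are vertices from Theorem~\ref{theo::singComplex}~(i), whose proof the paper defers elsewhere; it is cleaner, and closer to the paper's intent, to read this off from~(ii) directly. Second, your justification that the leaving points are honest zero-dimensional faces is somewhat vague ("the refinement forces a vertex where several cutting hyperplanes meet"); the clean argument is via~(iv): $\varrho(\sigma_0)$ is an extremal ray of each cone $\sigma \cap \lambda_i$ containing it, so the point where the supporting hyperplane $\bangle{u_{\sigma,i}, \cdot} = -1$ meets that ray is a vertex of the polytope $\partial A_X^c \cap \sigma \cap \lambda_i$ and hence of the complex.
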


\begin{proof}
Assertion~(i) holds more generally.
Indeed, the coarsest common refinement 
$\Sigma_1 \sqcap \Sigma_2$
of any two quasifans~$\Sigma_i$ in a common vector space 
consists of the intersections $\sigma_1 \cap \sigma_2$, 
where $\sigma_i \in \Sigma_i$.
Moreover, the faces of a given cone
$\sigma_1 \cap \sigma_2$ of $\Sigma_1 \sqcap \Sigma_2$
are precisely the cones $\sigma_1' \cap \sigma_2'$,
where $\sigma_i' \preceq \sigma_i$.

We show~(ii). Let $\Sigma'$ be the complete fan in 
$\QQ^{r+s}$ defined by the class $-\mathcal{K}_X \in K$.
Since $-\mathcal{K}_X$ is ample, the fan $\Sigma$ 
is a subfan of $\Sigma'$.
The preimage $P^{-1}(\Sigma')$ consists of
the cones $P^{-1}(\sigma')$, where 
$\sigma' \in \Sigma'$, and is the normal fan of 
$B(-\mathcal{K}_X) \subseteq \QQ^{n+m}$.
Moreover, $P^{-1}(\trop(X))$ turns out to be a 
subfan of the normal fan of $B \subseteq \QQ^{n+m}$.
It follows that $P^{-1}(\Sigma') \sqcap P^{-1}(\trop(X))$ 
is a subfan of the normal fan of 
$B(-\mathcal{K}_X) + B$.
Projecting the involved fans via $P$ to $\QQ^{r+s}$ 
gives the assertion. 

To obtain~(iii), consider first any $P$-elementary 
$\tau = \cone(v_{0j_0}, \ldots, v_{rj_r})$.
Then $v_{0j_0}, \ldots, v_{rj_r}$ is linearly 
dependent if and only if $v(\tau) = 0$ holds.
The latter is equivalent to $0$ being an inner point 
of $\tau$. 
Thus, if $\tau$ is contained in some 
$\sigma \in \Sigma$, then $\tau$ is 
pointed an thus must be simplicial.
The remaining part is then obvious; recall 
that the lineality part of $\trop(X)$ equals the vector 
subspace $0 \times \QQ^s \subseteq \QQ^{r+s}$.

We turn to~(iv). 
First, we claim that if $\sigma_0 \in \Sigma$ 
is big and $\varrho(\tau) = \varrho(\tau')$ 
holds for any two $P$-elementary cones
$\tau, \tau' \subseteq \sigma$,
then $\sigma_0$ is $P$-elementary.
Assume that $\sigma_0$ is not $P$-elementary.
Then we find some $1 \le t \le r$ and cones
$$
\tau 
\ = \ 
\cone(v_{0j_0}, \ldots, v_{tj_{t-1}}, 
v_{tj_t},  
v_{tj_{t+1}}, \ldots , v_{rj_r})
\ \subseteq \ 
\sigma_0,
$$
$$
\tau' 
\ = \ 
\cone(v_{0j_0}, \ldots, v_{tj_{t-1}}, 
v_{tj'_t},  
v_{tj_{t+1}}, \ldots , v_{rj_r})
\ \subseteq \ 
\sigma_0
$$
with $j_t \ne j'_t$ and thus $\tau \neq \tau'$.
Here, we may assume that 
$c_{\tau}^{-1}l_{tj_t} \ge c_{\tau'}^{-1}l_{tj_t'}$ 
holds with the greatest common divisors 
$c_\tau$ and $c_{\tau'}$ of the entries of
$v(\tau)$ and $v(\tau')$ respectively.
Then even 
$c_{\tau}^{-1}\ell_{\tau,i} \ge c_{\tau'}^{-1}\ell_{\tau',i}$ 
must hold for all $1 \le i \le r$.
Since, the rays $\varrho(\tau)$ and $\varrho(\tau')$ 
coincide, also their primitive generators 
$c_{\tau'}^{-1} v(\tau')$ and $c_{\tau}^{-1} v(\tau)$
coincide.
By the definition of $v(\tau)$ and $v(\tau')$, this 
implies
$$
c_{\tau'}^{-1}\ell_{\tau',t} v_{tj_t'}
\ = \ 
c_{\tau}^{-1}\ell_{\tau,k} v_{tj_t}
+
\sum_{i \ne t}
(c_{\tau}^{-1}\ell_{\tau,i} - c_{\tau'}^{-1}\ell_{\tau',i}) v_{ij_i}.
$$
We conclude $v_{tj_t'} \in \tau$. Since $v_{tj_t'}$ 
is an extremal ray of $\sigma_0$ and 
$\tau' \subseteq \sigma_0$ holds, 
$v_{tj_t'}$ generates an extremal ray of $\tau$.
This contradicts to the choice of $j_t'$
and the claim is verified.

Now, consider the equation of~(iv).
To verify ``$\subseteq$'', let $\varrho$ be an 
extremal ray of $\sigma \cap \lambda_i$.
We have to show that $\varrho = \varrho(\sigma_0)$
holds for some $\sigma_0 \in \Tau(\sigma)$ 
or that $\varrho$ is a ray of $\sigma$ with
$\varrho \subseteq \lambda_i$.
According to~(ii), there is a face $\sigma_\varrho \preceq \sigma$ 
such that $\varrho = \sigma_\varrho \cap \lambda$
or $\varrho = \sigma_\varrho \cap \lambda_i$ holds.
We choose $\sigma_\varrho$ minimal with respect to this
property, that means that we have
$\varrho^{\circ} \subseteq \sigma_\varrho^{\circ}$.
We distinguish the following cases.

\medskip

\noindent
\emph{Case 1.} We have $\varrho = \sigma_\varrho \cap \lambda$.
If $\sigma_\varrho \subseteq \lambda$ holds, then we
obtain $\varrho = \sigma_\varrho$ and thus 
$\varrho \subseteq \lambda_i$ is an extremal ray 
of $\sigma$.
So, assume that $\sigma_\varrho$ is not contained in $\lambda$.
Then, because of $\sigma_\varrho^{\circ} \cap \lambda \ne \emptyset$,
there is a $P$-elementary cone $\tau \subseteq \sigma_\varrho$. 
Using~(i), we obtain
$$ 
\varrho(\tau) 
\ = \ 
\tau \cap \lambda 
\ \subseteq \
\sigma_\varrho \cap \lambda
\ = \ 
\varrho
$$
and thus $\varrho = \varrho(\tau)$. 
As this does not depend on the particular choice of 
the $P$-elementary cone $\tau \subseteq \sigma_\varrho$, 
the above claim yields 
$\sigma_0 := \sigma_\varrho \in \Tau(\sigma)$
and $\varrho = \varrho(\sigma_0)$.

\medskip

\noindent
\emph{Case 2.}  
We don't have $\varrho = \sigma_\varrho \cap \lambda$.
Then $\varrho = \sigma_\varrho \cap \lambda_i$ and 
$\varrho^{\circ} \subseteq \lambda_i^{\circ}$ hold.
If $\sigma_\varrho \subseteq \lambda_i$ holds, then we
obtain $\varrho = \sigma_\varrho$ and thus 
$\varrho \subseteq \lambda_i$ is an extremal ray 
of $\sigma$.
So, assume that $\sigma_\varrho$ is not contained 
in $\lambda_i$.
Then $\sigma_\varrho \cap \lambda_j^{\circ}$ is 
non-empty for all $j = 0, \ldots, r$.
Thus, there is a $P$-elementary
cone $\tau \subseteq \sigma_\varrho$. 
Using~(i), we obtain
$$ 
\varrho(\tau) 
\ = \ 
\tau \cap \lambda 
\ \subseteq \
\sigma_\varrho \cap \lambda
\ = \ 
\varrho
$$
and thus $\varrho = \varrho(\tau)$. 
As this does not depend on the particular choice of 
the $P$-elementary cone $\tau \subseteq \sigma_\varrho$, 
the above claim yields 
$\sigma_0 := \sigma_\varrho \in \Tau(\sigma)$
and $\varrho = \varrho(\sigma_0)$.

We verify the inclusion ``$\supseteq$''.
Consider a face $\sigma_0 \in \Tau(\sigma)$.
As seen just before, the extremal rays of 
$\sigma_0 \cap \lambda_i$ are $\varrho(\sigma_0)$ 
and the rays of $\sigma_0$ that lie in $\lambda_i$.
Since $\sigma_0 \cap \lambda_i$ is a face of 
$\sigma \cap \lambda_i$, the ray $\varrho(\sigma_0)$ 
is an extremal ray of $\sigma \cap \lambda_i$.
Finally, consider an extremal ray 
$\varrho \preceq \sigma$ with $\varrho \subseteq \lambda_i$.
Then $\varrho = \varrho \cap \lambda_i$ is a face 
of $\sigma \cap \lambda_i$.

The proof of Assertion~(iv) is complete now. Assertion~(v)  
is a direct consequence of~(iv).

We turn to Assertions~(vi), (vii) and~(viii). 
Let $\rq{\tau} \preceq \rq{\sigma} \preceq \QQ_{\ge 0}^{n+m}$
be the faces with $P(\rq{\tau}) = \tau$ 
and $P(\rq{\sigma}) = \sigma$.
Moreover, let $e_\tau \in \rq{\tau}$ be the (unique) 
point with $P(e_\tau) = v(\tau)$. 
The minimum value $\bangle{u,v(\tau)}$ is attained at some 
vertex $u \in B_X$.
For this $u$, we find vertices 
$e_{\sigma} \in B(-\mathcal{K}_X)$ 
and $e_B \in B$ with
$$ 
u \ = \ (P^*)^{-1}(e_{\sigma} + e_B - e_Z).
$$
Here, $e_{\sigma}$ is any vertex 
of $B(-\mathcal{K}_X)$ such that 
$\rq{\sigma}$ is contained in the cone
of the normal fan of $B(-\mathcal{K}_X)$
associated with $e_\sigma$;
such $e_\sigma$ exists due to ampleness 
of $-\mathcal{K}_X$ 
and~$e_{\sigma}$ vanishes along $\rq{\sigma}$.
Together we have 
$$ 
e_\tau 
\ = \ 
\sum_{i = 0}^r l_{i j_i} e_{ij_i},
\qquad\qquad
\bangle{u,v(\tau)}
\ = \ 
\bangle{e_{\sigma} + e_B - e_Z,e_\tau}.
$$
As mentioned, $\bangle{e_{\sigma},e_{\tau}} = 0$
holds.
Moreover, $\bangle{e,e_\tau} = (r-1)l_{0j_0} \cdots l_{rj_r}$
holds for every $e \in B$.
We conclude $\bangle{u,v(\tau)} = - \ell_\tau$
and Assertion~(vi).
Moreover, Assertions~(vii) and~(viii) are direct consequences 
of~(vi) and~(ii).
\end{proof}

\begin{example}
Consider the $E_6$-singular affine surface 
$X = V(z_1^4 + z_2^3 + z_3^2) \subseteq \CC^3$.
It inherits a $\CC^*$-action from the action
$$
t \cdot (z_1,z_2,z_3)
\ = \
(t^{3}z_1,t^{4}z_2,t^{6}z_3)
$$
on $\CC^3$. The divisor class group and the Cox ring 
of the surface $X$ are explicitly given by 
$$ 
\Cl(X) \ = \ \ZZ/3\ZZ,
\qquad\qquad
\mathcal{R}(X) 
\ = \ 
\CC[T_{1},T_{2},T_{3}]/ \bangle{T_{1}^{3}+T_{2}^{3}+T_{3}^{2}},
$$
where the $\Cl(X)$-degrees of $T_1$, $T_2$, and $T_3$ 
are $\bar 1$, $ \bar 2$ and $\bar 0$.
The minimal toric ambient variety is
affine and corresponds to the cone
$$
\sigma 
\ = \ 
\cone((-3,-3,-2),(3,0,1),(0,2,1)).
$$
Denoting by $e_{i} \in \QQ^{3}$ the $i$-th canonical 
basis vector, the tropical variety $\trop(X)$ 
in $\QQ^{3}$ is given as
$$
\trop(X)
\ = \ 
\cone(e_{1},\pm e_{3})
\cup 
\cone(e_{2},\pm e_{3})
\cup 
\cone(-e_{1}-e_{2}, \pm e_{3}).
$$
The anticanonical polyhedron $A_X \subseteq \QQ^3$ is non bounded 
with recession cone generated by $(-1,-1,-1)$, $(1,0,0)$, 
$(0,1,0)$. The vertices  of $A_X$ are
$$
(-3,-3,-2),\
(3,0,1),\
(0,2,1),\
(0,0,1).
$$
The anticanonical complex $A_{X}^{c} = A_{X} \sqcap \Sigma \sqcap \trop(X)$ 
lives inside $\trop(X)$ and looks as follows.%
\begin{center}
\begin{tikzpicture}[scale=0.4]
%Fluegel 0
\draw[thick, draw=black, fill=gray!30!] 
(0,2.1) -- (-3,2.1) -- (-3,-2) -- (0,-2) -- cycle; 
\draw[thick, draw=black, fill=gray!90!] 
(0,-1) -- (-3,0.91) -- (0,1) -- cycle; 
\draw[black, fill=black] (-3,0.92) circle (.7ex);
%Fluegel 2
\draw[thick, draw=black, fill=gray!10!] 
(0,2.1) -- (2.4,3.1) -- (2.4,-1) -- (0,-2) -- cycle; 
\draw[thick, draw=black, fill=gray!40!] 
(0,-1) -- (2.4,0.9) -- (0,1) -- cycle; 
\draw[thick, draw=black] (0,-1) -- (2.4,0.95);
\draw[black, fill=black] (2.4,0.9) circle (.7ex);
%Fluegel 1
\draw[thick, draw=black, fill=gray!20!, fill opacity=0.80] 
(0,2.1) -- (2.65,1.1) -- (2.65,-3) -- (0,-2) -- cycle; 
\draw[thick, draw=black, fill=gray!60!, fill opacity=0.80] 
(0,-1) -- (2.65,-0.2) -- (0,1) -- cycle; 
\draw[black, fill=black] (2.65,-0.2) circle (.7ex);
\draw[black, fill=black] (0,1) circle (.7ex);
\end{tikzpicture}
\end{center}
\end{example}

\begin{corollary}
\label{coro::disc}
Let $X = X(A,P,\Phi)$ be of Type~2 such that 
$-\mathcal{K}_X$ is ample.
Let $\tau$ be a $P$-elementary cone contained 
in some $\sigma \in \Sigma$.
Assume $\varrho(\tau) \not\subseteq A_X^c$ and denote
by $c_{\tau}$ the greatest common divisor of the 
entries of $v(\tau)$. 
Then, for any resolution of singularities 
$\varphi \colon X'' \to X$ provided by~\ref{constr:ressing}, 
the discrepancy along the prime divisor of $X''$ 
corresponding to $\varrho(\tau)$ equals 
$c_{\tau}^{-1}\ell_{\tau}-1$.
\end{corollary}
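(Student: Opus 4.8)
The plan is to assemble Corollary~\ref{coro::disc} directly from the discrepancy formula in Proposition~\ref{lemm::discrepancy} together with the structural information provided by Proposition~\ref{prop:acancompstruct}. The formula in~\ref{lemm::discrepancy} reads $a_\varrho = -1 + \|v_\varrho\|/\|v'_\varrho\|$ whenever $\varrho \not\subseteq A_X^c$, where $v_\varrho$ is the primitive generator of the ray $\varrho$ and $v'_\varrho$ is its leaving point of the anticanonical complex. So the entire task reduces to computing these two quantities for the ray $\varrho = \varrho_\tau$ attached to the $P$-elementary cone $\tau$ and verifying that their ratio produces $c_\tau^{-1}\ell_\tau$.

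\medskip

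\noindent
\emph{First} I would identify the primitive generator of $\varrho_\tau$. By definition $\varrho_\tau = \QQ_{\ge 0}\cdot v_\tau$, and $c_\tau$ is the greatest common divisor of the entries of $v_\tau$; hence the primitive generator is $v_\varrho = c_\tau^{-1} v_\tau$. \emph{Second}, I would locate the leaving point. Proposition~\ref{prop:acancompstruct}~(vii) tells us that, under the hypothesis $\varrho_\tau \not\subseteq A_X^c$ (equivalently $\ell_\tau > 0$), the ray $\varrho_\tau$ leaves $A_X^c$ precisely at $v'_\tau = \ell_\tau^{-1} v_\tau$. Thus both $v_\varrho$ and the leaving point are explicit positive scalar multiples of the single vector $v_\tau$, namely $v_\varrho = c_\tau^{-1} v_\tau$ and $v'_\varrho = \ell_\tau^{-1} v_\tau$.

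\medskip

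\noindent
\emph{Third}, I would substitute into the discrepancy formula. Since $v_\varrho$ and $v'_\varrho$ are parallel with positive coefficients, their norm ratio is just the ratio of the scalars:
$$
\frac{\|v_\varrho\|}{\|v'_\varrho\|}
\ = \
\frac{c_\tau^{-1}\|v_\tau\|}{\ell_\tau^{-1}\|v_\tau\|}
\ = \
c_\tau^{-1}\ell_\tau.
$$
Plugging this into $a_\varrho = -1 + \|v_\varrho\|/\|v'_\varrho\|$ yields $a_{\varrho_\tau} = c_\tau^{-1}\ell_\tau - 1$, which is exactly the claimed value. The one point requiring care is that Proposition~\ref{lemm::discrepancy} is stated for rays $\varrho \in \Sigma''$ of the refined resolution fan, so I must observe that $\varrho_\tau$ genuinely appears among the rays used in the resolution of Construction~\ref{constr:ressing}: by Proposition~\ref{prop:acancompstruct}~(v), the rays of $\Sigma \sqcap \trop(X)$ are the rays of $\Sigma$ together with the rays $\varrho(\sigma_0)$ for $P$-elementary $\sigma_0$, so $\varrho_\tau$ does lie in the common refinement $\Sigma' = \Sigma \sqcap \trop(X)$ and hence survives into any regular subdivision $\Sigma''$.

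\medskip

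\noindent
I do not expect a serious obstacle here: the genuinely nontrivial content has already been absorbed into the earlier propositions. The only thing to double-check is the compatibility of normalizations — that the ``leaving point'' in~\ref{lemm::discrepancy} is measured against the \emph{primitive} generator $v_\varrho$ rather than against $v_\tau$ itself, so that the factor $c_\tau^{-1}$ correctly enters the numerator. Once the two parallel vectors are written with their explicit scalars, the computation is a one-line ratio and the result follows immediately.
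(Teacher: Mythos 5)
Your proposal is correct and follows exactly the route the paper intends: the corollary is stated without a written proof precisely because it is the immediate combination of the discrepancy formula of Proposition~\ref{lemm::discrepancy} with Proposition~\ref{prop:acancompstruct}~(v) and~(vii), which is what you carry out. Your attention to the two normalization scalars $c_\tau^{-1}$ and $\ell_\tau^{-1}$ and to the fact that $\varrho_\tau$ survives into $\Sigma''$ covers the only points where care is needed.
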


\begin{corollary}
\label{cor::sing}
Let $X = X(A,P,\Phi)$ be of Type~2 such that $-\mathcal{K}_X$ is ample
and let $\tau  = \mathrm{cone}(v_{0j_0}, \dots, v_{rj_r})$
be contained in some $\sigma \in \Sigma$.
\begin{enumerate}
\item 
If $X$ has at most log terminal singularities, 
then $l_{0j_0}^{-1} + \ldots + l_{rj_r}^{-1} > r-1$ holds.
\item 
If $X$ has at most canonical singularities, 
then $l_{0j_0}^{-1} + \ldots + l_{rj_r}^{-1} \ge
r-1 + c_\tau l_{0j_0}^{-1}  \cdots  l_{rj_r}^{-1}$ holds.
\item 
If $X$ has at most terminal singularities, 
then $l_{0j_0}^{-1} + \ldots + l_{rj_r}^{-1} >
r-1 + c_\tau l_{0j_0}^{-1}  \cdots  l_{rj_r}^{-1}$ holds.
\end{enumerate}
\end{corollary}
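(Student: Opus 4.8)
The plan is to read off each of the three inequalities directly from the discrepancy formula for the ray $\varrho_\tau$, combining Corollary~\ref{coro::disc} with the characterizations in Theorem~\ref{theo::singComplex}. The central quantity is $\ell_\tau = (1-r)l_{0j_0}\cdots l_{rj_r} + \sum_{i=0}^r \ell_{\tau,i}$, where $\ell_{\tau,i} = l_{0j_0}\cdots l_{rj_r}/l_{ij_i}$. Dividing this by the product $l_{0j_0}\cdots l_{rj_r}$ gives
$$
\frac{\ell_\tau}{l_{0j_0}\cdots l_{rj_r}}
\ = \
(1-r) + \sum_{i=0}^r \frac{1}{l_{ij_i}}
\ = \
l_{0j_0}^{-1} + \ldots + l_{rj_r}^{-1} - (r-1),
$$
so that the sign and size of $\ell_\tau$ translate precisely into statements about $l_{0j_0}^{-1} + \ldots + l_{rj_r}^{-1} - (r-1)$.

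First I would handle~(i). By Theorem~\ref{theo::singComplex}~(ii), log terminality is equivalent to boundedness of $A_X^c$; in particular every $P$-elementary $\tau \subseteq \sigma$ must satisfy that $\varrho_\tau$ leaves $A_X^c$, which by Proposition~\ref{prop:acancompstruct}~(vii) happens if and only if $\ell_\tau > 0$. Strictly speaking one should note that if $\varrho_\tau \subseteq A_X^c$ the complex would be unbounded along that ray, contradicting log terminality, so $\ell_\tau > 0$ is forced. Dividing by the positive product as above yields $l_{0j_0}^{-1} + \ldots + l_{rj_r}^{-1} > r-1$, which is assertion~(i). Alternatively, and perhaps more cleanly, I can invoke Corollary~\ref{coro::disc}: the discrepancy along $\varrho_\tau$ equals $c_\tau^{-1}\ell_\tau - 1$, and log terminality forces this to exceed $-1$, giving $\ell_\tau > 0$ again.

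For~(ii) and~(iii) I would use Corollary~\ref{coro::disc} directly. The discrepancy along the prime divisor corresponding to $\varrho_\tau$ is $c_\tau^{-1}\ell_\tau - 1$. Canonical singularities require this to be $\ge 0$, i.e.\ $\ell_\tau \ge c_\tau$, and terminal singularities require it to be $> 0$, i.e.\ $\ell_\tau > c_\tau$. Dividing each inequality by $l_{0j_0}\cdots l_{rj_r}$ and using the computation above converts $\ell_\tau \ge c_\tau$ into
$$
l_{0j_0}^{-1} + \ldots + l_{rj_r}^{-1}
\ \ge \
r-1 + c_\tau\, l_{0j_0}^{-1}\cdots l_{rj_r}^{-1},
$$
which is~(ii), and the strict version gives~(iii). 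One subtlety to check is that these conditions are being asked only of those $\tau$ contained in some $\sigma \in \Sigma$, which is exactly the hypothesis of Corollary~\ref{coro::disc}, so the formula applies verbatim.

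The main obstacle, if any, is purely bookkeeping: verifying that the arithmetic identity $\ell_\tau / (l_{0j_0}\cdots l_{rj_r}) = \sum_i l_{ij_i}^{-1} - (r-1)$ is exactly right, and that the passage between ``discrepancy $> -1$'' and ``$\ell_\tau > 0$'' is legitimate even when $\varrho_\tau \subseteq A_X^c$ (where the discrepancy is at most $-1$ rather than given by the formula). Handling that boundary case is why the log terminal statement~(i) is cleanest when phrased through boundedness of $A_X^c$ via Theorem~\ref{theo::singComplex}~(ii) together with Proposition~\ref{prop:acancompstruct}~(vii), whereas the canonical and terminal statements follow most transparently from the explicit discrepancy in Corollary~\ref{coro::disc}. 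No deep new idea is needed; the work is entirely in applying the already-established discrepancy computation to each singularity type and simplifying.
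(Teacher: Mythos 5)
Your argument is correct and is exactly the derivation the paper intends (it leaves the corollary unproved as an immediate consequence of Proposition~\ref{prop:acancompstruct} and Corollary~\ref{coro::disc}): the identity $\ell_\tau/(l_{0j_0}\cdots l_{rj_r}) = \sum_i l_{ij_i}^{-1}-(r-1)$ is right, boundedness of $A_X^c$ forces $\ell_\tau>0$ for~(i), and the discrepancy $c_\tau^{-1}\ell_\tau-1$ gives~(ii) and~(iii). Your care about the boundary case $\varrho_\tau\subseteq A_X^c$ is exactly the right point to flag, and it is handled correctly.
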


\begin{remark}
Let $a_0 , \ldots, a_r$ be positive integers.
Then $a_0^{-1} + \ldots + a_r^{-1} > r-1$ holds if and 
only if $(a_0, \ldots,a_r)$ is a platonic tuple.
\end{remark}

\begin{theorem}
\label{thm:logtermchar}
Let $X = X(A,P,\Phi)$ be of Type~2 such that 
$-\mathcal{K}_X$ is ample and let~$\Sigma$ be the 
fan of the minimal toric ambient variety of $X$.
Then the following statements are equivalent.
\begin{enumerate}
\item
The variety $X$ has at most log terminal 
singularities.
\item
For every $P$-elementary 
$\tau = \cone(v_{0j_0}, \ldots, v_{rj_r})$
contained in a cone of $\Sigma$, the 
exponents $l_{0j_0}, \ldots, l_{rj_r}$ 
form a platonic tuple.
\end{enumerate}
\end{theorem}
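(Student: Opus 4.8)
The plan is to run the entire equivalence through the boundedness criterion for the anticanonical complex. First I would record the arithmetic reformulation of the platonic condition. Writing $N := l_{0j_0} \cdots l_{rj_r}$, the definition of $\ell_\tau$ gives
$$
\ell_\tau
\ = \
N \left( \sum_{i=0}^r l_{ij_i}^{-1} - (r-1) \right),
$$
so that $\ell_\tau > 0$ is equivalent to $l_{0j_0}^{-1} + \ldots + l_{rj_r}^{-1} > r-1$, which by the preceding remark is equivalent to $(l_{0j_0}, \ldots, l_{rj_r})$ being a platonic tuple. Thus condition~(ii) says precisely that $\ell_\tau > 0$ holds for every $P$-elementary cone $\tau$ contained in a cone of $\Sigma$, that is, for every $\tau \in \Tau(A,P,\Phi)$. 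Since having at most log terminal singularities is equivalent to boundedness of $A_X^c$ by Theorem~\ref{theo::singComplex}~(ii), the whole statement reduces to showing that $A_X^c$ is bounded if and only if $\ell_\tau > 0$ for all $\tau \in \Tau(A,P,\Phi)$.

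For the implication (i)$\Rightarrow$(ii) I would simply invoke Corollary~\ref{cor::sing}~(i): log terminality forces $l_{0j_0}^{-1} + \ldots + l_{rj_r}^{-1} > r-1$ for each $P$-elementary $\tau \subseteq \sigma \in \Sigma$, and by the reformulation above this is exactly the assertion that each such tuple is platonic. This direction is essentially a restatement of what is already established.

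The substance lies in (ii)$\Rightarrow$(i), i.e.\ in deducing boundedness of $A_X^c$ from positivity of all the $\ell_\tau$, and here I would argue cell by cell. By Proposition~\ref{prop:acancompstruct}~(i), the maximal cones of $\Sigma \sqcap \trop(X)$ are the cones $\sigma \cap \lambda_i$ with $\sigma \in \Sigma$ and $0 \le i \le r$, so $A_X^c$ is bounded exactly when every piece $A_X^c \cap \sigma \cap \lambda_i$ is a polytope, equivalently when every extremal ray of $\sigma \cap \lambda_i$ meets $\partial A_X^c$ in a finite point. The extremal rays are described by Proposition~\ref{prop:acancompstruct}~(iv): they are the rays $\varrho(\sigma_0)$ with $\sigma_0 \in \Tau(\sigma)$ together with the rays $\varrho \in \sigma^{(1)}$ satisfying $\varrho \subseteq \lambda_i$. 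For the latter, the primitive generator is a column of $P$, which by Theorem~\ref{theo::singComplex}~(i) is a vertex of $A_X^c$; since $A_X^c$ contains the origin in its relative interior, the ray leaves $A_X^c$ precisely at this vertex, hence at a finite point. For the former, Proposition~\ref{prop:acancompstruct}~(vii) tells us that $\varrho(\sigma_0)$ leaves $A_X^c$, at the finite point $\ell_{\sigma_0}^{-1} v_{\sigma_0}$, if and only if $\ell_{\sigma_0} > 0$, which is granted by~(ii). Thus every extremal ray of every maximal cone leaves $A_X^c$ at a finite point, every cell is bounded, and hence $A_X^c$ is bounded.

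I expect the main obstacle to be precisely this boundedness argument: one must rule out \emph{every} recession direction of $A_X^c$, not merely exhibit finitely many vertices via Proposition~\ref{prop:acancompstruct}~(viii). The key conceptual point is that the ``toric'' extremal rays, namely the columns of $P$ lying in a given leaf, are automatically harmless because they are vertices of $A_X^c$, so that the only directions in which $A_X^c$ could conceivably be unbounded are the tropical rays $\varrho_\tau$ attached to $P$-elementary cones; these are governed one-to-one by the sign of $\ell_\tau$, and positivity of $\ell_\tau$ is exactly the platonic condition. Along the way I would also check the harmless identification that a $P$-elementary cone contained in some $\sigma \in \Sigma$ is in fact a face of $\sigma$, its generators being columns of $P$ and hence extremal rays of $\sigma$, so that ``contained in a cone of $\Sigma$'' coincides with membership in $\Tau(A,P,\Phi)$.
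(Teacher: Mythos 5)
Your proposal is correct and follows essentially the same route as the paper: (i)$\Rightarrow$(ii) via Corollary~\ref{cor::sing}~(i), and (ii)$\Rightarrow$(i) by translating the platonic condition into $\ell_\tau>0$, deducing boundedness of $A_X^c$ from Proposition~\ref{prop:acancompstruct}, and concluding with Theorem~\ref{theo::singComplex}~(ii). The only difference is that you spell out the cell-by-cell boundedness argument that the paper leaves implicit, which is a welcome (and accurate) elaboration rather than a deviation.
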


\begin{proof}
Assume that $X = X(A,P,\Phi)$ is log terminal.
Then Corollary~\ref{cor::sing}~(i) tells us 
that for every $P$-elementary 
$\tau = \cone(v_{0j_0}, \ldots, v_{rj_r})$ 
contained in a cone of $\Sigma$, the 
corresponding exponents $l_{0j_0}, \ldots, l_{rj_r}$
form a platonic tuple.

Now assume that~(ii) holds.
Then every $(l_{0j_0}, \ldots, l_{rj_r})$ is a 
platonic tuple.
Consequently, we have $\ell_\tau > 0$ for 
every $P$-elementary cone $\tau$.
Proposition~\ref{prop:acancompstruct} 
shows that $A_X^c$ is bounded for 
$X = X(A,P,\Phi)$.
Theorem~\ref{theo::singComplex}~(ii) tells 
us that $X$ is log terminal. 
\end{proof}

\begin{remark}
\label{rem:affcase}
Let $X = X(A,P,\Phi)$ be affine of Type~2
such that $\mathcal{K}_X$ is $\QQ$-Cartier.
Then $-\mathcal{K}_X$ is ample.
The fan $\Sigma$ of the minimal toric ambient 
variety $Z$ of $X$ consists of all 
the faces of the cone $\sigma$ generated 
by the columns of $P$.  
In particular, every $P$-elementary cone
is contained in $\sigma$.
Thus, Theorem~\ref{thm:affltcharintro}
follows from Theorem~\ref{thm:logtermchar}.
Moreover, the rays $\varrho(\sigma_0)$, where 
$\sigma_0 \in \Tau(A,P,\Phi)$, are precisely the 
extremal rays of the intersection of $\sigma$
and the lineality part of $\trop(X)$.
\end{remark}

\section{Gorenstein index and canonical multiplicity}
\label{sec:gorenstein}

If a normal variety $X$ is $\QQ$-Gorenstein, then,
by definition, some multiple of its canonical class 
$\mathcal{K}_X$ is Cartier. 
The \emph{Gorenstein index} of $X$ is the smallest positive
integer $\imath_X$ such that $\imath_X \mathcal{K}_X$ is
Cartier.
We attach another invariant to the canonical divisor 
of $X$.

\begin{remark}
\label{rem:etaX}
Let  $X = X(A,P)$ be a $\QQ$-Gorenstein, 
affine $T$-variety of Type~2.
We consider canonical divisors $D_X$ 
on $X$ that are of the 
following form, cf.~\cite[Prop.~3.3.3.2]{ArDeHaLa}:
\begin{equation}
\label{eqn:simplbdry}
- \sum_{i,j} D_{ij} - \sum_k E_k 
+
\sum_{\alpha = 1}^{r-1} \sum_{j = 0}^{n_{i_\alpha}}  l_{i_\alpha j} D_{i_\alpha j},
\qquad 
0 \le i_\alpha \le r.
\end{equation}
Corollary~\ref{cor:affpic} says that $\imath_X D_X$ 
is the divisor of a $T$-homogeneous rational function.
Any two $\imath_X D_X$ with $D_X$ of shape~(\ref{eqn:simplbdry})
differ by the divisor of a $T$-invariant rational function,
and thus, all the functions with divsors $\imath_X D_X$,
where $D_X$ as in~(\ref{eqn:simplbdry}), are homogeneous with 
respect to the same weight $\eta_X \in \Chi(T)$. 
\end{remark}

\begin{definition}
\label{def:canonmult}
Let $X = X(A,P)$ be a $\QQ$-Gorenstein, affine 
$T$-variety of Type~2. 
We call $\eta_X \in \XX(T)$ of Remark~\ref{rem:etaX}
the \emph{canonical weight} of $X$. 
The \emph{canonical multiplicity} of $X$ is the minimal 
non-negative integer $\zeta_X$ such that 
$\eta_X = \zeta_X \cdot \eta_X'$ holds with a primitive 
element $\eta_X' \in \Chi(T)$. 
\end{definition}

\begin{proposition}
\label{prop:zeta}
Let $X = X(A,P)$ be a $\QQ$-Gorenstein, affine 
$T$-variety of Type~2 with at most log terminal 
singularities. 
Then $\zeta_X > 0$ holds.
Moreover, for any positive integer $\imath$, 
the following statements are equivalent.
\begin{enumerate}
\item 
The variety $X$ is of Gorenstein index $\imath$.
\item
There exist integers $\mu_1,\ldots,\mu_r$ with 
$\gcd(\mu_1,\ldots,\mu_r,\zeta_X,\imath) = 1$ such that
with $\mu_0 := \imath(r-1) - \mu_1 - \ldots - \mu_r$
we obtain integral vectors
$$ 
\nu_i := (\nu_{i1}, \ldots, \nu_{in_i})
\text{ with }
\nu_{ij}
\ := \ 
\frac{\imath - \mu_il_{ij}}{\zeta_X},
$$
$$
\nu' := (\nu'_1, \ldots, \nu'_m)
\text{ with }
\nu'_k
\ := \ 
\frac{\imath}{\zeta_X}
$$
and by suitable elementary row operations on 
the $(d,d')$-block, the matrix~$P$ gains 
$(\nu_0, \ldots, \nu_r,\nu')$ as its last row, 
i.e., turns into the shape
$$ 
\tilde P
\ = \ 
\begin{pmatrix}
-l_{0} & l_{1}  & \ldots & 0 & 0
\\
\vdots & \vdots & \ddots & \vdots & \vdots
\\
-l_{0} & 0 & \ldots &  l_r & 0
\\
* & * &  \ldots & * & *
\\
\nu_0 & \nu_1 &  \ldots & \nu_r & \nu'
\end{pmatrix}.
$$
\end{enumerate}
\end{proposition}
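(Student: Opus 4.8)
The plan is to translate the Gorenstein condition into a single linear-algebra statement about $P$ and then read off the last row. The starting point is Corollary~\ref{cor:affpic}: since $\Pic(X)=0$, a multiple $\imath \mathcal{K}_X$ is Cartier if and only if its class in $K=\Cl(X)$ vanishes. Hence the Gorenstein index is the order of $\mathcal{K}_X$ in $K=\ZZ^{n+m}/\im(P^*)$, and, as $P^*$ is injective (the columns of $P$ span $\QQ^{r+s}$), we have $\imath \mathcal{K}_X=0$ in $K$ exactly when $P^*w=\imath\, c$ for some $w\in\ZZ^{r+s}$, where $c\in\ZZ^{n+m}$ represents $-\mathcal{K}_X$. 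I would fix the \emph{uniform representative}: using that every relation $g_i$ is homogeneous, all $\deg(T_i^{l_i})$ coincide, so $\sum_{i}\deg(g_i)=(r-1)\deg(T_0^{l_0})$, and the formula $\mathcal{K}_X=\sum_i\deg(g_i)+\mathcal{K}_Z$ of Section~\ref{sec:acancomp} shows that $-\mathcal{K}_X$ is represented by the vector $c$ with entry $1$ in every column except the $0$-th block, where the entry is $1-(r-1)l_{0j}$. Writing $w=(\mu_1,\ldots,\mu_r,\xi)\in\ZZ^r\times\ZZ^s$, the bottom part $\xi$ is the $T$-weight of the realizing function, i.e.\ $\xi=\eta_X$; independence of the chosen representative follows because two shape-(\ref{eqn:simplbdry}) representatives differ by the divisor of a $T$-invariant function (Remark~\ref{rem:etaX}), which leaves $\xi$ untouched.

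For the first assertion $\zeta_X>0$, equivalently $\eta_X\ne 0$, I would argue by contradiction. Pick a $P$-elementary cone $\tau=\cone(v_{0j_0},\ldots,v_{rj_r})$ contained in $\sigma$ (such $\tau$ exists in the affine Type~2 case). On the one hand $P^*w=\imath\, c$ and $v_\tau=\sum_i\ell_{\tau,i}v_{ij_i}$ give, after using $\ell_{\tau,0}l_{0j_0}=l_{0j_0}\cdots l_{rj_r}$, the value $\bangle{w,v_\tau}=\imath\,\ell_\tau$. On the other hand Proposition~\ref{prop:acancompstruct}(iii) places $v_\tau$ in the lineality part $\lambda=0\times\QQ^s$, so its top coordinates vanish; if $\eta_X=0$ then $\bangle{w,v_\tau}=\bangle{\eta_X,v_\tau}=0$. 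Thus $\ell_\tau=0$, contradicting $\ell_\tau>0$, which holds because log terminality forces the platonic inequality $\sum_i l_{ij_i}^{-1}>r-1$ (Corollary~\ref{cor::sing}(i)).

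For (i)$\Rightarrow$(ii), let $\imath$ be the Gorenstein index, take $w$ with $P^*w=\imath\, c$, and write $\eta_X=\zeta_X\eta_X'$ with $\eta_X'$ primitive. By an admissible operation of type~(iv) (Remark~\ref{remark:admissibleops}), which realizes $\GL_s(\ZZ)$ on $\XX(T)$ and does not change the ring, I would arrange $\eta_X'=e_{r+s}$, so that $w=(\mu_1,\ldots,\mu_r,0,\ldots,0,\zeta_X)$ and the last row of $P$ is the only bottom row paired by $\zeta_X$. Reading $P^*w=\imath\, c$ columnwise then solves for the last row: the blocks $i\ge 1$ and the $S_k$-columns give $\mu_i l_{ij}+\zeta_X\nu_{ij}=\imath$ and $\zeta_X\nu'_k=\imath$, i.e.\ $\nu_{ij}=(\imath-\mu_i l_{ij})/\zeta_X$ and $\nu'_k=\imath/\zeta_X$, while the $0$-th block yields $\zeta_X\nu_{0j}=\imath-\mu_0 l_{0j}$ with $\mu_0:=\imath(r-1)-\mu_1-\ldots-\mu_r$ emerging precisely from the asymmetric entry of $c$. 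Minimality of $\imath$ gives the coprimality condition: if $c'=\gcd(\mu_1,\ldots,\mu_r,\zeta_X,\imath)>1$, then $w/c'$ is integral and $P^*(w/c')=(\imath/c')c$, contradicting that $\imath$ is the order of $\mathcal{K}_X$.

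The converse (ii)$\Rightarrow$(i) reverses this computation. Given the data, I would set $w:=(\mu_1,\ldots,\mu_r,0,\ldots,0,\zeta_X)$ and check $P^*w=\imath\, c$ directly: the blocks $i\ge1$ and the $S_k$-columns are immediate from the formulas for $\nu_{ij},\nu'_k$, and the $0$-th block uses the defining relation for $\mu_0$ together with $\sum_{p}\mu_p=\imath(r-1)-\mu_0$. Hence $\imath\mathcal{K}_X=0$ in $K$, so the index divides $\imath$; and if it were $\imath/c'$ with $c'>1$, the injectivity of $P^*$ would force $c'\mid w$ and $c'\mid\imath$, violating $\gcd(\mu_1,\ldots,\mu_r,\zeta_X,\imath)=1$. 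The routine parts are the columnwise evaluations; the point that needs care, and which I regard as the main obstacle, is pinning down the correct representative $c$ of $-\mathcal{K}_X$ with its block-$0$ asymmetry and verifying that the bottom component of $w$ is genuinely the well-defined canonical weight $\eta_X$, so that $\zeta_X$ enters as stated.
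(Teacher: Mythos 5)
Your proof is correct and follows essentially the same route as the paper: triviality of $\Pic(X)$ reduces the Gorenstein condition to solving $P^*w=\imath\,c$ for the explicit representative $c$ of $-\mathcal{K}_X$, a unimodular change of the bottom $s$ rows normalizes $w$ to $(\mu_1,\ldots,\mu_r,0,\ldots,0,\zeta_X)$, and the last row of $\tilde P$ together with the coprimality condition is then read off columnwise exactly as you describe. The only (harmless) variation is the step $\zeta_X>0$: you pair $w$ against $v_\tau$ for a $P$-elementary cone to force $\imath\,\ell_\tau=0$, whereas the paper derives the component equations $\imath_X=\mu_i l_{ij}$ and substitutes to reach the same contradiction with Corollary~\ref{cor::sing}~(i) --- the identical arithmetic, packaged slightly more geometrically on your side.
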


\begin{proof}
We work with an anticanonical divisor $D_X$ on $X$ such 
that $-D_X$ is of the form~(\ref{eqn:simplbdry}):
$$ 
D_X
\ := \ 
\sum_{i,j} D_{ij} + \sum_k E_k - (r-1)\sum_{j = 1}^{n_0} l_{0j}D_{0j}.
$$ 
According to Corollary~\ref{cor:affpic}, the Picard 
group of $X$ is trivial.
Thus, $\imath_X D_X$ is the divisor of some toric 
character $\chi^u$, where 
$$
u 
\ = \ 
(\mu_1,\ldots,\mu_r,\eta_1,\ldots,\eta_s)
\ \in \ 
\ZZ^{r+s}.
$$
Note that $-(\eta_1,\ldots,\eta_s) \in \ZZ^s = \Chi(T)$ 
is the canonical weight $\eta_X$ of $X$.
Moreover, the divisor 
$\imath_X D_X = \div(\chi^{u})$ corresponds to 
the vector $P^* \cdot u \in \ZZ^{m+n}$ 
under the identification of toric divisors
with lattice points via $D_{ij} \mapsto e_{ij}$
and $E_k \mapsto e_k$. 

We claim that $\eta_X$ is non-trivial.
Otherwise, $\eta_1 = \ldots = \eta_s = 0$ holds.
As noted, the $ij$-th and $k$-th components 
of the vector $P^* \cdot u$ are the multiplicities 
of $D_{ij}$ and~$D_k$ in~$\imath_X D_X$, respectively.
More explicitly, this leads to the conditions
$$ 
m \ = \ 0,
\qquad
\imath_X((r-1) l_{0j} -1) 
\ = \ 
(\mu_1 + \ldots + \mu_r)l_{0j},
\qquad
\imath_X 
\ = \ 
\mu_i l_{ij}
$$
for all $i$ and $j$. 
Plugging the third into the second one, we obtain 
that $l_{0j_0}^{-1} + \ldots + l_{rj_r}^{-1}$ 
equals $r-1$ for any choice of $1 \le j_i \le n_i$.
According to Corollary~\ref{cor::sing}~(i), 
this contradicts to log terminality of $X$.
Knowing that $\eta_X$ is non-zero, we obtain 
that~$\zeta_X$ is non-zero.

Now, assume that~(i) holds, i.e., we have 
$\imath = \imath_X$.
Let $u \in \ZZ^{r+s}$ as above.
Then we have $\zeta_X = \gcd(\eta_1,\ldots,\eta_s)$
and $\div(\chi^{u}) = \imath D_X$ implies
$\gcd(\mu_1,\ldots,\mu_r,\zeta_X,\imath) = 1$. 
Next, choose a unimodular $s \times s$ matrix $\mathcal{B}$ 
with $\mathcal{B}^{-1} \cdot (\eta_1,\ldots,\eta_s) = (0,\ldots,0,\zeta_X)$. 
Consider $\tilde P := \mathrm{diag}(E_r,\mathcal{B}^*) \cdot P$ and
$$
\tilde u 
\ = \ 
(\mu_1,\ldots,\mu_r,0,\ldots,0,\zeta_X)
\ \in \ 
\ZZ^{r+s}.
$$
Observe that we have 
$P^* \cdot u = \tilde P^* \cdot \tilde u$.
Comparing the entries of $\tilde P^* \cdot \tilde u$
with the multiplicities of the prime divisors 
$D_{ij}$ and $D_k$ 
in~$\imath D_X$ shows that the last row of~$\tilde P$ is 
as claimed.

Conversely, if~(ii) holds, consider
$u := (\mu_1,\ldots,\mu_r,0, \ldots, 0,\zeta_X)$.
Then we obtain
$\imath D_X = \div(\chi^u)$.
Using $\gcd(\mu_1,\ldots,\mu_r,\zeta_X,\imath)=1$,
we conclude that $\imath$ is the Gorenstein index 
of $X$.
\end{proof}

\begin{remark}
\label{rem:computezeta}
Let $X = X(A,P)$ be a $\QQ$-Gorenstein, affine 
$T$-variety of Type~2 
and~$D_X$ a canonical divisor on $X$ 
as in~(\ref{eqn:simplbdry}). 
Then $\imath_X D_X$ is the divisor of some toric 
character $\chi^u$, where 
$$
u 
\ = \ 
(\mu_1,\ldots,\mu_r,\eta_1,\ldots,\eta_s)
\ \in \ 
\ZZ^{r+s}.
$$
In this situation, 
we have $\eta_X = (\eta_1,\ldots,\eta_s) \in \Chi(T)$
for the canonical weight of $X$ and the canonical multiplicity
of $X$ is given by  $\zeta_X = \gcd(\eta_1,\ldots,\eta_s)$.
If $P$ is in the shape 
of Proposition~\ref{prop:zeta},
then $\eta_X = (0,\ldots,0,\zeta_X)$ holds
and $-\mu_1,\ldots,-\mu_r$ satisfy the conditions
of~\ref{prop:zeta}~(ii).
\end{remark}

\begin{remark}
\label{rem:zetaPprops}
The defining matrix $P$ of a given $\QQ$-Gorenstein,
affine $T$-variety $X = X(A,P)$ is in the shape 
of Proposition~\ref{prop:zeta} 
if and only if for every $i = 0, \ldots, r$,
the numbers 
$\mu_i := (\imath_X - \zeta_X \nu_{i1})l_{i1}^{-1}$
satisfy
\begin{enumerate}
\item
$\zeta_X \nu_{ij} + \mu_il_{ij} = \imath_X$ 
for $i = 1, \ldots, r$ and $j = 1, \ldots, n_i$,
\item
$\zeta_X\nu_{0j} + \mu_0l_{0j} = \imath_X$,
for 
$\mu_0 := \imath_X(r-1) - \mu_1 - \ldots - \mu_r$ 
and 
$j = 1, \ldots, n_0$,
\item
$\gcd(\mu_1,\ldots,\mu_r,\zeta_X,\imath_X) = 1$,
\item
$\zeta_X\nu_k' = \imath_X$ for $k = 1, \ldots, m$.
\end{enumerate}
\end{remark}

\begin{corollary}
\label{cor::Gore}
Let $X = X(A,P)$ be a $\QQ$-Gorenstein, affine 
$T$-variety  of Type~$2$ 
with at most log terminal singularities.
Then, for every $\imath \in \ZZ_{\ge 1}$,
the following statements are equivalent.
\begin{enumerate}
\item
The variety $X$ is of Gorenstein index $\imath$ and 
of canonical multiplicity one.
\item
One can choose the defining matrix $P$ to be of 
the shape 
$$
\begin{pmatrix}
-l_{0} & l_{1}  & \dots & 0 & 0
\\
\vdots & \vdots & \ddots & \vdots & \vdots
\\
-l_{0} & 0 & \dots &  l_r & 0
\\
* & * & \ldots & * & *
\\
\bm{\imath}-\imath(r-1)l_0 & \bm{\imath} & \ldots & \bm{\imath} & \bm{\imath}
\end{pmatrix},
$$
where  $\bm{\imath}$ stands for a vector 
$(\imath, \ldots, \imath)$ of suitable 
length. 
\end{enumerate}
\end{corollary}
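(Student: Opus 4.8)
The plan is to derive the corollary from Proposition~\ref{prop:zeta} by specializing to canonical multiplicity $\zeta_X = 1$ and then normalizing the last row of $P$ by a single admissible operation.

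For the implication ``(i)$\Rightarrow$(ii)'', I would start from Proposition~\ref{prop:zeta}: as $X$ has Gorenstein index $\imath$ and $\zeta_X = 1$, there are integers $\mu_1, \ldots, \mu_r$ such that $P$ can be brought into a shape whose last row is $(\nu_0, \ldots, \nu_r, \nu')$ with $\nu_{ij} = \imath - \mu_i l_{ij}$ for $i \ge 1$, with $\nu_{0j} = \imath - \mu_0 l_{0j}$ for $\mu_0 = \imath(r-1) - \mu_1 - \ldots - \mu_r$, and with $\nu'_k = \imath$. The remaining task is to eliminate the $\mu_i$. For this I would apply the admissible operation~\ref{remark:admissibleops}~(iii), adding $\mu_i$ times the $i$-th row of the $P_0$-block to the last row for each $i = 1, \ldots, r$; by~\ref{remark:admissibleops} this leaves the ring $R(A,P)$ unchanged. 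A direct computation then shows that the last-row entries of the $i$-th block ($i \ge 1$) and of the $S_k$-columns all become $\imath$, whereas in the $0$-th block they turn into $\imath - (\mu_0 + \mu_1 + \ldots + \mu_r)l_{0j} = \imath - \imath(r-1)l_{0j}$, which is precisely the asserted shape.

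For the converse ``(ii)$\Rightarrow$(i)'', I would read off the last row directly. Setting $u := (0, \ldots, 0, 1) \in \ZZ^{r+s}$, the vector $P^*u$ is the last row of $P$, and comparing it with the multiplicities of $\imath D_X$ for the distinguished (anti)canonical divisor $D_X$ appearing in the proof of Proposition~\ref{prop:zeta} gives $\imath D_X = \div(\chi^u)$. Triviality of $\Pic(X)$ (Corollary~\ref{cor:affpic}) then shows that $\imath \mathcal{K}_X$ is Cartier, so $\imath_X \mid \imath$. The key point is to pin down the exact values: since the columns of $P$ span $\QQ^{r+s}$ (Construction~\ref{constr:RAPdown}), the map $u \mapsto P^*u$ is injective; writing $\imath = t\,\imath_X$ and $\imath_X D_X = \div(\chi^{u_X})$ forces $u = t\,u_X$, and the last-coordinate equation $t\,(u_X)_{r+s} = 1$ with $u_X \in \ZZ^{r+s}$ yields $t = 1$, hence $\imath_X = \imath$. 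Finally, the $\eta$-part of $u$ equals $(0, \ldots, 0, 1)$ and is thus primitive, so Remark~\ref{rem:computezeta} gives $\zeta_X = \gcd(\eta_1, \ldots, \eta_s) = 1$.

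I expect the genuinely delicate step to be the exactness of the Gorenstein index in the converse: seeing $\imath_X \mid \imath$ is immediate, but excluding a proper divisor rests on the injectivity of $u \mapsto P^*u$ together with the integrality of $u_X$. The forward direction, by contrast, amounts to the bookkeeping of one admissible row operation.
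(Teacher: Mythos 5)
Your proof is correct and follows essentially the same route as the paper: the forward direction via Proposition~\ref{prop:zeta} specialized to $\zeta_X=1$ followed by the admissible operation adding the $\mu_i$-fold of the $i$-th row to the last row, and the converse by reading off $\imath D_X$ as the divisor of a toric character determined by the last row of $P$. You are in fact more explicit than the paper's two-line converse (which only asserts $\imath D_X=\div(\chi^u)$ and ``we see $\zeta_X=1$''): your use of the injectivity of $P^*$ and the integrality of $u_X$ to rule out $\imath_X$ being a proper divisor of $\imath$ makes precise a point the paper leaves implicit.
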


\begin{proof}
If~(i) holds, then we may assume $P$ to be as $\tilde P$  in
Proposition~\ref{prop:zeta}.
Adding the $\mu_i$-fold of the $i$-th row 
to the last row brings $P$ into the desired form.
If~(ii) holds, take $u = (0,\ldots, 0,-1) \in \ZZ^{r+s}$. 
Then $P^* \cdot u \in \ZZ^{n+m}$ defines a divisor 
$\imath D_X$ with $D_X$ a canonical divisor of 
shape~(\ref{eqn:simplbdry}) and we see 
$\zeta_X = 1$.
\end{proof}

\begin{proposition}
\label{prop:zetaexcep}
Let $X = X(A,P)$ be a $\QQ$-Gorenstein affine $T$-variety 
of Type $2$ with at most log terminal singularities
and canonical multiplicity $\zeta_X > 1$.
Then we can choose $P$ of shape~\ref{prop:zeta}~(ii)
such that $l_{ij}=1$ and $\nu_{ij}=0$ holds 
for $i = 3, \ldots, r$ and $j = 1,\ldots, n_i$
and, moreover, $P$ satisfies one of the following 
cases:

\renewcommand{\arraystretch}{1.8} 

\begin{longtable}{c|c|c|c|c}
Case 
& 
$(l_{01},l_{11},l_{21})$ 
& 
$(\nu_0,\nu_1,\nu_2)$ 
& 
$\zeta_X$ 
& 
$\imath_X$
\\
\hline
$(i)$ 
&
$(4,3,2)$ 
& 
$\frac{1}{2}
(\bm{\imath_X} + l_0,\, \bm{\imath_X} - \imath_Xl_1, \, \bm{\imath_X} - l_2)$  
&
$2$ 
&
$ 0 \mod 2$
\\
\hline
$(ii)$ 
&
$(3,3,2)$ 
& 
$
\frac{1}{3}
(\bm{\imath_X} - l_0,\,
\bm{\imath_X} + l_1, \, 
\bm{\imath_X} - \imath_Xl_2) 
$
&
$3$ 
&
$0 \mod 3$
\\
\hline
$(iii)$ 
&
$(2k+1,2,2)$ 
& 
$
\frac{1}{4}
(\bm{\imath_X} - \imath_Xl_0, \, 
\bm{\imath_X} - l_1, \,
\bm{\imath_X} + l_2) 
$  
&
$4$ 
&
$2 \mod 4$
\\
\hline
$(iv)$ 
&
$(2k,2,2)$ 
& 
$
\frac{1}{2}
(\bm{\imath_X} - l_0, \,
\bm{\imath_X} + l_1, \, 
\bm{\imath_X}- \imath_X l_2)
$  
&
$2$ 
&
$0 \mod 2$
\\
\hline
$(v)$ 
&
$(k,2,2)$ 
& 
$
\frac{1}{2}
(\bm{\imath_X}-\imath_Xl_0, \,
\bm{\imath_X} - l_1, \, 
\bm{\imath_X} + l_2)
$  
&
$2$ 
&
$0 \mod 2$
\\
\hline
$(vi)$ 
&
$(k_0,k_1,1)$ 
& 
$
(\nu_0, \, \nu_1, \, \zeta_X^{-1}(\bm{\imath_X}-\imath_Xl_2))
$ 
&
&
\end{longtable}

\noindent
where  $\bm{\imath_X}$ stands for a vector 
$(\imath_X, \ldots, \imath_X)$ of suitable 
length, and in Case (vi), all the numbers 
$(\imath_X-\nu_{0j_0}\zeta_X) / l_{0j_0}$
and 
$(\nu_{1j_1}\zeta_X-\imath_X) / l_{1j_1}$ 
are integral and coincide.
\end{proposition}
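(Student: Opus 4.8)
The plan is to reduce first to three distinguished column blocks, then normalize $P$ via Proposition~\ref{prop:zeta} and Remark~\ref{rem:zetaPprops}, and finally run a short number-theoretic case distinction over the possible platonic triples. I would begin by exploiting log terminality. Since $X$ is affine of Type~2 and $\QQ$-Gorenstein, Remark~\ref{rem:affcase} guarantees that $-\mathcal{K}_X$ is ample and that every $P$-elementary cone is a face of the single cone $\sigma$ generated by the columns of $P$; hence Corollary~\ref{cor::sing}~(i), equivalently Theorem~\ref{thm:affltcharintro}, applies and shows that every tuple $(l_{0j_0}, \ldots, l_{rj_r})$ is platonic. A platonic tuple has at most three entries exceeding one, so at most three column blocks can contain an entry $>1$; after an admissible operation of type~(ii) and using the ordering $l_{01} \ge \ldots \ge l_{r1}$, I may assume these are the blocks $0,1,2$, so that $l_{ij}=1$ for $i \ge 3$.

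Next I would put $P$ into the shape of Proposition~\ref{prop:zeta}~(ii) and read off the arithmetic constraints of Remark~\ref{rem:zetaPprops}: integrality of the last row is equivalent to $\mu_i l_{ij} \equiv \imath_X \pmod{\zeta_X}$ for all $i,j$, together with $\gcd(\mu_1,\ldots,\mu_r,\zeta_X,\imath_X)=1$. For $i \ge 3$ one has $l_{ij}=1$, whence $\mu_i \equiv \imath_X \pmod{\zeta_X}$; adding a suitable multiple of the $i$-th of the upper $r$ rows to the last row, an admissible operation of type~(iii), shifts $\mu_i$ by a multiple of $\zeta_X$ and lets me achieve $\mu_i = \imath_X$, hence $\nu_{ij}=0$, for all $i \ge 3$. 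The analogous operations with rows $1$ and $2$ move $\mu_1$ and $\mu_2$ independently within their residue classes modulo $\zeta_X$ while shifting $\mu_0$ oppositely, so they preserve the identity $\mu_0 + \mu_1 + \mu_2 = \imath_X$, which follows from $\sum_{i=0}^r \mu_i = \imath_X(r-1)$ once $\mu_i = \imath_X$ for $i \ge 3$.

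It then remains to analyze the three distinguished blocks according to the shape of the platonic triple $(l_{01},l_{11},l_{21})$. The key computation combines the three congruences $\mu_i l_{i1} \equiv \imath_X \pmod{\zeta_X}$ with the sum relation: substituting $\imath_X = \mu_0+\mu_1+\mu_2$ turns them into linear relations among $\mu_0,\mu_1,\mu_2$ modulo $\zeta_X$, and the coprimality condition then forces $\zeta_X$ to be small. Concretely, for a prime $p \mid \zeta_X$ not dividing the relevant exponents the relations propagate to force $p \mid \mu_1,\mu_2,\imath_X$, contradicting coprimality; this eliminates spurious prime factors and bounds the admissible prime powers. I expect this to show that $(5,3,2)$ forces $\zeta_X=1$ and is therefore excluded, that $(4,3,2)$ gives $\zeta_X=2$ with $\imath_X$ even, that $(3,3,2)$ gives $\zeta_X=3$ with $3 \mid \imath_X$, and that $(x,2,2)$ gives $\zeta_X \in \{2,4\}$, the value $4$ occurring only for $x$ odd with $\imath_X \equiv 2 \pmod 4$; the triple $(x,y,1)$ reflects the situation of only two nontrivial blocks and yields case~(vi). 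In each surviving case I would fix the tabulated integer representatives of $\mu_1,\mu_2$ by admissible operations, with $\mu_0=\imath_X-\mu_1-\mu_2$ determined, observe that the residue conditions then pin down the admissible entries inside each block, for instance forcing blocks $1,2$ to consist of twos and block~$0$ to the stated parity, and finally read off $\nu_{ij} = (\imath_X - \mu_i l_{ij})/\zeta_X$ to match the table.

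The main obstacle I anticipate is the last case distinction for $(x,2,2)$: separating $\zeta_X=2$ from $\zeta_X=4$, assigning it to the correct normalization among (iii)--(v), and verifying throughout that blocks carrying several distinct exponents remain simultaneously compatible with the platonic constraint and with the common-residue requirement $\mu_i(l_{ij}-l_{ij'}) \equiv 0 \pmod{\zeta_X}$, all while checking that the admissible operations chosen for blocks $0,1,2$ do not disturb the normalization $\nu_{ij}=0$ already achieved for $i \ge 3$.
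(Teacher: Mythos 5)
Your plan coincides with the paper's own proof: reduce to three distinguished blocks via platonicity, normalize $P$ through Proposition~\ref{prop:zeta} and Remark~\ref{rem:zetaPprops} so that $\nu_{ij}=0$ for $i\ge 3$ and $\mu_0+\mu_1+\mu_2=\imath_X$, then run the congruences $\mu_il_{i1}\equiv\imath_X\pmod{\zeta_X}$ against $\gcd(\mu_1,\mu_2,\zeta_X,\imath_X)=1$ case by case over the platonic triples, and your predicted outcomes (exclusion of $(5,3,2)$, $\zeta_X=2$ for $(4,3,2)$, $\zeta_X=3$ for $(3,3,2)$, $\zeta_X\in\{2,4\}$ for $(x,2,2)$ with $4$ only for odd $x$ and $\imath_X\equiv 2\bmod 4$) all match the paper's conclusions. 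What remains to be written out is exactly what the paper supplies: the elimination of the residual candidates $\zeta_X\in\{2,3,6\}$ in the first three cases and the explicit row operations pinning down the tabulated representatives of $\nu_0,\nu_1,\nu_2$.
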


\begin{proof}
Since $X = X(A,P)$ has at most log terminal 
singularities, Theorem~\ref{thm:affltcharintro} 
guarantees that the Cox ring $\mathcal{R}(X) = R(A,P)$ 
is platonic.
Thus, suitably exchanging data column blocks, we 
achieve $l_{ij} = 1$ for all $i \ge 3$.
Next, we bring $P$ in to the form of 
Proposition~\ref{prop:zeta}~(ii).
Finally, subtracting the  $\nu_{ij}$-fold 
of the $i$-th row from the last one, 
we achieve
$\nu_{ij}=0$ for $i = 3, \ldots, r$.

Observe that our new matrix $P$ still satisfies 
the conditions of Remark~\ref{rem:zetaPprops}.
For the integers $\mu_i$ defined there, we have
\begin{equation}
\label{eq:mu}
\mu_0 + \mu_1 + \mu_2 
\ = \ 
\mu_3 
\ = \ 
\ldots 
\ =  \ 
\mu_r  
\ =  \ 
\imath_X.
\end{equation}
Moreover, for $i=0,1,2$ set $\ell_i := l_{01} l_{11} l_{21} / l_{i1}$.
Then, because of  
$\imath_X + \mu_il_{ij} = \nu_{ij} \zeta_X$,  
we obtain
\begin{equation}
\label{eq:ltau}
\gcd(\ell_{0},\ell_{1},\ell_{2})^{-1} 
\sum_{i=0}^{2} \ell_{i} (\imath_X - \mu_il_{ij}) 
\ = \
\alpha \zeta_X
\quad \text{for some } \alpha \in \ZZ.
\end{equation}
Finally, Remark~\ref{rem:zetaPprops} ensures
\begin{equation}
\label{eq:gcd}
1
\ = \
\gcd(\mu_1,\ldots,\mu_r,\zeta_X,\imath_X)
\ = \
\gcd(\mu_1,\mu_2,\zeta_X,\imath_X).
\end{equation}
We will now apply these conditions to 
establish the table of the assertion.
Since $(l_{01},l_{11},l_{21})$ is a platonic
triple, we have to discuss the following 
cases.

\medskip

\noindent
\emph{Case 1}: $(l_{01},l_{11},l_{21})$ equals $(5,3,2)$.
Our task is to rule out this case.
Using~(\ref{eq:mu}) and~(\ref{eq:ltau}), we see 
that~$\zeta_X$ divides
$$
\imath_X
\ = \
31\imath_X-30(\mu_0+\mu_1+\mu_2)
\ = \
6(\imath_X-5\mu_0)+10(\imath_X-3\mu_1)+15(\imath_X-2\mu_2).
$$
Consequently, (\ref{eq:gcd}) becomes 
$\gcd(\mu_1,\mu_2,\zeta_X) = 1$
and from $\imath_X - \mu_il_{ij} = \nu_{ij} \zeta_X$ 
we infer that $\zeta_X$ divides $5\mu_0$, $3\mu_1$ 
and $2\mu_2$. 
This leaves us with the three possibilities 
$\zeta_X = 2,3,6$.

If $\zeta_X = 2$ holds, then $\zeta_X$ divides $\mu_0$ 
and $\mu_1$ but not $\mu_2$;
if $\zeta_X = 3$ holds, then~$\zeta_X$ divides $\mu_0$ 
and $\mu_2$ but not $\mu_1$.
Both contradicts to the fact that 
$\zeta_X$ divides $\imath_X = \mu_0+\mu_1+\mu_2$.
Thus, only $\zeta_X = 6$ is left. In that case,
$\zeta_X$ must divide~$\mu_0$.
Since $\zeta_X$ divides $\imath_X = \mu_0+\mu_1+\mu_2$,
we see that $\zeta_X$ divides $\mu_1+\mu_2$.
Moreover, $\zeta_X \mid 3\mu_1$ gives 
$\mu_1 = 2 \mu_1'$ and $\zeta_X \mid 2\mu_2$ gives 
$\mu_2 = 3 \mu_2'$ with integers $\mu_1',\mu_2'$.
Now, as $\zeta_X = 6$ divides $2\mu_1' + 3\mu_2'$,
we obtain that $\mu_2'$ and hence $\mu_2$ are even.
This contradicts $\gcd(\mu_1,\mu_2,\zeta_X) = 1$.

\medskip

\noindent
\emph{Case 2}: $(l_{01},l_{11},l_{21})$ equals $(4,3,2)$. 
Similarly as in the preceding case, we apply~(\ref{eq:mu}) 
and~(\ref{eq:ltau}) to see that~$\zeta_X$ divides
$$
\imath_X
\ = \
13\imath_X-12(\mu_0+\mu_1+\mu_2)
\ = \
\frac{1}{2}
\bigl(
6(\imath_X-4\mu_0)+8(\imath_X-3\mu_1)+12(\imath_X-2\mu_2)
\bigr).
$$
As before, we conclude $\gcd(\mu_1,\mu_2,\zeta_X) = 1$
and obtain that $\zeta_X$ divides $4\mu_0$, $3\mu_1$ 
and $2\mu_2$. This reduces to $\zeta_X = 2,3,6$.

If $\zeta_X = 3$ holds, then~$\zeta_X$ divides $\mu_0$ 
and $\mu_2$ but not $\mu_1$, contradicting the fact that 
$\zeta_X$ divides $\imath_X = \mu_0+\mu_1+\mu_2$.
If $\zeta_X = 6$ holds, then we obtain 
$\mu_0 = 3\mu_0'$, $\mu_1 = 2\mu_1'$ and $\mu_2 = 3 \mu_2'$ 
with suitable integers $\mu_i'$.
Since $\zeta_X$ divides $\imath_X = \mu_0+\mu_1+\mu_2$,
we obtain that $\mu_2$ is divisible by $3$, contradicting 
$\gcd(\mu_1,\mu_2,\zeta_X) = 1$.

Thus, the only possibility left is $\zeta_X = 2$.
We show that this leads to Case~(i) of the assertion.
Observe that $\mu_1$ is even, $\mu_2$ is odd because 
of $\gcd(\mu_1,\mu_2,\zeta_X) = 1$ and $\mu_2$ is odd
because $\imath_X = \mu_0+\mu_1+\mu_2$ is even.   
Recall that the vectors $\nu_i$ in the last row of $P$
are given as 
$$ 
\nu_i 
\ = \ 
\frac{1}{\zeta_X}(\bm{\imath_X}-\mu_i l_i)
\ = \ 
\frac{1}{2}\bm{\imath_X} - \frac{\mu_i}{2} l_i.
$$
Thus, adding the $(-\mu_0-\mu_2) / 2 $-fold of the first 
row and the $(\mu_2-1) / 2$-fold of the second row 
to the last row brings $P$ into the shape of Case~(i).

\medskip

\noindent
\emph{Case 3}: $(l_{01},l_{11},l_{21})$ equals $(3,3,2)$. 
As in the two preceding cases, we infer from
(\ref{eq:mu}) and~(\ref{eq:ltau}) that~$\zeta_X$ divides
$$
\imath_X
\ = \
7\imath_X-6(\mu_0+\mu_1+\mu_2)
\ = \
\frac{1}{3}
\bigl(
6(\imath_X-3\mu_0)+6(\imath_X-3\mu_1)+9(\imath_X-2\mu_2)
\bigr).
$$
Since $\gcd(\mu_1,\mu_2,\zeta_X)=1$ 
and $\zeta_X$ divides $3\mu_0$, $3\mu_1$, $2\mu_2$,
we are left with $\zeta_X=2,3,6$.
If $\zeta_X=2$ or $\zeta_X=6$ holds, 
then $\mu_0$, $\mu_1$ and
$\imath_X=\mu_0+\mu_1+\mu_2$ must be even. 
Thus also $\mu_2$ must be even, contradicting
$\gcd(\mu_1,\mu_2,\zeta_X)=1$.

Let $\zeta_X=3$. 
We show that this leads to Case~(ii) of the 
assertion. 
First,~$3$ divides~$\mu_2$ and $\imath_X = \mu_0+\mu_1+\mu_2$,
hence also $\mu_0+\mu_1$.
Moreover, $3$ divides neither $\mu_0$ nor $\mu_1$ 
because of $\gcd(\mu_1,\mu_2,\zeta_X)=1$. 
Interchanging, if necessary, the data of the
column blocks no.~0 and~1,
we achieve that $3$ divides $\mu_0-1$ and $\mu_1+1$.
So, at the moment, the $\nu_i$ in the last row of $P$ 
are of the form
$$ 
\nu_i 
\ = \ 
\frac{1}{\zeta_X}(\bm{\imath_X}-\mu_i l_i)
\ = \ 
\frac{1}{3}\bm{\imath_X} - \frac{\mu_i}{3} l_i.
$$
Adding the $(\mu_1+1)/3$-fold of the first and the 
$(-\mu_0-\mu_1)/3$-fold of the second to the last 
row of~$P$, we arrive at Case~(ii).

\medskip

\noindent
\emph{Case 4}: $(l_{01},l_{11},l_{21})$ equals $(k,2,2)$ with 
$k \geq 3$ odd. 
Then~(\ref{eq:mu}) and~(\ref{eq:ltau}) show that $\zeta_X$ 
divides
$$
2\imath_X
\ = \
(2+2k)\imath_X - 2k(\mu_0+\mu_1+\mu_2)
\ = \
\frac{1}{2}(4(\imath_X - k\mu_0)+2k(\imath_X-2\mu_1)+2k(\imath_X-2\mu_2)).
$$

\medskip

\noindent
\emph{Case 4.1}: 
$\zeta_X$ doesn't divide $\imath_X$.
Then we have $2\imath_X = \alpha \zeta_X$
with $\alpha \in \ZZ$ odd.
Thus,~$\zeta_X$ is even and 
$2 \mu_i = \imath_X - \nu_{ij}\zeta_X$ 
implies that $4 \mu_i$ is an odd multiple
of $\zeta_X$ for $i = 1,2$.
In particular, $4$ divides $\zeta_X$.
Moreover,~(\ref{eq:gcd}) 
implies $\gcd(\mu_1,\mu_2,\zeta_X/2)=1$
and we obtain $\zeta_X=4$. 
That means $\imath_X \equiv 2 \mod 4$.
Since $\zeta_X=4$ divides $\imath_X-k\mu_0$ 
and $k$ is odd, we conclude 
$\mu_0 \equiv 2 \mod 4$.
Then $\mu_0+\mu_1+\mu_2 =\imath_X \equiv 2 \mod 4$
implies that $4$ divides $\mu_1+\mu_2$.
Interchanging, if necessary, the data of 
the column blocks no.~1 and~2, 
we can assume $\mu_1 \equiv - \mu_2 \equiv 1 \mod 4$.
Then, adding the $(\mu_1-1)/4$-fold of the first 
and the $(\mu_2+1)/4$-fold of the second 
to the last row of~$P$, we arrive at Case~(iii) 
of the assertion.

\medskip

\noindent
\emph{Case 4.2}: $\zeta_X$ divides $\imath_X$. 
Then~(\ref{eq:gcd}) becomes $\gcd(\mu_1,\mu_2,\zeta_X)=1$.
Since $\zeta_X$ divides $2\mu_1$ and $2\mu_2$,
we see that $\zeta = 2$ holds and $\mu_1$, $\mu_2$ are odd.
Adding the $(\mu_1-1)/2$-fold of the first and 
the $(\mu_2+1)/2$-fold of the second to the last row of~$P$
leads to Case~(v) of the assertion.

\medskip

\noindent
\emph{Case 5}: $(l_{01},l_{11},l_{21})$ equals $(k,2,2)$ 
with $k \geq 2$ even. 
Then~(\ref{eq:mu}) and~(\ref{eq:ltau}) show that
$\zeta_X$ divides
$$
\imath_X
\ = \
(k+1)\imath_X - k(\mu_0+\mu_1+\mu_2)
\ = \
\frac{1}{4}(4(\imath_X - k\mu_0)+2k(\imath_X-2\mu_1)+2k(\imath_X-2\mu_2)).
$$
As earlier, we conclude that $\zeta_X | 2\mu_i$ for $i=1,2$
and $\zeta_X = 2$. 
Since $\gcd(\mu_1,\mu_2,2)=1$ holds 
and $\mu_0+\mu_1+\mu_2=\imath_X$ is even, 
two of the $\mu_i$ are be odd and one is even.
If $\mu_1$ and $\mu_2$ are odd, then adding the 
$(\mu_1-1)/2$-fold of the first and the 
$(\mu_2+1)/2$-fold of the second to the 
last row of~$P$ leads to Case (v). 
Now, let $\mu_0$ be odd.
Interchanging, if necessary, the data of the
column blocks no.~1 and~2,
we achieve that $\mu_1$ is odd. 
Then we add the $(\mu_1+1)/2$-fold of the 
first and the $(-\mu_0-\mu_1)/2$-fold of the second 
to the last row of~$P$ and arrive at 
Case~(iv) of the assertion.

\medskip

\noindent
\emph{Case 6}. $(l_{01},l_{11},l_{21})$ equals $(k_0,k_1,1)$, 
where $k_0, k_1 \in \ZZ_{>0}$. 
We subtract the $\nu_{21}$-fold of the second row of~$P$ 
from the last one. 
Since $\nu_{21}=(\imath_X-\mu_2)/\zeta_X$ holds, we 
obtain $\nu_2 = \zeta_X^{-1}(\bm{\imath_X}-\imath_Xl_2)$. 
Moreover,~(\ref{eq:mu}) becomes $\mu_0+\mu_1=0$.
We arrive at Case (vi) of the assertion 
by observing
$$
(\imath_X-\nu_{0j_0}\zeta_X) / l_{0j_0}
\ = \
\mu_0
\ = \
-\mu_1
\ = \
(\nu_{1j_1}\zeta_X-\imath_X) / l_{1j_1}.
$$
\end{proof}

\begin{example}
\label{ex::lt-surfaces}
We discuss the rational affine 
$\CC^*$-surfaces $X$ 
with at most log terminal singularities.
First, the affine toric surfaces 
$X = \CC^2 / C_k$ show up here, where 
$C_k$ is the cyclic group of order $k$ 
acting diagonally.
In terms of toric geometry, these surfaces
are given as 
$$
X 
\ = \
\Spec \, \CC[\sigma^\vee \cap \ZZ^2],
\qquad 
\sigma = \cone((k, \imath), \, (\imath, k + m)),
$$
where $k,m \in \ZZ_{> 0}$ with 
$\gcd(k,\imath) = \gcd(k+m,\imath) =1$ and 
$\imath$ is the Gorenstein index of $X$;
see~\cite[Chap.~10]{CLS} for more background.
Now consider a non-toric $\CC^*$-surface 
$X = X(A,P)$ of Type~2.
As a quotient of $\CC^2$ by a finite group,
$X$ has finite divisor class group and thus
$P$ is a $3 \times 3$ matrix of the shape
$$ 
P
\ = \ 
\left[
\begin{array}{rrr}
-l_{01} & l_{11} & 0
\\
-l_{01} & 0 & l_{21}
\\
d_{01} & d_{11} & d_{21}
\end{array}
\right].
$$
Theorem~\ref{thm:affltcharintro} says that 
$(l_{01},l_{11},l_{21})$ is a platonic triple.
Moreover, Corollary~\ref{cor::Gore} and 
Proposition~\ref{prop:zetaexcep} 
provide us with constraints on the 
$d_{i1}$.
Having in mind that~$P$ is of rank three with 
primitive columns, 
one directly arrives at the following possibilities,
where $\zeta = \zeta_X$ is the canonical multiplicity
and $\imath = \imath_X$ the Gorenstein index:

\renewcommand{\arraystretch}{2} 

\begin{longtable}{c|c|c|c}
Type 
& 
$P$
&
\hbox{\quad $\zeta$ \quad}
&
$\imath$
\\
\hline
$D_n^{1,\imath}$
&
\renewcommand{\arraystretch}{1} 
{\tiny
$
\left[
\begin{array}{rrr}
-n+2 & 2 & 0
\\
-n+2 & 0 & 2 
\\
-n \imath +3 \imath & \imath & \imath
\\
\end{array}
\right]
$
}
\renewcommand{\arraystretch}{2} 
&
$1$
&
$\gcd(\imath,2n) = 1$
\\
\hline
$D_{2n+1}^{2,\imath}$
&
\renewcommand{\arraystretch}{1} 
{\tiny
$
\left[
\begin{array}{rrr}
-2n+1 & 2 & 0
\\
-2n+1 & 0 & 2 
\\
(1-n)\imath & \imath/2+1 & \imath/2-1
\\
\end{array}
\right]
$
}
\renewcommand{\arraystretch}{2} 
&
$2$
&
$\gcd(\imath,8n-4) = 4$
\\
\hline
$E_6^{1,\imath}$
&
\renewcommand{\arraystretch}{1} 
{\tiny
$
\left[
\begin{array}{rrr}
-3 & 3 & 0
\\
-3 & 0 & 2 
\\
-2 \imath & \imath & \imath
\\
\end{array}
\right]
$
}
\renewcommand{\arraystretch}{2} 
&
$1$
&
$\gcd(\imath,6) = 1$
\\
\hline
$E_6^{3,\imath}$
&
\renewcommand{\arraystretch}{1} 
{\tiny
$
\left[
\begin{array}{rrr}
-3 & 3 & 0
\\
-3 & 0 & 2 
\\
\imath/3-1 & \imath/3+1 & -\imath/3
\\
\end{array}
\right]
$
}
\renewcommand{\arraystretch}{2} 
&
$3$
&
$\gcd(\imath, 18) = 9$ 
\\
\hline
$E_7^{1,\imath}$
&
\renewcommand{\arraystretch}{1} 
{\tiny
$
\left[
\begin{array}{rrr}
-4 & 3 & 0
\\
-4 & 0 & 2 
\\
-3 \imath & \imath & \imath
\\
\end{array}
\right]
$
}
\renewcommand{\arraystretch}{2} 
&
$1$
&
$\gcd(\imath,6) = 1$
\\
\hline
$E_8^{1,\imath}$
&
{\tiny
\renewcommand{\arraystretch}{1} 
$
\left[
\begin{array}{rrr}
-5 & 3 & 0
\\
-5 & 0 & 2 
\\
-4 \imath & \imath & \imath
\end{array}
\right]
$
\renewcommand{\arraystretch}{2} 
}
&
$1$
&
$\gcd(\imath,30) = 1$
\end{longtable}

\noindent
For geometric details on these surfaces, 
we refer to the work of Brieskorn~\cite{Br},
and, in the context of the McKay Correspondence,
Wunram~\cite{Wun} and Wemyss~\cite{Wem}.
\end{example}

\section{Geometry of the total coordinate space}

We take a closer look at the geometry of the 
total coordinate space $\b{X}$ 
of a $T$-variety $X$ of complexity one.
The first result says in particular 
that $\b{X}$ is Gorenstein and canonical 
provided that $X$ is log terminal 
and affine.

\begin{proposition}
\label{lemm::GorenstCan}
Let $R(A,P_0)$ be a platonic ring of Type $2$. 
Then the affine variety 
\linebreak
$\b{X} = \Spec \, R(A,P_0)$ is Gorenstein 
and has at most canonical singularities.
\end{proposition}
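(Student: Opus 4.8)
The plan is to treat the two assertions separately: the Gorenstein property is essentially formal, canonicity reduces to log terminality via an integrality argument that uses exactly this property, and log terminality is extracted from the platonic hypothesis through Theorem~\ref{thm:affltcharintro}. To begin, recall from the discussion following Construction~\ref{constr:RAP0} that $R(A,P_0)$ is an integral, normal complete intersection: $\b X=\Spec R(A,P_0)$ is cut out in $\CC^{n+m}$ by the $r-1$ trinomials $g_0,\dots,g_{r-2}$, matching its codimension $(n+m)-(n+m+1-r)=r-1$. Local complete intersections in a smooth variety are Gorenstein, and since $\CC^{n+m}$ has trivial canonical class and the $g_i$ are global, the canonical class $K_{\b X}$ is trivial, in particular Cartier.

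The reduction of canonicity to log terminality is then immediate: as $K_{\b X}$ is Cartier, for any resolution $\varphi\colon X'\to \b X$ the divisor $\varphi^*K_{\b X}$ is integral, so all discrepancies $a_i$ in the ramification formula lie in $\ZZ$. Thus, once $\b X$ is known to be log terminal, i.e.\ $a_i>-1$ for all $i$, integrality forces $a_i\ge 0$, which is exactly canonicity. This is the single point where Gorensteinness is genuinely used.

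It remains to prove that $\b X$ is log terminal, and here the platonic hypothesis enters. I would stack $P_0$ over a suitable integral block $d$ to form a square matrix $\hat P$ whose columns are primitive and span a pointed, full-dimensional cone, so that $Y:=X(A,\hat P)$ is an affine, normal, rational Type~2 variety with finite class group $\Cl(Y)=\ZZ^{n+m}/\im(\hat P^*)$. By Corollary~\ref{affQfact} the linear independence of the columns of $\hat P$ makes $Y$ a $\QQ$-factorial, hence $\QQ$-Gorenstein variety, and its Cox ring is $R(A,\hat P)=R(A,P_0)$, which is platonic by assumption. Theorem~\ref{thm:affltcharintro} then shows that $Y$ has at most log terminal singularities. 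Since $\Cl(Y)$ is finite, the characteristic space $\b X=\Spec R(A,P_0)\to Y$ is a finite, quasi-\'etale morphism, so $K_{\b X}$ is the pullback of $K_Y$ and log terminality descends from $Y$ to $\b X$; combined with the previous paragraph, this proves the proposition.

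I expect the last transfer to be the main obstacle. The variety $\b X$ is not itself one of the $X(A,P,\Phi)$ with defining block $P_0$ — its own Cox ring is a further iterate, with generally different leaf data — so the platonic characterization cannot be read off $\b X$ directly and must be routed through the auxiliary quotient $Y$ together with quasi-\'etale descent of discrepancies. A more hands-on alternative is to work in the embedding $\b X\subseteq\CC^{n+m}$ and evaluate, along the inner ray $\varrho_\tau$ of each $P$-elementary cone $\tau$, the discrepancy $c_\tau^{-1}\ell_\tau-1$ in the spirit of Corollary~\ref{coro::disc}, with $c_\tau$ the primitivity factor of $v_\tau$ in $\ZZ^{n+m}$: platonicity is exactly the inequality $\ell_\tau>0$ that bounds the anticanonical complex and yields log terminality, while Gorenstein integrality forces $c_\tau\mid\ell_\tau$ and hence $c_\tau^{-1}\ell_\tau-1\ge 0$.
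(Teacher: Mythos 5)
Your proof is correct and follows essentially the same route as the paper: complete $P_0$ to a square matrix so that $X(A,P)$ is a $\QQ$-factorial affine $T$-variety with platonic Cox ring $R(A,P_0)$, apply Theorem~\ref{thm:affltcharintro} there, and transfer the singularity type back to $\b{X}$ along the finite quasi-\'etale characteristic space map (the paper invokes \cite[Thm.~6.2.9]{Is} for this step). The only real difference is that you obtain Gorensteinness of $\b{X}$ directly from its complete-intersection presentation and then upgrade log terminal to canonical by integrality of discrepancies, whereas the paper establishes Gorensteinness of the quotient via Corollary~\ref{cor::Gore} and transfers ``Gorenstein and canonical'' across the cover in one stroke; both are sound.
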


\begin{proof}
Adding suitable rows, we complement the 
matrix $P_0$ to a square matrix $P$ of 
full rank with last row
$(\bm{1} - (r-1)l_0, \bm{1}, \ldots, \bm{1})$,
where $\bm{1}$ indicates vectors of length~$n_i$ 
with all entries equal to one;
this is possible, because the last row 
is not in the row space of~$P_0$.
Then $X = X(A,P)$ is a $\QQ$-factorial
affine $T$-variety.
Theorem~\ref{thm:affltcharintro}
tells us that $X$ has at most log terminal 
singularities and
Corollary~\ref{cor::Gore} ensures that~$X$ 
is Gorenstein. 
Thus, $X$ has at most canonical singularities.
Since $\b{X} \to X$ is finite with ramification
locus of codimension at least two, we can 
use~\cite[Thm.~6.2.9]{Is} to see that $\b{X}$ is 
Gorenstein with at most canonical 
singularities.
\end{proof}

Now we investigate the generic quotient 
$Y$ of $\b{X}$ by the action of the 
unit component $H_0^0 \subseteq H_0$,
in other words, the smooth projective 
curve $Y$ with function field 
$\CC(Y) = \CC(\b{X})^{H_0^0}$.
Note that the curve~$Y$ occurs also 
in~\cite{AlPe}, where it carries the 
polyhedral divisor of the Cox ring.

\begin{definition}
\label{def:variousl}
Consider the defining matrix $P_0$ of a ring $R(A,P_0)$
of Type~2 and the vectors $l_i = (l_{i1}, \ldots, l_{in_i})$ 
occuring in the rows of $P_0$. Set
$$ 
\mathfrak{l}_i  := \gcd(l_{i1}, \ldots, l_{in_i}),
\qquad
\mathfrak{l} := \gcd(\mathfrak{l}_0, \ldots, \mathfrak{l}_r),
\qquad
\mathfrak{l}_{ij} 
:= 
\gcd(\mathfrak{l}^{-1}\mathfrak{l}_i, \mathfrak{l}^{-1}\mathfrak{l}_j),
$$
$$
\overline{\mathfrak{l}}
:=
\lcm(\mathfrak{l}_0, \dots, \mathfrak{l}_r),
\qquad
b_i:= \mathfrak{l}_i^{-1} \overline{\mathfrak{l}},
\qquad
 b(i): = \gcd(b_j; \ j \neq i).
$$ 
\end{definition}

\goodbreak

\begin{theorem}
\label{theo::curve}
Let $R(A,P_0)$ be of Type $2$ and consider the 
action of the unit component $H_0^0 \subseteq H_0$
of the quasitorus $H_0 = \Spec \, \CC[K_0]$ on 
$\b{X} = \Spec \, R(A,P_0)$.
Then the smooth projective curve $Y$ with function 
field $\CC(Y) = \CC(\b{X})^{H_0^0}$ is of genus
$$ 
g(Y)
\ = \ 
\frac{\mathfrak{l}_0 \cdots \mathfrak{l}_r}{2 \overline{\mathfrak{l}}} 
\left(
(r-1) 
- 
\sum_{i=0}^r \frac{b(i)}{\mathfrak{l}_i}
\right) 
+ 1.
$$ 
\end{theorem}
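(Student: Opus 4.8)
The plan is to realize $Y$ as a finite abelian (Galois) cover of $Y_0 := \b X \git H_0$, the generic quotient by the full quasitorus, and to read off its genus by Riemann--Hurwitz. Here $\CC(Y_0) = \CC(\b X)^{H_0}$ is the field of functions of degree $0$, which for Type~2 is the rational function field of the projective line, so $Y_0 = \PP$ has genus $0$. Since $H_0^0 \subseteq H_0$ is the unit component, $Y \to Y_0$ is Galois with group $H_0/H_0^0$, canonically the torsion subgroup of $K_0$.

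First I would reduce to the \emph{surface case} $n_i = 1$, $l_{i1} = \mathfrak{l}_i$, $m = 0$. Writing $l_{ij} = \mathfrak{l}_i \hat{l}_{ij}$ with $\gcd_j(\hat{l}_{ij}) = 1$, the monomial $T_i^{l_i}$ equals $\t{T}_i^{\mathfrak{l}_i}$ for $\t{T}_i := \prod_j T_{ij}^{\hat{l}_{ij}}$, and since $(\hat l_{ij})_j$ is primitive this extends to a unimodular monomial change of coordinates on the block~$i$ torus. After this change every relation $g_i$ involves only the $\t T_i$; the remaining block variables and the $S_k$ occur in no relation, so $K_0$ splits off a free factor for them. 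Hence $\b X$ is, $H_0^0$-equivariantly and birationally, a product of the surface $\b X' := \Spec\, \CC[\t T_0, \ldots, \t T_r]/\langle g_i'\rangle$ (where $g_i'$ carries the exponents $\mathfrak l_i$) with a torus. Consequently $\CC(Y) \cong \CC(Y')$, and the quantities $\mathfrak l_i$, $\overline{\mathfrak l}$, $b(i)$ on the right-hand side are unchanged. Now $\b X'$ is a normal affine surface, $H_0^0 = \CC^*$, and $Y = \b X' \git \CC^*$.

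Next I would determine the covering data of $Y \to Y_0 = \PP$. Its degree equals the order of the torsion of $K_0$, which is the index of $\mathrm{im}(P_0^*)$ in its saturation, i.e.\ the greatest common divisor of the maximal minors of the (now $r \times (r+1)$) matrix $P_0$. Each such minor is $\pm\prod_{i\neq t}\mathfrak l_i$ for some $t$, whence $\deg(Y/Y_0) = \mathfrak l_0 \cdots \mathfrak l_r / \overline{\mathfrak l}$. The branch locus consists precisely of the $r+1$ points $p_0, \ldots, p_r \in \PP$ where the sections $u_i = \t T_i^{\mathfrak l_i}$ vanish, which are distinct by the pairwise independence of the columns of $A$; away from them all $u_i$ are nonvanishing, the extraction of the $\mathfrak l_i$-th roots is \'etale and the $\CC^*$-quotient is free, so $Y \to Y_0$ is unramified there. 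The key local claim is that over $p_i$ the ramification index is $\mathfrak l_i / b(i)$; one checks $b(i) \mid \mathfrak l_i$, so this is an integer, and since the cover is Galois each of the $\mathfrak l_0\cdots\mathfrak l_r\, b(i) / (\overline{\mathfrak l}\,\mathfrak l_i)$ points over $p_i$ carries this index.

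Finally, Riemann--Hurwitz for the degree-$d$ cover $Y \to \PP$ with $d = \mathfrak l_0\cdots\mathfrak l_r/\overline{\mathfrak l}$ and $e_i = \mathfrak l_i / b(i)$ gives
$$
2g(Y) - 2 \ = \ d(2\cdot 0 - 2) + \sum_{i=0}^r \frac{d}{e_i}\,(e_i - 1),
$$
and substituting $d/e_i = d\,b(i)/\mathfrak l_i$ and rearranging yields exactly the asserted formula. The main obstacle is the local computation giving the index $\mathfrak l_i/b(i)$: the factor $b(i)$ is a genuine correction --- naive root extraction would suggest index $\mathfrak l_i$ --- coming from the interaction of the $\CC^*$-quotient with the simultaneous root extraction at all branches, and it is precisely the delicate point of the $\CC^*$-surface computation in~\cite[Prop.~3, p.~64]{OrWa} that must be generalized here.
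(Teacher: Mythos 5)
Your route is genuinely different from the paper's. The paper goes \emph{upward}: it identifies $Y$ with the trinomial curve $V(h_0,\ldots,h_{r-2})$ in the weighted projective space $\PP(b_0,\ldots,b_r)$ via the Veronese subalgebra generated by the $T_i^{l_i/\mathfrak{l}_i}$ (this is the ring-theoretic version of your reduction to the surface case, carried out in Lemma~\ref{lemma::amu}), then covers $Y$ by the smooth complete intersection $Y'\subseteq\PP_r$ cut out by the same trinomials with all exponents equal to $\overline{\mathfrak{l}}$, computes $g(Y')$ by the standard complete-intersection formula, and descends by Hurwitz along the completely explicit monomial map $[z_0,\ldots,z_r]\mapsto[z_0^{b_0},\ldots,z_r^{b_r}]$ of degree $b_0\cdots b_r$. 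You instead go \emph{downward} to $Y_0\cong\PP^1$ and apply Hurwitz to the Galois cover $Y\to Y_0$ with group $H_0/H_0^0\cong K_0^{\rm tors}$. Your bookkeeping is correct: generic $H_0$-isotropy on $\b{X}$ is trivial, so the degree is $\vert K_0^{\rm tors}\vert$, which equals the gcd of the maximal minors of the reduced $P_0$, namely $\mathfrak{l}_0\cdots\mathfrak{l}_r/\overline{\mathfrak{l}}$; the branch locus is the $r+1$ points $p_i$; and with $e_i=\mathfrak{l}_i/b(i)$ the Riemann--Hurwitz count reproduces the stated formula exactly. What each approach buys: yours avoids the auxiliary curve $Y'$ and works directly with the quotient structure, but at the price of having to compute stabilizers of an abstract finite group action; the paper's detour through $Y'$ converts the delicate part into an elementary local analysis of a coordinatewise power map.

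That said, there is a genuine gap, and you flag it yourself: the ramification index $e_i=\mathfrak{l}_i/b(i)$ is asserted, not proved, and it carries essentially all of the content of the theorem --- everything else in your argument is Smith normal form and arithmetic. What is needed is the stabilizer in the Galois group $K_0^{\rm tors}$ of a point of $Y$ lying over $p_i$, and the correction factor $b(i)$ measures exactly how much of the order-$\mathfrak{l}_i$ isotropy along $V(T_{i1})$ already lies in $H_0^0$ rather than in the component group; this is not visible from ``naive root extraction.'' A concrete way to close the gap within your framework: the points of $Y$ over $p_i$ are in bijection with the irreducible components of $V(\b{X},T_{i1})$ (each is an $H_0^0$-invariant curve, and distinct components have distinct images in $Y$, as in the proof of Proposition~\ref{prop::isotropy}), so by Galois-ness $e_i=d/c(i)$ with $c(i)$ the component count of Lemma~\ref{lem:numbercomp}; one must then verify the determinantal-divisor identity $c(i)=d\,b(i)/\mathfrak{l}_i$. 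Until some such computation is supplied, your proof derives the correct formula from an unestablished input.
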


\begin{lemma}
\label{lemma::amu}
Let $R(A,P_0)$ be of Type $2$, 
consider the degree $u := \deg(g_0) \in K_0$
of the defining relations and the subgroup 
$$
K_0(u) 
\ := \ 
\{w \in K_0; \, \alpha w \in \ZZ u 
\text{ for some } \alpha \in \ZZ_{> 0} \}
\ \subseteq \ 
K_0.
$$
Then the Veronese subalgebra $R(A,P_0)(u)$ of $R(A,P_0)$ 
associated with $K_0(u)$ of $K_0$
is generated by the monomials 
$T_0^{l_0/\mathfrak{l}_0}, \ldots, T_r^{l_r/\mathfrak{l}_r}$.
\end{lemma}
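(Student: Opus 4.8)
The plan is to identify, block by block, exactly which monomials of $\CC[T_{ij},S_k]$ carry a degree lying in $K_0(u)$, and to show that these are precisely the monomials in the elements $M_i := T_i^{l_i/\mathfrak{l}_i}$; the assertion then follows, because the Veronese subalgebra is spanned by the images of such monomials.

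First I would record that all the $T_i^{l_i}$ share the common degree $u$. Indeed, each relation $g_i$ is $K_0$-homogeneous and is a $\CC$-linear combination of $T_i^{l_i}$, $T_{i+1}^{l_{i+1}}$, $T_{i+2}^{l_{i+2}}$ with nonzero coefficients (the $2\times 2$ minors of $A$ are nonzero, as its columns are pairwise linearly independent), so these three monomials have equal degree; chaining over $i = 0, \ldots, r-2$ gives $\deg(T_i^{l_i}) = u$ for all $i$. Writing $T_i^{l_i} = M_i^{\mathfrak{l}_i}$, we obtain $\mathfrak{l}_i \deg(M_i) = u$, whence $\deg(M_i) \in K_0(u)$. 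Thus the subalgebra generated by $M_0, \ldots, M_r$ is contained in $R(A,P_0)(u)$, and only the reverse inclusion needs work.

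For the converse I would analyze $\mathrm{im}(P_0^*)$ explicitly. Its elements are the integer row combinations $\sum_{i=1}^r c_i(\text{row } i)$ of $P_0$; such a combination equals $-(\sum_i c_i)l_0$ in the $0$-th column block, $c_j l_j$ in the $j$-th block for $j \geq 1$, and $0$ in the $S$-columns. Now let $T^a S^b$ be a monomial with exponent vector $(a,b) \in \ZZ_{\ge 0}^{n+m}$ and degree in $K_0(u)$, so that $\alpha(a,b) - \beta q_0 \in \mathrm{im}(P_0^*)$ for some $\alpha \in \ZZ_{>0}$ and $\beta \in \ZZ$, where $q_0 := \sum_j l_{0j}e_{0j}$ is the exponent vector of $T_0^{l_0}$. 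Comparing the $S$-columns forces $\alpha b = 0$, hence $b = 0$, so no $S_k$ occurs. Comparing the column blocks and setting $c_0 := \beta - \sum_{i \geq 1} c_i$ yields the uniform relations $\alpha a_i = c_i l_i$ for $i = 0, \ldots, r$. Since $a_i$ is a non-negative integer vector and $l_i/\mathfrak{l}_i$ is primitive, the scalar $d_i := c_i \mathfrak{l}_i/\alpha$ is a non-negative integer and $a_i = d_i (l_i/\mathfrak{l}_i)$; that is, $T^a = \prod_i M_i^{d_i}$.

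Finally I would conclude as follows: $R(A,P_0)(u) = \bigoplus_{w \in K_0(u)} R(A,P_0)_w$, and for each $w \in K_0(u)$ the component $R(A,P_0)_w$ is spanned by the images of the degree-$w$ monomials of $\CC[T_{ij},S_k]$. By the previous step every such monomial is a monomial in $M_0, \ldots, M_r$, so the Veronese subalgebra is the $\CC$-span of the images of the monomials in the $M_i$, i.e.\ the subalgebra they generate. The main obstacle is the middle step: reading off from the row structure of $P_0$ that degree membership in $K_0(u)$ forces simultaneously the vanishing of the $S$-exponents and the block-wise proportionality $a_i \parallel l_i$, where it is the primitivity of $l_i/\mathfrak{l}_i$ that upgrades this proportionality to an honest power of $M_i$.
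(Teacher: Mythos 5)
Your argument is correct and follows essentially the same route as the paper: both reduce membership of a monomial's degree in $K_0(u)$ to the condition that $\alpha\,(a,b) - \beta q_0$ lies in the row space of $P_0$, read off that the $S$-exponents vanish and that each block $a_i$ is a rational multiple of $l_i$, and then use primitivity of $l_i/\mathfrak{l}_i$ together with non-negativity and integrality of $a_i$ to conclude that the multiple is a non-negative integer. Your write-up is somewhat more explicit than the paper's (you spell out the blockwise comparison and also verify the easy inclusion that the $M_i$ themselves have degree in $K_0(u)$), but the underlying idea is identical.
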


\begin{proof}
First, observe that every element of $R(A,P_0)(u)$
is a polynomial in the variables $T_{ij}$.
Now consider a monomial $T^l$ in the $T_{ij}$ 
of degree $w \in K_0(u)$, where $l \in \ZZ^{n+m}$. 
Then $\alpha w \in \beta_0 u$ holds for some 
$\alpha \in \ZZ_{> 0}$ and $\beta_0 \in \ZZ$.
Moreover, there are $\beta_1, \ldots, \beta_r \in \ZZ$
with
$$
\alpha l 
 = 
\beta_0 l_0' + \beta_1 (l_0'-l_1') + \ldots + \beta_r(l_0' -l_r'),
\text{ where }
l_i'  :=  l_{i1} e_{i1} + \ldots + l_{in_i} e_{in_i},
$$
reflecting the fact that $\alpha l - \beta_0 l_0'$
lies in the row space of $P_0$.
Consequently, we obtain
$l = \beta_0' l_0' + \ldots + \beta_r' l_r'$
for suitable $\beta_i' \in \QQ$. 
Since $l$ has only non-negative integer entries, 
we conclude that every $\beta_i'$ is a non-negative 
integral multiple of $\mathfrak{l}_i^{-1}$.
Thus, $T^l$ is a monomial in the $T_i^{l_i/\mathfrak{l}_i}$.
The assertion follows.
\end{proof}

\begin{proof}[Proof of Theorem~\ref{theo::curve}]
The curve $Y$ occurs as a GIT-quotient: 
$Y = \b{X}^{ss}(u^0) / H_0^0$, where 
$u^0 \in \Chi(H_0^0)$ represents the character 
induced by $u = \deg(g_0) \in K_0 = \Chi(H_0)$.
In other words, we have $Y = \Proj \, R(A,P_0)(u^0)$
with the Veronese subalgebra defined by $u^0$.
We may replace $u^0$ with 
$$
w^0 \ := \ 
\frac{1}{\overline{\mathfrak{l}}} u^0 
\ \in \ 
\Chi(H_0^0).
$$
Then $R(A,P_0)(u^0)$ is replaced with 
$R(A,P_0)(w^0)$ which in turn equals the Veronese 
subalgebra treated in Lemma~\ref{lemma::amu}.
Moreover, the generators 
$T_i^{l_i/\mathfrak{l}_i} \in R(A,P_0)(w^0)$ 
are of degree $b_iw^0 \in  \Chi(H_0^0)$,
respectively.  
We obtain a closed 
embedding into a weighted projective space
$$
Y 
\ = \ 
V(h_0, \ldots, h_{r-2}) 
\ \subseteq \ 
\PP(b_0, \dots, b_r),
\qquad
h_{i}
\ :=  \
\det
\left[
\begin{array}{lll}
T_i^{\mathfrak{l}_i}& T_{i+1}^{\mathfrak{l}_{i+1}}&T_{i+2}^{\mathfrak{l}_{i+2}}
\\
a_i & a_{i+1}& a_{i+2}
\end{array}
\right],
$$
where the $h_i$ generate the ideal of relations among the generators 
of the Veronese subalgebra $R(A,P_0)(w^0)$.
The idea is now to construct a ramified 
covering $Y' \to Y$ with a suitable curve $Y'$ 
and then to compute the genus of $Y$ via the Hurwitz
formula. Consider
$$ 
Y'
\ = \ 
V(h_0', \ldots, h_{r-2}') 
\ \subseteq \ 
\PP_r,
\qquad
h_{i}'
\ :=  \
\det
\left[
\begin{array}{lll}
T_i^{\b{\mathfrak{l}}} & T_{i+1}^{\b{\mathfrak{l}}} &T_{i+2}^{\b{\mathfrak{l}}}
\\
a_i & a_{i+1}& a_{i+2}
\end{array}
\right].
$$
The $Y' \subseteq \PP_r$ is a smooth complete 
intersection curve.
Computing the genus of $Y'$ according
to~\cite{Fu}, we obtain
$$
g(Y') 
\ = \ 
\frac{1}{2}((r-1)
\overline{\mathfrak{l}}^r - (r+1)\overline{\mathfrak{l}}^{r-1})+1.
$$
The morphism $\PP_r \to \PP(b_0, \ldots, b_r)$
sending $[z_0,\ldots,z_r]$ to $[z_0^{b_0},\ldots,z_r^{b_r}]$
restricts to a morphism $Y' \to Y$ 
of degree $b_0 \cdots b_r$.
The intersection $Y \cap U_i$ with the $i$-th coordinate
hyperplane $U_i \subseteq \PP_r$ contains   
precisely $\overline{\mathfrak{l}}^{r-1}$ points and 
each of these points has ramification 
order $b_i\cdot b(i)-1$.
Outside the $U_i$, the morphism $Y' \to Y$ is unramified.
The Hurwitz formula then gives $g(Y)$.
\end{proof}

We now use Theorem~\ref{theo::curve} to characterize 
rationality of $\b{X} = \Spec \, R(A,P_0)$.
For the special case of Pham-Brieskorn surfaces,
the following statement has been obtained in~\cite{BaKp}.

\begin{proposition}
\label{prop::Hyper}
Let $R(A,P_0)$ be of Type $2$ with $r=2$, that 
means that $\b{X} = \Spec \, R(A, P_0)$ is 
given as
$$ 
\b{X} 
\ \cong \ 
V(
T_{01}^{l_{01}} \cdots T_{0n_0}^{l_{0n_0}} 
\, + \, 
T_{11}^{l_{11}} \cdots T_{1n_1}^{l_{1n_1}}
\, + \, 
T_{21}^{l_{21}} \cdots T_{2n_2}^{l_{2n_2}} 
)
\ \subseteq \
\CC^n.
$$
Then the hypersurface $\b{X}$ is rational
if and only if one of the following conditions 
holds:
\begin{enumerate}
\item
there are pairwise coprime positive integers 
$c_0, c_1,c_2$ and a positive integer~$s$ 
such that, after suitable renumbering, one has
$$
\gcd(c_2,s)=1,
\quad
\mathfrak{l}_0=sc_0, 
\quad \mathfrak{l}_1=sc_1, 
\quad \mathfrak{l}_2=c_2;
$$
\item
there are pairwise coprime positive integers $c_0, c_1,c_2$ 
such that
$$
\mathfrak{l}_0=2c_0, 
\quad 
\mathfrak{l}_1=2c_1, 
\quad 
\mathfrak{l}_2=2c_2.
$$
\end{enumerate}
\end{proposition}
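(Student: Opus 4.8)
The plan is to split the statement into a geometric reduction and an elementary but decisive number-theoretic computation, the latter resting entirely on the genus formula of Theorem~\ref{theo::curve}.

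First I would reduce rationality of $\b{X}$ to the vanishing of $g(Y)$. The torus $T_0 := H_0^0$ is split, of dimension $\dim \b{X}-1$, and $Y$ is the smooth projective model of the quotient $\b{X} \git T_0$, so that $\CC(Y) = \CC(\b{X})^{T_0}$. Since the action has complexity one, a generic $T_0$-orbit has dimension $\dim T_0$, hence finite stabilizer $\Gamma$; the generic fibre of the quotient is therefore a torsor under the split torus $T_0/\Gamma$ over the field $k := \CC(Y)$. By Hilbert~90 such a torsor is trivial, so $\CC(\b{X})$ is purely transcendental over $k$. Consequently $\b{X}$ is rational if and only if $\CC(Y)$ is rational over $\CC$, that is, if and only if $g(Y)=0$. (The forward direction also follows directly: $\b{X}$ dominates $Y$, so rationality of $\b{X}$ makes the curve $Y$ unirational, hence rational by L\"uroth.)

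Next I would specialize the genus formula to $r=2$ and simplify it. Writing $\{i,j,k\}=\{0,1,2\}$, one has $b(i) = \overline{\mathfrak{l}}/\lcm(\mathfrak{l}_j,\mathfrak{l}_k)$, and the identity $\mathfrak{l}_j\mathfrak{l}_k = \gcd(\mathfrak{l}_j,\mathfrak{l}_k)\lcm(\mathfrak{l}_j,\mathfrak{l}_k)$ makes each term $\tfrac{\mathfrak{l}_0\mathfrak{l}_1\mathfrak{l}_2}{2\overline{\mathfrak{l}}}\cdot\tfrac{b(i)}{\mathfrak{l}_i}$ collapse to $\tfrac12\gcd(\mathfrak{l}_j,\mathfrak{l}_k)$. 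Thus $g(Y)=0$ becomes the symmetric equation
$$
\gcd(\mathfrak{l}_0,\mathfrak{l}_1) + \gcd(\mathfrak{l}_1,\mathfrak{l}_2) + \gcd(\mathfrak{l}_0,\mathfrak{l}_2) - \frac{\mathfrak{l}_0\mathfrak{l}_1\mathfrak{l}_2}{\overline{\mathfrak{l}}} \ = \ 2.
$$
Setting $\mathfrak{l} := \gcd(\mathfrak{l}_0,\mathfrak{l}_1,\mathfrak{l}_2)$ and $d_k := \gcd(\mathfrak{l}_i,\mathfrak{l}_j)$ (again with $\{i,j,k\}=\{0,1,2\}$) and using $\overline{\mathfrak{l}} = \mathfrak{l}_0\mathfrak{l}_1\mathfrak{l}_2\,\mathfrak{l}/(d_0d_1d_2)$, the last term equals $d_0d_1d_2/\mathfrak{l}$; writing $e_k := d_k/\mathfrak{l}$, so that $\gcd(e_0,e_1,e_2)=1$, the equation turns into
$$
\mathfrak{l}\,\bigl(e_0 + e_1 + e_2 - \mathfrak{l}\,e_0e_1e_2\bigr) \ = \ 2.
$$

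Finally I would solve this Diophantine equation and read off the structure. Positivity of the bracket forces $\mathfrak{l} \in \{1,2\}$, and a short case check over $e_k \ge 1$ leaves exactly two families: $\mathfrak{l}=1$ with $\{e_0,e_1,e_2\}=\{1,1,t\}$ for some $t \ge 1$, i.e.\ two of the pairwise gcds $d_k$ equal $1$; and $\mathfrak{l}=2$ with $e_0=e_1=e_2=1$, i.e.\ all three pairwise gcds equal $2$. In the first family, renumbering so that $\gcd(\mathfrak{l}_0,\mathfrak{l}_2)=\gcd(\mathfrak{l}_1,\mathfrak{l}_2)=1$ and putting $s:=\gcd(\mathfrak{l}_0,\mathfrak{l}_1)$, $c_0:=\mathfrak{l}_0/s$, $c_1:=\mathfrak{l}_1/s$, $c_2:=\mathfrak{l}_2$ yields pairwise coprime $c_0,c_1,c_2$ with $\gcd(c_2,s)=1$, which is condition~(i). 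In the second, $c_k:=\mathfrak{l}_k/2$ are pairwise coprime, which is condition~(ii). For the converse one checks directly that (i) produces the pairwise-gcd profile $(1,1,s)$ with $\mathfrak{l}=1$ and (ii) the profile $(2,2,2)$ with $\mathfrak{l}=2$, both satisfying the displayed equation, so the equivalence is complete. The hard part will be the simplification in the previous step: coaxing the opaque genus formula into the symmetric gcd-identity is exactly what turns the problem into the transparent Diophantine equation solved here.
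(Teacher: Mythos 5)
Your proof is correct and follows essentially the same route as the paper: reduce rationality of $\b{X}$ to $g(Y)=0$, specialize the genus formula of Theorem~\ref{theo::curve} to $r=2$, and solve the resulting Diophantine equation, which is precisely the content of the paper's Lemma~\ref{lem:variousl} (your $e_k$ are the paper's $\mathfrak{l}_{ij}$). The only differences are your explicit Hilbert~90/L\"uroth justification of the first reduction, which the paper merely asserts, and a slightly streamlined case analysis of the Diophantine equation in place of the inequality manipulations in Lemma~\ref{lem:variousl}.
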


\begin{lemma}
\label{lem:variousl}
For $i=0,1,2$, let $l_i = (l_{i1}, \ldots, l_{in_i})$
be tuples of positive integers.
Define $\mathfrak{l}$, $\mathfrak{l}_i$ 
and $\mathfrak{l}_{ij}$ as in 
Definition~\ref{def:variousl} for $r=2$.
Then the following statements are equivalent.
\begin{enumerate}
\item
We have
$
\mathfrak{l}(\mathfrak{l}\mathfrak{l}_{01}\mathfrak{l}_{02}\mathfrak{l}_{12}
-
(\mathfrak{l}_{01}+\mathfrak{l}_{02}+\mathfrak{l}_{12}))
= 
-2
$.
\item
One of the following two conditions holds:
\begin{enumerate}
\item
there are pairwise coprime positive integers 
$c_0, c_1,c_2$ and a positive integer~$s$ 
such that, after suitable renumbering, one has
$$
\gcd(c_2,s)=1,
\quad
\mathfrak{l}_0=sc_0, 
\quad \mathfrak{l}_1=sc_1, 
\quad \mathfrak{l}_2=c_2;
$$
\item
there are pairwise coprime positive integers $c_0, c_1,c_2$ 
such that
$$
\mathfrak{l}_0=2c_0, 
\quad 
\mathfrak{l}_1=2c_1, 
\quad 
\mathfrak{l}_2=2c_2.
$$
\end{enumerate}
\end{enumerate}
\end{lemma}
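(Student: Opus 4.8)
The plan is to reduce the stated equivalence to an elementary Diophantine analysis in the three pairwise greatest common divisors. First I would abbreviate $L := \mathfrak{l}$ and set $a := \mathfrak{l}_0/\mathfrak{l}$, $b := \mathfrak{l}_1/\mathfrak{l}$, $c := \mathfrak{l}_2/\mathfrak{l}$, so that $\gcd(a,b,c) = 1$ and, by Definition~\ref{def:variousl}, $p := \mathfrak{l}_{01} = \gcd(a,b)$, $q := \mathfrak{l}_{02} = \gcd(a,c)$ and $t := \mathfrak{l}_{12} = \gcd(b,c)$. The first observation is that $p, q, t$ are pairwise coprime: a common prime divisor of, say, $p$ and $q$ would divide each of $a, b, c$, contradicting $\gcd(a,b,c) = 1$. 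Rewriting the equation in~(i) as $L\,(p + q + t - L\,pqt) = 2$ and using that both factors are positive integers, I obtain $L \in \{1,2\}$, which splits the problem into two cases.

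In the case $L = 1$ the equation reads $p + q + t - pqt = 2$, and I would show that the pairwise coprime solutions are exactly the triples in which at least two of $p, q, t$ equal $1$. If all three are $\ge 2$, then $pqt \ge 4\max(p,q,t) > p+q+t$, so the left-hand side is negative; if exactly one equals $1$, say $p=1$ with $q,t \ge 2$, the equation becomes $(q-1)(t-1) = 0$, which is impossible; and if two of them, say $p=q=1$, equal one, the equation holds identically for every value of the third. In the case $L = 2$ the equation is $p + q + t - 2pqt = 1$; the same estimate rules out all three being $\ge 2$, the substitution $p=1$ with $q,t \ge 2$ yields the impossible $q+t = 2qt$, and $p=q=1$ forces the third to equal $1$, so the only pairwise coprime solution is $p=q=t=1$.

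It then remains to translate these two outcomes into conditions~(ii)(a) and~(ii)(b). For $L=1$, after renumbering so that $\gcd(a,c)=\gcd(b,c)=1$ (i.e.\ $q=t=1$ and $p$ arbitrary), I would set $s := \gcd(a,b)$, write $a = s c_0$, $b = s c_1$ with $\gcd(c_0,c_1)=1$, and put $c_2 := c$; the coprimality relations just recorded give that $c_0,c_1,c_2$ are pairwise coprime and that $\gcd(c_2,s)=1$, which is precisely~(ii)(a), the subcase $p=q=t=1$ being subsumed under $s=1$. For $L=2$ the triple $a,b,c$ is pairwise coprime, so $\mathfrak{l}_i = 2c_i$ with $c_i := a,b,c$ yields~(ii)(b). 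Conversely, I would substitute the shapes of~(ii) back: in~(ii)(a) one computes $\mathfrak{l}=1$ and $(p,q,t)=(s,1,1)$, whence $1\cdot(s+1+1) - 1\cdot s = 2$; in~(ii)(b) one computes $\mathfrak{l}=2$ and $(p,q,t)=(1,1,1)$, whence $2\cdot 3 - 4\cdot 1 = 2$.

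I expect the genuine content to lie in the two case analyses of the middle paragraph, which are short but must be done carefully to cover the "exactly one equals $1$" subcases. The main bookkeeping obstacle will be the back-translation: one has to verify that the auxiliary $s$ and $c_0,c_1,c_2$ extracted from the $\mathfrak{l}_i$ simultaneously satisfy \emph{all} the coprimality constraints of~(ii)(a), and to use the "suitable renumbering" consistently so that the index singled out as coprime to the other two becomes index $2$.
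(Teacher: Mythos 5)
Your proposal is correct and follows essentially the same route as the paper: reduce to $\mathfrak{l}\in\{1,2\}$ via divisibility, then determine exactly when the resulting Diophantine equation in the three pairwise gcds $\mathfrak{l}_{01},\mathfrak{l}_{02},\mathfrak{l}_{12}$ can hold, concluding that at least two of them (resp.\ all three) equal one. The only difference is in the middle step — the paper derives this from an algebraic identity yielding an inequality with explicit equality conditions, whereas you argue by a direct case split on how many of $p,q,t$ equal $1$ — and you additionally write out the back-translation to (ii)(a)/(ii)(b) and the converse computation, which the paper leaves implicit; both arguments are sound.
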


\begin{proof}
If~(ii) holds, then a simple computation shows
that~(i) is valid.
Now, assume that~(i) holds. 
Then the following cases have to be 
considered.

\medskip

\noindent
\emph{Case 1}. We have $\mathfrak{l}=1$. 
Then 
$
\mathfrak{l}_{01}(\mathfrak{l}_{02}\mathfrak{l}_{12}-1)
=
\mathfrak{l}_{02}+\mathfrak{l}_{12}-2
$
holds.
From this we deduce 
\begin{eqnarray*}
\mathfrak{l}_{01}(\mathfrak{l}_{02}\mathfrak{l}_{12}-1)
& = & 
(\mathfrak{l}_{01}-1)(\mathfrak{l}_{02}\mathfrak{l}_{12}-1)
+
(\mathfrak{l}_{02}-1)(\mathfrak{l}_{12}-1)+\mathfrak{l}_{02}
+
\mathfrak{l}_{12}-2
\\
& \ge &
\mathfrak{l}_{02}+\mathfrak{l}_{12}-2,
\end{eqnarray*}
where equality holds if and only if at least two of 
$\mathfrak{l}_{01}, \mathfrak{l}_{02}, \mathfrak{l}_{12}$ 
equal one. 
So, we arrive at Condition~(a).

\medskip

\noindent
\emph{Case 2}. We have $\mathfrak{l}=2$. 
Then we have 
$
\mathfrak{l}_{01}(2\mathfrak{l}_{02}\mathfrak{l}_{12}-1)+1
=
\mathfrak{l}_{02}+\mathfrak{l}_{12}.
$
In this situation, we conclude
\begin{eqnarray*}
\mathfrak{l}_{01}(2\mathfrak{l}_{02}\mathfrak{l}_{12}-1)+1 
& = &
(\mathfrak{l}_{01}-1)(2\mathfrak{l}_{02}\mathfrak{l}_{12}-1)
+\mathfrak{l}_{02}\mathfrak{l}_{12}
\\
& & 
\quad +(\mathfrak{l}_{02}-1)(\mathfrak{l}_{12}-1)+\mathfrak{l}_{02}
+
\mathfrak{l}_{12}-1
\\
& \ge &
\mathfrak{l}_{02}+\mathfrak{l}_{12},
\end{eqnarray*}
where equality holds if and only if 
we have $\mathfrak{l}_{01}=\mathfrak{l}_{02}=\mathfrak{l}_{12}=1$.
Thus, we arrive at Condition~(b).
\end{proof}

\begin{proof}[Proof of Proposition~\ref{prop::Hyper}]
First, observe that $\b{X}$ is rational if and only 
if $Y$ is rational or, in other words, of genus zero.
For $r=2$, Theorem~\ref{theo::curve} gives
$$ 
g(Y) 
\ = \ 
\frac{\mathfrak{l}}{2}
(\mathfrak{l}\mathfrak{l}_{01}\mathfrak{l}_{02}\mathfrak{l}_{12}
- \mathfrak{l}_{01} -\mathfrak{l}_{02} -\mathfrak{l}_{12})
+1.
$$
Thus, according to Lemma~\ref{lem:variousl}, 
condition $g(Y) = 0$ holds if and only if~(i) 
or~(ii) of the proposition holds.
\end{proof}

\begin{remark}
If the defining polynomial in Proposition~\ref{prop::Hyper}
is classically homogeneous, then it defines a projective 
hypersurface $X' \subseteq \PP^{n-1}$ and the following 
statements are equivalent.
\begin{enumerate}
\item
$X'$ is rational.
\item
$\Cl(X')$ is finitely generated.
\item
Condition~\ref{prop::Hyper}~(i) or~(ii) holds.
\end{enumerate}
\end{remark}

\begin{corollary}
\label{cor:ratchar}
Let $R(A,P_0)$ be of Type $2$.
Then $\b{X} = \Spec \, R(A,P_0)$ is rational if 
and only if one of the following conditions
holds:
\begin{enumerate}
\item
We have $\gcd(\mathfrak{l}_i,\mathfrak{l}_j) = 1$
for all $0 \le i < j \le r$, in other words, $R(A,P_0)$ is factorial.
\item
There are $0 \le i < j \le r$ with 
$\gcd(\mathfrak{l}_i,\mathfrak{l}_j) > 1$
and $\gcd(\mathfrak{l}_u,\mathfrak{l}_v) = 1$ 
whenever $v \not\in \{i,j\}$.
\item
There are $0 \le i < j < k \le r$
with 
$
\gcd(\mathfrak{l}_i,\mathfrak{l}_j) = 
\gcd(\mathfrak{l}_i,\mathfrak{l}_k) =
\gcd(\mathfrak{l}_j,\mathfrak{l}_k) =
2
$ 
and $\gcd(\mathfrak{l}_u,\mathfrak{l}_v) = 1$ 
whenever $v \not\in \{i,j,k\}$.
\end{enumerate} 
\end{corollary}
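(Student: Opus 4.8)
The plan is to convert rationality of $\b X=\Spec R(A,P_0)$ into the vanishing of the genus computed in Theorem~\ref{theo::curve}. Exactly as in the proof of Proposition~\ref{prop::Hyper}, the good quotient $\b X\to Y$ by the torus $H_0^0$ has rational generic fibre, so $\b X$ is rational if and only if $g(Y)=0$; the whole statement thus becomes a question about the formula of Theorem~\ref{theo::curve}. I encode the tuple $(\mathfrak{l}_0,\dots,\mathfrak{l}_r)$ in its \emph{non-coprimality graph} on $\{0,\dots,r\}$, with an edge $\{i,j\}$ whenever $\gcd(\mathfrak{l}_i,\mathfrak{l}_j)>1$, and call an index \emph{isolated} if it is coprime to all others. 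The first auxiliary step is a reduction lemma: deleting an isolated index leaves $g(Y)$ unchanged. This is a direct computation with Theorem~\ref{theo::curve}: if $\mathfrak{l}_i$ is coprime to all remaining entries then $\overline{\mathfrak{l}}=\mathfrak{l}_i\,\overline{\mathfrak{l}}'$, the prefactor $\mathfrak{l}_0\cdots\mathfrak{l}_r/\overline{\mathfrak{l}}$ is unchanged, and one checks $b(i)=\mathfrak{l}_i$ together with $b(k)=b'(k)$ for the surviving indices, so the old and new genera coincide. Iterating, $g(Y)$ depends only on the subtuple supported on the non-isolated indices.

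The arithmetic core is the decisive observation that $b(i)$ divides $\mathfrak{l}_i$ for every $i$, with $b(i)=\mathfrak{l}_i$ precisely when $i$ is isolated. I would prove this prime by prime: for a prime $p$ set $e_k=v_p(\mathfrak{l}_k)$ and $E=\max_k e_k$; then $v_p(b(i))=E-\max_{j\neq i}e_j$, and a short case split according to whether $e_i=E$ gives $v_p(b(i))\le e_i$ in all cases, with equality for every $p\mid\mathfrak{l}_i$ exactly when no other $\mathfrak{l}_j$ shares a prime with $\mathfrak{l}_i$. Hence $b(i)$ is a \emph{proper} divisor of $\mathfrak{l}_i$ at every non-isolated index, so there $b(i)/\mathfrak{l}_i\le\tfrac12$, while $b(i)/\mathfrak{l}_i=1$ at isolated indices. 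Writing $J$ for the set of non-isolated indices and $I$ for its complement, the equation $g(Y)=0$ of Theorem~\ref{theo::curve} rearranges to
\[
\sum_{i=0}^r \frac{b(i)}{\mathfrak{l}_i} \;=\; (r-1)+\frac{2\overline{\mathfrak{l}}}{\mathfrak{l}_0\cdots\mathfrak{l}_r}\;>\;r-1 .
\]
Bounding the left-hand side by $|I|+\tfrac12|J|$ and using $|I|+|J|=r+1$ yields $|J|<4$, so at most three indices are non-isolated.

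It remains to match $|J|\in\{0,2,3\}$ (the value $1$ is impossible, since a non-isolated vertex has a non-isolated neighbour) with the three cases. If $|J|=0$ the $\mathfrak{l}_i$ are pairwise coprime, which is case~(i) and, by the reduction lemma, reduces to a polynomial ring. If $|J|=2$ the two non-isolated indices carry the unique edge and all remaining indices are coprime to everything, which is exactly case~(ii). If $|J|=3$, the reduction lemma identifies $g(Y)$ with the genus of the three-block curve on these indices, so Proposition~\ref{prop::Hyper} forces that subtuple into its condition~(i) or~(ii); condition~(i) would leave one of the three indices coprime to the other two, contradicting non-isolatedness, so condition~(ii) holds and the three entries are $2c_0,2c_1,2c_2$ with pairwise coprime $c_\nu$, i.e.\ all three pairwise gcd's equal $2$, which is case~(iii). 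Conversely each of (i),(ii),(iii) reduces, after deleting the globally coprime indices, to a polynomial ring, a two-block hypersurface, or a gcd-two triple covered by Proposition~\ref{prop::Hyper}(ii), all of genus zero.

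The main obstacle is extracting a usable estimate from the rather opaque genus formula. The crux is the divisibility $b(i)\mid\mathfrak{l}_i$, which at one stroke supplies the uniform bound $b(i)/\mathfrak{l}_i\le\tfrac12$ off the isolated locus and the exact equality criterion; once this is established the bound $|J|\le 3$ and the appeal to Proposition~\ref{prop::Hyper} are routine. I expect the only genuinely delicate verification to be the reduction lemma, where the interplay of $\gcd$ and $\lcm$ in the definitions of $b_i$ and $b(i)$ from Definition~\ref{def:variousl} must be tracked carefully to confirm that the explicit change of $r$ to $r-1$ is exactly compensated.
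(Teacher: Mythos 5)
Your proposal is correct and follows essentially the same route as the paper: your reduction lemma for isolated indices is the paper's Lemma~\ref{lem:redtohysu}, your divisibility observation $b(i)\mid\mathfrak{l}_i$ with equality exactly at isolated indices and the resulting bound of at most three non-isolated indices is the paper's Lemma~\ref{lem:ijk}, and the final reduction to the trinomial case handled by Proposition~\ref{prop::Hyper} is precisely how the paper concludes. The only difference is cosmetic: you spell out the case match $|J|\in\{0,2,3\}$ against conditions (i)--(iii) more explicitly than the paper does.
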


\begin{lemma}
\label{lem:redtohysu}
Let $A,P_0$ be defining data of Type~2,
enhance $A$ to $A'$ by attaching a 
further column
and $P_0$ to $P'_0$ by attaching
$l_{r+1}$ to $l_0, \ldots, l_r$.
If $\gcd(\mathfrak{l}_i,\mathfrak{l}_{r+1}) = 1$
holds for $i = 0, \ldots, r$, then 
we have $g(Y) = g(Y')$ for the 
curves associated with $R(A,P)$ and 
$R(A',P_0')$ respectively.
\end{lemma}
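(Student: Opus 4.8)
The plan is to prove the equality $g(Y) = g(Y')$ by a direct comparison of the two numbers produced by the genus formula of Theorem~\ref{theo::curve}, applied once to the data with blocks $0, \ldots, r$ and once to the enhanced data with blocks $0, \ldots, r+1$. I would write $\overline{\mathfrak{l}}, b_i, b(i)$ for the quantities of Definition~\ref{def:variousl} attached to $R(A, P_0)$ and $\overline{\mathfrak{l}}', b_i', b'(i)$ for those attached to $R(A', P_0')$. The first step is to exploit the coprimality hypothesis $\gcd(\mathfrak{l}_i, \mathfrak{l}_{r+1}) = 1$ for $i = 0, \ldots, r$ to record the elementary identities $\overline{\mathfrak{l}}' = \overline{\mathfrak{l}}\,\mathfrak{l}_{r+1}$ and $\mathfrak{l}_0 \cdots \mathfrak{l}_{r+1} = (\mathfrak{l}_0 \cdots \mathfrak{l}_r)\,\mathfrak{l}_{r+1}$. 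Consequently the leading factor $\frac{\mathfrak{l}_0 \cdots \mathfrak{l}_{r+1}}{2\overline{\mathfrak{l}}'}$ of $g(Y') - 1$ agrees with the factor $\frac{\mathfrak{l}_0 \cdots \mathfrak{l}_r}{2\overline{\mathfrak{l}}}$ of $g(Y) - 1$, so everything reduces to comparing the two bracketed sums.

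The heart of the argument is to track how the numbers $b(i)$ change. Since $b_i' = \overline{\mathfrak{l}}'/\mathfrak{l}_i = \mathfrak{l}_{r+1}\, b_i$ for $i \le r$ while $b_{r+1}' = \overline{\mathfrak{l}}$, a prime-by-prime valuation analysis should give $b'(i) = \gcd(\mathfrak{l}_{r+1}\, b(i), \overline{\mathfrak{l}}) = b(i)$ for each $i \le r$: at a prime dividing $\mathfrak{l}_{r+1}$ all the $\mathfrak{l}_j$ with $j \le r$ are units, so both sides have valuation zero, while at a prime not dividing $\mathfrak{l}_{r+1}$ one uses $b(i) \mid \overline{\mathfrak{l}}$ (which holds since $b(i) \mid b_j$ and $b_j \mathfrak{l}_j = \overline{\mathfrak{l}}$ for any $j \neq i$). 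For the new index I would first observe the basic fact $\gcd(b_0, \ldots, b_r) = 1$, immediate from $\overline{\mathfrak{l}} = \lcm(\mathfrak{l}_0, \ldots, \mathfrak{l}_r)$ by comparing $p$-adic valuations; this yields $b'(r+1) = \gcd(b_j' ; j \le r) = \mathfrak{l}_{r+1}\gcd(b_0, \ldots, b_r) = \mathfrak{l}_{r+1}$, and hence the single extra summand $b'(r+1)/\mathfrak{l}_{r+1} = 1$.

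Combining these computations, the sum appearing for $Y'$ satisfies $\sum_{i=0}^{r+1} b'(i)/\mathfrak{l}_i = \sum_{i=0}^r b(i)/\mathfrak{l}_i + 1$, so that the bracket $r - \sum_{i=0}^{r+1} b'(i)/\mathfrak{l}_i$ for $Y'$ equals the bracket $(r-1) - \sum_{i=0}^r b(i)/\mathfrak{l}_i$ for $Y$: the increment of the constant term from $r-1$ to $r$ is cancelled exactly by the new summand. Multiplying by the common leading factor gives $g(Y') = g(Y)$, as desired. I expect the only genuinely delicate point to be the identity $b'(i) = b(i)$ for $i \le r$, since that is where the coprimality hypothesis has to be used with care; the cleanest route is the valuation bookkeeping sketched above, splitting primes according to whether they divide $\mathfrak{l}_{r+1}$.
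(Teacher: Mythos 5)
Your proposal is correct and follows essentially the same route as the paper: both establish $\overline{\mathfrak{l}}' = \overline{\mathfrak{l}}\,\mathfrak{l}_{r+1}$, $b'(i) = b(i)$ for $i \le r$ and $b'(r+1) = \mathfrak{l}_{r+1}$, and then substitute into the genus formula of Theorem~\ref{theo::curve}. The paper merely states these identities without proof, whereas you supply the valuation-theoretic verification; the bookkeeping checks out.
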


\begin{proof}
Denote the numbers arising from $P'$ 
in the sense of Definition~\ref{def:variousl} 
by $\mathfrak{l}_i'$, $\mathfrak{l}'$ etc.
Then we have
$$ 
\b{\mathfrak{l}}' 
\ = \ 
\b{\mathfrak{l}}\mathfrak{l}_{r+1},
\quad
b'(i) 
\ = \ 
\gcd(\b{\mathfrak{l}}, \, \b{\mathfrak{l}}' / \b{\mathfrak{l}}_j; \; j \ne i)
\ = \ 
b(i),
\quad 
i = 0, \ldots, r,
$$
$$
b(r+1)
\ = \ 
\gcd(\b{\mathfrak{l}}' / \b{\mathfrak{l}}_0, 
\ldots, 
\b{\mathfrak{l}}' / \b{\mathfrak{l}}_r)
\ = \ 
\mathfrak{l}_{r+1}.
$$
Plugging these identities into the genus formula 
of Theorem~\ref{theo::curve},
we directly obtain $g(Y') = g(Y)$.  
\end{proof}

\begin{lemma}
\label{lem:ijk}
Let $R(A,P_0)$ be of Type $2$ and assume 
that the curve $Y$ associated with $R(A,P)$ 
is of genus zero.
Then there are $0 \le i \le j \le  k \le r$
with $\gcd(\mathfrak{l}_u,\mathfrak{l}_v) = 1$ 
whenever $v \not\in \{i,j,k\}$.
\end{lemma}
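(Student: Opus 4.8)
The plan is to extract the conclusion directly from the genus formula of Theorem~\ref{theo::curve}. Since $g(Y)=0$ and the prefactor $\mathfrak{l}_0\cdots\mathfrak{l}_r/(2\overline{\mathfrak{l}})$ is strictly positive, that formula forces $\sum_{i=0}^r b(i)/\mathfrak{l}_i > r-1$, which I rewrite as
$$
\sum_{i=0}^r \left(1 - \frac{b(i)}{\mathfrak{l}_i}\right) \ < \ 2 .
$$
Everything then reduces to controlling the individual summands $1-b(i)/\mathfrak{l}_i$ in terms of the coprimality pattern of the $\mathfrak{l}_i$.

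The first step is to show that each $\mathfrak{l}_i/b(i)$ is a positive integer, so that every summand above is non-negative. This is a prime-by-prime computation using the definitions in~\ref{def:variousl}: fix a prime $p$, write $e_j$ for the exponent of $p$ in $\mathfrak{l}_j$ and $E:=\max_j e_j$ for its exponent in $\overline{\mathfrak{l}}$. From $b_j=\overline{\mathfrak{l}}/\mathfrak{l}_j$ and $b(i)=\gcd(b_j;\,j\neq i)$ one reads off that the exponent of $p$ in $b(i)$ equals $E-\max_{j\neq i}e_j$, while the exponent in $\mathfrak{l}_i$ is $e_i$. A short dichotomy — according to whether $e_i=E$ or $e_i<E$ — yields $e_i-E+\max_{j\neq i}e_j\ge 0$ in both cases, proving integrality.

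The same computation pins down exactly when a summand vanishes. Tracing through both cases, the exponent of $p$ in $\mathfrak{l}_i/b(i)$ is zero precisely when no prime divides both $\mathfrak{l}_i$ and some $\mathfrak{l}_j$ with $j\neq i$; hence $b(i)/\mathfrak{l}_i=1$ if and only if $\gcd(\mathfrak{l}_i,\mathfrak{l}_j)=1$ for all $j\neq i$. Consequently, as soon as $\mathfrak{l}_i$ has a non-trivial common factor with some $\mathfrak{l}_j$, the positive integer $\mathfrak{l}_i/b(i)$ is at least $2$, so that $1-b(i)/\mathfrak{l}_i\ge \tfrac12$.

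Combining these facts finishes the argument. Let $S\subseteq\{0,\ldots,r\}$ be the set of indices $i$ for which $\mathfrak{l}_i$ is \emph{not} coprime to all remaining $\mathfrak{l}_j$. Each $i\in S$ contributes at least $\tfrac12$ to the displayed sum, so the inequality gives $\tfrac12\,|S|<2$, whence $|S|\le 3$. Choosing $i\le j\le k$ with $\{i,j,k\}\supseteq S$ (repeating indices if $|S|<3$), any $v\notin\{i,j,k\}$ lies outside $S$ and is therefore coprime to every $\mathfrak{l}_u$ with $u\neq v$, which is the assertion. I expect the valuation bookkeeping of the middle step — establishing integrality of $\mathfrak{l}_i/b(i)$ together with the clean characterisation of equality — to be the only genuinely delicate point; the rest is immediate from the genus formula.
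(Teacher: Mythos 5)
Your proposal is correct and follows essentially the same route as the paper's proof: both extract $\sum_{i}b(i)/\mathfrak{l}_i>r-1$ from the genus formula, use that $b(i)$ divides $\mathfrak{l}_i$ so each deficient term costs at least $\tfrac12$, and invoke the equivalence $b(i)=\mathfrak{l}_i\iff\gcd(\mathfrak{l}_i,\mathfrak{l}_j)=1$ for all $j\neq i$. The only difference is that you supply the prime-by-prime verification of the divisibility and of that equivalence, which the paper asserts without proof; your valuation computation is correct.
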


\begin{proof}
According to Theorem~\ref{theo::curve}, the fact 
that the curve $Y$ associated with~$R(A,P)$ 
is of genus zero implies
$$ 
\sum_{i=0}^r \frac{b(i)}{\mathfrak{l}_i}
\ = \ 
(r-1) 
+
\frac{2 \b{\mathfrak{l}}}{\mathfrak{l}_0 \cdots \mathfrak{l}_r}
\ > \ 
r-1.
$$
As $b(i)$ divides $\mathfrak{l}_i$, we see that 
$b(i) \ne \mathfrak{l}_i$ can happen at most three times.
Moreover, $b(i) = \mathfrak{l}_i$ is equivalent 
to $\gcd(\mathfrak{l}_i,\mathfrak{l}_j) = 1$ for 
all $j \ne i$.
\end{proof}

\begin{proof}[Proof of Corollary~\ref{cor:ratchar}]
We may assume that the indices $i,j$ and $k$ of 
Lemma~\ref{lem:ijk} are $0,1$ and $2$.
Then Lemma~\ref{lem:redtohysu} says that $\b{X}$ 
is rational if and only if the trinomial hypersurface
defined by the exponent vectors $l_0,l_1,l_2$ is rational.
Thus, Proposition~\ref{prop::Hyper} gives the assertion.
\end{proof}

\begin{corollary}
\label{cor:rat}
Let $R(A,P_0)$ be a platonic ring of Type $2$. 
Then $\b{X} = \Spec \, R(A,P_0)$ is rational.
\end{corollary}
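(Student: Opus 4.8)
The plan is to deduce the statement from the rationality criterion of Corollary~\ref{cor:ratchar}: it suffices to show that a platonic ring $R(A,P_0)$ of Type~2 satisfies one of its three conditions on the pairwise greatest common divisors of the numbers $\mathfrak{l}_0, \ldots, \mathfrak{l}_r$ of Definition~\ref{def:variousl}. The starting point is a structural observation: in a platonic ring at most three of the $\mathfrak{l}_i$ can exceed~$1$. Indeed, if $\mathfrak{l}_i > 1$ then every entry $l_{ij}$ of the $i$-th block is a multiple of $\mathfrak{l}_i$, hence $l_{ij} > 1$ for all $j$; so if four indices had $\mathfrak{l}_i > 1$, picking one variable from each block would produce a tuple $(l_{0j_0}, \ldots, l_{rj_r})$ with at least four entries exceeding~$1$, contradicting platonicity.

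Since a pair $(\mathfrak{l}_u, \mathfrak{l}_v)$ is automatically coprime once one of the two equals~$1$, all nontrivial pairwise gcds occur among the at most three \emph{big} indices $i$ with $\mathfrak{l}_i > 1$. If there are at most two big indices, then at most one pair can have nontrivial gcd, so condition~(i) or~(ii) of Corollary~\ref{cor:ratchar} holds immediately. The only genuine work is the case of exactly three big indices $p, q, s$, where $\mathfrak{l}_p, \mathfrak{l}_q, \mathfrak{l}_s \ge 2$ (and, again by platonicity, every other block consists only of $1$'s). Here the key extra input is that for any choice of variables the triple $(l_{pj_p}, l_{qj_q}, l_{sj_s})$, reordered decreasingly, is a platonic triple with all entries $\ge 2$. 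Inspecting the list $(5,3,2), (4,3,2), (3,3,2), (x,2,2)$ of such triples, each has minimum exactly~$2$; hence in every such triple at least one of the chosen entries equals~$2$, which forces at most two of the three blocks to contain a value $>2$.

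I would then split into the resulting subcases. If all three blocks are constant equal to~$2$, then $\mathfrak{l}_p=\mathfrak{l}_q=\mathfrak{l}_s=2$, all pairwise gcds equal~$2$, and condition~(iii) holds. If exactly one block has a value $>2$, then two of the $\mathfrak{l}$'s equal~$2$, and a short parity discussion on the third yields condition~(ii) (third $\mathfrak{l}$ odd) or~(iii) (third $\mathfrak{l}$ even). The remaining subcase, where exactly two blocks carry values $>2$ and the third is constant~$2$, is the main obstacle: one must use that every cross-triple $(a,b,2)$ with $a,b>2$ is platonic, which confines $\{a,b\}$ to $\{5,3\}, \{4,3\}, \{3,3\}$ and thereby pins each of $\mathfrak{l}_p, \mathfrak{l}_q$ into $\{2,3,4,5\}$; running through the admissible combinations, and discarding those (such as $(4,4,2)$ or $(5,4,2)$) that are not platonic, shows that the pairwise gcds are either all~$1$ or else exactly one exceeds~$1$, so condition~(i) or~(ii) holds and the ``two nontrivial pairs'' pattern never arises. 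Alternatively, one may first strip off the coprime blocks using Lemma~\ref{lem:redtohysu} to reduce to $r=2$ and then match this subcase directly against the conditions of Proposition~\ref{prop::Hyper} via Lemma~\ref{lem:variousl}. In all cases one of the conditions~(i)--(iii) of Corollary~\ref{cor:ratchar} is met, whence $\b{X}$ is rational.
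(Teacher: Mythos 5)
Your proposal is correct and follows exactly the route the paper intends: the corollary is stated as an immediate consequence of Corollary~\ref{cor:ratchar}, and you supply the (omitted) combinatorial verification that a platonic ring always satisfies one of conditions (i)--(iii) there. Your case analysis — at most three indices with $\mathfrak{l}_i>1$, and within the three-big-blocks case the reduction via the platonic triples $(5,3,2)$, $(4,3,2)$, $(3,3,2)$, $(x,2,2)$ — checks out in all subcases.
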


\begin{remark}
\label{rem:rattcs}
It may happen that for a 
rational $T$-variety $X$
of complexity one, 
the total coordinate space $\b{X}$ 
is rational, 
but the total coordinate space 
of $\b{X}$ not any more.
For instance consider
$$
X_3 
\ := \ 
V(T_1^4 + T_2^4 + T_3^4) 
\ \subseteq \ 
\CC^3.
$$
Then, according to Proposition~\ref{prop::Hyper},
the surface $X_3$ is not rational.
Moreover, $X_3$ is the total coordinate space 
of an affine rational $\CC^*$-surface $X_2$ 
with defining matrix 
$$
P_2
\ = \ 
\left[
\begin{array}{rrr}
-4 & 4 & 0
\\
-4 & 0 & 4
\\
-3 & 1 & 1
\end{array}
\right].
$$
The divisor class group of $X_2$ is 
$\Cl(X_2) = \ZZ / 4 \ZZ \times \ZZ / 4\ZZ$ 
and the $\Cl(X_2)$-grading of the Cox ring 
$
\mathcal{R}(X_2) 
=  
\CC[T_1,T_2,T_3]/ \bangle{T_1^4 + T_2^4 + T_3^4}$
is given by
$$ 
\deg(T_1) \ = \ (\b{1},\b{1}),
\quad
\deg(T_2) \ = \ (\b{1},\b{2}),
\quad
\deg(T_3) \ = \ (\b{2},\b{1}).
$$
For an equation for $X_2$, compute 
the degree zero subalgebra 
of $\mathcal{R}(X_2)$: it has three generators $S_1,S_2,S_3$ 
and $S_1^3 + S_2^3 + S_3^4$ as defining relation.
Thus,  
$$
X_2 
\ \cong \ 
V(S_1^3 + S_2^3 + S_3^4) 
\ \subseteq \ 
\CC^3.
$$
To obtain a rational affine $\CC^*$-surface having 
$X_2$ as its total coordinate space, we take $X_1$,
defined by 
$$
P_1
\ := \ 
\left[
\begin{array}{rrr}
-3 & 3 & 0
\\
-3 & 0 & 4
\\
-2 & 1 & 1
\end{array}
\right].
$$
The divisor class group of $X_1$ is 
$\Cl(X_1) = \ZZ / 3 \ZZ$ 
and the $\Cl(X_1)$-grading of the Cox ring 
$
\mathcal{R}(X_1) 
=  
\CC[S_1,S_2,S_3]/ \bangle{S_1^3 + S_2^3 + S_3^4}$
is given by
$$ 
\deg(T_1) \ = \ \b{1},
\quad
\deg(T_2) \ = \ \b{2},
\quad
\deg(T_3) \ = \ \b{0}.
$$
We have constructed a chain of total coordinate spaces 
$X_3 \to X_2 \to X_1$, where $X_1$ is a rational 
affine $\CC^*$-surface, $X_2$ is rational and
$X_3$ not.
\end{remark}

Finally, we determine the factor group of the 
maximal quasitorus by its unit component acting 
on a given trinomial hypersurface; the proof 
is a direct consequence of the subsequent lemma.

\begin{proposition}
Let $R(A,P)$ be any ring of Type~2, where $r=2$.
Then, for the quasitorus $H_0$ acting on the 
corresponding trinomial hypersurface 
$$
\b{X} 
\ \cong \  
V(
T_{01}^{l_{01}} \cdots T_{0n_0}^{l_{0n_0}}
\, + \,
T_{11}^{l_{11}} \cdots T_{1n_1}^{l_{1n_1}}
\, + \,
T_{21}^{l_{21}} \cdots T_{2n_2}^{l_{2n_2}} 
)
\subseteq 
\CC^n,
$$
the factor group $H_0/H_0^0$ by the unit 
component $H_0^0 \subseteq H_0$
is isomorphic to the product of cyclic groups
$C(\mathfrak{l})
\times 
C(\mathfrak{l}\mathfrak{l}_{01}\mathfrak{l}_{02}\mathfrak{l}_{12})$.
\end{proposition}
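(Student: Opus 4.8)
The plan is to identify the component group $H_0/H_0^0$ with the torsion subgroup of the grading group $K_0$ and then to read this torsion off from the Smith normal form of the defining matrix~$P_0$. Since $H_0 = \Spec\,\CC[K_0]$ is diagonalizable, writing $K_0 \cong \ZZ^s \oplus (K_0)_{\mathrm{tors}}$ gives $H_0 \cong (\CC^*)^s \times \Spec\,\CC[(K_0)_{\mathrm{tors}}]$, so that $H_0^0 = (\CC^*)^s$ and hence $H_0/H_0^0 \cong \Spec\,\CC[(K_0)_{\mathrm{tors}}] \cong (K_0)_{\mathrm{tors}}$, using self-duality of finite abelian groups. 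Thus it suffices to compute the torsion of $K_0 = \ZZ^n/\mathrm{im}(P_0^*)$, where for $r = 2$ the matrix $P_0$ has the two rows $(-l_0, l_1, 0)$ and $(-l_0, 0, l_2)$.

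Because $l_1$ and $l_2$ have strictly positive entries, $P_0$ has rank two, and its two invariant factors $d_1 \mid d_2$ satisfy $(K_0)_{\mathrm{tors}} \cong \ZZ/d_1 \oplus \ZZ/d_2$. Here $d_1$ is the gcd of all entries of $P_0$ and $d_1 d_2$ is the gcd of all its $2\times 2$ minors. First I would note that the entries are, up to sign, the $l_{ij}$, whence $d_1 = \gcd(\mathfrak{l}_0, \mathfrak{l}_1, \mathfrak{l}_2) = \mathfrak{l}$. Then, inspecting the three column shapes $(-l_{0j}, -l_{0j})$, $(l_{1j}, 0)$ and $(0, l_{2j})$, one sees that the only nonvanishing $2 \times 2$ minors are $\pm l_{0j} l_{1k}$, $\pm l_{0j} l_{2k}$ and $l_{1j} l_{2k}$. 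Applying the elementary identity $\gcd_{j,k}(a_j b_k) = \gcd_j(a_j)\,\gcd_k(b_k)$, proved by comparing $p$-adic valuations, these collapse to
$$
d_1 d_2 \ = \ \gcd(\mathfrak{l}_0\mathfrak{l}_1, \ \mathfrak{l}_0\mathfrak{l}_2, \ \mathfrak{l}_1\mathfrak{l}_2).
$$

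To finish, I would substitute $\mathfrak{l}_i = \mathfrak{l}\, m_i$ with $\gcd(m_0, m_1, m_2) = 1$, so that $\mathfrak{l}_{ij} = \gcd(m_i, m_j)$, and rewrite the display as $d_1 d_2 = \mathfrak{l}^2 \gcd(m_0 m_1, m_0 m_2, m_1 m_2)$. The crucial combinatorial step is then
$$
\gcd(m_0 m_1, \ m_0 m_2, \ m_1 m_2) \ = \ \mathfrak{l}_{01}\mathfrak{l}_{02}\mathfrak{l}_{12},
$$
which I would check prime by prime: with $a = v_p(m_0)$, $b = v_p(m_1)$, $c = v_p(m_2)$ and $\min(a,b,c) = 0$, both sides have $p$-valuation $\min(a+b, a+c, b+c) = \min(a,b) + \min(a,c) + \min(b,c)$. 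Combining everything yields $d_1 = \mathfrak{l}$ and $d_2 = \mathfrak{l}\,\mathfrak{l}_{01}\mathfrak{l}_{02}\mathfrak{l}_{12}$, and since $d_1 \mid d_2$ this is already the invariant-factor form; hence $H_0/H_0^0 \cong C(\mathfrak{l}) \times C(\mathfrak{l}\,\mathfrak{l}_{01}\mathfrak{l}_{02}\mathfrak{l}_{12})$. The main point of care — and presumably the content of the announced lemma — is the clean evaluation of the gcd of the $2 \times 2$ minors; note that the hypothesis $\gcd(m_0, m_1, m_2) = 1$, i.e. that $\mathfrak{l}$ has been divided out, is exactly what forces the valuation identity $\min(a+b,a+c,b+c) = \min(a,b)+\min(a,c)+\min(b,c)$, which fails in general.
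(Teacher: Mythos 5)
Your proof is correct and follows essentially the same route as the paper: identify $H_0/H_0^0$ with $K_0^{\rm tors}$ and compute the determinantal divisors (Smith normal form) of $P_0$, obtaining $\mathfrak{l}$ and $\mathfrak{l}\mathfrak{l}_{01}\mathfrak{l}_{02}\mathfrak{l}_{12}$ as the invariant factors. The only cosmetic difference is that the paper first performs column operations to shrink $P_0$ to a $2\times 3$ matrix with entries $\pm\mathfrak{l}_i$ before taking gcds of minors, whereas you compute the gcd of the $2\times 2$ minors of the full matrix directly and, usefully, spell out the $p$-adic verification of the identity $\gcd(\mathfrak{l}_0\mathfrak{l}_1,\mathfrak{l}_0\mathfrak{l}_2,\mathfrak{l}_1\mathfrak{l}_2)=\mathfrak{l}^2\mathfrak{l}_{01}\mathfrak{l}_{02}\mathfrak{l}_{12}$ that the paper leaves implicit.
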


\begin{lemma}
\label{lem:smith}
Consider a matrix $P_0$ with $m=0$ and $r=2$ 
as in Type~2 of Construction~\ref{constr:RAPdown}:
$$ 
P_0 
\ = \ 
\left[
\begin{array}{rrr}
-l_0 & l_1 & 0
\\
-l_0 & 0 & l_2
\end{array}
\right].
$$
As earlier, set $\mathfrak{l}_i = \gcd(l_{i1}, \ldots, l_{n_i})$.
Then, with 
$\mathfrak{l}_{ij} =  \gcd(\mathfrak{l}_i,\mathfrak{l}_j)$
and
$\mathfrak{l} =  \gcd(\mathfrak{l}_0,\mathfrak{l}_1,\mathfrak{l}_2)$,
we obtain 
$$
K_0^{\rm tors} 
\ = \ 
(\ZZ^{n} / {\rm im}(P_0^*))^{\rm tors} 
\ \cong \ 
C(\mathfrak{l})
\times 
C(\mathfrak{l}\mathfrak{l}_{01}\mathfrak{l}_{02}\mathfrak{l}_{12}).
$$
\end{lemma}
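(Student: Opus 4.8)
The plan is to read off the torsion of $K_0 = \ZZ^n/\mathrm{im}(P_0^*)$ from the Smith normal form of $P_0$. Since transposition leaves invariant factors unchanged, it suffices to work with the $2 \times n$ matrix $P_0$ itself. The two rows of $P_0$ are linearly independent (in row one the block-$2$ entries vanish while in row two they do not, and symmetrically for block one, where $n_1, n_2 \ge 1$), so $P_0$ has rank two and hence exactly two nonzero invariant factors $d_1 \mid d_2$. Consequently $K_0 \cong \ZZ^{n-2} \oplus C(d_1) \oplus C(d_2)$, so that $K_0^{\mathrm{tors}} \cong C(d_1) \times C(d_2)$. I would then invoke the determinantal-divisor description $d_1 = \Delta_1$ and $d_1 d_2 = \Delta_2$, where $\Delta_k$ is the $\gcd$ of all $k \times k$ minors of $P_0$, reducing the whole statement to computing $\Delta_1$ and $\Delta_2$.

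Computing $\Delta_1$ is immediate: it is the $\gcd$ of all entries $-l_{0j}, l_{1j}, l_{2j}$, which equals $\gcd(\mathfrak{l}_0,\mathfrak{l}_1,\mathfrak{l}_2) = \mathfrak{l}$, giving $d_1 = \mathfrak{l}$ and the first factor $C(\mathfrak{l})$. For $\Delta_2$ I would use the block structure of the columns: any $2 \times 2$ minor built from two columns of the \emph{same} block vanishes, whereas a minor from a column of block $i$ and one of block $i' \neq i$ equals $\pm\, l_{ij}l_{i'j'}$. Applying the elementary valuation identity $\gcd(\{a_ib_j\}_{i,j}) = \gcd_i(a_i)\,\gcd_j(b_j)$ blockwise then yields $\Delta_2 = \gcd(\mathfrak{l}_0\mathfrak{l}_1,\, \mathfrak{l}_0\mathfrak{l}_2,\, \mathfrak{l}_1\mathfrak{l}_2)$.

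It remains to match this with the asserted value. Writing $\mathfrak{l}_i = \mathfrak{l}\,m_i$ with $\gcd(m_0,m_1,m_2) = 1$, and recalling from Definition~\ref{def:variousl} that $\mathfrak{l}_{ij} = \gcd(\mathfrak{l}^{-1}\mathfrak{l}_i,\mathfrak{l}^{-1}\mathfrak{l}_j) = \gcd(m_i,m_j)$, I would factor out $\mathfrak{l}^2$ to get $\Delta_2 = \mathfrak{l}^2\,\gcd(m_0m_1,\,m_0m_2,\,m_1m_2)$ and then establish the key identity $\gcd(m_0m_1,m_0m_2,m_1m_2) = \gcd(m_0,m_1)\gcd(m_0,m_2)\gcd(m_1,m_2)$. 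Dividing $\Delta_2$ by $d_1 = \mathfrak{l}$ yields $d_2 = \mathfrak{l}\,\mathfrak{l}_{01}\mathfrak{l}_{02}\mathfrak{l}_{12}$, with $d_1 \mid d_2$ automatic, which is precisely the second factor.

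The main obstacle is this last identity, as it is the only point where the coprimality $\gcd(m_0,m_1,m_2) = 1$ is genuinely needed. I would settle it prime by prime: fixing $p$ and reordering so that $a = v_p(m_0) \le b = v_p(m_1) \le c = v_p(m_2)$, coprimality forces $a = 0$, and then both sides acquire $p$-valuation $\min(a+b,a+c,b+c) = b = \min(a,b)+\min(a,c)+\min(b,c)$. The same valuation bookkeeping also justifies the auxiliary identity $\gcd(\{a_ib_j\}) = \gcd_i(a_i)\gcd_j(b_j)$ used for $\Delta_2$. Everything else, namely the blockwise vanishing of minors and the determinantal-divisor formula, is routine.
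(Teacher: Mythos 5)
Your proof is correct and follows essentially the same route as the paper's: both read off the two invariant factors from the determinantal divisors $\Delta_1=\mathfrak{l}$ and $\Delta_2=\gcd(\mathfrak{l}_0\mathfrak{l}_1,\mathfrak{l}_0\mathfrak{l}_2,\mathfrak{l}_1\mathfrak{l}_2)$, the only difference being that the paper first reduces each block to $(\mathfrak{l}_i,0,\ldots,0)$ by column operations and then cites~\cite{Ne}, whereas you compute the $2\times 2$ minors of $P_0$ directly and supply the $p$-adic verification of $\Delta_2/\Delta_1=\mathfrak{l}\,\mathfrak{l}_{01}\mathfrak{l}_{02}\mathfrak{l}_{12}$ that the paper leaves implicit. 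You were also right to use the convention $\mathfrak{l}_{ij}=\gcd(\mathfrak{l}^{-1}\mathfrak{l}_i,\mathfrak{l}^{-1}\mathfrak{l}_j)$ of Definition~\ref{def:variousl} rather than the $\gcd(\mathfrak{l}_i,\mathfrak{l}_j)$ written in the statement, since with the latter reading the asserted formula already fails for $\mathfrak{l}_0=\mathfrak{l}_1=\mathfrak{l}_2=2$.
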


\begin{proof}
Suitable elementary column operations to $P_0$
transform the entries $l_i$ to 
$(\mathfrak{l}_i,0,\ldots, 0)$.
Thus, $K_0^{\rm tors} \cong (\ZZ^{3} / {\rm im}(P_1^*))^{\rm tors}$
holds with the $2 \times 3$ matrix
$$ 
P_1 
\ := \ 
\left[
\begin{array}{rrr}
-\mathfrak{l}_0 & \mathfrak{l}_1 & 0
\\
-\mathfrak{l}_0 & 0 & \mathfrak{l}_2
\end{array}
\right].
$$
The determinantal divisors of $P_0$ are 
$\gcd(\mathfrak{l}_0,\mathfrak{l}_1, \mathfrak{l}_2)$
and $\gcd(\mathfrak{l}_0\mathfrak{l}_1,\mathfrak{l}_0\mathfrak{l}_2,
\mathfrak{l}_1\mathfrak{l}_2)$.
Thus, the invariant factors of $P_0$ are
$\mathfrak{l}$
and 
$\mathfrak{l}\mathfrak{l}_{01}\mathfrak{l}_{02}\mathfrak{l}_{12}$;
see~\cite{Ne}.
\end{proof}

\section{Proof of Theorems~\ref{theorem::CoxIteration} 
and~\ref{theorem::quotient}}

We are ready to prove our first main results.
The proof of Theorem~\ref{theorem::CoxIteration}
will be in fact constructive in the sense 
that it allows to compute the defining equations 
of the Cox ring in every iteration step;
see Proposition~\ref{prop::isotropy}.

\begin{remark}
\label{rem:leadPlat}
Let $R(A,P)$ resp.~$R(A,P_0)$ be a ring of Type~2. 
Applying suitable admissible operations,
one achieves that $P$ resp.~$P_0$ (is \emph{ordered} 
in the sense that 
$l_{i1} \ge \ldots \ge l_{in_i}$
for all $i = 0, \ldots, r$ 
and ${l_{01} \ge \ldots \ge l_{r1}}$ 
hold. 
For an ordered $P$ resp.~$P_0$, 
the ring $R(A,P)$ resp.~$R(A,P_0)$
is platonic 
if and only if $(l_{01},l_{11},l_{21})$ 
is a platonic triple and $l_{i1} =1$ 
holds for $i \geq 3$.
\end{remark}

\begin{definition}
The \emph{leading platonic triple} of
a ring $R(A,P)$ resp.~$R(A,P_0)$
of Type~2 is the triple
$(l_{01},l_{11},l_{21})$ obtained after 
ordering $P$ resp.~$P_0$.
\end{definition}

\begin{lemma}
\label{lemma::Ptors}
Let $R(A,P_0)$ be of Type~$2$ and platonic
such that $l_{i1} \ge \ldots \ge l_{in_i}$ 
holds for all $i$ and $l_{i1} = 1$ for $i \ge 3$.
Moreover, assume 
$\gcd(\mathfrak{l}_1, \mathfrak{l}_2 ) = \mathfrak{l}$.
Then, with $K_0 = \ZZ^{n+m}/\im(P_0^*)$,
the kernel of $ \ZZ^{n+m}\to K_0/K_0^{\mathrm{tors}}$ 
is generated by the rows of the matrix 
$$
P_1
\ := \
\left[
\begin{array}{ccccccccc}
\frac{-1}{\gcd(\mathfrak{l}_0,\mathfrak{l}_1)} l_0
& 
\frac{1}{\gcd(\mathfrak{l}_0,\mathfrak{l}_1)} l_1
& 0 &  \dots & &0 & 0 & \dots & 0
\\[5pt]
\frac{-1}{\gcd(\mathfrak{l}_0,\mathfrak{l}_2)} l_0
& 0 
& 
\frac{1}{\gcd(\mathfrak{l}_0,\mathfrak{l}_2)} l_2 
& 0 & & 0& & &\\
-l_0 & 0 &  & \bm{1} & & 0 &\vdots &&\vdots
\\
\vdots &  &  & \vdots & \ddots & \vdots && &
\\
-l_0 & 0 & \dots & 0      &        & \bm{1} & 0& \dots & 0%\\
\end{array}
\right],
$$
where, as before, the symbols $\bm{1}$ indicate vectors 
of length~$n_i$ with all entries equal to one.
\end{lemma}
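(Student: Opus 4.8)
The plan is to identify the kernel in question with a saturation of the row lattice of $P_0$, and then to recognize the rows of $P_1$ as an explicit generating set of that saturation. Writing $L := \im(P_0^*) \subseteq \ZZ^{n+m}$ for the lattice spanned by the rows of $P_0$, one has $K_0^{\mathrm{tors}} = L^{\mathrm{sat}}/L$, where $L^{\mathrm{sat}} = (L \otimes_\ZZ \QQ) \cap \ZZ^{n+m}$ is the saturation; hence $K_0/K_0^{\mathrm{tors}} = \ZZ^{n+m}/L^{\mathrm{sat}}$ and the kernel of $\ZZ^{n+m} \to K_0/K_0^{\mathrm{tors}}$ equals $L^{\mathrm{sat}}$. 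Using the notation $l_i' := l_{i1}e_{i1} + \ldots + l_{in_i}e_{in_i}$ of Lemma~\ref{lemma::amu}, the rows of $P_0$ are $\rho_i = -l_0' + l_i'$ for $i = 1, \ldots, r$. First I would check that the rows of $P_1$ are integral: $\gcd(\mathfrak{l}_0,\mathfrak{l}_1)$ divides both $\mathfrak{l}_0$ and $\mathfrak{l}_1$, hence divides every entry of $l_0$ and of $l_1$ (similarly for the index $2$). Since $P$ is platonic with $l_{i1}=1$ for $i\ge 3$, the last $r-2$ rows of $P_1$ coincide with $\rho_3,\ldots,\rho_r$, while the first two are $\gcd(\mathfrak{l}_0,\mathfrak{l}_1)^{-1}\rho_1$ and $\gcd(\mathfrak{l}_0,\mathfrak{l}_2)^{-1}\rho_2$. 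Thus the rows of $P_1$ lie in $L^{\mathrm{sat}}$, span the same rational subspace as the $\rho_i$, and their $\ZZ$-span contains $L$; it therefore remains only to prove the reverse inclusion $L^{\mathrm{sat}} \subseteq \langle \text{rows of } P_1\rangle$.

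For this, I would take an arbitrary $v \in L^{\mathrm{sat}}$ and write $v = \sum_{i=1}^r c_i \rho_i$ with $c_i \in \QQ$, so that the block-$0$ component of $v$ is $-(\sum_i c_i)\,l_0'$ and the block-$i$ component is $c_i l_i'$ for $i \geq 1$ (all $S_k$-coordinates vanish). Integrality of $v$ forces $\mathfrak{l}_i c_i \in \ZZ$ for $i = 1,\ldots,r$ and $\mathfrak{l}_0\sum_i c_i \in \ZZ$; for $i \geq 3$ this gives $c_i \in \ZZ$ because $\mathfrak{l}_i = 1$, and consequently $(c_1+c_2)\mathfrak{l}_0 \in \ZZ$ as well. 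Expressing $v$ in the rows of $P_1$, the (unique) coordinates are $c_1\gcd(\mathfrak{l}_0,\mathfrak{l}_1)$, $c_2\gcd(\mathfrak{l}_0,\mathfrak{l}_2)$ and $c_3,\ldots,c_r \in \ZZ$, so membership $v \in \langle \text{rows of } P_1\rangle$ reduces to the two integrality statements $c_1\gcd(\mathfrak{l}_0,\mathfrak{l}_1) \in \ZZ$ and $c_2\gcd(\mathfrak{l}_0,\mathfrak{l}_2) \in \ZZ$.

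The crux, and the one place where the hypothesis $\gcd(\mathfrak{l}_1,\mathfrak{l}_2) = \mathfrak{l}$ enters, is this last claim, which I would verify one prime $p$ at a time. Put $\alpha = v_p(\mathfrak{l}_0)$, $\beta = v_p(\mathfrak{l}_1)$ and $\gamma = v_p(\mathfrak{l}_2)$; the constraints already obtained read $v_p(c_1) \geq -\beta$, $v_p(c_2) \geq -\gamma$ and $v_p(c_1+c_2) \geq -\alpha$, while the hypothesis gives $\gcd(\mathfrak{l}_1,\mathfrak{l}_2) = \mathfrak{l} = \gcd(\mathfrak{l}_0,\ldots,\mathfrak{l}_r)$, which divides $\mathfrak{l}_0$ and hence yields $\min(\beta,\gamma) \leq \alpha$. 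Since $v_p(\gcd(\mathfrak{l}_0,\mathfrak{l}_1)) = \min(\alpha,\beta)$, the goal is $v_p(c_1) \geq -\min(\alpha,\beta)$. If $\beta \leq \alpha$, the bound $v_p(c_1) \geq -\beta = -\min(\alpha,\beta)$ already suffices; if $\beta > \alpha$, then $\min(\beta,\gamma)\le\alpha$ forces $\gamma \leq \alpha$, so that $v_p(c_1) = v_p((c_1+c_2)-c_2) \geq \min(v_p(c_1+c_2),v_p(c_2)) \geq -\alpha = -\min(\alpha,\beta)$, using $v_p(c_2)\ge-\gamma\ge-\alpha$. The estimate for $c_2$ follows symmetrically. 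This gives $c_1\gcd(\mathfrak{l}_0,\mathfrak{l}_1), c_2\gcd(\mathfrak{l}_0,\mathfrak{l}_2) \in \ZZ$, hence $v \in \langle \text{rows of } P_1\rangle$, completing the reverse inclusion. I expect the main obstacle to be exactly this local case distinction: the worst case $\beta > \alpha$ is precisely where $\mathfrak{l}_1$ is more $p$-divisible than $\mathfrak{l}_0$, and recovering the bound there depends entirely on the coprimality-type hypothesis forcing $\gamma \leq \alpha$; the rest is bookkeeping with block components and $p$-adic valuations.
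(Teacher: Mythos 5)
Your proof is correct, and it reaches the same intermediate goal as the paper -- namely that the row lattice of $P_1$ is exactly the saturation of $\im(P_0^*)$ in $\ZZ^{n+m}$ -- but it verifies this by a genuinely different method. The paper observes that $\im(P_0^*)$ has finite index in $\im(P_1^*)$, so it suffices to show $\ZZ^{n+m}/\im(P_1^*)$ is torsion free; it then performs column operations to reduce to a $2\times 3$ matrix with entries $\mathfrak{l}_i/\gcd(\mathfrak{l}_0,\mathfrak{l}_j)$ and checks that both determinantal divisors equal one, the second one via the identity $\gcd(\mathfrak{l}_0\mathfrak{l}_1,\mathfrak{l}_0\mathfrak{l}_2,\mathfrak{l}_1\mathfrak{l}_2)=\gcd(\mathfrak{l}_0,\mathfrak{l}_1)\gcd(\mathfrak{l}_0,\mathfrak{l}_2)$, which is where $\gcd(\mathfrak{l}_1,\mathfrak{l}_2)=\mathfrak{l}$ enters. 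You instead take an arbitrary element of the saturation, read off its unique rational coordinates $c_1\gcd(\mathfrak{l}_0,\mathfrak{l}_1)$, $c_2\gcd(\mathfrak{l}_0,\mathfrak{l}_2)$, $c_3,\ldots,c_r$ with respect to the rows of $P_1$, and prove their integrality prime by prime. The two arguments rest on the same arithmetic fact: your case distinction on $\beta\le\alpha$ versus $\beta>\alpha$ (with $\min(\beta,\gamma)\le\alpha$ forcing $\gamma\le\alpha$ in the second case) is precisely the local form of the paper's gcd identity $\min(\alpha+\beta,\alpha+\gamma,\beta+\gamma)=\min(\alpha,\beta)+\min(\alpha,\gamma)$. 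What the paper's route buys is brevity via standard Smith-normal-form machinery; what yours buys is a self-contained, elementary element-chase that makes visible exactly which integrality constraints come from which block of $P_0$ and pinpoints the single place the hypothesis $\gcd(\mathfrak{l}_1,\mathfrak{l}_2)=\mathfrak{l}$ is needed. All the supporting steps you use (integrality of the rows of $P_1$, reduction of integrality of $c_il_i'$ to $c_i\mathfrak{l}_i\in\ZZ$, the identification of the kernel with the saturation) check out.
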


\begin{proof}
Observe that the rows of $P_0$ generate a sublattice 
of finite index in the row lattice $P_1$.
Thus, we have a commutative diagram
$$ 
\xymatrix{
K_0 \ar[rr] 
\ar[rd] 
&
&
K_0/K_0^{\mathrm{tors}}
\\
&
\ZZ^{n+m}/\im(P_1^*) \ar[ur]
&
}
$$
It suffices to show, that $\ZZ^{n+m}/\im(P_1^*)$ is torsion free. 
Applying suitable elementary column operations to $P_1$,
reduces the problem to showing that for the 
$2 \times 3$ matrix
$$
\left[
\begin{array}{ccc}
\frac{\mathfrak{l}_0}{\gcd(\mathfrak{l}_0,\mathfrak{l}_1)} 
& 
\frac{\mathfrak{l}_1}{\gcd(\mathfrak{l}_0,\mathfrak{l}_1)}
& 
0 
\\[5pt]
\frac{\mathfrak{l}_0}{\gcd(\mathfrak{l}_0,\mathfrak{l}_2)} 
& 
0 
& 
\frac{\mathfrak{l}_2}{\gcd(\mathfrak{l}_0,\mathfrak{l}_1)} 
\end{array}
\right],
$$
all determinantal divisors equal one.
The entries of the above matrix are coprime 
and its $2\times 2$ minors are 
$$
\frac{\mathfrak{l}_0\mathfrak{l}_2}{\gcd(\mathfrak{l}_0,\mathfrak{l}_1)\gcd(\mathfrak{l}_0,\mathfrak{l}_2)}, 
\qquad
\frac{\mathfrak{l}_1\mathfrak{l}_2}{\gcd(\mathfrak{l}_0,\mathfrak{l}_1)\gcd(\mathfrak{l}_0,\mathfrak{l}_2)}, 
\qquad 
\frac{\mathfrak{l}_0\mathfrak{l}_1}{\gcd(\mathfrak{l}_0,\mathfrak{l}_1)\gcd(\mathfrak{l}_0,\mathfrak{l}_2)}.
$$
up to sign.
By assumption, we have 
$\gcd(\mathfrak{l}_1, \mathfrak{l}_2)
=
\mathfrak{l}$.
Consequently, we obtain
$$
\gcd(\mathfrak{l}_0\mathfrak{l}_2,\mathfrak{l}_0\mathfrak{l}_1,\mathfrak{l}_1\mathfrak{l}_2) 
\ = \ 
\gcd(\mathfrak{l}_0\mathfrak{l}, \mathfrak{l}_1\mathfrak{l}_2)
\ = \ 
\gcd(\mathfrak{l}_0,\mathfrak{l}_1)\gcd(\mathfrak{l}_0,\mathfrak{l}_2)
$$
and therefore the second determinantal divisor equals one. 
As remarked, the first one equals one as well
and the assertion follows.
\end{proof}

\begin{lemma}
\label{lem:numbercomp}
Let $R(A,P_0)$ be of Type~2
and $\b{X} = \Spec \, R(A,P_0)$.
Then, for any generator 
$T_{01}$ of $R(A,P_0)$,
we have 
$$ 
V(\b{X},T_{01})
\ \cong \
V(T_{01}) \cap V(T_1^{l_1}-T_{i}^{l_{i}}; \; i = 2, \ldots, r)
\ \subseteq \
\CC^{n+m}.
$$
In particular, the number of irreducible components of 
$V(\b{X},T_{01})$ equals the product of the invariant 
factors of the matrix
$$ 
\left[
\begin{array}{cccc}
-\mathfrak{l}_1 & \mathfrak{l}_2 &        & 0
\\
\vdots          &                & \ddots & 
\\
-\mathfrak{l}_1 &       0         &        &  \mathfrak{l}_{r}
\end{array}
\right].
$$
\end{lemma}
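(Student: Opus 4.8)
The plan is to prove the displayed isomorphism first and then count components on the right-hand side, which is a pure lattice computation.

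For the isomorphism, note that $l_{01}>0$ makes $T_{01}$ a factor of $T_0^{l_0}$, so $T_0^{l_0}$ vanishes on $V(\b X,T_{01})$. Expanding the Type~2 determinant of Construction~\ref{constr:RAP0} along its top row shows
\[
g_i \ = \ c_{i+1,i+2}\,T_i^{l_i}-c_{i,i+2}\,T_{i+1}^{l_{i+1}}+c_{i,i+1}\,T_{i+2}^{l_{i+2}},
\qquad c_{pq}:=\det[a_p\ a_q],
\]
where all $c_{pq}\neq 0$ because the columns of $A$ are pairwise linearly independent. In $\CC[T_{ij},S_k]/(T_{01})$ the term $T_0^{l_0}$ disappears from $g_0$, so the $g_i$ become $\CC$-linear forms in the monomials $T_1^{l_1},\ldots,T_r^{l_r}$. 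Regarding these monomials as indeterminates $x_1,\ldots,x_r$ (with $x_0=0$), the vanishing of all $g_i$ is equivalent to $(0,x_1,\ldots,x_r)$ lying in the two-dimensional row space of $A$ — as one sees inductively from the invertibility of each block $[a_i\ a_{i+1}]$. Its solutions form the line $x_i=t\,\det[a_0\ a_i]$, all of whose entries are nonzero. Hence, modulo $T_{01}$, the ideal $(g_0,\ldots,g_{r-2})$ equals $(T_i^{l_i}-\gamma_iT_1^{l_1}\,;\,i=2,\ldots,r)$ for suitable $\gamma_i\in\CC^*$, and the diagonal automorphism $T_{i1}\mapsto\gamma_i^{1/l_{i1}}T_{i1}$ ($i\geq 2$) of $\CC^{n+m}$ fixes $T_{01}$ and normalizes every $\gamma_i$ to $1$. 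This yields the asserted isomorphism.

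For the component count I would pass to the right-hand variety. There the $S_k$ and $T_{02},\ldots,T_{0n_0}$ are free while $T_{01}=0$, so it is a product of an affine space with $W:=V(T_1^{l_1}-T_i^{l_i}\,;\,i=2,\ldots,r)$ in the coordinates of the blocks $1,\ldots,r$, and only $W$ matters. Writing $N:=n_1+\cdots+n_r$, I claim $W=\overline{W\cap(\CC^*)^N}$. Indeed a point of $W$ has all monomials $T_i^{l_i}$ equal; if their common value is nonzero then every $T_{ij}$ is nonzero and the point lies in $(\CC^*)^N$, and otherwise it lies in $Z_0:=\{T_i^{l_i}=0\ \forall i\}$, a union of coordinate subspaces of dimension $\leq N-r$. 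As $W$ is cut out by $r-1$ equations, Krull's principal ideal theorem forces every component of $W$ to have dimension $\geq N-(r-1)>N-r$, so none lies in $Z_0$. Thus $W$ is the closure of the diagonalizable subgroup $\ker\phi:=W\cap(\CC^*)^N=\ker\bigl(\phi\colon(\CC^*)^N\to(\CC^*)^{r-1},\ T\mapsto(T_1^{l_1}T_i^{-l_i})_{i=2}^r\bigr)$; its identity component is a torus and the finitely many cosets are disjoint irreducible sets of equal dimension, so the number of irreducible components of $W$ equals $|\pi_0(\ker\phi)|$.

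Finally I would compute $\pi_0(\ker\phi)$. Since $l_i/\mathfrak{l}_i$ is primitive, each map $T_i\mapsto T_i^{l_i/\mathfrak{l}_i}$ is a surjection $(\CC^*)^{n_i}\to\CC^*$ with connected kernel, and $T_i^{l_i}=(T_i^{l_i/\mathfrak{l}_i})^{\mathfrak{l}_i}$; collecting these factors $\phi$ as $(\CC^*)^N\twoheadrightarrow(\CC^*)^r\xrightarrow{\ \bar\phi\ }(\CC^*)^{r-1}$ with connected kernel on the left, where $\bar\phi$ has exponent matrix $\Lambda$ with $i$-th row $(\mathfrak{l}_1,0,\ldots,-\mathfrak{l}_i,\ldots,0)$, i.e.\ up to sign the matrix of the assertion. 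A connected kernel on the left gives $\pi_0(\ker\phi)\cong\pi_0(\ker\bar\phi)$, and $\ker\bar\phi=\Spec\CC[\ZZ^r/\im(\Lambda^*)]$ has $|\pi_0|$ equal to the order of the torsion subgroup of $\ZZ^r/\im(\Lambda^*)$. Since $\Lambda$ has full row rank $r-1$, this order is the product of its invariant factors, exactly as computed via Smith normal form in Lemma~\ref{lem:smith}. The main obstacle I anticipate is the middle step: one must rule out spurious low-dimensional components supported on $Z_0$ before the count reduces to the torus, which is precisely where the equidimensionality forced by the complete-intersection structure (Krull) is essential; the nonvanishing of the $\gamma_i$ and the reduction of the exponent matrix to the $\mathfrak{l}_i$ are comparatively routine.
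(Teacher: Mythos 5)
Your proposal is correct and takes essentially the same route as the paper's proof: reduce the relations modulo $T_{01}$ to binomials $T_i^{l_i}-T_1^{l_1}$ after rescaling variables, identify the intersection with the big torus with (a product of a torus and) the kernel of the monomial homomorphism $\pi$, count its connected components via invariant factors, and check there are no further components outside the torus. The paper compresses the last step into ``one directly checks''; your Krull-dimension argument and the explicit factorization through $(\CC^*)^r$ supply exactly the details it omits.
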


\begin{proof}
First observe that the ideal
$\bangle{T_{01},g_0,\ldots,g_{r-2}} \subseteq \CC[T_{ij},S_k]$
is generated by binomials which can be brought into 
the above form by scaling the variables appropriately.
Now consider the homomorphism of tori
$$ 
\pi \colon
\TT^{n_1+\ldots+n_r} \ \to \ \TT^{r-1},
\qquad
(t_1, \ldots, t_r) 
\ \mapsto \ 
\left( 
\frac{t_2^{l_2}}{t_1^{l_1}}, \ldots, \frac{t_r^{l_{r}}}{t_1^{l_1}}
\right).
$$
Then the number of connected components of $\ker(\pi)$ 
equals the product of the invariant factors of the above 
matrix.
Moreover, $\TT^{n_0-1} \times \ker(\pi) \times \TT^m $ 
is isomorphic to 
$V(\b{X},T_{01}) \cap \TT^{n+m}$.
Finally, one directly checks that $V(\b{X},T_{01})$ has 
no further irreducible components outside $\TT^{n+m}$.
\end{proof}

\begin{lemma}
\label{lem:numbercomp2}
Let $R(A,P_0)$ be of Type~2 and platonic.
Assume that $P_0$ is ordered. 
Then the number $c(i)$ 
of irreducible components of 
$V(\b{X},T_{ij})$ is given as 

\begin{center}
\begin{tabular}{c||c|c|c|c}
$i$ & $0$ & $1$ & $2$ & $\ge 3$
\\
\hline
$c(i)$
& 
$\gcd(\mathfrak{l}_1,\mathfrak{l}_2)$
& 
$\gcd(\mathfrak{l}_0,\mathfrak{l}_2)$
& 
$\gcd(\mathfrak{l}_0,\mathfrak{l}_1)$
&
$\mathfrak{l}^{2}\mathfrak{l}_{01}\mathfrak{l}_{02}\mathfrak{l}_{12}$
\end{tabular}
\end{center}
\end{lemma}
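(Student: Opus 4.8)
The plan is to reduce everything to Lemma~\ref{lem:numbercomp}, which already expresses the number of irreducible components of $V(\b{X}, T_{01})$ as the product of the invariant factors of an explicit $(r-1) \times r$ matrix built from $\mathfrak{l}_1, \ldots, \mathfrak{l}_r$. Since setting any variable $T_{ij}$ of the $i$-th block to zero forces the whole monomial $T_i^{l_i}$ to vanish, the count $c(i)$ should depend only on the block index $i$; moving the chosen variable to the first slot of its block by an admissible operation of type~(i) makes this precise. To compute $c(i)$ for arbitrary $i$, I would first apply an admissible operation of type~(ii) exchanging the data of block $i$ with that of block $0$. As recorded in Remark~\ref{remark:admissibleops}, this preserves the graded isomorphism type of $R(A,P_0)$, and it carries the hypersurface $V(\b{X}, T_{ij})$ to $V(\b{X}', T_{01}')$; hence $c(i)$ equals the component count handed to us by Lemma~\ref{lem:numbercomp} applied to the new ring, whose blocks $1, \ldots, r$ now carry the numbers $\{\mathfrak{l}_k : k \neq i\}$.

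Next I would recall that for a full-rank integer matrix the product of the invariant factors equals the gcd of its maximal minors. The matrix of Lemma~\ref{lem:numbercomp} has full rank $r-1$ (its diagonal block is nonzero), so for first column $-\mathfrak{l}_{(1)}$ and diagonal $\mathfrak{l}_{(2)}, \ldots, \mathfrak{l}_{(r)}$ a one-line Laplace expansion shows the $(r-1) \times (r-1)$ minors to be exactly $D_i / \mathfrak{l}_j$ for $j \neq i$, where $D_i := \prod_{k \neq i} \mathfrak{l}_k$. In particular the outcome is symmetric in the $\mathfrak{l}_k$ with $k \neq i$, so the arbitrary ordering produced by the block swap is irrelevant and $c(i) = \gcd_{j \neq i}(D_i/\mathfrak{l}_j)$.

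Now I would feed in the hypotheses. Ordering together with the platonic condition forces $l_{k1} = 1$, hence $\mathfrak{l}_k = 1$, for every $k \geq 3$. Consequently $D_0 = \mathfrak{l}_1\mathfrak{l}_2$, $D_1 = \mathfrak{l}_0\mathfrak{l}_2$, $D_2 = \mathfrak{l}_0\mathfrak{l}_1$, and a short gcd computation yields $c(0) = \gcd(\mathfrak{l}_1,\mathfrak{l}_2)$, $c(1) = \gcd(\mathfrak{l}_0,\mathfrak{l}_2)$ and $c(2) = \gcd(\mathfrak{l}_0,\mathfrak{l}_1)$, matching the first three table entries. For $i \geq 3$ one has $D_i = \mathfrak{l}_0\mathfrak{l}_1\mathfrak{l}_2$, so the minors reduce to $\mathfrak{l}_1\mathfrak{l}_2$, $\mathfrak{l}_0\mathfrak{l}_2$, $\mathfrak{l}_0\mathfrak{l}_1$ (and the redundant $\mathfrak{l}_0\mathfrak{l}_1\mathfrak{l}_2$), giving $c(i) = \gcd(\mathfrak{l}_0\mathfrak{l}_1, \mathfrak{l}_0\mathfrak{l}_2, \mathfrak{l}_1\mathfrak{l}_2)$.

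The one genuinely arithmetic step that remains is the identity $\gcd(\mathfrak{l}_0\mathfrak{l}_1, \mathfrak{l}_0\mathfrak{l}_2, \mathfrak{l}_1\mathfrak{l}_2) = \mathfrak{l}^2 \mathfrak{l}_{01}\mathfrak{l}_{02}\mathfrak{l}_{12}$. I would verify it prime by prime: writing $\mathfrak{l}_{ij} = \mathfrak{l}^{-1}\gcd(\mathfrak{l}_i,\mathfrak{l}_j)$, the right-hand side becomes $\mathfrak{l}^{-1}\gcd(\mathfrak{l}_0,\mathfrak{l}_1)\gcd(\mathfrak{l}_0,\mathfrak{l}_2)\gcd(\mathfrak{l}_1,\mathfrak{l}_2)$, and for each prime $p$, ordering the valuations $\alpha \le \beta \le \gamma$ of $\mathfrak{l}_0,\mathfrak{l}_1,\mathfrak{l}_2$ produces $\alpha+\beta$ on both sides. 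Note that the existence of an index $k \geq 3$ already forces $\mathfrak{l} = 1$, so in the situation at hand only the coprime case is needed; the valuation check nonetheless costs nothing in general. I expect this valuation bookkeeping, together with correctly tracking the index relabeling under the block swap, to be the fussiest part of the argument, while everything else is a direct appeal to Lemma~\ref{lem:numbercomp} and a single determinant expansion.
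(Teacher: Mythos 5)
Your route is the paper's route: the entire published proof consists of moving $T_{ij}$ to the position of $T_{01}$ by admissible operations and invoking Lemma~\ref{lem:numbercomp}, and you do exactly this, additionally making explicit that the product of the invariant factors of the resulting $(r-1)\times r$ matrix is the gcd of its maximal minors $\prod_{k\ne i}\mathfrak{l}_k/\mathfrak{l}_j$. That computation is correct, and the entries for $i=0,1,2$ follow at once since ordering plus platonicity give $\mathfrak{l}_k=1$ for $k\ge 3$.

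The one step that needs more care is your last paragraph, for the column $i\ge 3$. Your reduction correctly yields $c(i)=\gcd(\mathfrak{l}_0\mathfrak{l}_1,\mathfrak{l}_0\mathfrak{l}_2,\mathfrak{l}_1\mathfrak{l}_2)$, and your valuation check proves that this equals $d^{-1}\gcd(\mathfrak{l}_0,\mathfrak{l}_1)\gcd(\mathfrak{l}_0,\mathfrak{l}_2)\gcd(\mathfrak{l}_1,\mathfrak{l}_2)$ where $d=\gcd(\mathfrak{l}_0,\mathfrak{l}_1,\mathfrak{l}_2)$. However, the symbol $\mathfrak{l}$ of Definition~\ref{def:variousl} is the gcd over \emph{all} blocks, and the presence of a block $k\ge 3$ forces $\mathfrak{l}=1$ without forcing $d=1$; your parenthetical ``only the coprime case is needed'' conflates the two. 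Concretely, $l_0=l_1=l_2=(2)$, $l_3=(1,1)$ is ordered and platonic with $d=2$ and $\mathfrak{l}=1$; there are $\gcd(4,4,4)=4$ components, while $\mathfrak{l}^{2}\mathfrak{l}_{01}\mathfrak{l}_{02}\mathfrak{l}_{12}$ read literally against Definition~\ref{def:variousl} gives $8$. So the table entry must be understood with $\mathfrak{l}$ and $\mathfrak{l}_{ij}$ formed from $\mathfrak{l}_0,\mathfrak{l}_1,\mathfrak{l}_2$ alone, exactly as in Lemma~\ref{lem:smith}; with that reading your valuation argument closes the proof in full generality. This is a notational ambiguity of the statement rather than an error in your computation, but since your write-up leans on ``$\mathfrak{l}=1$'' to dismiss the non-coprime case, you should either prove $d=1$ (you cannot, as the example shows) or state explicitly which gcd the formula refers to.
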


\begin{proof}
Suitable admissible operations turn
$T_{ij}$ to $T_{01}$.
Then the number of components is computed 
via Lemma~\ref{lem:numbercomp}.
\end{proof}

\begin{proposition}
\label{prop::isotropy}
Let $R(A,P_0)$ be of Type~2, platonic
and non-factorial.
Assume that $P_0$ is ordered and let 
$P_1$ be as in Lemma~\ref{lemma::Ptors}.
Set 
$$ 
n_{i,1}, \ldots, n_{i,c(i)} \ := \ n_i,
\qquad
l_{ij,1}, \ldots, l_{ij,c(i)}
\ := \ 
\gcd ( (P_{1})_{1,ij},\ldots,(P_{1})_{r,ij} ).
$$
The
$l_{i,\alpha} := (l_{i1,\alpha}, \ldots, l_{in_i,\alpha}) \in \ZZ^{n_{i,\alpha}}$
build up an $r' \times (n'+ m)$ matrix $P_0'$,
where $n' := c(0)n_0 + \ldots + c(r)n_r$.
With a suitable matrix $A'$, the following holds.
\begin{enumerate}
\item
The affine variety $\Spec \, R(A',P'_0)$ is the 
total coordinate space of the affine variety 
$\Spec \, R(A,P_0)$,
\item
The leading platonic triple (l.p.t.) of $R(A',P'_0)$ 
can be expressed in terms of that of $R(A,P_0)$ 
as 

\begin{center}
\begin{tabular}{c|c}
l.p.t. of $R(A,P_0)$ & l.p.t. of $R(A',P'_0)$
\\
\hline
$(4,3,2)$ 
& 
$(3,3,2)$
\\
$(3,3,2)$ 
& 
$(2,2,2)$
\\
$(y,2,2)$ 
& 
$(z,z,1)$ or $\left(\frac{y}{2},2,2\right)$
\\
$(x,y,1)$ 
& 
$\left(
\frac{x}{\gcd(\mathfrak{l}_0,\mathfrak{l}_1)}, 
\frac{y}{\gcd(\mathfrak{l}_0,\mathfrak{l}_1)},
1
\right)$
\\
\end{tabular}
\end{center}
\end{enumerate}
\end{proposition}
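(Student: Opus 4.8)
The plan is to establish~(i) by recognizing $\b{X} = \Spec \, R(A,P_0)$ as a normal affine $T$-variety of complexity one and reading its Cox ring off from the combinatorial data already assembled in Lemmas~\ref{lemma::Ptors} and~\ref{lem:numbercomp2}, and then to deduce~(ii) by a direct computation with the matrix $P_1$ for each admissible leading platonic triple. First, for~(i), I would record that $R(A,P_0)$ is $K_0$-factorial with only constant invertible homogeneous elements, so that the divisor class group of $\b{X}$ is the torsion part $K_0^{\mathrm{tors}}$. Since $P_0$ has rank $r$, the unit component $H_0^0$ of $H_0 = \Spec \, \CC[K_0]$ has dimension $n+m-r = \dim \b{X} - 1$; thus $\b{X}$ is a normal affine $T$-variety of complexity one, and of Type~$2$ because $R(A,P_0)$ carries only constant invariant functions. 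As $R(A,P_0)$ is platonic, Corollary~\ref{cor:rat} gives that the associated curve $Y$ is rational, hence $\Cl(\b{X})$ is finitely generated and, by the structure theorem \cite[Thm.~1.8]{HaWr} (see also \cite[Sec.~4.4.2]{ArDeHaLa}), the Cox ring of $\b{X}$ is again of the form $R(A',P_0')$. It then remains to identify $P_0'$ and a suitable $A'$.

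The defining matrix $P_0'$ is assembled from the primitive generators along the $T$-invariant prime divisors of $\b{X}$ lying over the special points of $Y$, and these are exactly the irreducible components of the $V(\b{X},T_{ij})$, while the $m$ free variables $S_k$ persist unchanged. By Lemma~\ref{lem:numbercomp2} there are $c(i)$ such components in block $i$, which accounts for the $c(i)$-fold refinement and the count $n' = c(0)n_0 + \ldots + c(r)n_r$. The exponents are dictated by the passage to the free quotient $K_0/K_0^{\mathrm{tors}}$: by Lemma~\ref{lemma::Ptors} the rows of $P_1$ generate the kernel of $\ZZ^{n+m} \to K_0/K_0^{\mathrm{tors}}$, so the primitive generator attached to the component arising from $T_{ij}$ carries the exponent $\gcd((P_1)_{1,ij},\ldots,(P_1)_{r,ij}) = l_{ij,\alpha}$. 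Pulling back the trinomials $g_i$ and absorbing the splitting then shows that the relations of $\mathcal{R}(\b{X})$ are again trinomials in the refined variables with precisely these exponents; after normalizing the continuous data one obtains $A'$, and thus $\mathcal{R}(\b{X}) = R(A',P_0')$, which is~(i).

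For~(ii) I would feed each admissible leading triple through this recipe. After ordering $P_0$ I reorder blocks so that the pair with $\gcd(\mathfrak{l}_i,\mathfrak{l}_j) = \mathfrak{l}$ occupies positions $1$ and $2$, as Lemma~\ref{lemma::Ptors} requires, write down $P_1$, and read off for each of the three leading blocks its refined exponent (the gcd of its $P_1$-column) together with its multiplicity $c(i)$; collecting all resulting exponents and taking the three largest yields the new leading triple. For $(4,3,2)$ this produces multiplicities $(1,2,1)$ and refined exponents $(2,3,1)$, hence the multiset $\{3,3,2,1\}$ and the triple $(3,3,2)$; for $(3,3,2)$ it produces multiplicities $(1,1,3)$ and exponents $(1,1,2)$, hence $\{1,1,2,2,2\}$ and $(2,2,2)$; the case $(y,2,2)$ bifurcates according to the parity of $y$ into the two listed alternatives $(z,z,1)$ and $(y/2,2,2)$; and $(x,y,1)$, whose third exponent is already trivial, reduces both remaining exponents by $\gcd(\mathfrak{l}_0,\mathfrak{l}_1)$.

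The hard part will be~(i), and within it the exact determination of the exponents $l_{ij,\alpha}$ rather than merely the number $c(i)$ of new variables: one must verify that the primitive ray generators of the Cox ring of $\b{X}$ carry precisely the gcd's extracted from $P_1$, and that after the $c(i)$-fold splitting the defining relations stay trinomial of Type~$2$, so that the structure theorem applies verbatim. Coordinating the free-grading information of Lemma~\ref{lemma::Ptors} with the component count of Lemma~\ref{lem:numbercomp2} is the crux; once this matching is secured, part~(ii) is a finite computation whose only genuine subtleties are the preliminary block reordering and the parity split in the $(y,2,2)$ case.
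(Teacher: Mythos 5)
Your proposal is correct and follows essentially the same route as the paper: both compute $\mathcal{R}(\b{X})$ via the general recipe for complexity-one $T$-varieties (\cite[Thm.~4.4.1.6]{ArDeHaLa}), taking the exponents from the column gcd's of $P_1$ (equivalently, the $H_0^0$-isotropy orders along $V(T_{ij})$) and the multiplicities $c(i)$ from Lemma~\ref{lem:numbercomp2}, then reading off the new leading platonic triples case by case. The crux you flag --- matching the $c(i)$ components of the various $V(\b{X},T_{ij})$ within a block into chains identified over $Y$ --- is precisely what the paper settles by observing that components of a fixed $V(\b{X},T_{ij})$ lie in a common affine chart (hence have disjoint images), while $V(\b{X},T_{ij})$ and $V(\b{X},T_{ij'})$ are identified under the separation morphism $\b{X}_0/H_0^0 \to Y$.
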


\begin{proof}
We compute the Cox ring of $\b{X} = \Spec \, R(A,P_0)$ 
according to~\cite[Thm.~4.4.1.6]{ArDeHaLa};
use Corollary 1.9 \cite{HaWr} to obtain
the statement given there also in the affine case.
That means that we have to figure out which
invariant divisors are identified under 
the rational map onto the curve $Y$ with 
function field $\CC(\b{X})^{H_0^0}$ 
and we have to determine the orders of 
isotropy groups of invariant divisors.

Let $P_1$ be as in Lemma~\ref{lemma::Ptors}.
Then the torus $H_0^0$ acts diagonally 
on $\CC^{n+m}$ 
with weights provided by the projection 
$Q_1 \colon \ZZ^{n+m} \to K_0^0$, where 
$K_0^0 = \ZZ^{n+m} / \im(P_1^*)$ 
equals the character group of~$H_0^0$.
Consider the commutative diagram
$$ 
\xymatrix{
{\b{X}_0}
\ar[d]
\ar@{}[r]|\subseteq
&
{\CC^{n+m}_0}
\ar[d]
\\
{\b{X}_0} / H_0^0
\ar[d]
\ar@{}[r]|\subseteq
&
{\CC^{n+m}_0} / H_0^0
\ar[d]
\\
Y
\ar@{}[r]|\subseteq
&
{\PP}
}
$$
where $\b{X}_0 \subseteq \b{X}$
and $\CC^{n+m}_0 \subseteq \CC^{n+m}$
denote the open $H_0^0$-invariant subsets 
obtained by removing all coordinate 
hyperplanes $V(S_k)$ 
and all intersections $V(T_{i_1j_1},T_{i_2j_2})$
with $(i_1,j_1)  \ne (i_2,j_2)$ from 
$\CC^{n+m}$.
Moreover, the geometric quotient spaces 
in the middle row are possibly non-separated
and the maps to the lower row are separation
morphisms.

We determine the orders of isotropy groups.
Every point in $\TT^{n+m}$ has trivial
$H_0^0$-isotropy.
Thus, we only have to look what happens on 
the sets $V(T_{ij}) \cap \CC^{n+m}_0$.
According to~\cite[Prop.~2.1.4.2]{ArDeHaLa}, 
the order of isotropy group of $H_0^0$ at any 
point $x \in V(T_{ij}) \cap \CC^{n+m}_0$ equals 
the greatest common divisor of the entries 
of the $ij$-th column of $P_1$:
$$ 
\vert H_{0,x}^0 \vert 
\ = \ 
l_{ij}'
\ := \ 
\gcd((P_{1})_{1,ij},\ldots,(P_{1})_{r,ij})
\quad 
\text{ for all }
x \in V(T_{ij}) \cap \CC^{n+m}_0.
$$

Now we figure out which $H_0^0$-invariant 
divisors of $\b{X}_0$ are identified under 
the map $\b{X}_0 \to Y$.
Lemma~\ref{lem:numbercomp2} provides us 
explicit numbers $c(0), \ldots, c(r)$ 
such that for fixed 
$i$ and $j = 1, \ldots, n_i$, we have
the decomposition into prime divisors
$$
V(\b{X},T_{ij})
\ = \ 
D_{ij,1} \cup \ldots \cup D_{ij, c(i)},
$$
in particular, the number $c(i)$ does not depend 
on the choice of $j$.
The components $D_{ij,1}, \ldots, D_{ij, c(i)}$
lie in the common affine chart 
$W_0 \subseteq \b{X}_0$ obtained by localizing 
at all $T_{i'j'}$ different from $T_{ij}$.
Their images thus lie in the affine 
chart $W_0/H_0^0 \subseteq \b{X}_0/H_0^0$.
Consequently, the $D_{ij,1}, \ldots, D_{ij, c(i)}$
have pairwise disjoint images under the composition
$\b{X}_0 \to \b{X}_0/H_0^0 \to Y$.

On the other hand, $V(\b{X},T_{ij})$ and  
$V(\b{X},T_{ij'})$ are identified isomorphically 
under the separation map $\b{X}_0/H_0^0 \to Y$
Thus, suitably numbering, we obtain for every 
$i$, and $\alpha = 1, \ldots, c(i)$ a chain 
$$ 
D_{i1,\alpha}, \ldots, D_{in_i, \alpha},
$$
of divisors identified under the morphism
$\b{X}_0/H_0^0 \to Y$.
The order of isotropy for any 
$x \in D_{ij,\alpha}$ equals $l_{ij}'$.
Now, using~\cite[Thm.~4.4.1.6]{ArDeHaLa}, 
we can compute the defining relations 
of the Cox ring of $\b{X}$,
which establishes the two assertions.
\end{proof}

\begin{remark}
\label{rem:ithelp}
Let $R(A,P_0)$ be a non factorial platonic 
ring with ordered $P_0$ and leading platonic 
triple $(l_{01},l_{11},l_{21})$. 
Denote by $R(A',P'_0)$ the Cox ring of $\Spec \, R(A,P_0)$. 
Then the exponents of the defining relations 
of $R(A',P'_0)$ are listed in the following table, 
where $\bm{1}_{n_1}$ denotes the vector of 
length $n_i$ with all entries equal to one.

\begin{center}

\setlength\extrarowheight{5pt}

\begin{tabular}{l|c}
leading plat. triple
& 
exponents in $R(A',P'_0)$
\\
\hline
$(4,3,2)$ 
& 
$l_1$, $l_1$, $l_0/2$, $\bm{1}_{n_2}$ 
and $2 \times \bm{1}_{n_i}$ for $i \geq 3$.
\\
\hline
$(3,3,2)$
& 
$l_2$, $l_2$, $l_2$, $\bm{1}_{n_0}$, $\bm{1}_{n_1}$ 
and $3 \times \bm{1}_{n_i}$ for $i \geq 3$.
\\
\hline
$(x,2,2)$ and $\mathfrak{l}=2$ 
& 
$l_0/2$, $l_0/2$, $2 \times \bm{1}_{n_1}$,
$2 \times \bm{1}_{n_2}$, and $4 \times \bm{1}_{n_i}$ for $i \geq 3$. 
\\
\hline
$(x,2,2)$ and $2 \nmid \mathfrak{l}_0$ 
& 
$l_0$, $l_0$, $\bm{1}_{n_1}$, $\bm{1}_{n_2}$
and $2 \times \bm{1}_{n_i}$ for $i \geq 3$.
\\
\hline
$(x,2,2)$ and $\mathfrak{l}_{2}=1$ 
& 
$l_0/2$, $l_2$, $l_2$, $\bm{1}_{n_1}$ and 
$2 \times \bm{1}_{n_i}$ for $i \geq 3$.
\\
\hline
$(x,y,1)$  
& 
$\frac{l_0}{\gcd(\mathfrak{l}_0,\mathfrak{l}_1)}$, 
$\frac{l_1}{\gcd(\mathfrak{l}_0,\mathfrak{l}_1)}$, 
$\gcd(\mathfrak{l}_0, \mathfrak{l}_1) \times \bm{1}_{n_i}$ 
for $i \geq 2$.
%\\
\end{tabular}

\end{center}

\end{remark}

\begin{proof}[Proof of Theorem~\ref{theorem::CoxIteration}]
We start with a rational, normal, affine, log 
terminal $X_1$ of complexity one.
According to Theorem~\ref{thm:affltcharintro}, 
the Cox ring $R_2$ of $X_1$ is a platonic ring. 
If the greatest common divisors of pairs 
$\mathfrak{l}_i,\mathfrak{l}_j$ of $R_2$ 
all equal one, then $R_2$ is factorial by~\cite[Thm.~1.1]{HaHe}
and we are done.
If not, then we pass to the Cox ring $R_3$ of $X_2:= \Spec \, R_2$ 
and so on.
Proposition~\ref{prop::isotropy} ensures that 
this procedure terminates with a factorial 
platonic ring $R_p$.
\end{proof}

\begin{proof}[Proof of Theorem~\ref{theorem::quotient}]
Let $X_1$ be any rational, normal, affine variety 
with a torus action of complexity one of Type~2
and at most log terminal singularities.
Then Theorem~\ref{theorem::CoxIteration} provides 
us with a chain of quotients
$$
\xymatrix{ 
X_p 
\ar[r]^{ \quot H_{p-1} \ } 
&
X_{p-1} 
\ar[r]^{\quot H_{p-2} \ } 
&
\quad \ldots \quad 
\ar[r]^{ \quot H_3 \ } 
&
X_3 
\ar[r]^{\quot H_{2} \ }
&
X_2 
\ar[r]^{\quot H_1 \ }
&
X_1 },
$$ 
such that $X_i = \Spec(R_i)$ holds with a platonic 
ring $R_i$ when $i \ge 2$, 
the ring $R_p$ is factorial and 
each $X_{i+1} \to X_{i}$ is the total 
coordinate space.
The idea is to construct stepwise 
solvable linear algebraic groups 
$G_i \subseteq \Aut(X_{i+1})$ acting 
algebraically on~$X_{i+1}$ such 
that the unit component $G_i^0 \subseteq G_i$ 
is a torus, $G_i$ contains $H_{i}$ as a 
normal subgroup, 
$G_{i-1} = G_i/H_{i}$ holds 
and we have $G_1 = H_1$.

Start with $G_1 := H_1$, acting on $X_2$.
According to~\cite[Thm.~2.4.3.2]{ArDeHaLa},
there exists an (effective) action of a 
torus $\mathcal{G}_1$ on $X_3$ lifting the action of
$G_1^0$ on $X_2$ and commuting with the action 
of $H_2$ on $X_3$.
Moreover,~\cite[Thm.~5.1]{ArGa} provides us with 
an exact sequence of groups 
$$
\xymatrix{ 
1
\ar[r] 
&
H_2
\ar[r]
&
\mathrm{Aut}(X_3,H_2)
\ar[r]^{\pi}
&
\mathrm{Aut}(X_2)
\ar[r] 
& 
1
},
$$
where $\mathrm{Aut}(X_3,H_2)$ denotes the 
group of automorphisms of $X_3$ normalizing 
the quasitorus $H_2$. 
Set $G_2 := \pi^{-1}(G_1)$.
Then $H_2^0\mathcal{G}_1$, 
as a factor group of the torus 
$H_2^0 \times \mathcal{G}_1$ by a closed subgroup, 
is an algebraic torus and it is of finite index 
in $G_2$.
Thus, $G_2$ is an affine algebraic group with  
$G_2^0 = H_2^0\mathcal{G}_1$ being a torus.
By construction, $H_2 \subseteq G_2$ is 
the kernel of $\alpha_1 := \pi \vert_{G_2}$ and hence
a normal subgroup.
Moreover, $G_2$ is solvable and acts 
algebraically on $X_3$.
Iterating this procedure gives a sequence
$$ 
\xymatrix{ 
G_{p -1}
\ar[r]^{\alpha_{p-2} \ } 
&
G_{p - 2}
\ar[r]^{\alpha_{p-3} \ } 
&
\quad \ldots \quad 
\ar[r]^{\alpha_{2} \ } 
&
G_{2}
\ar[r]^{\alpha_{1} \ } 
&
G_{1}
\ar[r]^{\alpha_{0} \ } 
&
1
}
$$
of group epimorphisms, where, as wanted, $G_i$ 
is a solvable reductive group 
acting algebraically on $X_{i+1}$ such that 
$H_i = \ker(\alpha_{i-1})$ is the characteristic 
quasitorus of $X_i$.
In particular, the group $G := G_{p-1} \subseteq \Aut(X_p)$
satisfies the first assertion of the theorem.

We turn to the second assertion.
From~\cite[Prop.~1.6.1.6]{ArDeHaLa}, we 
infer that $G_1 = H_1$ acts freely on the 
preimage $U_2 \subseteq X_2$ of the 
set of smooth points $U_1 \subseteq X_1$
and moreover, the complement $X_2 \setminus U_2$ 
is of codimension at least two in $X_2$.
Let $U_3 \subseteq X_3$ be the preimage of 
$U_2 \subseteq X_2$.
Again, the complement of $U_3$ is of 
codimension at least two in $X_3$ and,
as $U_2$ consists of smooth points of 
$X_2$, the quasitorus $H_2$ acts freely 
on $U_3$. 
Because of $G_2/H_2 = G_1$, we conclude that 
$U_3$ is $G_2$-invariant and $G_2$ acts
freely on $U_2$.
Repeating this procedure, we end up with 
an open set $U_p \subseteq X_p$ having complement 
of codimension at least two such that
$G$ acts freely on $U_p$.
Thus, $G$ acts strongly stably on $X_p$.
Now consider
$$ 
G = \mathcal{D}_0
\ \supseteq \ 
\mathcal{D}_1
\ \supseteq \ 
\ldots 
\ \supseteq \ 
\mathcal{D}_{p-2} 
\ \supseteq \ 
\mathcal{D}_{p-1}
= 1,
\qquad
\mathcal{D}_i := \ker(\alpha_i \circ \ldots \circ \alpha_{p-2}).
$$
Then we have $X_i = X_p \quot \mathcal{D}_{i-1}$
and $H_i = \mathcal{D}_{i-1} / \mathcal{D}_{i}$.
Moreover for each $\mathcal{D}_i$, its action on
$X_p$ is strongly stable, as remarked before,
and  $X_{p}$ is $G$-factorial because it
is factorial.
Using~\cite[Prop.~3.5]{ArGa}, we obtain 
a commutative diagram
$$ 
\xymatrix{
X_p \quot [\mathcal{D}_i,\mathcal{D}_i]
\ar[dr]_{\quot \mathcal{D}_i / [\mathcal{D}_i,\mathcal{D}_i] \qquad}
\ar[rr]^{\beta}
&
&
X_p \quot \mathcal{D}_{i+1}
\ar[dl]^{\quot \mathcal{D}_i / \mathcal{D}_{i+1}}
\\
&
X_p \quot \mathcal{D}_i
&
}
$$
where the left downward map is a total coordinate space.
As $\mathcal{D}_i / \mathcal{D}_{i+1} = H_{i+1}$ 
is abelian, $[\mathcal{D}_i,\mathcal{D}_i]$
is contained in $\mathcal{D}_{i+1}$ and
we have the horizontal morphism $\beta$.
Since the right hand side is a total coordinate space 
as well, we infer from~\cite[Sec.~1.6.4]{ArDeHaLa} that 
$\beta$ is an isomorphism.
This implies $\mathcal{D}_{i+1} = [\mathcal{D}_i,\mathcal{D}_i]$,
proving the second assertion.
\end{proof}

\section{Compound du Val singularities}
\label{sec:cDV}

Between the Gorenstein terminal and canonical threefold
singularities lie the compound du Val singularities, 
introduced by Miles Reid in~\cite{Re}, see 
also~\cite{Re1, Ma, KoMo}.
We discuss compound du Val singularities in the context 
of $T$-varieties of complexity one and provide first 
constraints on the defining data for affine threefolds,
preparing the proof of our classification results.

\begin{definition}
\label{def:cDV}
\cite[Def.~2.1]{Re},~\cite[Thm.~5.34, Cor.~2.3.2]{KoMo}.
A normal, canonical,
 Gorenstein threefold singularity $x \in X$ 
is called \emph{compound du Val}, if one of the 
following equivalent criteria is satisfied:
\begin{enumerate}
\item 
For a general hypersurface $Y  \subseteq X$ with $x \in Y$,
the point $x$ is a du Val surface singularity of $Y$.
\item 
Near $x$, the threefold $X$ is analytically isomorphic 
to a hypersurface of the following shape
$$
V(f(T_1,T_2,T_3) +  g(T_1,T_2,T_3,T_4) \, T_4)
\ \subseteq \ 
\CC^4,
$$
where $f$ is a defining polynomial for a du Val 
surface singularity in $\CC^3$
and~$g$ is any polynomial in $T_1,T_2,T_3,T_4$.
\item 
For every
resolution $\varphi \colon X' \to X$ 
of singularities and every irreducible 
exceptional divisor $E \subseteq \varphi^{-1}(x)$, 
the discrepancy of $E$ is greater than zero.
\item 
There is a
resolution $\varphi \colon X' \to X$ 
of singularities such that every irreducible 
exceptional divisor $E \subseteq \varphi^{-1}(x)$
is of discrepancy greater than zero.
\end{enumerate}
\end{definition}

For an affine toric threefold $X$, Condition~\ref{def:cDV}~(iv)
means the following: $X$ is defined by the cone over 
$\triangle \times \{1\}$ with a hollow lattice polytope 
$\triangle \subseteq \QQ^2$, where hollow means that $\triangle$
has no lattice points in its interior.
Based on this characterization, one obtains the list of 
toric compound du Val singularities provided in~\cite{Da}:

\begin{proposition}
\label{prop:toriccDV}
Let $X$ be an affine toric variety with a compound du Val 
singularity.
Then $X \cong X(\sigma)$ holds with a cone 
$\sigma \subseteq \QQ^3$ generated 
by the columns of one of the following matrices
$$
(1)
\
\begin{bmatrix}
 0  & 0 & k \\
 0  & 1 & 0 \\
 1 & 1 & 1
\end{bmatrix},
\ k \in \ZZ_{\ge 2},
\quad
(2)
\
\begin{bmatrix}
 0  & 0 & k_1 & k_2 \\
 0  & 1 & 0   & 1 \\
 1 & 1 & 1 & 1
 \end{bmatrix},
\ k_1,k_2 \in \ZZ_{\ge 1},
\quad
(3)
\
\begin{bmatrix}
 0  & 0 & 2 \\
 0  & 2 & 0   \\
 1 & 1 & 1
 \end{bmatrix}.
$$
\end{proposition}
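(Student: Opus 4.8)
The plan is to translate the problem into lattice geometry via the characterization recalled just above the proposition, and then to classify the relevant lattice polygons up to affine equivalence.

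\textbf{Setup.} By Definition~\ref{def:cDV} a compound du Val singularity is in particular Gorenstein and canonical, and since $x \in X$ is a point, the defining cone $\sigma \subseteq \QQ^3$ of the local toric model is pointed and full-dimensional. Gorensteinness means that the primitive ray generators of $\sigma$ lie on an affine hyperplane $\{u = 1\}$ for a primitive $u \in (\ZZ^3)^*$; applying a suitable element of $\GL(3,\ZZ)$ I may assume $u = e_3^*$. Then $\sigma = \cone(\triangle \times \{1\})$ for the lattice polygon $\triangle \subseteq \QQ^2$ spanned by the height-one ray generators, and full-dimensionality of $\sigma$ forces $\dim \triangle = 2$. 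By the characterization of toric compound du Val singularities stated above (the toric reading of Definition~\ref{def:cDV}~(iv)), $X$ is compound du Val if and only if $\triangle$ is hollow, i.e.\ has no lattice point in its interior. The residual freedom in the normalization is exactly the group $\mathrm{AGL}(2,\ZZ)$ of affine lattice automorphisms of $\{z=1\}$, and two such $\triangle$ give isomorphic $X$ precisely when they are $\mathrm{AGL}(2,\ZZ)$-equivalent. So the task reduces to classifying two-dimensional hollow lattice polygons up to affine lattice equivalence.

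\textbf{Lattice width one.} Let $w(\triangle)$ denote the lattice width of $\triangle$. If $w(\triangle) = 1$, then after an affine lattice transformation $\triangle$ lies in the strip $\{0 \le y \le 1\}$; its vertices are lattice points with $0 \le y \le 1$, hence have $y \in \{0,1\}$, so $\triangle$ is the convex hull of a (possibly degenerate) segment on $y=0$ and one on $y=1$, and therefore has at most four vertices. A translation, the shear $(x,y) \mapsto (x - cy, y)$ and, if needed, a reflection bring $\triangle$ into the form $\mathrm{conv}\{(0,0),(k_1,0),(0,1),(k_2,1)\}$. When one of the two edges degenerates to a point this is the triangle of matrix~(1); the requirement that $X$ be singular rules out the unimodular case and leaves $k \ge 2$. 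Otherwise it is the trapezoid of matrix~(2) with $k_1,k_2 \ge 1$, and the four rays make $\sigma$ non-simplicial, so $X$ is automatically singular. Hollowness is automatic here, as no lattice point has $0 < y < 1$.

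\textbf{Lattice width at least two --- the main obstacle.} Here I claim $\triangle \cong \mathrm{conv}\{(0,0),(2,0),(0,2)\}$, giving matrix~(3), and that $w(\triangle) \ge 3$ cannot occur. Choosing $y$ as a width-realizing direction, I have $\triangle \subseteq \{0 \le y \le w\}$ with $w = w(\triangle)$. For each interior integer height $1 \le j \le w-1$ the line $\{y = j\}$ meets the interior of $\triangle$, so hollowness forces the open horizontal slice at height $j$ to contain no lattice point; thus the concave cross-section-length function $f(y) = \mathrm{length}(\triangle \cap \{y = \mathrm{const}\})$ satisfies $f(j) \le 1$ at every interior integer height. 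Combining the concavity of $f$ with the fact that the top and bottom edge lengths $f(0), f(w)$ are non-negative integers (their endpoints being vertices), a short case analysis forces $w = 2$ and then pins down $\triangle$: the admissible pairs $(f(0),f(w))$ are severely restricted, and checking them --- discarding those that either acquire an interior lattice point or secretly have width one in another direction, such as the area-two parallelograms --- leaves only $\mathrm{conv}\{(0,0),(2,0),(0,2)\}$. The delicate points are ruling out $w \ge 3$ and verifying that no hollow triangle or quadrilateral of genuine width two other than $2\triangle$ survives; alternatively one may invoke the classification of hollow lattice polygons, which states that every such polygon is affinely equivalent either to a subpolygon of the strip $\QQ \times [0,1]$ or to $\mathrm{conv}\{(0,0),(2,0),(0,2)\}$.

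Finally, the three resulting cones are pairwise non-equivalent and all give genuine singularities for the stated parameter ranges, which completes the classification and recovers the list of~\cite{Da}.
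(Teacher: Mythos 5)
Your proposal is correct and takes essentially the same route as the paper: the paper's entire proof consists of deleting the third row to exhibit the hollow lattice polygons and then citing the known census of such polygons (\cite{RaSt}), which is exactly the reduction you carry out. Your explicit treatment of the width-one case is a welcome supplement, and the width-$\ge 2$ case, which you only sketch, is closed by your stated fallback to the same classification of hollow polygons that the paper itself invokes.
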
 

\begin{proof}
After removing the third row from the matrices, we find 
in their columns the vertices of the hollow polytopes 
$\triangle \subseteq \QQ^2$; see~\cite{RaSt}. 
\end{proof}

We turn to affine $T$-varieties $X$ of complexity one.
As the toric case is settled, we can concentrate on
the varieties $X = X(A,P)$ of Type~2.
The basic tool is the anticanonical complex $A_X^c$,
described in Proposition~\ref{prop:acancompstruct}.
The following statement specifies a bit more.

\begin{proposition}
\label{prop:roofacan}
Let $X = X(A,P)$ be an affine Gorenstein,
log-terminal threefold of Type~2 such that
$P$ is in the form of Proposition~\ref{prop:zeta}.
Consider the intersections
$$
\partial A_X^c(\lambda)
\, := \, 
\partial A_X^c \cap \lambda,
\quad
\partial A_X^c(\lambda_i)
\, := \, 
\partial A_X^c \cap \lambda_i,
\quad 
\partial A_X^c (\lambda_i, \tau) 
\, := \,
\partial A_X^c(\lambda_i) \cap \tau,
$$
where $\partial A_X^c$ is the relative 
boundary of the anticanonical complex,
$\lambda \subseteq \trop(X)$ 
the lineality part,
$\lambda_0, \ldots, \lambda_r \subseteq \trop(X)$
are the leaves
and $\tau$ is any $P$-elementary cone.
\begin{enumerate}
\item
Let $x_1, \ldots, x_{r+2}$ be the standard 
coordinates on the column space $\QQ^{r+2}$
of~$P$ and set $x_0 := - x_1 -  \ldots - x_r$. 
Then $x_i,x_{r+1},x_{r+2}$ are linear coordinates 
on the three-dimensional vector space 
$\Lin_\QQ(\lambda_i)$ and we have 
$$
\partial A_X^c(\lambda_i)
\ = \
A_X^c
\cap
\lambda_i 
\cap 
\mathcal{H}_i
\ \subseteq \
\Lin_\QQ(\lambda_i)
$$
with the plane 
$
\mathcal{H}_i 
:= 
V(\zeta_X x_{r+2} + \mu_i x_{i} - \imath_X)
\subseteq 
\Lin_\QQ(\lambda_i)
$,
where $\mu_i$ is the integer defined
in Remark~\ref{rem:zetaPprops}.
In particular, for fixed $i$, 
the columns $v_{ij}$ of $P$ lie 
on the half plane $\lambda_i \cap \mathcal{H}_i$.
\item
The set $A_X^c \cap \tau$ is a two-dimensional 
and $\partial A_X^c \cap \tau$ a 
one-dimensional polyhedral complex. 
Furthermore, 
$\partial A_X^c (\lambda_i, \tau)$
is a line segment.
\end{enumerate}
\end{proposition}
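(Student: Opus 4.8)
The plan is to deduce planarity of the roof over each leaf from Proposition~\ref{prop:acancompstruct} and then to pin down the plane by evaluating the cutting functional on the known vertices of $A_X^c$. I would first dispose of the coordinate claim, which is forced by the shape of the leaves: on $\lambda_i$ with $i\ge 1$ the coordinates $x_j$ with $1\le j\le r$ and $j\neq i$ vanish identically, whereas on $\lambda_0$ all of $x_1,\ldots,x_r$ agree, so that $x_0=-x_1-\ldots-x_r$ together with $x_{r+1}$ and $x_{r+2}$ restricts to a linear coordinate system on the three-dimensional space $\Lin_\QQ(\lambda_i)$, with $x_i$ playing the role of the radial coordinate of the leaf. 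Since $X$ is affine, Proposition~\ref{prop:affchar} shows that $\Sigma$ is the face fan of the single cone $\sigma$ spanned by the columns of $P$, so $\sigma\cap\lambda_i$ is the unique maximal cone of $\Sigma\sqcap\trop(X)$ sitting in the leaf. Proposition~\ref{prop:acancompstruct}(ii) then yields a vertex $u_{\sigma,i}\in B_X$ with $\partial A_X^c(\lambda_i)=\{v\in\sigma\cap\lambda_i;\ \langle u_{\sigma,i},v\rangle=-1\}$; in particular this boundary lies in a single affine hyperplane, and it only remains to identify that hyperplane with $\mathcal{H}_i$.

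For the identification I would exhibit enough vertices of $A_X^c$ lying on $\mathcal{H}_i$. By Proposition~\ref{prop:acancompstruct}(viii) the vertices of $A_X^c$ meeting $\lambda_i$ are the columns $v_{ij}$ and the leaving points $v'_\tau=\ell_\tau^{-1}v_\tau$ of the $P$-elementary cones $\tau\subseteq\sigma$; the latter lie in the lineality $\lambda\subseteq\lambda_i$ by Proposition~\ref{prop:acancompstruct}(iii), and $\ell_\tau>0$ by log terminality, as in the proof of Theorem~\ref{thm:logtermchar}. That each $v_{ij}$ lies on $\mathcal{H}_i$ is immediate from the relation $\zeta_X\nu_{ij}+\mu_il_{ij}=\imath_X$ of Remark~\ref{rem:zetaPprops}. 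For the leaving points I would use $\ell_{\tau,i}l_{ij_i}=l_{0j_0}\cdots l_{rj_r}$, the definition $\ell_\tau=\sum_i\ell_{\tau,i}-(r-1)l_{0j_0}\cdots l_{rj_r}$, and the relation $\mu_0+\ldots+\mu_r=(r-1)\imath_X$ to compute $x_{r+2}(v'_\tau)=\imath_X/\zeta_X$, while $x_i(v'_\tau)=0$ since $v'_\tau\in\lambda$; hence $v'_\tau\in\mathcal{H}_i$ for every $i$. As $\partial A_X^c(\lambda_i)$ is two-dimensional, these vertices affinely span a two-plane, so the cutting hyperplane coincides with $\mathcal{H}_i$, giving (i).

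For (ii) I would start from $A_X^c\subseteq\trop(X)$, whence $A_X^c\cap\tau=A_X^c\cap\bigcup_i(\tau\cap\lambda_i)$. A short check shows that $\tau\cap\lambda_i=\cone(v_{ij_i},\varrho_\tau)$ is two-dimensional: writing a point of $\tau$ as $\sum_kc_kv_{kj_k}$, membership in $\lambda_i$ forces the $c_k$ with $k\neq 0,i$ to be determined by $c_0$, leaving two free parameters. Being the bounded part of $A_X^c$ over this two-dimensional support, $A_X^c\cap\tau$ is a two-dimensional polyhedral complex and $\partial A_X^c\cap\tau$ a one-dimensional one. Finally $\partial A_X^c(\lambda_i,\tau)=\mathcal{H}_i\cap(\tau\cap\lambda_i)\cap A_X^c$; since $\mathcal{H}_i$ meets the two-dimensional cone $\tau\cap\lambda_i$ in an affine line, its bounded part is the segment joining $v_{ij_i}$ to $v'_\tau$, a line segment.

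The main obstacle is the plane identification in (i): although planarity of $\partial A_X^c(\lambda_i)$ is handed to us by Proposition~\ref{prop:acancompstruct}(ii), proving that the roof is flat at the uniform height $x_{r+2}=\imath_X/\zeta_X$ over the lineality requires the explicit evaluation of $x_{r+2}(v'_\tau)$ for every $P$-elementary $\tau$, together with the careful bookkeeping of the leaf $\lambda_0$, where $x_0=-x_1-\ldots-x_r$ must take the place of $x_i$.
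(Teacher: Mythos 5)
Your argument is essentially the paper's: both parts rest on Proposition~\ref{prop:acancompstruct} --- planarity of $\partial A_X^c(\lambda_i)$ via the cutting vertex $u_{\sigma,i}$ from part~(ii) of that proposition, and for dimension counts the vertices $0$, $v_{ij_i}$, $v_\tau'$ of $A_X^c\cap\tau\cap\lambda_i$. Your explicit identification of the cutting plane by evaluating on the vertices $v_{ij}$ and $v_\tau'$ (the computation $x_{r+2}(v_\tau')=\imath_X/\zeta_X$ via $\ell_{\tau,i}l_{ij_i}=l_{0j_0}\cdots l_{rj_r}$ and $\sum\mu_i=(r-1)\imath_X$ is correct) is a legitimate and more self-contained way to fill in the step the paper dispatches with ``by the assumption on $P$.'' One caveat concerns exactly the point you flagged as the main obstacle but did not carry out: with the literal definition $x_0=-x_1-\cdots-x_r$ one gets $x_0(v_{0j})=r\,l_{0j}$, so the relation $\zeta_X\nu_{0j}+\mu_0 l_{0j}=\imath_X$ of Remark~\ref{rem:zetaPprops} places $v_{0j}$ on $V\bigl(\zeta_X x_{r+2}+\tfrac{\mu_0}{r}x_0-\imath_X\bigr)$ rather than on $\mathcal{H}_0$ as written; the intended reading (consistent with the figures and with the later computations in Proposition~\ref{prop:Qfac}) is that $x_0$ restricts to $-x_k$ for any single $k$ on $\Lin_\QQ(\lambda_0)$, and your ``immediate from the relation'' silently uses that normalization rather than the displayed one.
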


\begin{proof}
We show~(i). 
Let $\sigma \subseteq \QQ^{r+2}$ be the cone
over the columns of $P$.
Then the set $\partial A_X^c(\lambda_i)$ 
equals $\partial A_X^c \cap \sigma \cap \lambda_i$.
By the assumption on $P$, the equation from 
Proposition~\ref{prop:acancompstruct}~(ii) 
gives the assertion.

For~(iii), write 
$\tau=\cone(w_0,\ldots,w_r)$ with $w_i \in \lambda_i$. 
Observe that $A_X^c \cap \tau  \cap \lambda_i$ 
has the vertices $0$, $w_i$, $v(\tau)'$ and is 
thus two-dimensional. 
Only  $w_i$ and $v(\tau)'$ satisfy the equation
$\zeta_X x_{r+2} + \mu_i x_{i} = \imath_X$. 
Thus $A_X^c \cap \tau$ is two-dimensional and
$\partial A_X^c \cap \tau$ as well 
as $\partial A_X^c (\lambda_i, \tau)$ 
are one-dimensional.
\end{proof}

The following figures visualize the situation of 
Proposition~\ref{prop:roofacan} for the case $r=2$.
The first one shows the leaves $\lambda_i$,
the second one the half planes $\lambda_i \cap \mathcal{H}_i$,
the third one all $A_X^c(\lambda_i)$
and the last one all $A_X^c (\lambda_i, \tau)$
for a given $P$-elementary cone $\tau$.

\begin{center}
\begin{tikzpicture}[scale=0.8]

%Fluegel 0

\draw[thick, draw=black, fill=gray!20!, fill opacity=0.5] (0,-1,-1)--(0,-1,1)--(-1.8,-1,1)--(-1.8,-1,-1)--cycle;
\draw[thick, draw=black, fill=gray!20!, fill opacity=0.5] (0,-1,-1)--(0,1,-1)--(-1.8,1,-1)--(-1.8,-1,-1)--cycle;

\draw[thick, draw=black, fill=gray!20!, fill opacity=0.80] (0,-1,1)--(0,1,1)--(-1.8,1,1)--(-1.8,-1,1)--cycle;
\draw[thick, draw=black, fill=gray!10!, fill opacity=0.80] (0,1,1)--(0,1,-1)--(-1.8,1,-1)--(-1.8,1,1)--cycle;

%Lineality Space
\draw[thick, draw=black, fill=gray!60!, fill opacity=0.80] (0,-1,-1)--(0,-1,1)--(0,1,1)--(0,1,-1)--cycle;

%Fluegel 1
%darin lineality space
\draw[thick, draw=black, fill=gray!90!, fill opacity=0.80]
(2,-1,-2.8)--(2,-1,-0.8)--(2,1,-0.8)--(2,1,-2.8)--cycle;

\draw[thick, draw=black, fill=gray!20!, fill opacity=0.5] (2,-1,-2.8)--(2,-1,-0.8)--(3.8,-1,-1)--(3.8,-1,-3)--cycle;
\draw[thick, draw=black, fill=gray!20!, fill opacity=0.5] (2,-1,-2.8)--(2,1,-2.8)--(3.8,1,-3)--(3.8,-1,-3)--cycle;

\draw[thick, draw=black, fill=gray!10!, fill opacity=0.80] (2,1,-0.8)--(2,1,-2.8)--(3.8,1,-3)--(3.8,1,-1)--cycle;
\draw[thick, draw=black, fill=gray!20!, fill opacity=0.80] (2,1,-0.8)--(2,-1,-0.8)--(3.8,-1,-1)--(3.8,1,-1)--cycle;

%Fluegel 2
%darin lineality space
\draw[thick, draw=black, fill=gray!90!, fill opacity=0.80] (2,-1,1.3)--(2,-1,3.3)--(2,1,3.3)--(2,1,1.3)--cycle;

\draw[thick, draw=black, fill=gray!20!, fill opacity=0.5] (2,-1,1.3)--(2,-1,3.3)--(4.3,-1,4.6)--(4.3,-1,2.6)--cycle;
\draw[thick, draw=black, fill=gray!20!, fill opacity=0.5] (2,-1,1.3)--(2,1,1.3)--(4.3,1,2.6)--(4.3,-1,2.6)--cycle;

\draw[thick, draw=black, fill=gray!10!, fill opacity=0.80] (2,1,3.3)--(2,1,1.3)--(4.3,1,2.6)--(4.3,1,4.6)--cycle;
\draw[thick, draw=black, fill=gray!20!, fill opacity=0.80] (2,1,3.3)--(2,-1,3.3)--(4.3,-1,4.6)--(4.3,1,4.6)--cycle;

\end{tikzpicture}
\hfill
%jetzt neu mit grid 
\begin{tikzpicture}[scale=0.8]

%Fluegel 0

\draw[thick, draw=black, fill=gray!20!, opacity=0.2] (0,-1,-1)--(0,-1,1)--(-1.8,-1,1)--(-1.8,-1,-1)--cycle;
\draw[thick, draw=black, fill=gray!20!, opacity=0.2] (0,-1,-1)--(0,1,-1)--(-1.8,1,-1)--(-1.8,-1,-1)--cycle;

\foreach \i in {0,-0.4,...,-1.6}
	{
\draw[draw=black] (\i,\i/4,1)--(\i,\i/4,-1);	
	}
	
\foreach \i in {0.8,0.4,...,-1.2}
	{
\draw[draw=black] (0,0,\i)--(-1.8,-0.45,\i);	
	}

\draw[thick, draw=black, fill=gray!20!, opacity=0.2] (0,-1,1)--(0,1,1)--(-1.8,1,1)--(-1.8,-1,1)--cycle;
\draw[thick, draw=black, fill=gray!10!, opacity=0.2] (0,1,1)--(0,1,-1)--(-1.8,1,-1)--(-1.8,1,1)--cycle;

%Lineality Space
\draw[thick, draw=black, fill=gray!60!, opacity=0.2] (0,-1,-1)--(0,-1,1)--(0,1,1)--(0,1,-1)--cycle;

%Fluegel 1
%darin lineality space
\draw[thick, draw=black, fill=gray!90!, opacity=0.2]
(2,-1,-2.8)--(2,-1,-0.8)--(2,1,-0.8)--(2,1,-2.8)--cycle;

\draw[thick, draw=black, fill=gray!20!, opacity=0.2] (2,-1,-2.8)--(2,-1,-0.8)--(3.8,-1,-1)--(3.8,-1,-3)--cycle;
\draw[thick, draw=black, fill=gray!20!, opacity=0.2] (2,-1,-2.8)--(2,1,-2.8)--(3.8,1,-3)--(3.8,-1,-3)--cycle;

\foreach \i in {0,0.4,...,1.6}
	{
\draw[draw=black] (2+\i,0,-0.8-\i*0.1)--(2+\i,0,-2.8-\i*0.1);	
	}
	
\foreach \i in {-1,-1.4,...,-2.6}
	{
\draw[draw=black] (2,0,\i)--(3.8,0,-0.2+\i);	
	}

\draw[thick, draw=black, fill=gray!10!, opacity=0.2] (2,1,-0.8)--(2,1,-2.8)--(3.8,1,-3)--(3.8,1,-1)--cycle;
\draw[thick, draw=black, fill=gray!20!, opacity=0.2] (2,1,-0.8)--(2,-1,-0.8)--(3.8,-1,-1)--(3.8,1,-1)--cycle;

%Fluegel 2
%darin lineality space
\draw[thick, draw=black, fill=gray!90!, opacity=0.2] (2,-1,1.3)--(2,-1,3.3)--(2,1,3.3)--(2,1,1.3)--cycle;

\draw[thick, draw=black, fill=gray!20!, opacity=0.2] (2,-1,1.3)--(2,-1,3.3)--(4.3,-1,4.6)--(4.3,-1,2.6)--cycle;
\draw[thick, draw=black, fill=gray!20!, opacity=0.2] (2,-1,1.3)--(2,1,1.3)--(4.3,1,2.6)--(4.3,-1,2.6)--cycle;

\foreach \i in {0,0.4,...,2}
	{
\draw[draw=black] (2+\i,0,1.3+\i*0.5652174)--(2+\i,0,3.3+\i*0.5652174);	
	}
	
\foreach \i in {1.5,1.9,...,3.1}
	{
\draw[draw=black] (2,0,\i)--(4.3,0,1.3+\i);	
	}

\draw[thick, draw=black, fill=gray!10!, opacity=0.2] (2,1,3.3)--(2,1,1.3)--(4.3,1,2.6)--(4.3,1,4.6)--cycle;
\draw[thick, draw=black, fill=gray!20!, opacity=0.2] (2,1,3.3)--(2,-1,3.3)--(4.3,-1,4.6)--(4.3,1,4.6)--cycle;

\end{tikzpicture}
\hfill
% jetzt mit  A_lambda
\begin{tikzpicture}[scale=0.8]

%Fluegel 0

\draw[thick, draw=black, fill=gray!20!, opacity=0.2] (0,-1,-1)--(0,-1,1)--(-1.8,-1,1)--(-1.8,-1,-1)--cycle;
\draw[thick, draw=black, fill=gray!20!, opacity=0.2] (0,-1,-1)--(0,1,-1)--(-1.8,1,-1)--(-1.8,-1,-1)--cycle;

\draw[draw=black, fill=gray!20!, opacity=0.8] (0,0,-0.4)--(-0.8,-0.2,-0.4)--(0,-0.5,0)--cycle;
\draw[draw=black, fill=gray!20!, opacity=0.8] (-0.8,-0.2,-0.4)--(-1.2,-0.3,0.4)--(0,-0.5,0)--cycle;
\draw[draw=black, fill=gray!20!, opacity=0.8] (0,0,0.4)--(-1.2,-0.3,0.4)--(0,-0.5,0)--cycle;
\draw[draw=black, fill=gray!50!, opacity=0.8] (0,0,-0.4)--(-0.8,-0.2,-0.4)--(-1.2,-0.3,0.4)--(0,0,0.4)--cycle;

\foreach \i in {0,-0.4,...,-1.6}
	{
\draw[draw=black, opacity=0.4] (\i,\i/4,1)--(\i,\i/4,-1);	
	}
	
\foreach \i in {0.8,0.4,...,-1.2}
	{
\draw[draw=black, opacity=0.4] (0,0,\i)--(-1.8,-0.45,\i);	
	}

\draw[thick, draw=black, fill=gray!20!, opacity=0.2] (0,-1,1)--(0,1,1)--(-1.8,1,1)--(-1.8,-1,1)--cycle;
\draw[thick, draw=black, fill=gray!10!, opacity=0.2] (0,1,1)--(0,1,-1)--(-1.8,1,-1)--(-1.8,1,1)--cycle;

%Lineality Space
\draw[thick, draw=black, fill=gray!60!, opacity=0.2] (0,-1,-1)--(0,-1,1)--(0,1,1)--(0,1,-1)--cycle;

%Fluegel 1
%darin lineality space
\draw[thick, draw=black, fill=gray!90!, opacity=0.2]
(2,-1,-2.8)--(2,-1,-0.8)--(2,1,-0.8)--(2,1,-2.8)--cycle;

\draw[thick, draw=black, fill=gray!20!, opacity=0.2] (2,-1,-2.8)--(2,-1,-0.8)--(3.8,-1,-1)--(3.8,-1,-3)--cycle;
\draw[thick, draw=black, fill=gray!20!, opacity=0.2] (2,-1,-2.8)--(2,1,-2.8)--(3.8,1,-3)--(3.8,-1,-3)--cycle;

\draw[draw=black, fill=gray!20!, opacity=0.8] (2,0,-2.2)--(3.2,0,-1.92)--(2,-0.5,-1.8)--cycle;
\draw[draw=black, fill=gray!20!, opacity=0.8] (2,0,-1.4)--(3.2,0,-1.92)--(2,-0.5,-1.8)--cycle;
\draw[draw=black, fill=gray!50!, opacity=0.8] (2,0,-2.2)--(3.2,0,-1.92)--(2,0,-1.4)--cycle;

\foreach \i in {0,0.4,...,1.6}
	{
\draw[draw=black, opacity=0.4] (2+\i,0,-0.8-\i*0.1)--(2+\i,0,-2.8-\i*0.1);	
	}
	
\foreach \i in {-1,-1.4,...,-2.6}
	{
\draw[draw=black, opacity=0.4] (2,0,\i)--(3.8,0,-0.2+\i);	
	}

\draw[thick, draw=black, fill=gray!10!, opacity=0.2] (2,1,-0.8)--(2,1,-2.8)--(3.8,1,-3)--(3.8,1,-1)--cycle;
\draw[thick, draw=black, fill=gray!20!, opacity=0.2] (2,1,-0.8)--(2,-1,-0.8)--(3.8,-1,-1)--(3.8,1,-1)--cycle;

%Fluegel 2
%darin lineality space
\draw[thick, draw=black, fill=gray!90!, opacity=0.2] (2,-1,1.3)--(2,-1,3.3)--(2,1,3.3)--(2,1,1.3)--cycle;

\draw[thick, draw=black, fill=gray!20!, opacity=0.2] (2,-1,1.3)--(2,-1,3.3)--(4.3,-1,4.6)--(4.3,-1,2.6)--cycle;
\draw[thick, draw=black, fill=gray!20!, opacity=0.2] (2,-1,1.3)--(2,1,1.3)--(4.3,1,2.6)--(4.3,-1,2.6)--cycle;

\draw[draw=black, fill=gray!20!, opacity=0.8] (2,0,1.9)--(3.6,0,4)--(2,-0.5,2.3)--cycle;
\draw[draw=black, fill=gray!20!, opacity=0.8] (2,0,2.7)--(3.6,0,4)--(2,-0.5,2.3)--cycle;
\draw[draw=black, fill=gray!50!, opacity=0.8] (2,0,2.7)--(3.6,0,4)--(2,0,1.9)--cycle;

\foreach \i in {0,0.4,...,2}
	{
\draw[draw=black, opacity=0.2] (2+\i,0,1.3+\i*0.5652174)--(2+\i,0,3.3+\i*0.5652174);	
	}
	
\foreach \i in {1.5,1.9,...,3.1}
	{
\draw[draw=black, opacity=0.2] (2,0,\i)--(4.3,0,1.3+\i);	
	}

\draw[thick, draw=black, fill=gray!10!, opacity=0.2] (2,1,3.3)--(2,1,1.3)--(4.3,1,2.6)--(4.3,1,4.6)--cycle;
\draw[thick, draw=black, fill=gray!20!, opacity=0.2] (2,1,3.3)--(2,-1,3.3)--(4.3,-1,4.6)--(4.3,1,4.6)--cycle;

\end{tikzpicture}
\hfill
% jetzt 4. mit intersection mit 
\begin{tikzpicture}[scale=0.8]

%Fluegel 0

\draw[thick, draw=black, fill=gray!20!, opacity=0.2] (0,-1,-1)--(0,-1,1)--(-1.8,-1,1)--(-1.8,-1,-1)--cycle;
\draw[thick, draw=black, fill=gray!20!, opacity=0.2] (0,-1,-1)--(0,1,-1)--(-1.8,1,-1)--(-1.8,-1,-1)--cycle;

\draw[draw=black, fill=gray!20!, opacity=0.2] (0,0,-0.4)--(-0.8,-0.2,-0.4)--(0,-0.5,0)--cycle;
\draw[draw=black, fill=gray!20!, opacity=0.2] (-0.8,-0.2,-0.4)--(-1.2,-0.3,0.4)--(0,-0.5,0)--cycle;
\draw[draw=black, fill=gray!80!, opacity=0.9] (0,0,0.4)--(-1.2,-0.3,0.4)--(0,-0.5,0)--cycle;
\draw[thick, draw=black, opacity=0.9] (0,0,0.4)--(-1.2,-0.3,0.4);
\draw[draw=black, fill=gray!50!, opacity=0.2] (0,0,-0.4)--(-0.8,-0.2,-0.4)--(-1.2,-0.3,0.4)--(0,0,0.4)--cycle;

\foreach \i in {0,-0.4,...,-1.6}
	{
\draw[draw=black, opacity=0.4] (\i,\i/4,1)--(\i,\i/4,-1);	
	}
	
\foreach \i in {0.8,0.4,...,-1.2}
	{
\draw[draw=black, opacity=0.4] (0,0,\i)--(-1.8,-0.45,\i);	
	}

\draw[thick, draw=black, fill=gray!20!, opacity=0.2] (0,-1,1)--(0,1,1)--(-1.8,1,1)--(-1.8,-1,1)--cycle;
\draw[thick, draw=black, fill=gray!10!, opacity=0.2] (0,1,1)--(0,1,-1)--(-1.8,1,-1)--(-1.8,1,1)--cycle;

%Lineality Space
\draw[thick, draw=black, fill=gray!60!, opacity=0.2] (0,-1,-1)--(0,-1,1)--(0,1,1)--(0,1,-1)--cycle;

%Fluegel 1
%darin lineality space
\draw[thick, draw=black, fill=gray!90!, opacity=0.2]
(2,-1,-2.8)--(2,-1,-0.8)--(2,1,-0.8)--(2,1,-2.8)--cycle;

\draw[thick, draw=black, fill=gray!20!, opacity=0.2] (2,-1,-2.8)--(2,-1,-0.8)--(3.8,-1,-1)--(3.8,-1,-3)--cycle;
\draw[thick, draw=black, fill=gray!20!, opacity=0.2] (2,-1,-2.8)--(2,1,-2.8)--(3.8,1,-3)--(3.8,-1,-3)--cycle;

\draw[draw=black, fill=gray!20!, opacity=0.2] (2,0,-2.2)--(3.2,0,-1.92)--(2,-0.5,-1.8)--cycle;
\draw[draw=black, fill=gray!80!, opacity=0.9] (2,0,-1.4)--(3.2,0,-1.92)--(2,-0.5,-1.8)--cycle;
\draw[thick, draw=black, opacity=0.9] (2,0,-1.4)--(3.2,0,-1.92);
\draw[draw=black, fill=gray!50!, opacity=0.2] (2,0,-2.2)--(3.2,0,-1.92)--(2,0,-1.4)--cycle;

\foreach \i in {0,0.4,...,1.6}
	{
\draw[draw=black, opacity=0.4] (2+\i,0,-0.8-\i*0.1)--(2+\i,0,-2.8-\i*0.1);	
	}
	
\foreach \i in {-1,-1.4,...,-2.6}
	{
\draw[draw=black, opacity=0.4] (2,0,\i)--(3.8,0,-0.2+\i);	
	}

\draw[thick, draw=black, fill=gray!10!, opacity=0.2] (2,1,-0.8)--(2,1,-2.8)--(3.8,1,-3)--(3.8,1,-1)--cycle;
\draw[thick, draw=black, fill=gray!20!, opacity=0.2] (2,1,-0.8)--(2,-1,-0.8)--(3.8,-1,-1)--(3.8,1,-1)--cycle;

%Fluegel 2
%darin lineality space
\draw[thick, draw=black, fill=gray!90!, opacity=0.2] (2,-1,1.3)--(2,-1,3.3)--(2,1,3.3)--(2,1,1.3)--cycle;

\draw[thick, draw=black, fill=gray!20!, opacity=0.2] (2,-1,1.3)--(2,-1,3.3)--(4.3,-1,4.6)--(4.3,-1,2.6)--cycle;
\draw[thick, draw=black, fill=gray!20!, opacity=0.2] (2,-1,1.3)--(2,1,1.3)--(4.3,1,2.6)--(4.3,-1,2.6)--cycle;

\draw[draw=black, fill=gray!20!, opacity=0.2] (2,0,1.9)--(3.6,0,4)--(2,-0.5,2.3)--cycle;
\draw[draw=black, fill=gray!80!, opacity=0.9] (2,0,2.7)--(3.6,0,4)--(2,-0.5,2.3)--cycle;
\draw[thick, draw=black, opacity=0.9] (2,0,2.7)--(3.6,0,4);
\draw[draw=black, fill=gray!50!, opacity=0.2] (2,0,2.7)--(3.6,0,4)--(2,0,1.9)--cycle;

\foreach \i in {0,0.4,...,2}
	{
\draw[draw=black, opacity=0.2] (2+\i,0,1.3+\i*0.5652174)--(2+\i,0,3.3+\i*0.5652174);	
	}
	
\foreach \i in {1.5,1.9,...,3.1}
	{
\draw[draw=black, opacity=0.2] (2,0,\i)--(4.3,0,1.3+\i);	
	}

\draw[thick, draw=black, fill=gray!10!, opacity=0.2] (2,1,3.3)--(2,1,1.3)--(4.3,1,2.6)--(4.3,1,4.6)--cycle;
\draw[thick, draw=black, fill=gray!20!, opacity=0.2] (2,1,3.3)--(2,-1,3.3)--(4.3,-1,4.6)--(4.3,1,4.6)--cycle;

\end{tikzpicture}
\end{center}

The following statement shows that the relative boundary 
$\partial A_X^c$ of the anticanonical complex replaces 
the lattice polytope $\triangle$ from the toric setting 
discussed before.

\begin{proposition}
\label{prop:cpl1hollow}
Let $X = X(A,P)$ be an affine Gorenstein,
log-terminal threefold of Type~2.
Then $X$ has at most compound du Val 
singularities if and only if 
there are no integral points in the relative 
interior of $\partial A_X^c$.
\end{proposition}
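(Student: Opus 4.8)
The plan is to reduce the compound du Val property to a statement about discrepancies and then to translate it, via the discrepancy formula of Proposition~\ref{lemm::discrepancy}, into a condition on lattice points of the anticanonical complex. First I would record that a Gorenstein log terminal variety is automatically Gorenstein canonical: discrepancies are integral, so $a_i > -1$ becomes $a_i \ge 0$. Thus $X$ is a normal, canonical, Gorenstein threefold, and by Definition~\ref{def:cDV} it has at most compound du Val singularities if and only if, at every point $x$, every exceptional divisor with centre the point $x$ has strictly positive discrepancy. Since we resolve $X$ torically through Construction~\ref{constr:ressing}, every exceptional divisor has $T$-invariant centre, and the only zero-dimensional $T$-invariant subvariety is the torus fixed point $x_0$, the image of $0 \in \b{X}$. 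Along a one-dimensional component $C$ of the singular locus, a general point carries a transverse slice which is a Gorenstein canonical, hence du Val, surface singularity, so criterion~\ref{def:cDV}(i) holds there; as the complement of $x_0$ in $C$ is a single $T$-orbit, every such point is of this type. Hence the global compound du Val property is equivalent to the single condition that over $x_0$ no exceptional divisor has discrepancy zero.

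Next I would translate vanishing of discrepancies into the geometry of $A_X^c$. By Proposition~\ref{lemm::discrepancy}, for a ray $\varrho$ of a resolving fan with primitive generator $v_\varrho$ one has $a_\varrho = -1 + \|v_\varrho\|/\|v_\varrho'\|$ when $\varrho \not\subseteq A_X^c$ and $a_\varrho \le -1$ otherwise. Canonicity (Theorem~\ref{theo::singComplex}(iii)) forbids the latter for an exceptional $v_\varrho$, so every exceptional primitive generator lies on or outside $A_X^c$, and $a_\varrho = 0$ holds precisely when $v_\varrho \in \partial A_X^c$. Thus crepant exceptional divisors correspond exactly to the lattice points of $\partial A_X^c$ that are not columns of $P$. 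It remains to match the dimension of the centre of such a divisor with the stratum of $\partial A_X^c$ containing $v_\varrho$. Using Proposition~\ref{prop:roofacan}, the two-dimensional faces of $\partial A_X^c$ are the roof polygons $\partial A_X^c(\lambda_i) = A_X^c \cap \lambda_i \cap \mathcal{H}_i$; a generator in the relative interior of such a face lies in a cone $\sigma_0 \preceq \sigma$ whose associated stratum $X \cap V(\sigma_0)$ is the point $x_0$, whereas a generator on a one-dimensional face of $\partial A_X^c$ lies on a proper face of $\sigma$ cutting out a one-dimensional stratum, i.e.\ a point on a singular curve. Consequently, crepant divisors with centre $x_0$ correspond exactly to lattice points in the relative interior of $\partial A_X^c$.

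With this dictionary I would prove the two implications. For the forward direction, assume $X$ is compound du Val and suppose, for contradiction, that $u$ is a lattice point in the relative interior of $\partial A_X^c$. Since $X$ is canonical, $u$ is primitive: otherwise $u/k$, with $k$ the greatest common divisor of its entries, would be a nonzero lattice point in the relative interior of $A_X^c$, contradicting Theorem~\ref{theo::singComplex}(iii). As $u$ lies in the relative interior of a two-dimensional face it is none of the columns of $P$, so refining $\Sigma \sqcap \trop(X)$ to a regular fan containing the ray $\QQ_{\ge 0}u$ yields an exceptional divisor $D_\varrho$ with $v_\varrho = u \in \partial A_X^c$, of discrepancy zero and, by the dictionary, with centre $x_0$; this violates criterion~\ref{def:cDV}(iii). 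For the converse, assume there are no lattice points in the relative interior of $\partial A_X^c$. Then no regular subdivision of $\Sigma \sqcap \trop(X)$ contains a ray through the two-dimensional part of $\partial A_X^c$, so in the toric resolution every exceptional divisor with centre $x_0$ has its generator off that part; being canonical it cannot lie strictly inside $A_X^c$, so its discrepancy is positive. By criterion~\ref{def:cDV}(iv) the point $x_0$ is compound du Val, and together with the du Val transverse type along the singular curves this gives the compound du Val property everywhere.

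The main obstacle I anticipate is the geometric dictionary of the second paragraph: rigorously matching the dimension of the centre of $D_\varrho$ with the dimension of the face of $\partial A_X^c$ containing $v_\varrho$. This demands a careful analysis, via Proposition~\ref{prop:roofacan}, of how the roof polygons $\partial A_X^c(\lambda_i)$ sit relative to the faces of the defining cone $\sigma$ and to their intersections with the leaves $\lambda_i$ of $\trop(X)$, so as to certify that relative-interior lattice points of the roof, and only those, yield crepant divisors contracted to the fixed point $x_0$.
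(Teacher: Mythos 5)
Your overall strategy is the same as the paper's: reduce the compound du Val condition to the requirement that every exceptional divisor over the torus fixed point has positive discrepancy (singular points away from the fixed point lie on curves of singular points and are automatically compound du Val -- the paper handles this via Lemma~\ref{lem:bigsingcpl1} and a citation of Koll\'ar--Mori), and then translate that requirement through Proposition~\ref{lemm::discrepancy} and canonicity into the absence of lattice points in the relative interior of $\partial A_X^c$.

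There is, however, a genuine error in your ``dictionary''. You identify the relative interior of $\partial A_X^c$ with the union of the relative interiors of the two-dimensional roof polygons $\partial A_X^c(\lambda_i)$, and you assert that a lattice point on a one-dimensional face of $\partial A_X^c$ lies on a proper face of $\sigma$ and hence corresponds to a divisor centred on a singular curve. This is false for the ridge segment $\partial A_X^c(\lambda) = \partial A_X^c \cap \lambda$ over the lineality part of $\trop(X)$: it is a one-dimensional face of the complex, yet its relative interior is contained in $\sigma^\circ$ (an interior point of the segment is a positive combination of points $v_\tau$ attached to distinct $P$-elementary cones, hence a positive combination of \emph{all} extremal rays of $\sigma$), so a lattice point there yields a crepant exceptional divisor whose centre is the fixed point and destroys the compound du Val property. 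These are precisely the points the subsequent classification hunts for -- for instance $(0,0,1,1) \in \partial A_X^c(\lambda)^\circ$ eliminates most candidate matrices in the proof of Proposition~\ref{prop:Qfac}. With your dictionary such points would be attributed to one-dimensional strata and discarded, so the forward direction is incomplete (you only treat $u$ in the relative interior of a two-dimensional face) and the converse direction, which only forbids rays through ``the two-dimensional part'', would wrongly certify varieties with ridge lattice points as compound du Val. The correct statement, which the paper uses, is that the relative interior of $\partial A_X^c$ equals $\partial A_X^c \cap \sigma^\circ$ and that the centre of $D_\varrho$ is the fixed point if and only if $v_\varrho \in \sigma^\circ$, irrespective of the dimension of the face of $\partial A_X^c$ on which $v_\varrho$ lies.
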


\begin{lemma} 
\label{lem:bigsingcpl1}
Let $X = X(A,P,\Phi)$, denote by $\Sigma$ the fan of
the minimal toric ambient variety $Z$ of $X$
and let $\sigma \in \Sigma$ be a big cone.
\begin{enumerate}
\item
The toric orbit $T_Z \cdot z_\sigma \subseteq Z$ 
corresponding to the cone $\sigma \in \Sigma$ is 
contained in $X \subseteq Z$. 
\item
If $T_Z \cdot z_\sigma \subseteq X$ contains a 
singular point of $X$, then every point of 
$T_Z \cdot z_\sigma$ is singular in $X$.
\end{enumerate}
\end{lemma}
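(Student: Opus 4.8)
The plan is to handle both parts through the Cox-coordinate description of the toric orbit $T_Z \cdot z_\sigma$ together with the $P$-elementary structure forced by bigness. Throughout I would use that the rays of $\Sigma$ are exactly the columns $v_{ij}, v_k$ of $P$, and that a big $\sigma$ contains a $P$-elementary cone $\tau = \cone(v_{0j_0}, \ldots, v_{rj_r})$ (by the remark identifying bigness with containment of a $P$-elementary cone); in particular $\sigma$ has, for every $i = 0, \ldots, r$, at least one column $v_{ij_i}$ among its rays. Note that for Type~1 every cone of $\Sigma$ is a leaf cone by Proposition~\ref{prop:acancompstruct-type1}~(i), so no big cone exists and the statement is vacuous there; hence I may freely work with the Type~2 relations $g_i = \alpha_i T_i^{l_i} + \beta_i T_{i+1}^{l_{i+1}} + \gamma_i T_{i+2}^{l_{i+2}}$ obtained by expanding the defining determinants.

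For~(i) I would recall from toric geometry (cf.~\cite{CLS}) that, in the Cox quotient $\rq Z \to Z$, the fibre over $T_Z \cdot z_\sigma$ is precisely the set of points $z \in \rq Z \subseteq \CC^{n+m}$ whose coordinates vanish exactly along $\sigma(1)$. By the previous paragraph, for such a point each $T_{ij_i}$-coordinate is zero, so every monomial $T_i^{l_i}$ vanishes at $z$ for $i = 0, \ldots, r$. Since each relation $g_i$ is a linear combination of the three monomials $T_i^{l_i}, T_{i+1}^{l_{i+1}}, T_{i+2}^{l_{i+2}}$, all of which vanish at $z$, one gets $g_i(z) = 0$ for all $i$, i.e. $z \in \b X$. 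Thus the whole fibre lies in $\rq X = \b X \cap \rq Z$, and passing to the quotient yields $T_Z \cdot z_\sigma \subseteq X$.

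For~(ii) the key point is that the complexity-one torus $T$, which does preserve $X$, already acts transitively on the orbit $O(\sigma) := T_Z \cdot z_\sigma$. I would check this on cocharacter lattices: $T$ corresponds to $0 \times \QQ^s \subseteq \QQ^{r+s}$, the $T_Z$-stabiliser of $z_\sigma$ to $\Lin(\sigma) \cap \ZZ^{r+s}$, and transitivity amounts to $\Lin(\sigma) + (0 \times \QQ^s) = \QQ^{r+s}$. This follows from bigness: projecting the elementary subcone $\tau \subseteq \sigma$ to the first $r$ coordinates sends $v_{ij_i}$ to $l_{ij_i}\bar v_i$, where $\bar v_i = e_i$ for $i \ge 1$ and $\bar v_0 = -(e_1 + \ldots + e_r)$, so $\Lin(\tau)$ surjects onto $\QQ^r$; as $0 \times \QQ^s$ is exactly the kernel of this projection, $\Lin(\tau) + (0 \times \QQ^s) = \QQ^{r+s}$, and a fortiori the same holds for $\Lin(\sigma) \supseteq \Lin(\tau)$. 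Hence $T$ acts transitively on $O(\sigma)$. Since $T$ acts on $X$ by automorphisms, the singular locus $X_{\sing}$ is closed and $T$-invariant; so if $O(\sigma)$ meets $X_{\sing}$ in a point $p$, then $O(\sigma) = T \cdot p \subseteq X_{\sing}$, which is the claim.

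The step I expect to require the most care is the transitivity in~(ii): one must pass from the full ambient torus $T_Z$, which does \emph{not} preserve $X$, to the subtorus $T$ that does, and confirm that bigness is exactly what forces the $T$-orbit to fill out the whole toric orbit $O(\sigma)$. Once this lattice surjectivity is in hand, $T$-invariance of the singular locus closes the argument at once; part~(i) is then essentially formal, the only subtlety being the correct Cox-coordinate description of the fibre of $\rq Z \to Z$ over $O(\sigma)$.
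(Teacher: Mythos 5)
Your proposal is correct. Part~(i) is essentially the paper's argument: the fibre of $\rq{Z}\to Z$ over the big orbit has all coordinates $T_{ij}$ with $v_{ij}\in\sigma(1)$ equal to zero, bigness forces every monomial $T_i^{l_i}$ to vanish there, hence all trinomials $g_i$ vanish and one descends to $X=\rq{X}\quot H$. (One small point: the generators of a $P$-elementary subcone $\tau\subseteq\sigma$ need not themselves be extremal rays of $\sigma$; what you actually use is that, since all rays of $\Sigma$ are columns of $P$ and a nonnegative combination of columns can only reach $\lambda_i^\circ$ if some $v_{ij}$ occurs, bigness forces for each $i$ some $v_{ij}\in\sigma(1)$ --- which suffices, as any such $j$ kills $T_i^{l_i}$.) For part~(ii) you take a genuinely different route. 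The paper stays upstairs: it observes from the monomial structure of the partials of the $g_i$ that the singular locus of $\b{X}$ is cut out by coordinate subspaces, so singularity of one point of $\rq{X}$ over the orbit propagates along the whole $\TT^{n+m}$-orbit inside $\b{X}$, and then transfers this to $X$ via the smoothness correspondence for characteristic spaces \cite[Cor.~3.3.1.12]{ArDeHaLa}. You instead work downstairs, showing that bigness makes $\Lin_\QQ(\sigma)+(0\times\QQ^s)=\QQ^{r+s}$, so the complexity-one torus $T$ (which, unlike $T_Z$, preserves $X$) already acts transitively on $T_Z\cdot z_\sigma$, and then invoke $T$-invariance of $X_{\sing}$. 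Your argument avoids the Jacobian computation and the descent result for singularities at the cost of the lattice-surjectivity check; both are sound, and yours is arguably the more transparent explanation of why bigness is the relevant hypothesis.
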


\begin{proof} 
We show~(i).
By the structure of the defining relations $g_i$, 
the corresponding statement holds for 
$\b{X} \subseteq \b{Z} = \CC^{n+m}$. 
Passing to the quotient by the characteristic 
quasitorus $H$ gives the assertion.

We turn to~(ii).
Let $z \in \rq{X}$ be a point mapping 
to $T_Z \cdot z_\sigma$.
Using once more the specific shape of the defining 
relations $g_i$, we see that if the point 
$z \in \rq{X}$ is singular in $\b{X}$, 
then every point of $\TT^{n+m} \mal z$ is singular in 
$\b{X}$. 
Thus, the assertion follows 
from~\cite[Cor.~3.3.1.12]{ArDeHaLa}.
\end{proof}

\begin{proof}[Proof of Proposition~\ref{prop:cpl1hollow}]
Since $X$ is Gorenstein and log terminal, 
it is canonical.
Let $Z$ be the minimal toric ambient variety 
of $X$.
Recall that $Z$ is the affine toric variety 
defined by the cone $\sigma$ over the columns 
of $P$ and that the toric fixed point $x \in Z$ 
belongs to $X$.
For any point $x' \in X$ different from $x$, 
we infer from Lemma~\ref{lem:bigsingcpl1} 
and~\cite[3.4.4.6]{ArDeHaLa} that, if $x'$ is singular 
in $X$, then it belongs to a curve consisting 
of singular points of $X$.
According to~\cite[Cor.~5.4]{KoMo}, the point $x'$ 
is at most a compund du Val singularity.
Thus, $X$ has at most compound du Val 
singularities if and only if 
every prime divisor $E \subseteq \varphi^{-1}(x)$
has positive discrepancy;
use Condition~\ref{def:cDV}~(iv).
By Proposition~\ref{lemm::discrepancy}, 
the latter holds if and only if there are 
no integral points in 
$\partial A_X^c \cap \sigma^\circ$,
which in turn is the relative interior 
of $\partial A_X^c$.
\end{proof}

\begin{definition}
Let the matrix $P$ be of Type~2 and ordered 
in the sense of Remark~\ref{rem:leadPlat}. 
By the \emph{leading block} of $P$, we mean the 
matrix $[v_{01},\ldots,v_{r1}]$.
\end{definition}

\begin{lemma}
\label{le:lBdata}
Let $X=X(A,P)$ be an affine, Gorenstein, log-terminal
threefold of canonical multiplicity one of Type~2.
\begin{enumerate}
\item
By admissible operations one achieves
that $P$ is ordered in the sense of Remark~\ref{rem:leadPlat},
in the form of Corollary~\ref{cor::Gore}
and the entry $\mathfrak{d}_i$
sitting in column~$v_{i1}$ and row 
number~$r+1$ of~$P$
satisfies $\mathfrak{d}_{i}=0$ whenever $i \ge 3$.
\item
In the situation of~(i), the leading block of 
the matrix $P$ is fully determined by the data
$(l_{01},l_{11}, l_{21}; \mathfrak{d}_0, \mathfrak{d}_1, \mathfrak{d}_2)$.
\end{enumerate}
\end{lemma}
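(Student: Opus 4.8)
The plan is to establish~(i) by reducing $P$ to an explicit normal form through a short chain of admissible operations, and then to obtain~(ii) by simply reading off the leading block. Two structural facts organize everything: since $X$ is Gorenstein we have $\imath_X = 1$, and since $X$ is a Type~2 threefold we have $s = 2$, so $P$ has $r+2$ rows, the bottom $d$-block consisting of rows $r+1$ and $r+2$.

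First I would order $P$ via admissible operations of types~(i) and~(ii), arranging $l_{i1} \geq \ldots \geq l_{in_i}$ for every $i$ and $l_{01} \geq \ldots \geq l_{r1}$. By Theorem~\ref{thm:affltcharintro} log terminality makes $R(A,P)$ platonic, so Remark~\ref{rem:leadPlat} gives that $(l_{01},l_{11},l_{21})$ is a platonic triple and $l_{i1} = 1$ for all $i \geq 3$. Because $X$ is Gorenstein of canonical multiplicity one, Corollary~\ref{cor::Gore} (applied with $\imath = 1$) lets me fix the last row of $P$ as $(\bm{1}-(r-1)l_0,\bm{1},\ldots,\bm{1})$; the operations achieving this are of types~(iii) and~(iv) and hence act only on the bottom two rows, so they preserve the ordering. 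At this point $v_{i1} = (0,\ldots,l_{i1},\ldots,0,\mathfrak{d}_i,1)$ with $l_{i1}$ in row $i$ for $1 \leq i \leq r$, while $v_{01} = (-l_{01},\ldots,-l_{01},\mathfrak{d}_0,\,1-(r-1)l_{01})$.

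The heart of~(i) is to clear $\mathfrak{d}_i$ for $i \geq 3$. For each such $i$ I would perform the type~(iii) operation adding $-\mathfrak{d}_i$ times the $i$-th upper row to row $r+1$. The one point that genuinely needs the hypotheses — and the main obstacle — is integrality: this operation changes the row-$(r+1)$ entry of $v_{i1}$ by $-\mathfrak{d}_i\, l_{i1}$, which equals $-\mathfrak{d}_i$ and hence sends $\mathfrak{d}_i$ to $0$ precisely because $l_{i1} = 1$; were some $l_{i1} > 1$, no integral type~(iii) operation would clear $\mathfrak{d}_i$ in general, so this step is exactly where log terminality (through platonicity) is indispensable. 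Since for $i \geq 1$ the $i$-th upper row is supported only on blocks $0$ and $i$, the operations for distinct $i \geq 3$ are independent: they leave $\mathfrak{d}_1,\mathfrak{d}_2$ unchanged, modify only $\mathfrak{d}_0$, and, acting on row $r+1$ alone, preserve the canonical last row. This establishes~(i).

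For~(ii) I would just transcribe the leading block $[v_{01},\ldots,v_{r1}]$ in the normal form of~(i). Its upper $r$ rows form the $P_0$-part, determined by $l_{01}$ (the common block-$0$ entry) together with $l_{11}$, $l_{21}$ and the values $l_{i1} = 1$ for $i \geq 3$; its $(r+1)$-st row is $(\mathfrak{d}_0,\mathfrak{d}_1,\mathfrak{d}_2,0,\ldots,0)$ by~(i); and its last row is $(1-(r-1)l_{01},1,\ldots,1)$, depending only on $l_{01}$ (and $\imath_X = 1$). With $r$ read off from the size of the block, every entry is thus an explicit function of $(l_{01},l_{11},l_{21};\mathfrak{d}_0,\mathfrak{d}_1,\mathfrak{d}_2)$, which is~(ii).
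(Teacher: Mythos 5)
Your proposal is correct and follows essentially the same route as the paper: order $P$, use platonicity to force $l_{i1}=1$ for $i\ge 3$, invoke Corollary~\ref{cor::Gore} with $\imath_X=1$ to fix the last row, and subtract the $\mathfrak{d}_i$-fold of the $i$-th row from row $r+1$ to clear $\mathfrak{d}_i$ for $i\ge 3$. Your added remarks on why the integrality of that last step hinges on $l_{i1}=1$ and why it leaves $\mathfrak{d}_1,\mathfrak{d}_2$ and the bottom row untouched are accurate elaborations of what the paper leaves implicit.
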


\begin{proof}
The leading block contains the leading platonic triple
$(l_{01},l_{11},l_{21})$. 
All other~$l_{i1}$ must be equal to one.
Due to Corollary~\ref{cor::Gore}, 
the last row of $P$ is determined by these data. 
Subtracting the $\mathfrak{d}_i$-fold of the $i$-th 
from the $r+1$-th row, we obtain 
$\mathfrak{d}_i=0$ for $i \geq 3$. 
Thus apart from $l_{01},l_{11},l_{21}$, the only free 
parameters in the leading block 
are $\mathfrak{d}_0,\mathfrak{d}_1, \mathfrak{d}_2$.
\end{proof}

\begin{definition}
In the situation of Lemma~\ref{le:lBdata}~(i), 
we call 
$(l_{01},l_{11}, l_{21}; \mathfrak{d}_0, \mathfrak{d}_1, \mathfrak{d}_2)$
the \emph{leading block data} of $P$.
\end{definition}

\begin{proposition}
\label{prop:leadingblockdata}
Let $X=X(A,P)$ be an affine Gorenstein log-terminal threefold
of Type 2 and canonical multiplicity one in the form of 
Lemma~\ref{le:lBdata}.
By admissible operations, keeping the form of 
Lemma~\ref{le:lBdata}, we achieve that
the  leading block has one of the following data:
\begin{align*}
(i)
&
\left(5,3,2;0,0,0\right)
&
(ii)
&
\left(4,3,2;0,0,0\right)
&
(iii)
&
\left(4,3,2;1,0,0\right)
\\
(iv)
&
\left(3,3,2;0,0,0\right)
&
(v)
&
\left(3,3,2;1,0,0\right)
&
(vi)
&
\left(l_{01},2,2;0,0,0\right)
\\
(vii)
&
\left(l_{01},2,2;1,0,0\right)
&
(viii)
&
\left(l_{01},2,2;0,1,0\right)
&
(ix)
&
\left(l_{01},l_{11},1;\mathfrak{d}_0,0,0\right)
\end{align*}
\end{proposition}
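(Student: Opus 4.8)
The plan is to fix the leading platonic triple first and then bring the three remaining free entries $\mathfrak{d}_0,\mathfrak{d}_1,\mathfrak{d}_2$ of the leading block into a normal form by the admissible operations preserving the shape of Lemma~\ref{le:lBdata}. Since $X$ is Gorenstein we have $\imath_X=1$, so the last row of $P$ equals $(\bm{1}-(r-1)l_0,\bm{1},\ldots,\bm{1})$. As $X$ is log terminal, Theorem~\ref{thm:affltcharintro} shows that $R(A,P)$ is platonic, hence the leading platonic triple $(l_{01},l_{11},l_{21})$ is one of $(5,3,2)$, $(4,3,2)$, $(3,3,2)$, $(l_{01},2,2)$ or $(l_{01},l_{11},1)$; these give the five families. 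By Lemma~\ref{le:lBdata} the whole leading block is then encoded by $(l_{01},l_{11},l_{21};\mathfrak{d}_0,\mathfrak{d}_1,\mathfrak{d}_2)$, where $\mathfrak{d}_i$ is the entry of the free row $r+1$ in column $v_{i1}$, and $\mathfrak{d}_i=0$ for $i\ge 3$.

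Next I would list the operations keeping this shape and read off their action on $(\mathfrak{d}_0,\mathfrak{d}_1,\mathfrak{d}_2)$. Adding the first, resp.\ the second, of the upper $r$ rows to row $r+1$ translates by $(-l_{01},l_{11},0)$, resp.\ $(-l_{01},0,l_{21})$; adding the fixed last row to row $r+1$ (and, if $r\ge 3$, restoring $\mathfrak{d}_i=0$ for $i\ge 3$ with the rows $i\ge 3$) translates by $(1-l_{01},1,1)$; negating row $r+1$ is the sign change $(\mathfrak{d}_0,\mathfrak{d}_1,\mathfrak{d}_2)\mapsto-(\mathfrak{d}_0,\mathfrak{d}_1,\mathfrak{d}_2)$; and exchanging two of the non-distinguished blocks $1,\ldots,r$ carrying the same $l$-data permutes the corresponding $\mathfrak{d}$'s. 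The three translations span a sublattice $L\subseteq\ZZ^3$, and a direct determinant computation gives $[\ZZ^3:L]=\ell_\tau$ for the leading $P$-elementary cone $\tau=\cone(v_{01},\ldots,v_{r1})$; explicitly $\ell_\tau=1,2,3,4$ in the first four families and $\ell_\tau=l_{01}+l_{11}$ in the last. Because $\imath_X=1$, the primitivity condition $\gcd(l_{i1},\mathfrak{d}_i,\imath_X)=1$ on the columns $v_{i1}$ is automatic, so the only remaining task is to pick, in each coset of $\ZZ^3/L$, a representative of the listed shape, using the sign change and the block permutations to reduce the number of cosets.

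I would then run through the five families. For $(5,3,2)$ one has $L=\ZZ^3$, so $(0,0,0)$ is forced, case~(i). For $(4,3,2)$ the quotient $\ZZ^3/L$ is cyclic of order two and yields $(0,0,0)$ and $(1,0,0)$, cases~(ii) and~(iii). For $(3,3,2)$ the quotient is cyclic of order three; the sign change identifies the two non-trivial cosets, leaving $(0,0,0)$ and $(1,0,0)$, cases~(iv) and~(v). For $(l_{01},2,2)$ the quotient has order four, being $\ZZ/4$ when $l_{01}$ is odd and $\ZZ/2\times\ZZ/2$ when $l_{01}$ is even; the sign change together with the block swap of the two $l=2$ blocks identifies the two ``off-diagonal'' cosets and leaves exactly $(0,0,0)$, $(1,0,0)$ and $(0,1,0)$, cases~(vi)--(viii). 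Finally, for $(l_{01},l_{11},1)$ the block with leading entry one produces a translation of step one in $\mathfrak{d}_2$, so $\mathfrak{d}_2$ and then $\mathfrak{d}_1$ can be cleared, while $\mathfrak{d}_0$ persists as a genuine parameter; this is case~(ix).

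The main obstacle is the bookkeeping in the two parametric families. For $(l_{01},2,2)$ one must track how the parity of $l_{01}$ changes the group structure of $\ZZ^3/L$ and verify in both cases that the sign change and the single admissible block swap leave precisely three orbits. A second point requiring care is that the block permutations must be restricted to the non-distinguished blocks $1,\ldots,r$: swapping the special block $0$ would alter the $-l_0$ pattern and the last row, and one has to check after each sign change or restoration step that the canonical last row of Corollary~\ref{cor::Gore} and the vanishing $\mathfrak{d}_i=0$ for $i\ge 3$ are preserved, so that we remain within the form of Lemma~\ref{le:lBdata}.
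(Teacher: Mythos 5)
Your proposal is correct and follows the same overall decomposition as the paper --- first the five possible leading platonic triples via Theorem~\ref{thm:affltcharintro}, then a normalization of $(\mathfrak{d}_0,\mathfrak{d}_1,\mathfrak{d}_2)$ by admissible operations --- but you execute the normalization step by a genuinely different, more structural device. The paper treats each triple by exhibiting explicit coefficient vectors $a=(a_1,a_2,a_3)$, chosen according to congruences of the $\mathfrak{d}_i$, for the operation ``add the $a_1$-fold of the first, the $a_2$-fold of the second and the $a_3$-fold of the last row to the penultimate row''. You instead observe that the available translations of $(\mathfrak{d}_0,\mathfrak{d}_1,\mathfrak{d}_2)$ span a sublattice $L\subseteq\ZZ^3$ with $[\ZZ^3:L]=\ell_\tau$ for the leading $P$-elementary cone --- which indeed equals $1,2,3,4$ and $l_{01}+l_{11}$ in the five families, as one checks from $\ell_\tau=l_{01}l_{21}+l_{01}l_{11}+l_{11}l_{21}-l_{01}l_{11}l_{21}$ --- and then count orbits of $\ZZ^3/L$ under the residual symmetries. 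This buys a conceptual explanation of why there are exactly $1,2,2,3$ normal forms in the four bounded families and one parametric family in case~(ix), at the price of not producing the explicit operations the paper needs later when transforming concrete matrices. I verified the key computations: the three generators $(-l_{01},l_{11},0)$, $(-l_{01},0,l_{21})$ and $(1-l_{01},1,1)$ (the last one correctly accounting for the restoration of $\mathfrak{d}_i=0$, $i\ge 3$), the group structures $\ZZ/4$ versus $\ZZ/2\times\ZZ/2$ according to the parity of $l_{01}$, and the orbit counts under the sign change and the swap of blocks $1$ and $2$; all are right. Two minor remarks: your caution about never exchanging the data of block $0$ is unnecessary --- operation~(ii) of Remark~\ref{remark:admissibleops} permits it, and the paper uses the swap $0\leftrightarrow 1$ in its $(3,3,2)$ case, where your row negation $(\mathfrak{d}_0,\mathfrak{d}_1,\mathfrak{d}_2)\mapsto-(\mathfrak{d}_0,\mathfrak{d}_1,\mathfrak{d}_2)$ achieves the same identification of the two nontrivial cosets of $\ZZ/3$; and ``same $l$-data'' for the admissible block swaps can be relaxed to equality of the leading exponents, which is all the $(l_{01},2,2)$ family requires.
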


\begin{proof}
We go through all possible leading platonic triples 
and explicitly list the admissible operations on $P$ 
that produce the desired leading block data.
First, we modify $P$ by subtracting the $i$-th 
row from the last for $i\geq 3$.
Then we have 
$$
\nu_{01} \ = \ 1-l_{01},
\qquad
\nu_{11} \ = \ \nu_{21} \ = \ 1,
\qquad
\nu_{i1} \ = \ \mathfrak{d}_i \ = \ 0,
\ i =3, \ldots, r.
$$
In the sequel, by 
``applying $a = (a_1,a_2,a_3)$'' 
we mean performing the following sequence 
of admissible operations on $P$: 
add the $a_1$-fold of the first, the $a_2$-fold 
of the second and the $a_3$-fold of the last 
to the penultimate row of $P$.

\medskip

\noindent
\emph{Case 1}: 
The leading platonic triple is $(5,3,2)$. 
We arrive at Case~(i) by applying
$$
a 
\ = \ 
\left(
2\mathfrak{d}_0+3\mathfrak{d}_1+5\mathfrak{d}_2
, \
3\mathfrak{d}_0+5\mathfrak{d}_1+7\mathfrak{d}_2
, \
-6\mathfrak{d}_0-10\mathfrak{d}_1-15\mathfrak{d}_2
\right).
$$

\medskip

\noindent
\emph{Case 2}:  
The leading platonic triple is $(4,3,2)$. 
If $\mathfrak{d}_0 \equiv \mathfrak{d}_2 \mod 2$ holds, 
then we arrive at Case~(ii) by applying
$$
a 
\ = \
\left(
\mathfrak{d}_0+\mathfrak{d}_1+2\mathfrak{d}_2
, \
\frac{3}{2}\mathfrak{d}_0+2\mathfrak{d}_1+ \frac{5}{2} \mathfrak{d}_2 
, \
-3\mathfrak{d}_0-4\mathfrak{d}_1-6\mathfrak{d}_2
\right).
$$
If $\mathfrak{d}_0 \equiv \mathfrak{d}_2+1 \mod 2$ holds,
then we arrive at Case~(iii) by applying
$$
a 
\ = \ 
\left(
\mathfrak{d}_0+\mathfrak{d}_1+2\mathfrak{d}_2-1
, \
\frac{3}{2}\mathfrak{d}_0+2\mathfrak{d}_1+ \frac{5}{2} \mathfrak{d}_2 -\frac{3}{2}
, \
-3\mathfrak{d}_0-4\mathfrak{d}_1-6\mathfrak{d}_2+3
\right).
$$

\medskip

\noindent
\emph{Case 3}: 
The leading platonic triple is $(3,3,2)$.
We distinguish the cases 
$\mathfrak{d}_0 \equiv \mathfrak{d}_1 \mod 3$ 
and  
$\mathfrak{d}_0 \equiv \mathfrak{d}_1 +1 \mod 3$ 
(if $\mathfrak{d}_0 \equiv \mathfrak{d}_1 -1 \mod 3$, 
then exchange the data of the blocks $0$ and $1$ 
of $P$). 
We arrive at Cases~(iv) and~(v) by applying respectively
$$
a 
\ = \ 
\left(
\frac{2}{3} \mathfrak{d}_0+\frac{1}{3}\mathfrak{d}_1+\mathfrak{d}_2
, \
\mathfrak{d}_0+\mathfrak{d}_1+\mathfrak{d}_2 
, \
-2\mathfrak{d}_0-2\mathfrak{d}_1-3\mathfrak{d}_2
\right),
$$
$$
a 
\ = \ 
\left(
\frac{2}{3} \mathfrak{d}_0+\frac{1}{3}\mathfrak{d}_1+\mathfrak{d}_2-\frac{2}{3}
, \
\mathfrak{d}_0+\mathfrak{d}_1+\mathfrak{d}_2-1
, \
-2\mathfrak{d}_0-2\mathfrak{d}_1-3\mathfrak{d}_2+2
\right).
$$

\medskip

\noindent
\emph{Case 4:} 
The leading platonic triple is $(l_{01},2,2)$. 
We distinguish several subcases and will work with 
$$
a
\ = \ 
\left(
\frac{1}{2}\mathfrak{d}_0+
\frac{l_{01}-2}{4}
\mathfrak{d}_1+\frac{l_{0j_0}}{4}\mathfrak{d}_2
, \, 
\frac{1}{2}\mathfrak{d}_0+\frac{l_{01}}{4}\mathfrak{d}_1+
\frac{l_{01}-2}{4}\mathfrak{d}_2
, \,
-\mathfrak{d}_0-\frac{l_{01}}{2}(\mathfrak{d}_1+\mathfrak{d}_2)
\right).
$$

\medskip

\noindent
\emph{4.1:} 
We have $l_{01} \equiv 1 \mod 4 $. 

\smallskip

\noindent
\emph{4.1.1:} 
$\mathfrak{d}_1 \equiv \mathfrak{d}_2 \mod 4$.
If $\mathfrak{d}_0$ is even, then applying $a$, we arrive at Case (vi).  
If $\mathfrak{d}_0$ is odd, then applying 
$a  + (-1/2, \ -1/2, \ 1 )$,
we arrive at Case (vii).

\smallskip

\noindent
\emph{4.1.2:} 
$\mathfrak{d}_1 \equiv \mathfrak{d}_2 + 1 \mod 4 $.
If $\mathfrak{d}_0$ is even, then applying 
$a  + (1/4 , \ -1/4 , \ 1/2)$ leads to Case~(viii). 
If $\mathfrak{d}_0$ is odd, then applying
$a + (-1/4 , \ 1/4 , \ 1/2)$ and exchanging 
the data of column blocks 1 and 2 leads to
Case~(viii).

\smallskip

\noindent
\emph{4.1.3:}
$\mathfrak{d}_1 \equiv \mathfrak{d}_2 - 1 \mod 4$.
Exchanging the data of column blocks 1 and 2, we are in 4.1.2 and thus
arrive at Case~(viii).

\smallskip

\noindent
\emph{4.1.4:} 
$\mathfrak{d}_1 \equiv \mathfrak{d}_2 + 2 \mod 4 $. 
If $\mathfrak{d}_0$ is odd, then applying $a$, we arrive at Case~(vi). 
If $\mathfrak{d}_0$ is even, then applying $a  + (-1/2, \, -1/2, \, 1 )$
leads to Case~(vii).

\medskip

\noindent
\emph{4.2:} 
We have $l_{01} \equiv 2 \mod 4 $. 

\smallskip

\noindent
\emph{4.2.1:} 
$\mathfrak{d}_0 \equiv \mathfrak{d}_1 \equiv \mathfrak{d}_2 \mod 2 $. 
Applying $a$, we arrive at Case~(vi).

\smallskip

\noindent
\emph{4.2.2:} 
$\mathfrak{d}_0 \equiv \mathfrak{d}_1 \not\equiv \mathfrak{d}_2 \mod 2 $. 
Applying $a +  (0, \,  -1/2, \,  1)$, we arrive at Case (viii).

\smallskip

\noindent
\emph{4.2.3:} 
$\mathfrak{d}_0 \equiv \mathfrak{d}_2 \not\equiv \mathfrak{d}_1 \mod 2 $.
Exchanging the data of column blocks 1 and 2, we are in 4.2.2
and thus arrive at Case (viii).

\smallskip

\noindent
\emph{4.2.4:}
$\mathfrak{d}_0 \not\equiv \mathfrak{d}_1 \equiv \mathfrak{d}_2 \mod 2 $. 
Applying $a  + (-1/2, \, -1/2, \, 1 )$, we arrive at Case~(vii).

\medskip

\noindent
\emph{4.3 and 4.4:} 
$l_{01} \equiv 3 \mod 4 $ or $l_{01} \equiv 3 \mod 4 $,
respectively. 
These cases are settled by similar arguments as~4.1 
and~4.2. 
That means that the same admissible operations are 
applied after, if necessary exchanging the data of 
column blocks~1 and~2.

\medskip

\noindent
\emph{Case 5:} 
The leading platonic triple is $(l_{01},l_{11},1)$. 
Applying 
$(0 , \, \mathfrak{d}_1-\mathfrak{d}_2 , \, -\mathfrak{d}_1)$, 
we arrive at Case (ix).

\medskip

Finally, in each of the cases~(i) to~(ix), 
we modify the matrix $P$ obtained so far by adding 
the $i$-th row to the last one for $i = 3, \ldots, r$.
This brings ~$P$ again into the form of 
Lemma~\ref{le:lBdata}~(i).
\end{proof}

\section{Proof of Theorems~\ref{thm:cdv-class-intro} 
and~\ref{thm:cdv-graph-intro}}

In Propositions~\ref{prop:Qfac}, 
\ref{prop:nonQfac} and~\ref{prop:zeta>1}, 
we classify the compound du Val singularities 
admitting a torus action of complexity one
and list their defining matrices~$P$,
numerated according to their appearance in 
Theorem~\ref{thm:cdv-class-intro}. 
We begin with the case of 
$\QQ$-factorial non-toric 
threefolds of canonical multiplicity one.

\begin{proposition}
\label{prop:Qfac}
Let $X$ be a non-toric affine threefold of Type~2.
Assume that~$X$ is $\QQ$-factorial, of canonical multiplicity 
one and has at most compound du Val singularities. 
Then $X$, for suitable $A$, is isomorphic to $X(A,P)$,
where $P$ is one of the following matrices:

\setlength{\arraycolsep}{.1cm}
$$
{%\small
\begin{array}{rlrlrl}
\text{(8)}
&
{\tiny
\begin{bmatrix}
  -5 & 3 & 0 & 0 \\
	-5 & 0 & 2 & 0 \\
	0 & 0 & 0 & 1 \\
	-4 & 1 & 1 & 1
 \end{bmatrix}
}
&
\text{(7) }
&
{\tiny
 \begin{bmatrix}
  -4 & 3 & 0 & 0 \\
	-4 & 0 & 2 & 0 \\
	0 & 0 & 0 & 1 \\
	-3 & 1 & 1 & 1
 \end{bmatrix}
}
&
\text{(18)}
&
 {\tiny
\begin{bmatrix}
  -4 & -1 & 3 & 0  \\
	-4 & -1 & 0 & 2  \\
	1 & 3 & 0 & 0  \\
	-3 & 0 & 1 & 1 
 \end{bmatrix}
}
 \\[.5cm]
\text{(6)}
 &
{\tiny
 \begin{bmatrix}
  -3 & 3 & 0 & 0 \\
	-3 & 0 & 2 & 0 \\
	0 & 0 & 0 & 1 \\
	-2 & 1 & 1 & 1
 \end{bmatrix}
}
 &
\text{(15)}
 & 
 {\tiny
\begin{bmatrix}
  -3 & -1 & 3 & 0  \\
	-3 & -1 & 0 & 2  \\
	1 & 2 & 0 & 0  \\
	-2 & 0 & 1 & 1 
 \end{bmatrix}
}
&
\text{(17)}
 &
 {\tiny
\begin{bmatrix}
  -3 & -2 & 3 & 0  \\
	-3 & -2 & 0 & 2  \\
	1 & 1 & 0 & 0  \\
	-2 & -1 & 1 & 1 
 \end{bmatrix}
}
 \\[.5cm]
\text{(4)}
 &
{\tiny
 \begin{bmatrix}
  -k & 2 & 0 & 0 \\
	-k & 0 & 2 & 0 \\
	0 & 0 & 0 & 1 \\
	1-k & 1 & 1 & 1
 \end{bmatrix}
}
&
\text{(12-e-e)}
&
{\tiny
\begin{bmatrix}
  -k_1 & -k_2 & 2 & 0  \\
	-k_1 & -k_2 & 0 & 2  \\
	0 & 1 &  0 & 0  \\
	1-k_1 & 1-k_2 & 1 & 1 
 \end{bmatrix}
}
 &
\text{(5-o)}
 &
  {\tiny
\begin{bmatrix}
  -k & 2 & 0 & 0  \\
	-k & 0 & 2 & 0  \\
	1 & 0 & 0 & 0  \\
	1-k & 1 & 1 & 1 
 \end{bmatrix}
}
\\[.5cm]
\text{(11)}
&
 {\tiny
\begin{bmatrix}
  -k & 2 & 1 & 0  \\
	-k & 0 & 0 & 2  \\
	1 & 0 & 0 & 0  \\
	1-k & 1 & 1 & 1 
 \end{bmatrix}
}
&
\text{(12-o-e/o)}
&
{\tiny
\begin{bmatrix}
  -2k_1 & - 2k_2-1 & 2  & 0  \\
	-2k_1 & - 2k_2-1  & 0  & 2  \\
	0 &  k_1-k_2  & 1  & 0  \\
	1-2k_1 & -2k_2 & 1 & 1 
 \end{bmatrix}
}
&
\text{(16)}
&
{\tiny
 \begin{bmatrix}
  -4 & 2 & 1 & 0  \\
	-4 & 0 & 0 & 2  \\
	0 & 1 & 2 & 0  \\
	-3 & 1 & 1 & 1 
 \end{bmatrix}
}
\\[.5cm]
\text{(5-e)}
&
{\tiny
 \begin{bmatrix}
  -2k-1 & 2  & 0 &0 \\
	-2k-1 & 0  & 2 & 0  \\
	0 & 1 & 0 & k+1 \\
	-2k & 1  & 1 & 1
 \end{bmatrix}
}
\quad
&
\text{(10-o)}
&
{\tiny
\begin{bmatrix}
  -2k-1   & 2  & 1 & 0  \\
	-2k-1 & 0  & 0 & 2   \\
	0     & 1  & \left\lceil \frac{2k+1}{4} \right\rceil & 0  \\
	-2k   & 1  & 1 & 1
 \end{bmatrix}
}
\qquad
&
\end{array}
}
$$
where the parameters $k,k_1,k_2$ are positive integers
and in (4), (5-o) and (11), we have $k\geq2$.
Moreover, (12-e-e) indicates that the two exponents 
in the defining equation of 
Theorem~\ref{thm:cdv-class-intro}~(12)
are even, in (5-o) the exponent is odd etc..
\end{proposition}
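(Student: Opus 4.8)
The plan is to combine the combinatorial rigidity forced by $\QQ$-factoriality with the leading-block normal form of Proposition~\ref{prop:leadingblockdata}, and then to extract the compound du Val condition from the hollowness criterion of Proposition~\ref{prop:cpl1hollow}. First I would fix the ambient format. Since $X$ is a threefold of Type~2 we have $s=2$, so $P$ is an $(r+2)\times(n+m)$ matrix. By Corollary~\ref{affQfact}, $\QQ$-factoriality says that the columns of $P$ are linearly independent; as they also span $\QQ^{r+2}$ by Construction~\ref{constr:RAPdown}, the matrix $P$ is square and invertible, whence $n+m=r+2$. Because there are $r+1$ blocks, each with $n_i\ge 1$, this leaves exactly two combinatorial types: either $m=1$ with all $n_i=1$, or $m=0$ with a single block of size two and all others of size one. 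Being compound du Val, $X$ is Gorenstein, so $\imath_X=1$, and the canonical multiplicity one hypothesis lets me invoke Corollary~\ref{cor::Gore} to bring $P$ into the shape with last row $(\bm{1}-(r-1)l_0,\bm{1},\ldots,\bm{1})$.

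Next I would pass to the leading block. By Lemma~\ref{le:lBdata} the leading block is fully determined by the data $(l_{01},l_{11},l_{21};\mathfrak{d}_0,\mathfrak{d}_1,\mathfrak{d}_2)$, and Theorem~\ref{thm:affltcharintro} forces the leading triple $(l_{01},l_{11},l_{21})$ to be platonic, since compound du Val implies canonical implies log terminal. Applying Proposition~\ref{prop:leadingblockdata} then reduces the leading block data, up to admissible operations preserving the form of Lemma~\ref{le:lBdata}, to the nine cases (i)--(ix). Together with the two combinatorial types above, this cuts the classification down to a finite list of candidate matrices, parametrised by the platonic triple, the exponents in the at most one non-leading block of size two, and the residual $d$-row entries.

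Finally I would impose the compound du Val property via Proposition~\ref{prop:cpl1hollow}: $X$ has at most compound du Val singularities precisely when $\partial A_X^c$ contains no lattice point in its relative interior. Here Proposition~\ref{prop:roofacan} is the workhorse; it identifies each leaf piece $\partial A_X^c(\lambda_i)$ with the slice of the cone over the columns of $P$ by the plane $\mathcal{H}_i=V(\zeta_X x_{r+2}+\mu_i x_i-\imath_X)$, and realises each $\partial A_X^c(\lambda_i,\tau)$ as a line segment for every $P$-elementary $\tau$. For each of the nine cases I would compute these segments explicitly from the normalised $P$ and demand that no interior lattice point occurs; the resulting divisibility and size constraints on the parameters single out precisely the matrices tabulated in the statement, numbered as in Theorem~\ref{thm:cdv-class-intro}.

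The main obstacle will be this last step: for each leading-block type one must assemble the full square matrix $P$ from the leading block, the non-leading exponents, and the free $d$-row entries $\mathfrak{d}_i$, translate the hollowness of $\partial A_X^c$ into explicit arithmetic conditions on these entries, and verify that exactly the listed families survive. The bookkeeping is heaviest in the $(l_{01},2,2)$ and $(l_{01},l_{11},1)$ cases, where the parameters $k$ (respectively $k_1,k_2$) range over infinite families and the hollowness condition branches on the parities of the exponents and of the $\mathfrak{d}_i$; this is what produces the even/odd refinements such as $(5\text{-}e)$, $(5\text{-}o)$, $(10\text{-}o)$, $(12\text{-}e\text{-}e)$ and $(12\text{-}o\text{-}e/o)$ recorded in the statement.
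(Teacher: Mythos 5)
Your outline reproduces the paper's own strategy step for step: make $P$ square via Corollary~\ref{affQfact}, normalise the leading block through Lemma~\ref{le:lBdata} and Proposition~\ref{prop:leadingblockdata} into the nine cases, and then test the no-interior-lattice-point criterion of Proposition~\ref{prop:cpl1hollow} using the explicit descriptions in Propositions~\ref{prop:roofacan} and~\ref{prop:acancompstruct}. However, there are two gaps. The smaller one: your ``two combinatorial types'' do not yet yield a finite list of leading-block configurations, because $r$ is unbounded. You must first pass to an irredundant $P$ (Remark~\ref{rem:redundant}); then every block with index $i\ge 3$ has $l_{i1}=1$ and hence $n_i\ge 2$, and since at most one block can have size two, this forces $r\le 3$. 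The case $r=3$ must then be excluded separately: there the extra column is $v_{32}=(0,0,1,t,0)$ with $t>0$, and since all of $l_{01},l_{11},l_{21}$ exceed one, the lineality segment $\partial A_X^c(\lambda)$ has length $t\,l_{01}l_{11}l_{21}/\beta\ge 2$ and so always contains an interior lattice point. Only after this does the extra column live in one of $\lambda_0^\circ,\lambda_1^\circ,\lambda_2^\circ$ or $\lambda$ with $r=2$, which is the case split the paper actually runs.

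The larger gap is that everything after ``for each of the nine cases I would compute these segments explicitly'' \emph{is} the proof --- several pages of it --- and you have not carried out any of it. Beyond the sheer volume, there is a substantive point your sketch underplays: checking the lineality segment $\partial A_X^c(\lambda)$ alone is not sufficient. In several cases the decisive obstruction sits in a leaf polytope $\partial A_X^c(\lambda_i)$ rather than on $\lambda$; for instance, with leading block data $(4,3,2;1,0,0)$ and $v_{02}=(-1,-1,t,0)$, the segment on $\lambda$ is harmless for all $t\ge 3$, and it is the point $(-1,-1,3,0)\in\partial A_X^c(\lambda_0)^\circ$ that kills every $t>3$ and pins down matrix (18). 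A correct execution therefore has to examine both the lineality segment and the two-dimensional leaf pieces in each case, which is exactly where the parity branching producing (5-e)/(5-o), (10-o), (12-e-e) and (12-o-e/o) comes from. As it stands your text is a faithful roadmap of the paper's argument, not a proof.
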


\begin{proof}
We may assume that $P$ is irredundant and in 
the form of Proposition~\ref{prop:leadingblockdata}.
As~$X$ is $\QQ$-factorial, the matrix $P$ has 
precisely $r+2$ columns, i.e., is a square matrix, 
see Corollary~\ref{affQfact}. 
Since we assume~$P$ to be irredundant and $l_{ij} =1$ holds
for $i \ge 3$, we must have $n_i \ge 2$ for $i \ge 3$. 
This forces $r \le 3$. 
The strategy is now to compute suitable parts of
$\partial A_X^c$ explicitly
according to Proposition~\ref{prop:acancompstruct}
and to use the fact that they don't contain interior 
lattice points, as guaranteed by 
Proposition~\ref{prop:cpl1hollow}.

Consider the case $r = 3$. 
Here, we have $n_0=n_1=n_2=1$ and $n_3=2$. 
Moreover, $(l_{01},l_{11},l_{21})$ is a 
platonic triple with $l_{21} > 1$ 
and $l_3=(1,1)$ holds.
The column apart from the leading block of $P$ 
is $v_{32} = (0,0,1,t,0)$, where we may 
assume that~$t$ is a positive integer.
The vertices of $\partial A_X^c(\lambda)$ 
thus are 
$$ 
\left(0,0,0,\frac{\alpha}{\beta},1\right),
\qquad\qquad
\left(0,0,0,\frac{\alpha + tl_{01}l_{11}l_{21}}{\beta},1\right),
$$
where
$$
\alpha 
:= 
\mathfrak{d}_0l_{11}l_{21}
+\mathfrak{d}_1l_{01}l_{21}
+\mathfrak{d}_2l_{01}l_{11},
\quad
\beta := l_{11}l_{21}+l_{01}l_{21}+l_{01}l_{11}-l_{01}l_{11}l_{21}
$$
Since $l_{01},l_{11},l_{21}$ all differ from one, 
$tl_{01}l_{11}l_{21}/\beta \geq 2$ holds and  
thus $\partial A_X^c(\lambda)$ contains an integral 
point in its relative interior. 
Consequently $r=3$ is impossible.

We are left with the case $r=2$. 
Here, $P$ is a $4 \times 4$ matrix,
the leading block columns are 
$v_{01},v_{11},v_{21}$ and the 
column $v$ of $P$ apart from these three
is one of 
$$
v_{02}=(-k, -k,t,1-k),
\quad
v_{12}=(k,0,0,t,1),
\quad
v_{22}=(0,k,t,1),
\quad
v_1=(0,0,t,1).
$$
We now go through the list of all possible leading block data
provided by Proposition~\ref{prop:leadingblockdata}.
We will often compute the line segment 
$\partial A_X^c(\lambda) \subseteq \QQ^4$
from Proposition~\ref{prop:roofacan} explicitly.
According to Proposition~\ref{prop:acancompstruct},
the $P$-elementary cone spanned by the columns of the 
leading block produces the first vertex $w_1$ 
of $\partial A_X^c(\lambda)$
and the second vertex $w_2$ either arises from a
(unique) second $P$-elementary cone or one has 
$w_2 = v = v_1$.

Let $P$ have the leading block data $(5,3,2;0,0,0)$. 
Then the first vertex of $\partial A_X^c(\lambda)$
is $w_1 = (0,0,0,1)$.
Consider the case that the additional column 
$v$ lies in the 
relative interior $\lambda_0^{\circ} \subseteq \lambda_0$.
Then $v = v_{02}=(-k,-k,t,1-k)$ 
with $1 \le k \le 5$, where we may assume $t>0$. 
We compute $w_2 = (0,0,6t/(6-k),1)$.
The following figures show  
$\partial A_X^c(\lambda_0) \subseteq \mathcal{H}_0$ 
with the lower edge being
$\partial A_X^c(\lambda)$,
where the plane $\mathcal{H}_0$ is defined 
as in Proposition~\ref{prop:roofacan}:

\begin{center}
\begin{tikzpicture}[scale=0.3]
\fill[fill=gray!30!, fill opacity=0.90] (0,0)--(0,5)--(1,5)--(6,0)--cycle;
\draw[gray, very thin] (-0.5,0) grid (6.5,5.5);
%\draw[gray] (-0.5,0)--(6.5,0);
\draw[line width=1pt] (0,0)--(0,5)--(1,5)--(6,0)--cycle;
\coordinate[label=left:\tiny $v_{01}$] (A) at (0,5);
\coordinate[label=above:\tiny $v_{02}$] (B) at (1,5);
\coordinate[label=below:\tiny $w_1$] (C) at (0,0);
\coordinate[label=below:\tiny $w_2$] (D) at (6,0);
\fill (0,5) circle (0.2);
\fill (1,5) circle (0.2);
\fill[fill=white] (1,0) circle (0.2);
\draw (1,0) circle (0.2);
\node at (-2.5,-1.5) {\tiny k =};
\node at (3,-1.5) {\tiny 5};
\end{tikzpicture}
\hfill
\begin{tikzpicture}[scale=0.3]
\fill[fill=gray!30!, fill opacity=0.90] (0,0)--(0,5)--(1,4)--(3,0)--cycle;
\draw[gray, very thin] (-0.5,0) grid (3.5,5.5);
%\draw[black] (-0.5,0)--(3.5,0);
\draw[line width=1pt] (0,0)--(0,5)--(1,4)--(3,0)--cycle;
%\coordinate[label=left:\tiny $v_{01}$] (A) at (0,5);
%\coordinate[label=above:\tiny $v_{02}$] (B) at (1,5);
\fill (0,5) circle (0.2);
\fill (1,4) circle (0.2);
\fill[fill=white] (1,0) circle (0.2);
\draw (1,0) circle (0.2);
\node at (1.5,-1.5) {\tiny 4};
\end{tikzpicture}
\hfill 
\begin{tikzpicture}[scale=0.3]
\fill[fill=gray!30!, fill opacity=0.90](0,0)--(0,5)--(1,3)--(2,0)--cycle;
\draw[gray, very thin] (-0.5,0) grid (2.5,5.5);
%\draw[gray, thin] (-0.5,0)--(2.5,0);
\draw[line width=1pt] (0,0)--(0,5)--(1,3)--(2,0)--cycle;
%\coordinate[label=left:\tiny $v_{01}$] (A) at (0,5);
%\coordinate[label=above:\tiny $v_{02}$] (B) at (1,5);
\fill (0,5) circle (0.2);
\fill (1,3) circle (0.2);
\fill[fill=white] (1,0) circle (0.2);
\draw (1,0) circle (0.2);
\node at (1,-1.5) {\tiny 3};
\end{tikzpicture}
\hfill 
\begin{tikzpicture}[scale=0.3]
\fill[fill=gray!30!, fill opacity=0.90](0,0)--(0,5)--(1,2)--(1.5,0)--cycle;
\draw[gray, very thin] (-0.5,0) grid (2.5,5.5);
%\draw[gray, thin] (-0.5,0)--(2.5,0);
\draw[line width=1pt] (0,0)--(0,5)--(1,2)--(1.5,0)--cycle;
%\coordinate[label=left:\tiny $v_{01}$] (A) at (0,5);
%\coordinate[label=above:\tiny $v_{02}$] (B) at (1,5);
\fill (0,5) circle (0.2);
\fill (1,2) circle (0.2);
\fill[fill=white] (1,0) circle (0.2);
\draw (1,0) circle (0.2);
\node at (1,-1.5) {\tiny 2};
\end{tikzpicture}
\hfill 
\begin{tikzpicture}[scale=0.3]
\fill[fill=gray!30!, fill opacity=0.90](0,0)--(0,5)--(1,1)--(1.2,0)--cycle;
\draw[gray, very thin] (-0.5,0) grid (2.5,5.5);
%\draw[gray, thin] (-0.5,0)--(2.5,0);
\draw[line width=1pt] (0,0)--(0,5)--(1,1)--(1.2,0)--cycle;
%\coordinate[label=left:\tiny $v_{01}$] (A) at (0,5);
%\coordinate[label=above:\tiny $v_{02}$] (B) at (1,5);
\fill (0,5) circle (0.2);
\fill (1,1) circle (0.2);
\fill[fill=white] (1,0) circle (0.2);
\draw (1,0) circle (0.2);
\node at (1,-1.5) {\tiny 1};
\end{tikzpicture}
\hfill 
\begin{tikzpicture}[scale=0.3]
\node at (1,-1.4) {${\color{white}{0}}$};
\fill[fill=gray!30!, fill opacity=0.90] (0,0)--(0,5)--(1,0)--cycle;
\draw[gray, very thin] (-0.5,0) grid (2.5,5.5);
%\draw[gray, thin] (-0.5,0)--(2.5,0);
\draw[line width=1pt] (0,0)--(0,5)--(1,0)--cycle;
%\coordinate[label=left:\tiny $v_{01}$] (A) at (0,5);
\coordinate[label=below:\tiny $v_{1}$] (B) at (1,0);
\fill (0,5) circle (0.2);
\fill (1,0) circle (0.2);
\end{tikzpicture}
\hfill \
\end{center}
\noindent
where the last figure indicates the case of the
additional column lying in $\lambda$, treated below.
Now, because of $6t/(6-k)  \geq 6/5$, we find the 
point $(0,0,1,1)$ in the relative interior
of $\partial A_X^c(\lambda)$ and hence
in the relative interior of $\partial A_X^c$.
According to Proposition~\ref{prop:cpl1hollow},
we leave the compound du Val case here.

We proceed in a more condensed way.
Assume $v \in \lambda_1^{\circ}$.
Then $v = v_{12}=(k,0,t,1)$ with 
$1 \le k \le 3$ and we can assume $t>1$.
We obtain $w_2 = (0,0,10t/(10-3k),1)$.
We find again $(0,0,1,1)$ in 
$\partial A_X^c(\lambda)^{\circ}$
and thus leave the compound du Val case. 
Assume $v \in \lambda_2^{\circ}$.
Then $v = v_{22}=(0,k,t,1)$  with 
$1 \le k \le 2$ and we can assume $t>1$.
We obtain $w_2  = (0,0,15t/(15-7k,1)$.
Once more, $(0,0,1,1)$ shows up in 
$\partial A_X^c(\lambda)^{\circ}$ and we leave 
the compound du Val case. 
Finally, assume $v = v_1=(0,0,t,1)$.
We may assume $t > 0$.
Only for $t=1$ there are no lattice points
in $\partial A_X^c(\lambda)^{\circ}$. 
Moreover, if $t=1$, then all $\partial A_X^c(\lambda_i)$ 
are hollow polytopes of the first type 
of Proposition~\ref{prop:toriccDV}
and we arrive at matrix~$(8)$ from the assertion
defining the compund du Val singularity
$E_8 \times \CC$. 

Let $P$ have the leading block data $(4,3,2;0,0,0)$. 
Also here, the first vertex of $\partial A_X^c(\lambda)$ 
is $w_1 = (0,0,0,1)$.
Assume $v \in \lambda_0^{\circ}$.
Then $v = v_{02}=(-k,-k,t,1-k)$ 
with $1 \le k \le 4$, where we may assume $t>0$.
We obtain $w_2 = (0,0,6t/(6-k),1)$.
Thus, $(0,0,1,1)$ lies in  
$\partial A_X^c(\lambda)^{\circ}$ and we leave 
the compound du Val case. 
Assume $v \in \lambda_1^{\circ}$. 
Then $v = v_{12}=(k,0,t,1)$, where 
$1 \le k \le 3$ and we can assume $t > 0$. 
We obtain $w_2 = (0,0,4t/(4-k),1)$
and find $(0,0,1,1)$ in the relative interior 
of $\partial A_X^c(\lambda)$ and thus 
leave the compound du Val case.
Assume $v \in \lambda_2^{\circ}$.
Then $v = v_{22}=(0,k,t,1)$  with 
$k = 1, 2$ and we can assume $t > 0$.
We obtain $w_2 = (0,0,12t/(12-5k),1)$
and see that $(0,0,1,1)$ lies in 
$\partial A_X^c(\lambda)^{\circ}$.
Thus, we leave the compound du Val case.
Finally, assume $v = v_1=(0,0,t,1)$. 
For $t > 1$, we find $(0,0,1,1)$
in $\partial A_X^c(\lambda)^{\circ}$. 
The case $t=1$ gives matrix~$(7)$, 
defining the compound du Val singularity
$E_7 \times \CC$.%

Let $P$ have the leading block data
$\left(4,3,2;1,0,0\right)$. 
Here, the first vertex 
of $\partial A_X^c(\lambda)$ is 
$w_1 = (0,0,3,1)$.
To visualize the setting,
consider the $P$-elementary cone 
$\tau \subseteq \QQ^4$ generated by the 
columns $v_{01}, v_{11}, v_{21}$ of 
the leading block and the line segments
$\partial A_X^c(\lambda_i, \tau) \subseteq \mathcal{H}_i$,
where $i = 0,1,2$, 
from Proposition~\ref{prop:roofacan}:

\begin{center}

\hfill
\begin{tikzpicture}[scale=0.3]
\draw[gray, very thin] (-0.5,0) grid (4.5,4.5);
\draw[line width=1pt] (3,0)--(1,4);
\coordinate[label=above:\tiny $v_{01}$] (A) at (1,4);
\fill (1,4) circle (0.2);
\node at (2.5,-1) {\tiny $\partial A_X^c(\lambda_0, \tau)$};
\end{tikzpicture}
\hfill
\begin{tikzpicture}[scale=0.3]
\draw[gray, very thin] (-0.5,0) grid (4.5,4.5);
\draw[line width=1pt] (3,0)--(0,3);
\coordinate[label=left:\tiny $v_{11}$] (A) at (0,3);
\fill (0,3) circle (0.2);
\node at (2.5,-1) {\tiny $\partial A_X^c(\lambda_1, \tau)$};
\end{tikzpicture}
\hfill 
\begin{tikzpicture}[scale=0.3]
\draw[gray, very thin] (-0.5,0) grid (4.5,4.5);
\draw[line width=1pt] (3,0)--(0,2);
\coordinate[label=left:\tiny $v_{21}$] (A) at (0,2);
\fill (0,2) circle (0.2);
\node at (2.5,-1) {\tiny $\partial A_X^c(\lambda_2, \tau)$};
\end{tikzpicture}
\hfill \

\end{center}
Note that the additional column $v$
is represented in the above figures by a lattice point not 
contained in $\partial A_X^c(\lambda_i, \tau)$, indicated by 
the black line.
Going through the cases, we will also have to
look at the polytopes 
$\partial A_X^c(\lambda_i)$
and will encounter the following situations:%

\begin{center}

\hfill
\begin{tikzpicture}[scale=0.3]
\fill[fill=gray!30!, fill opacity=0.90](3,0)--(1,4)--(27/7,0);
\draw[gray, very thin] (-0.5,0) grid (4.5,4.5);
\draw[line width=1pt] (3,0)--(1,4)--(27/7,0)--cycle;
\fill (1,4) circle (0.2);
\fill[fill=white] (3,1) circle (0.2);
\draw (3,1) circle (0.2);
\node at (2.5,-1) {\tiny $\partial A_X^c(\lambda_0)$};
\end{tikzpicture}
\hfill
\begin{tikzpicture}[scale=0.3]
\fill[fill=gray!30!, fill opacity=0.90](3,0)--(0,3)--(27/7,0);
\draw[gray, very thin] (-0.5,0) grid (4.5,4.5);
\draw[line width=1pt] (3,0)--(0,3)--(27/7,0)--cycle;
\fill (0,3) circle (0.2);
\node at (2.5,-1) {\tiny $\partial A_X^c(\lambda_1)$};
\end{tikzpicture}
\hfill 
\begin{tikzpicture}[scale=0.3]
\fill[fill=gray!30!, fill opacity=0.90](3,0)--(0,2)--(2,1)--(27/7,0);
\draw[gray, very thin] (-0.5,0) grid (4.5,4.5);
\draw[line width=1pt] (3,0)--(0,2)--(2,1)--(27/7,0)--cycle;
\coordinate[label=above:\tiny $v_{22}$] (B) at (2,1);
\fill (0,2) circle (0.2);
\fill (2,1) circle (0.2);
\node at (2.5,-1) {\tiny $\partial A_X^c(\lambda_2)$};
\end{tikzpicture}
\hfill \

\end{center}

Assume $v \in \lambda_0^{\circ}$.
Then $v = v_{02} = (-k,-k,t,1-k)$ with 
$1 \le k \le 4$.  The second vertex 
of $\partial A_X^c(\lambda)$ is $w_2 = (0,0,6t/(6-k),1)$.
We find one of the points $(0,0,4,1)$ or $(0,0,2,1)$ in 
$\partial A_X^c(\lambda)^\circ$
for $k=2,4$.
Moreover, for $k=3$, we find $(-1,-1,3,0)$
in $\partial A_X^c(\lambda_0)^\circ$.
Thus, we end up with non compound 
du Val singularities for $k=2,3,4$.
In the case $k=1$, we may assume $t>2$.
Only for $t=3$,
no lattice points are inside $\partial {A_X^c}^\circ$.
For $t>3$, the point $(-1,-1,3,0)$ 
lies in  $\partial A_X^c(\lambda_0)^\circ$.
So with $t=3$, we obtain matrix~$(18)$, defining 
a compound du Val singularity.

We show that the remaining possible locations of 
$v$ all lead to non compound du Val singularities.
Assume $v \in \lambda_1^{\circ}$.
Then $v = v_{12} = (k,0,t,1) \in \lambda_1^\circ$ 
with $1 \le k \le 3$. 
The second vertex of $\partial A_X^c(\lambda)$ 
is $w_2 = (0,0,(k+4t)/(4-k),1)$.
Thus, either $(0,0,2,1)$ or $(0,0,4,1)$ lies 
in $\partial A_X^c(\lambda)^\circ$.
Assume $v \in \lambda_2^{\circ}$. 
Then $v = v_{22} = (0,k,t,1)$ 
with $k = 1,2$, where we can assume 
$t > 1$ or $t>0$ accordingly.
The second vertex of 
$\partial A_X^c(\lambda)$ 
is $w_2 = (0,0,(3k+12t)/(12-5k),1)$. 
For $k=2$, we find $(0,0,4,1)$
in $\partial A_X^c(\lambda)^\circ$.
For $k=1$, 
the segment $\partial A_X^c(\lambda)$
is of length $(12t-18)/7$. 
Thus, for $t \ge 3$, we find a lattice
point in $\partial A_X^c(\lambda)^\circ$.
For $t= 2$, we look at $\partial A_X^c(\lambda_0)^\circ$ 
and see that it contains $(-1,-1,3,0)$; 
see the figure above. 
Finally, if $v = v_1 \in \lambda$,
one finds $(-1,-1,3,0)$
in $\partial A_X^c(\lambda_0)^\circ$.

Let $P$ have the leading block data $(3,3,2;0,0,0)$. 
Then the first vertex of $\partial A_X^c(\lambda)$ is 
$w_1 = (0,0,0,1)$. 
Assume $v=(-k,-k,t,1-k) \in \lambda_0^\circ$ 
or $v=(k,0,t,1)  \in \lambda_1^\circ$
with $k=1,2,3$.
Then we can assume $t>0$.
We obtain  $w_2 = (0,0,6t/(6-k),1)$,
find $(0,0,1,1)$ in
$\partial A_X^c(\lambda)^\circ$
and thus leave the compound du Val case. 
If $v=(0,k,t,1)  \in  \lambda_2^\circ$,
with
$k=1,2$, we can assume $t > 0$.
We obtain $w_2 = (0,0,3t/(3-k),1)$
and find
$(0,0,1,1)$ in
$\partial A_X^c(\lambda)^\circ$.
Thus also here, we leave the compound du Val 
case.
Finally, if
$v=(0,0,t,1) \in \lambda$,
then we end up with $t=1$ and the 
matrix~$(6)$, defining the compound 
du Val singularity $E_6 \times \CC$. 

Let $P$ have the leading block  data $(3,3,2;1,0,0)$. 
Then the first vertex of $\partial A_X^c(\lambda)$ is 
$w_1 = (0,0,2,1)$. 
We will take a look at the leaves:

\begin{center}

\hfill
\begin{tikzpicture}[scale=0.3]
\draw[gray, very thin] (-0.5,0) grid (3.5,3.5);
\draw[line width=1pt] (2,0)--(1,3);
\coordinate[label=above:\tiny $v_{01}$] (A) at (1,3);
\fill (1,3) circle (0.2);
\fill[fill=white] (2,1) circle (0.2);
\draw (2,1) circle (0.2);
\node at (2,-1) {\tiny $\partial A_X^c(\lambda_0, \tau)$};
\end{tikzpicture}
\hfill
\begin{tikzpicture}[scale=0.3]
\draw[gray, very thin] (-0.5,0) grid (3.5,3.5);
\draw[line width=1pt] (2,0)--(0,3);
\coordinate[label=left:\tiny $v_{11}$] (A) at (0,3);
\fill (0,3) circle (0.2);
\fill[fill=white] (1,1) circle (0.2);
\draw (1,1) circle (0.2);
\node at (2,-1) {\tiny $\partial A_X^c(\lambda_1, \tau)$};
\end{tikzpicture}
\hfill 
\begin{tikzpicture}[scale=0.3]
\draw[gray, very thin] (-0.5,0) grid (3.5,3.5);
\draw[line width=1pt] (2,0)--(0,2);
\coordinate[label=left:\tiny $v_{21}$] (A) at (0,2);
\fill (0,2) circle (0.2);
\node at (2,-1) {\tiny $\partial A_X^c(\lambda_2, \tau)$};
\end{tikzpicture}
\hfill \

\end{center}

Assume $v \in \lambda_0^\circ$.
Then $v = v_{01} = (-k,-k,t,1-k) $ with $k = 1,2,3$.
We obtain $w_2 = (0,0,6t/(6-k),1)$.
In the case $k=3$ as well as in the case
$k=2$ with $t \neq 1$,
we find one of $(0,0,1,1)$ and $(0,0,3,1)$ 
in $\partial A_X^c (\lambda)^\circ$
and leave the compound du Val case.
For $k=2$ and $t=1$, there are no lattice 
points in $\partial A_X^c$ and the resulting 
matrix is~(15), defining a compound du Val 
singularity. 
If $k=1$ and $t  \ne 2$, 
we  find $(0,0,1,1)$ or $(0,0,3,1)$ in
$\partial A_X^c (\lambda)^\circ$.
The case $t=2$
leads to the matrix~(7), defining 
a compound du Val singularity.
The case of $v \in \lambda_1^\circ$ 
can be reduced by means of admissible operations 
to the previous case.
We show that for the remaining possible
locations of $v$, we leave the compound 
du Val case.
If $v=(0,k,t,1) \in \lambda_2^\circ$, 
then $w_2 = (0,0,(3t+k)/(3-k),1)$
and we find $(0,0,1,1)$ or $(0,0,3,1)$ in
$\partial A_X^c (\lambda)^\circ$.
If $v=(0,0,t,1) \in \lambda$, 
then $(-1,-1,2,0)$ or $(1,0,1,1)$
lies in $\partial A_X^c (\lambda)^\circ$.

Let $P$ have the leading block data $(l_{01},2,2;0,0,0)$. 
Then the first vertex of $\partial A_X^c(\lambda)$ is 
$(0,0,0,1)$. 
Assume $v \in \lambda_0^\circ$.
Then $v = v_{02} = (-k,-k,t,1-k)$ 
with $1 \le k \le l_{01}$,
where we can assume $t > 0$.
We have $w_2=(0 ,0 ,t ,1)$. 
For $t>1$, we obtain  $(0,0,1,1) \in \partial A_X^c(\lambda)^\circ$
and thus leave the compound du Val case.
For $t=1$, the resulting singularity is compound du Val
for every $k$ and has defining matrix~(12-e-e) with $k_1 \ge k_2$.
Assume $v \in \lambda_1^\circ$.
Then $v=v_{12}=(k,0,t,1)$ with $k=1,2$. 
We can assume $l_{01}>2$ and $t > 0$.
For $k=1$ we have $w_2=(0,0,2tl_{01}/(2+l_{01}),1)$
and for $k=2$, we have $w_2=(0,0, tl_{01}/2, 1)$. 
In both cases, $\partial A_X^c(\lambda)^\circ$ contains 
$(0,0,1,1)$ and we obtain a 
non compound du Val singularity.
The case of $v \in \lambda_2^\circ$ can be transformed 
via exchanging the data of 
blocks~1 and~2 into the previous one.
Finally, if $v=(0,0,t,1) \in \lambda$,
then we must have $t=1$ and this 
gives the compound du Val singularity $D_{l_{01}+2} \times \CC$,
defined by the matrix~(4).

Let $P$ have the leading block data $(l_{01},2,2;1,0,0)$. 
Then the first vertex of $\partial A_X^c(\lambda)$ is 
$(0,0,1,1)$. 
Assume $v  \in \lambda_0^\circ$. 
Then $v = v_{02} = (-k,-k,t,1-k)$ with $1 \le k \le l_{01}$.
We can assume $t<1$.
For $t<0$, we have $(0,0,0,1) \in \partial A_X^c(\lambda)^\circ$.
For $t=0$, we obtain a matrix (12-e-e) as in the case of leading 
block data $(l_{01},2,2;0,0,0)$, now with $k_1 \leq k_2$.
Assume $v \in \lambda_1^\circ$. 
The case $l_{01}=2$ can be transformed
via admissible operations 
into the case of leading block data
$(l_{01},2,2;0,1,0)$ and an additional column 
in  $\lambda_0^\circ$, 
which is discussed below.
So, let $l_{01}>2$. 
Then $v = (k,0,t,1)$, where $k = 1,2$.
For $k=2$, we can assume $t>0$.
We obtain $w_2=(0,0,1+ tl_{01}/2, 1)$
and $(0,0,2,1) \in \partial A_X^c (\lambda)^\circ$
and thus leave the compound du Val case.
Now let $k=1$. 
Here, $t$ may be any integer and we obtain 
$w_2 = (0,0,2(1+l_{01}t)/(2+l_{01}),1)$. 
Only for $t=0,1$ there are no lattice points 
in $\partial A_X^c (\lambda)^\circ$.
Both cases lead by admissible operations 
to the compound du Val singularity 
with defining matrix~(11).
The case of $v \in \lambda_2^\circ$ 
can be transformed to the previous one by exchanging the 
data of column blocks~1 and~2.
Finally, if $v \in \lambda$, then it equals either
$(0,0,0,1)$ or  $(0,0,2,1)$.
Both cases lead to the compound du Val singularity with 
defining matrix~(5o).

Let $P$ have leading block data 
$(l_{01},2,2;0,1,0)$. 
Then the first vertex of $\partial A_X^c (\lambda)$
is $w_1 = (0,0, l_{01}/2 ,1)$. 

\medskip

\noindent
\emph{Case 1:} The exponent $l_{01}$ is even. 
Assume $v \in \lambda$.
Then $v = v_1 = w_2 = (0,0,t,1)$.
Exchanging the data of blocks~0 and~1
transforms the case $l_{01}=2$ into the 
corresponding case with leading block data 
$(l_{01},2,2;1,0,0)$ treated before.
So, let $l_{01}>2$.
Having no lattice points in
$\partial {A}_{X}^c (\lambda)^\circ$
implies $t = l_{01}/2 \pm 1$.
But then, there are integer points
in $\partial {A}_{X}^c (\lambda_0)^\circ$:
for $t = l_{01}/2 + 1$ we find
$$ 
\left(-1,-1,\frac{l_{01}}{2},0\right)
\ = \ 
\frac{1}{l_{01}} v_{01} + \frac{1}{2} w_1 
+ \left(\frac{1}{2}-\frac{1}{l_{01}}\right) w_2
$$
and for $t = l_{01}/2 - 1$ we find
$$ 
\left(-1,-1,\frac{l_{01}}{2}-1,0\right)
\ = \ 
\frac{1}{l_{01}} v_{01} + \left(\frac{1}{2}-\frac{2}{l_{01}}\right) w_1 
+ \left( \frac{1}{l_{01}}+\frac{1}{2}\right) w_2.
$$

Assume $v \in \lambda_0^\circ$.
Then $v = v_{02} = (-k,-k,t,1-k)$ with 
$1 \le k \le l_{01}$.
The second vertex of $\partial A_X^c (\lambda)$
is $w_2 = (0,0,t+ k/2,1)$.
If $k$ is even, then having no lattice points in
$\partial {A}_{X}^c (\lambda)^\circ$
implies $t = (l_{01}-k)/2 \pm 1$.
Again there are integer points
in $\partial {A}_{X}^c (\lambda_0)^\circ$:
for $t = (l_{01}-k)/2 + 1$ we find
$$ 
\left(-1,-1,\frac{l_{01}}{2},0\right)
\ = \ 
\frac{1}{k} v_{02} + \frac{1}{2} w_1 
+ \left(\frac{1}{2}-\frac{1}{k}\right) w_2
$$
and for $t = (l_{01}-k)/2 - 1$ we find
$$ 
\left(-1,-1,\frac{l_{01}}{2}-1,0\right)
\ = \ 
\frac{1}{k} v_{02} + \frac{1}{2} w_1 
+ \left(\frac{1}{2}-\frac{1}{k}\right) w_2.
$$
If $k$ is odd, then having no lattice points in
$\partial {A}_{X}^c (\lambda)^\circ$
implies $t = (l_{01}-k \pm 1)/2$.
For both choices of $t$, this setting produces a 
compound du Val singularity with matrix (12-o-e/o) 
and parameters $k_1\geq k_2$.

Before entering the discussion of the cases 
$v \in  \lambda_i^\circ$ with $i=1,2$,
the parameter $k$ occurring in $v$ 
might be $k=1,2$ and the vertex $w_2$
is given by 
$$
w_2
\ = \ 
\begin{cases}
\left(0,0,\frac{2tl_{01}}{2l_{01}+2k-kl_{01}},1\right),
&
v=(k,0,t,1) \in \lambda_1^\circ,
\\[2ex]
\left(0,0,\frac{2tl_{01}+kl_{01}}{2l_{01}+2k-kl_{01}},1\right),
&
v=(0,k,t,1) \in \lambda_2^\circ.
\end{cases}
$$

\medskip

\noindent
\emph{Case 1.1:} We have $l_{01} \equiv 0 \mod 4$.
If $v \in \lambda_2^\circ$ or $v=(2,0,t,1) \in \lambda_1^\circ$,
then we find one of $(0,0,l_{01}/2 \pm 1,1)$ 
in $\partial A_X^c (\lambda)^\circ$. 
Thus, we are left with $v \in \lambda_1^\circ$ and
$k=1$.
For any $t \ne l_{01}/4 + 1$, we find the 
lattice point
$(1,0, l_{01}/4 +1,1)$ in  $\partial {A_X^c}(\lambda_1)^\circ$.
Thus, we end up with 
$$ 
v 
\ = \
(k,0,t,1)
\ = \ 
(1,0, l_{01}/4 +1,1),
\qquad
w_2
\ = \ 
(0,0, l_{01}(l_{01}+4)/(2l_{01}+4),1).
$$
Note that the segment $\partial A_X^c (\lambda)$ contains 
no lattice points, because its length equals $l_{01}/(l_{01}+2)<1$. 
Taking a look at $\lambda_0$, we observe
$$
(-1,-1, l_{01}/2,0) 
\in 
\partial A_X^c(\lambda_0)^\circ
\iff
\frac{l_{01}}{l_{01}+2} 
>
\frac{l_{01}}{2(l_{01}-1)}
\iff
l_{01} > 4.
$$
Thus, to obtain compound du Val singularities, 
we must have $l_{01} \le 4 $.
As $l_{01} \equiv 0 \mod 4$ holds, only $l_{01} = 4$ is left
and, indeed, this leads to the compound du Val 
singularity with defining matrix~(16).

\medskip

\noindent
\emph{Case 1.2:} We have $l_{01} \equiv 2 \mod 4$.
If $v \in \lambda_1^\circ$ or $v=(0,2,t,1) \in \lambda_2^\circ$,
then we find one of $(0,0,l_{01}/2 \pm 1,1)$ 
in $\partial A_X^c (\lambda)^\circ$. 
Thus, we are left with $v \in \lambda_2^\circ$ and
$k=1$.
For any $t \ne l_{01}/4 + 1/2$, we find the 
lattice point
$(0,1, l_{01}/4 + 1/2,1)$ in $\partial A_X^c (\lambda_2)^\circ$. 
We end up with 
$$ 
v 
\ = \ 
(0,k,t,1)
\ = \ 
(0,1, l_{01}/4 + 1/2,1) 
\ \in \ 
\lambda_2^\circ.
$$ 
Similar to Case~1.1, we obtain that 
$(-1,-1, l_{01}/2,0) \in \partial A_X^c(\lambda_0)^\circ$  
as soon as $l_{01} > 4$.
Thus, only $l_{01} = 2$ might lead to a 
compound du Val singularity.
In this case, we exchange the data of blocks~0 and~2
and land in case of leading block data 
$(l_{01},2,2;0,1,0)$ 
and an additional column in $\lambda_0^\circ$.

\medskip

\noindent
\emph{Case 2:} The exponent $l_{01}$ is odd. 
If $v \in \lambda$, 
then $v = v_1 = w_2 = (0,0, (l_{01}+1)/2,1)$ holds
and we arrive at the compound du Val singularity 
with defining matrix~(5e). 
If $v \in \lambda_0^\circ$ holds, then the 
arguing runs similar as in Case~1.
Only for $k$ odd and 
$v = v_{02} = (-k,-k,(l_{01}-k+1)/2,1-k)$,
there are no lattice points  
in the relative interior of 
$\partial A_X^c (\lambda)^\circ$
and we end up with the
matrix~(12-o-e/o) as in Case 1, 
but now with parameters $k_1 \leq k_2$.

Assume $v \in \lambda_1^\circ$.
Then $v = v_{12} = (k,0,t,1)$ with $k=1,2$.
The case $k=2$ gives $w_2=(0,0,tl_{01}/2,1)$,
the point $(0,0,(l_{01} \pm 1)/2,1)$ lies 
$\partial A_X^c (\lambda)^\circ$ and
thus we leave the compound du Val case.
So, let $k=1$.
Then we have $v = (1,0,t,1) \in \lambda_1^\circ$.
Moreover,
$w_1=(0,0, l_{01} / 2 ,1)$ and 
$w_2=(0,0,2tl_{01}/(2+l_{01}),1)$.
Now, as $l_{01}$ is odd, we see
that $\partial A_X^c (\lambda)$ 
to have no lattice points in the
relative interior means 
$$
\frac{1}{2} 
\ \geq \ 
\left| \frac{2tl_{01}}{2+l_{01}} -\frac{l_{01}}{2} \right|.
$$
If $l_{01} \equiv 1 \mod 4$, this is only fulfilled 
for $t=(l_{01}+3)/4$. 
If $l_{01} \equiv 3 \mod 4$, it is only fulfilled for
$t=(l_{01}+1)/4$. 
Altogether, it is fulfilled for
$t= \left\lceil  l_{01} / 4 \right\rceil$. 
This leads to the compound du Val singularity 
with defining matrix (10o).

The case $v \in \lambda_2^\circ$ can be transformed 
by suitable admissible operations to the 
case $v \in \lambda_1^\circ$ just discussed.

Let $P$ have leading block data 
$(l_{01},l_{11},1;\mathfrak{d}_0,0,0)$. 
As $P$ is irredundant, the additional column 
is forced to be  $(0,1,t,1) \in \lambda_2^\circ$
and we have $l_{01},l_{11} \geq 2$. 
The vertices of $\partial A_X^c (\lambda)$ 
turn out to be  
$$
w_1 
\ = \  
\left(0,0, \frac{\mathfrak{d}_0l_{11}}{l_{01}+l_{11}},1\right),
\qquad
w_2 
\ = \ 
\left(0,0, \frac{\mathfrak{d}_0l_{11} + t l_{01} l_{11}}{l_{01}+l_{11}},1\right).
$$
We have $0 < tl_{01} l_{11}/(l_{01}+l_{11}) \le 1$ 
only for $t=1$ and $l_{01}=l_{11}=2$.
In this case, the second inequality becomes an equality
and thus $w_1$ is integral $w_1$ which implies 
$\mathfrak{d}_0=0$.
We arrive at the compound du Val singularity with matrix~(12-e-e)
and parameters $k_1=k_2=1$.
\end{proof}

We turn to the non-toric non-$\QQ$-factorial threefolds, 
still of canonical multiplicity one.
The following observation provides the link 
to the $\QQ$-factorial case.
Given defining data $A,P$ for a ring 
$R(A,P)$ of Type~2, 
we will have to deal with quadratic
submatrices $P'$ of $P$, obtained 
by erasing columns and rows from $P$.
The corresponding submatrix $A'$ of $A$
gathers all columns $a_i$ of $A$ 
such that at least one column $v_{ij}$ 
is not erased from $P$ when passing to $P'$.

\begin{lemma}
\label{le:nonQfact}
Let $X=X(A,P)$ be a compound du Val threefold of Type~2 
and canonical multiplicity $\zeta_X$ 
with $P$ irredundant in the form of Proposition~\ref{prop:zeta} 
and ordered in the sense of 
Remark~\ref{rem:leadPlat}. 
\begin{enumerate}
\item 
Let $P'$ be an $(r+2) \times (r+2)$ submatrix of $P$
such that for any $i = 0, \ldots, r$ at least 
one $v_{ij}$ is not erased from $P$.
\begin{enumerate}
\item
$A'=A$ and $P'$ are defining data of Type~2
in the sense of Construction~\ref{constr:RAPdown};
moreover, $P'$ is in the form of 
Proposition~\ref{prop:zeta}.
\item
$X' = X(A',P')$ is a $\QQ$-factorial threefold with at most 
compound du Val singularities 
of canonical multiplicity $\zeta_{X'}=\zeta_X$.
\end{enumerate}
Moreover, one always finds a submatrix $P'$ as above 
being ordered and having the same leading block as $P$.
\item
Every $P'$ as in~$(i)$ admits a $4 \times 4$ submatrix $P''$ 
with the same leading block as $P'$ such that 
\begin{enumerate}
\item
$A''$ and $P''$ are defining data of Type~2
in the sense of Construction~\ref{constr:RAPdown},
the matrix $P''$ is ordered and the form of 
Proposition~\ref{prop:zeta}.
\item
The varieties $X' = X(A',P')$ and $X'' = X(A'',P'')$
are equivariantly isomorphic to each other.
\end{enumerate}
\item 
If the leading platonic triple of $P$ is 
different from $(x,y,1)$, then $r=2$ holds.
\item 
One always finds $P'$ and $P''$ as in~$(ii)$ with 
the same leading block as $P$ such that 
\begin{enumerate}
\item 
in case of the leading platonic triple of $P$ 
differing from $(x,y,1)$,
up to admissible operations, $P''$ is one 
of the matrices from Proposition~\ref{prop:Qfac}. 
\item 
in case of the leading platonic triple of $P$  
equal to $(x,y,1)$, we have $n_2'' = 2$ for $P''$. 
\end{enumerate}
\end{enumerate}
\end{lemma}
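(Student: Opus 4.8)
The plan is to reduce the non-$\QQ$-factorial situation to the already classified $\QQ$-factorial one by passing to square submatrices of $P$, controlling at each step the anticanonical complex and the invariants $\zeta_X,\imath_X$.

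For~(i), I would first select, among the columns of $P$, the $r+1$ leading block columns $v_{01},\dots,v_{r1}$ together with one further column completing them to $r+2$ linearly independent vectors; since the columns of $P$ generate $\QQ^{r+2}$, such a completion exists and may be kept ordered with unchanged leading block. Because at least one column of every block $i=0,\dots,r$ survives, the upper $r$ rows retain the Type~2 block pattern and all columns $a_i$ of $A$ are used, so $A'=A$ and $P'$ are Type~2 data. The last row of $P'$ consists of the entries $\nu_{ij}$ of the surviving columns, which obey the same relations $\zeta_X\nu_{ij}+\mu_il_{ij}=\imath_X$ with the same $\mu_i,\zeta_X,\imath_X$; hence $P'$ is again in the form of Proposition~\ref{prop:zeta} and $\zeta_{X'}=\zeta_X$ by Remark~\ref{rem:computezeta}. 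Linear independence of the columns gives $\QQ$-factoriality of $X'$ through Corollary~\ref{affQfact}, preservation of the platonic leading data yields log terminality via Theorem~\ref{thm:affltcharintro}, and $\imath_{X'}=\imath_X=1$ gives the Gorenstein property. The decisive point is the cDV property of $X'$: I would invoke Proposition~\ref{prop:cpl1hollow} and show that $\partial A_{X'}^c$ is contained, leaf by leaf, in $\partial A_X^c$. By Proposition~\ref{prop:roofacan}(i), for each $i$ the supporting plane $\mathcal{H}_i=V(\zeta_Xx_{r+2}+\mu_ix_i-\imath_X)$ depends only on the shared data $\mu_i,\zeta_X,\imath_X$, so $\partial A_{X'}^c(\lambda_i)$ and $\partial A_X^c(\lambda_i)$ lie in the same plane; as moreover the cone $\sigma'=\cone(\text{columns of }P')$ is a subcone of $\sigma$, the former is a sub-polytope of the latter. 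Any lattice point interior to $\partial A_{X'}^c$ would then be interior to $\partial A_X^c$, which is excluded because $X$ is cDV. Establishing this leaf-wise containment is the main obstacle, as it relies on the precise description of the relative boundary.

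For~(ii) and~(iv), I would remove the blocks $i\ge 3$ step by step. When the leading platonic triple differs from $(x,y,1)$, part~(iii) already forces $r=2$, so $P'$ is $4\times 4$ and $P'':=P'$ serves, landing by Proposition~\ref{prop:Qfac} on one of the listed matrices; this gives~(iv)(a). When the leading triple equals $(x,y,1)$, the exponents of all blocks $i\ge 2$ equal one, so every block carrying a single such column is redundant and can be erased by the reduction of Remark~\ref{rem:redundant}, which drops one row and one column and induces a graded, hence equivariant, isomorphism of the associated varieties. Swapping block data by admissible operations, I would arrange the one non-leading column of $P'$ to sit in block~$2$ and delete the trivial single-column blocks $3,\dots,r$; the process stops exactly when three blocks remain, block~$2$ carrying two $l=1$ columns, since a further deletion would either violate irredundancy (as $l_{21}=1$) or force $r<2$ and thereby leave the non-toric Type~2 setting. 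This produces the required $4\times 4$ matrix $P''$ with $n_2''=2$, equivariantly isomorphic to $X'$, which is~(ii) together with~(iv)(b); throughout, orderedness and the shape of Proposition~\ref{prop:zeta} persist because the deleted rows carry only trivial data.

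For~(iii), I would argue by contradiction using~(i): assuming $r\ge 3$ with leading triple $\ne(x,y,1)$, part~(i) produces a $\QQ$-factorial Gorenstein cDV threefold $X'$ with the same $r$ and the same leading block, so $l_{01},l_{11},l_{21}\ge 2$. Reproducing the computation in the proof of Proposition~\ref{prop:Qfac} for $r=3$, where $\partial A_{X'}^c\cap\lambda$ is a segment whose length is forced to contain an interior lattice point once all three leading exponents exceed one, contradicts the cDV property through Proposition~\ref{prop:cpl1hollow}. Hence $r=2$.
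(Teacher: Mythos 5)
Your overall strategy coincides with the paper's: pass to square column-submatrices of $P$, check that the data stay of Type~2 with the same $\mu_i,\zeta_X,\imath_X$, deduce the compound du Val property from containment of anticanonical complexes, erase redundant single-column blocks via Remark~\ref{rem:redundant}, and contradict the $r=3$ computation of Proposition~\ref{prop:Qfac}. However, there are two genuine gaps. First, assertion~(i) is claimed for \emph{every} $(r+2)\times(r+2)$ column-submatrix $P'$ keeping at least one $v_{ij}$ per block, and it is used later in exactly this generality (for instance for submatrices of the form $[v_{01},v_{02},v_{11},v_{21}]$ in the proof of Proposition~\ref{prop:nonQfac}); you only verify it for the particular choice ``leading block plus one completing column''. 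The point that needs an argument in general is that the $r+2$ selected columns are linearly independent, which is what makes $P'$ admissible input for Construction~\ref{constr:RAPdown} and gives $\QQ$-factoriality via Corollary~\ref{affQfact}. The paper settles this by distinguishing whether $P'$ retains a column in the lineality part (then the remaining $r+1$ columns span a simplicial $P$-elementary cone not containing that column in its linear span) or retains two columns in one block (then two distinct $P$-elementary cones produce two rays spanning $\sigma'\cap\lambda$). This step is absent from your write-up.

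Second, your proof of~(iii) does not close. With your submatrix $P'$ (leading block plus one column) the resulting $X'$ is in general \emph{redundant} in all blocks $i\ge 3$, and after erasing those blocks one lands on an $r=2$ matrix, from which no contradiction can be extracted; the $r=3$ computation in Proposition~\ref{prop:Qfac} requires an irredundant matrix whose block~$3$ carries two columns with $l_3=(1,1)$. The correct choice --- and the one the paper makes --- is to retain \emph{both} $v_{31}$ and $v_{32}$ (these exist because $P$ is irredundant, so $n_3\ge 2$) together with $v_{01},v_{11},v_{21}$ and one column from each block $i\ge 4$, and then to erase the redundant blocks $4,\dots,r$, producing an irredundant $5\times 5$ matrix with $r=3$ and leading platonic triple different from $(x,y,1)$; only then is the segment $\partial A_{X''}^c(\lambda)$ of length at least $2$ and forced to contain an interior lattice point. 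Since your~(ii) relies on~(iii), this gap propagates. A minor further point: in~(iv)(a) you should note that $\imath_{X''}=1$ forces $\zeta_{X''}=1$ via Proposition~\ref{prop:zetaexcep} before Proposition~\ref{prop:Qfac} can be invoked.
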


\begin{proof}
We verify~(i). Note that each column of $P'$ is 
as well a column of $P$.
By Proposition~\ref{prop:affchar}, the columns 
of $P$ generate the extremal rays of a full 
dimensional cone $\sigma \subseteq \QQ^{r+2}$. 
Thus, also the columns of $P'$ generate
the extremal rays of a cone $\sigma' \subseteq \QQ^{r+2}$.
We show that $\sigma'$ is full dimensional.
If~$P'$ has a column $v_1 \in \lambda$,
then, using Proposition~\ref{prop:acancompstruct}~(iii)
we see that  the remaining $r+1$ columns of $P'$ 
are linearly independent and $v_1$ does not lie in 
their linear span.
If $P'$ has no column inside $\lambda$, then 
we can form two different $P$-elementary cones 
$\tau_1$ and $\tau_2$ out of columns of~$P'$.
The corresponding $v_{\tau_i} \in \tau_i^\circ$
generate the pointed two-dimensional cone 
$\sigma' \cap \lambda$ and we see that
the columns of $P$ generate $\QQ^{r+2}$.
Thus, we can conclude that $P'$ satisfies the conditions of
Type~2 of Construction~\ref{constr:RAPdown}
and, together with $A' = A$ gives defining data.
Observe that $X' = X(A',P')$ 
is $\QQ$-factorial by construction.
Using Remark~\ref{rem:computezeta}, we 
obtain $\zeta_{X'}=\zeta_X$
and see that $P'$ still is 
in the form of Proposition~\ref{prop:zeta}.
Using Remark~\ref{rem:zetaPprops}, 
conclude $\imath_{X'}=\imath_X=1$.
Moreover, according to 
Proposition~\ref{prop:acancompstruct},
the anticanonical complex $A_{X'}^c$ is a 
subcomplex of $A_X^c$ and the same holds 
for $\partial A_{X'}^c$ and $\partial A_{X}^c$.
Thus, Proposition~\ref{prop:cpl1hollow} shows 
that~$X'$ inherits from~$X$ the 
property of having at most compound du Val 
singularities.
The supplement is obvious.

We prove~(ii). 
For $r=2$, there is nothing to show. 
So, assume $r \geq 3$. 
If $P'$ has a column $v_k \in \lambda$,
then we have $n_i=l_{i1}=1$ for $i \geq 3$ and
Remark~\ref{rem:redundant}, applied $r-2$ times,
yields the desired $4 \times 4$ matrix $P''$.
We turn to the case that $P'$ has no column 
in $\lambda$.
Then $n_k = 2$ for some $0 \le k \le r$ and 
all other $n_i$ equal one.
If $k \le 2$ holds, then we have $n_i=l_{i1}=1$ 
for $i \geq 3$ and proceed as before to obtain~$P''$.
We discuss $k=3$.
First assume that the leading platonic triple 
of $P'$ equals $(x,y,1)$. 
Then, exchanging the data of column blocks $3$ 
and $2$ of $P'$, we are in the case $k \le 2$
just treated.
If the leading platonic triple of $P'$ differs
from $(x,y,1)$ 
then, applying $r-3$ times Remark~\ref{rem:redundant},
we arrive at an irredundant $5 \times 5$ matrix $P''$
defining a variety $X'' = X(A'',P'')$ isomorphic to 
$X' = X(A',P')$;
a contradiction to Proposition~\ref{prop:Qfac}.
Finally, if $k \ge 4$, then we exchange the data 
of column blocks~$k$ and $3$ of $P'$ and are 
in the case $k=3$.
This proves~(ii).

We turn to~(iii).  Assume $r \geq 3$. 
Since $P$ is irredundant and ordered in 
the sense of Remark~\ref{rem:leadPlat}, 
we have $n_i \ge 2$ and $l_{ij} = 1$ 
for $i \ge 3$.  
Consider the submatrices
$$
P' 
\ := \ 
[v_{01}, v_{11}, v_{21}, v_{31}, v_{32}, v_{41}, \ldots, v_{r1}],
\qquad 
P^\sim
\ := \ 
[v_{01}, v_{11}, v_{21}, v_{31}, v_{32}].
$$
Let $P''$ be the matrix obtained by erasing from $P^\sim$ 
erasing all but the first three and the last two rows.
Then $P''$ is an irredundant $5 \times 5$
matrix and $X'' = X(A'',P'')$ is isomorphic 
to $X' = X(A',P')$; a contradiction to Proposition~\ref{prop:Qfac}.

Finally, we show~(iv). 
For~(a), observe that because of 
$\imath_{X''}=\imath_X=1$, 
Proposition~\ref{prop:zetaexcep} 
gives $\zeta_{X''}=\zeta_X=1$. 
Thus $X''$ is $\QQ$-factorial compound du Val and $P''$ must, 
up to admissible operations, be one of the matrices 
from Proposition~\ref{prop:Qfac}.
We turn to~(b).
For any $i \ge 2$, we have $n_i \ge 2$,
because $P$ is irredundant.
Consider the submatrices
$$
P'
\ := \ 
[v_{01}, v_{11}, v_{21}, v_{22}, v_{31}, \ldots, v_{r1}],
\qquad 
P^\sim
\ := \ 
[v_{01}, v_{11}, v_{21}, v_{22}].
$$
Then we obtain the desired $P''$ from $P^\sim$ by 
erasing all but the first two and the last two rows.
\end{proof}

\goodbreak

\begin{proposition} 
\label{prop:nonQfac}
Let $X = X(A,P)$ be a non-toric affine threefold of Type~2.
Assume that $X$ is not $\QQ$-factorial, 
of canonical multiplicity one 
and has at most compound du Val singularities. 
Then $P$ can be assumed to be the matrix
$$
\text{(10-e)}
\
\begin{bmatrix}
  -k & 2 & 1 & 0 & 0  \\
	-k & 0 & 0 & 2 & 1 \\
	1 & 0 & 0 & 0  & 0\\
	1-k & 1 & 1 & 1 & 1
 \end{bmatrix},
\qquad 
k \in \ \ZZ_{\ge 2}.
$$
\end{proposition}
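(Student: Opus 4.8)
The plan is to reduce to the already established $\QQ$-factorial classification of Proposition~\ref{prop:Qfac} and then to decide which further columns may be adjoined to such a leading configuration without destroying the compound du Val property. First I would normalize $P$: being Gorenstein ($\imath_X=1$) of canonical multiplicity one, Corollary~\ref{cor::Gore} lets me assume that the last row of $P$ carries the entry $1-(r-1)l_{0j}$ in the columns $v_{0j}$ and the entry $1$ in all remaining columns; I also take $P$ irredundant (Remark~\ref{rem:redundant}), ordered (Remark~\ref{rem:leadPlat}) and in the leading block form of Proposition~\ref{prop:leadingblockdata}. Since $X$ is not $\QQ$-factorial, Corollary~\ref{affQfact} forces the columns of $P$ to be linearly dependent, so $P$ has strictly more than $r+2$ columns. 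Applying Lemma~\ref{le:nonQfact}, I extract a $4\times4$ submatrix $P''$ defining a $\QQ$-factorial compound du Val threefold with the same leading block and $\zeta_{X''}=1$; hence $P''$ is, up to admissible operations, one of the matrices of Proposition~\ref{prop:Qfac}, and by Lemma~\ref{le:nonQfact}(iii) one has $r=2$ unless the leading platonic triple is $(x,y,1)$. This pins the leading block data down to the cases realized in Proposition~\ref{prop:Qfac}.

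With the leading block fixed (Lemma~\ref{le:lBdata}), each further column $v$ has, in the Gorenstein normalization, last entry $1$ and lies in $\lambda$ or in one of the open leaves $\lambda_0^\circ,\lambda_1^\circ,\lambda_2^\circ$, exactly as the single extra column in the proof of Proposition~\ref{prop:Qfac}. For every candidate configuration I would read off $\partial A_X^c$ from Propositions~\ref{prop:roofacan} and~\ref{prop:acancompstruct}: the lineality part $\partial A_X^c(\lambda)$ is the segment in the hyperplane $x_{r+2}=1$ whose two endpoints are the leaving points $\ell_\tau^{-1}v_\tau$ of the extreme $P$-elementary cones $\tau$ (Proposition~\ref{prop:acancompstruct}(vii)), and the leaves $\partial A_X^c(\lambda_i)$ are the polygons of Proposition~\ref{prop:roofacan}. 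By Proposition~\ref{prop:cpl1hollow}, $X$ is compound du Val if and only if the relative interior of $\partial A_X^c$ contains no lattice point.

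For the rigid leading triples $(5,3,2),(4,3,2),(3,3,2)$ the analysis in Proposition~\ref{prop:Qfac} already shows that a single admissible extra column barely avoids interior lattice points; any second extra column then either lengthens the lineality segment past the point $(0,0,1,1)$ or enlarges a leaf polygon so as to enclose a lattice point, so these cases drop out. For the triple $(x,y,1)$ the length estimate for $\partial A_X^c(\lambda)$, as in the final case of Proposition~\ref{prop:Qfac}, forces $l_{01}=l_{11}=2$, after which the defining trinomial is equivalent to a binomial and $X$ is toric, contrary to hypothesis. The decisive case is $(l_{01},2,2)$: here I expect that only the data $(l_{01},2,2;1,0,0)$ of case (vii) in Proposition~\ref{prop:leadingblockdata}, together with exactly one column $(1,0,0,1)\in\lambda_1^\circ$ and one column $(0,1,0,1)\in\lambda_2^\circ$, keeps $\partial A_X^c$ hollow; this is precisely matrix (10-e). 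For the data $(l_{01},2,2;0,0,0)$ the leading leaving point already sits at height $0$, so any extra leaf column yields a leaving point of height at least $1$ and hence the interior lattice point $(0,0,1,1)$.

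The step I expect to be the main obstacle is that adjoining two or more extra columns creates new mixed $P$-elementary cones $\tau=\cone(v_{0j_0},v_{1j_1},v_{2j_2})$ built from the small extra columns, whose leaving point $\ell_\tau^{-1}v_\tau$ may protrude further along the lineality than any cone using the leading block. For example, two height-one extra columns (one in each of $\lambda_1,\lambda_2$) over a height-zero leading column produce a diagonal leaving point of height $\tfrac43$, dragging the segment past $(0,0,1,1)$; by contrast, the height-one leading column of case (vii) with the two height-zero extras keeps every leaving point, including the diagonal one $\ell_\tau^{-1}v_\tau=(0,0,\tfrac{1}{l_{01}+1},1)$, strictly below $1$. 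Hence the exhaustive bookkeeping of the leaving points of all $P$-elementary cones, not merely those meeting the leading block, is what forces the uniqueness of (10-e), and this is the technical heart of the argument.
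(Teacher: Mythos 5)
Your overall strategy coincides with the paper's: normalize $P$, use Lemma~\ref{le:nonQfact} to extract a $4\times 4$ submatrix $P''$ that must appear in the list of Proposition~\ref{prop:Qfac}, and then test each candidate extra column by checking $\partial A_X^c$ for interior lattice points via Proposition~\ref{prop:cpl1hollow}. Your identification of (10-e) as arising from matrix (11) with the additional column $(0,1,0,1)$, and your computation of the ``diagonal'' leaving point $(0,0,\tfrac{1}{l_{01}+1},1)$ of the mixed $P$-elementary cone, are correct and match the paper. However, there are two concrete gaps. First, you never address the leading block data $(l_{01},2,2;0,1,0)$ (case (viii) of Proposition~\ref{prop:leadingblockdata}). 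This is not an empty case: it carries the $\QQ$-factorial compound du Val families (12-o-e/o), (16) and (10-o) from Proposition~\ref{prop:Qfac}, so one must explicitly rule out non-$\QQ$-factorial extensions of each of them; this occupies a substantial part of the paper's proof (the subcases for $l_{01}$ odd and $l_{01}=4$, where the obstruction is typically the lattice point $(-1,-1,k_1,0)$ appearing in $\partial A_X^c(\lambda_0)^{\circ}$ rather than anything on the lineality segment).

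Second, your dismissal of the leading triple $(x,y,1)$ is based on a false claim. Forcing $l_{01}=l_{11}=2$ does make the Cox ring relation $T_{01}^2+T_{11}^2+T_{21}T_{22}$ abstractly isomorphic to a binomial ring, but the linear change of variables is not monomial and need not be homogeneous for the $K$-grading (one only has $2\deg T_{01}=2\deg T_{11}$ in $K_0$), so $X$ itself is \emph{not} toric; indeed the resulting singularity is matrix (12-e-e) with $k_1=k_2=1$, which the paper lists among the non-toric cases. Consequently you still have to show that no further column can be adjoined, which the paper does by observing that any extra column stretches $\partial A_X^c(\lambda)$ beyond the segment with vertices $(0,0,0,1)$ and $(0,0,1,1)$. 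Moreover, for this leading triple the relevant submatrix $P''$ may be \emph{redundant} (namely when $l_{11}=1$), a possibility your irredundant-$4\times4$ framework does not see; the paper has to pass to $5$- and $6$-column configurations to exclude it. Without these two pieces the classification is not complete, even though your final answer is the right one.
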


\begin{proof}
The strategy is to look first for 
not necessarily irredundant
matrices $P''$ with $r''=2$ 
defining a $\QQ$-factorial 
$X'' = X(A'',P'')$ 
of canonical multiplicity one 
with at most compound du Val 
singularities.
Then we obtain,  up to 
admissible operations, all matrices 
$P$ with~$X(A,P)$ satisfying 
the assumptions of the proposition
by enlarging the $P''$ in the sense of 
Lemma~\ref{le:nonQfact}.
We organize the subsequent discussion 
according to the possible leading block 
data, as listed in 
Proposition~\ref{prop:leadingblockdata},
and treat pairs $P'', P$ sharing 
the same leading block data.
Note that we have $r=2$ for $P$ 
whenever the leading platonic triple
differs from $(x,y,1)$.

Consider the leading block data 
$(5,3,2;0,0,0)$.
Proposition~\ref{prop:Qfac} tells us 
that after suitable admissible operations,
we have
$$
P''
 \ = \ 
\begin{bmatrix}
  -5 & 3 & 0 & 0 \\
	-5 & 0 & 2 & 0 \\
	0 & 0 & 0 & 1 \\
	-4 & 1 & 1 & 1
 \end{bmatrix}.
$$
After performing the corresponding admissible operations 
on $P$, we find $P''$ as a submatrix of $P$. 
Moreover, $P$ has at least one further column
and thus a submatrix
$$
P'''
\ = \ 
\begin{bmatrix}
  -5 & 3 & 0 & * \\
	-5 & 0 & 2 & * \\
	0 & 0 & 0 & * \\
	-4 & 1 & 1 & *
 \end{bmatrix}.
$$
Lemma~\ref{le:nonQfact}~(i) says that $X''' = X(A''',P''')$
is $\QQ$-factorial, of canonical multiplicity one 
and with at most compound du Val singularities.
Thus, up to admissible operations, $P'''$
occurs in the list of Proposition~\ref{prop:Qfac}.
So, the last column must be one of
$$
(0,0,1,1), \qquad (0,0,-1,1)
$$
The first case is impossible, because the columns of the 
defining matrix $P$ are pairwise different.
For $(0,0,-1,1)$ as last column, the point $(0,0,0,1)$ 
lies in $\partial A_{X}^c(\lambda)^\circ$; 
a contradiction to Proposition~\ref{prop:cpl1hollow}.

The case of leading block data $(4,3,2;0,0,0)$
is treated by exactly the same arguments as the preceding 
case.

Consider the leading block data $(4,3,2;1,0,0)$. 
Again, Proposition~\ref{prop:Qfac} tells us that, 
up to admissible operations, we have 
$$
P''
\ = \
\begin{bmatrix}
  -4 & -1 & 3 & 0  \\
	-4 & -1 & 0 & 2  \\
	1 & 3 & 0 & 0  \\
	-3 & 0 & 1 & 1 
 \end{bmatrix}.
$$
Adapting $P$ by admissible operations, it comprises $P''$
as a submatrix. 
As before, we obtain a matrix $P'''$ by enhancing 
the leading block with a further column of~$P$,
which this time must be one of
$$ 
(-1,-1,3,0), \qquad (-1,-1,2,0).
$$ 
The first leads to two identical columns of $P$ 
and this is excluded.
For the second we find $(0,0,3,1)$ inside 
$\partial A_{X}^c(\lambda)^\circ$ 
and leave the compound du Val case.

The case of leading block data $(3,3,2;0,0,0)$
runs exactly as the case of $(5,3,2;0,0,0)$.

Consider the leading block data $(3,3,2;1,0,0)$. 
Here Proposition~\ref{prop:Qfac} leaves us with two 
possibilities for the submatrix $P''$ of the accordingly 
adapted $P$. The first possibility is
\begin{equation}
\label{eq:P332}
P''
\ = \
 \begin{bmatrix}
  -3 & -2 & 3 & 0  \\
	-3 & -2 & 0 & 2  \\
	1 & 1 & 0 & 0  \\
	-2 & -1 & 1 & 1 
 \end{bmatrix}
\end{equation}
with columns $v_{01},v_{02},v_{11},v_{21}$.
Using as above Proposition~\ref{prop:Qfac},
we arrive at three possibilities for submatrices 
$P''' = [v_{01},v_{11},v_{21},*]$;
with $\sigma = \cone(v_{01},v_{02},v_{11},v_{21})$,
we find  the following situation in the
$\partial A_{X}^c (\lambda_i) \cap  \sigma$: 
\begin{center}

\hfill
\begin{tikzpicture}[scale=0.4]
\fill[fill=gray!30!, fill opacity=0.90](2,0)--(1,3)--(1,2)--(3/2,0);
\draw[gray, very thin] (-0.5,0) grid (3.5,3.5);
\draw[line width=1pt] (2,0)--(1,3)--(1,2)--(3/2,0)--cycle;
\coordinate[label=above:\tiny $v_{01}$] (A) at (1,3);
\fill (1,3) circle (0.2);

\coordinate[label=above left:\tiny $v_{02}$] (A) at (1,2);
\fill (1,2) circle (0.2);

\fill[fill=white] (2,1) circle (0.15);
\draw (2,1) circle (0.15);
\node at (1.5,-1) {\tiny $\partial A_X^c(\lambda_0) \cap \sigma$};
\end{tikzpicture}
\hfill
\begin{tikzpicture}[scale=0.4]
\fill[fill=gray!30!, fill opacity=0.90](2,0)--(0,3)--(3/2,0);
\draw[gray, very thin] (-0.5,0) grid (3.5,3.5);
\draw[line width=1pt] (2,0)--(0,3)--(3/2,0)--cycle;
\coordinate[label=left:\tiny $v_{11}$] (A) at (0,3);
\fill (0,3) circle (0.2);
\fill[fill=white] (1,1) circle (0.15);
\draw (1,1) circle (0.15);

\fill[fill=white] (1,2) circle (0.15);
\draw (1,2) circle (0.15);

\node at (1.5,-1) {\tiny $\partial A_X^c(\lambda_1) \cap \sigma$};
\end{tikzpicture}
\hfill 
\begin{tikzpicture}[scale=0.4]
\fill[fill=gray!30!, fill opacity=0.90](2,0)--(0,2)--(3/2,0);
\draw[gray, very thin] (-0.5,0) grid (3.5,3.5);
\draw[line width=1pt] (2,0)--(0,2)--(3/2,0)--cycle;
\coordinate[label=left:\tiny $v_{21}$] (A) at (0,2);
\fill (0,2) circle (0.2);
\node at (1.5,-1) {\tiny $\partial A_X^c(\lambda_2) \cap \sigma$};
\end{tikzpicture}
\hfill \

\end{center}
where the circles indicate the prospective columns $*$ 
of $P'''$ leading to compound du Val singularities $X(A''',P''')$ 
of canonical multiplicity one. They are
$$
(-1,-1,2,0) \in \lambda_0, 
\quad
(1,0,1,1),
\
(2,0,1,1) \in \lambda_1.
$$
The lower one in the middle picture 
is contained in $\sigma$ which is not possible.
The other two force $(0,0,2,1)$ 
to lie in $\partial A_X^c(\lambda)^\circ$
which is as well impossible. 
So, (\ref{eq:P332}) does not occur as a submatrix
of $P$.
The second possibility is 
$$
P''
\ = \ 
 \begin{bmatrix}
  -3 & -1 & 3 & 0  \\
	-3 & -1 & 0 & 2  \\
	1 & 2 & 0 & 0  \\
	-2 & 0 & 1 & 1 
 \end{bmatrix}.
$$
Here we proceed analogously as with~(\ref{eq:P332}) and see
the only possible additional column in $P$ is $(1,0,1,1)$. 
In this case again $(0,0,2,1)$ lies in 
$\partial A_X^c(\lambda)^\circ$ 
and we leave the compound du Val case.

Consider the leading block data $(l_{01},2,2;0,0,0)$.
Here Proposition~\ref{prop:Qfac} tells us that 
the submatrix $P''$ of the accordingly adapted $P$ is
$$
P''
\ = \
\begin{bmatrix}
  -k_1 & -k_2 & 2 & 0  \\
	-k_1 & -k_2 & 0 & 2  \\
	0 & 1 &  0 & 0  \\
	1-k_1 & 1-k_2 & 1 & 1 
 \end{bmatrix},
$$
where we allow $k_2=0$ here and in this case change the 
second and fourth column to have a proper defining matrix. 
A possible further column for $P'''$ must have the form 
$(-k_3,-k_3,t,1-k_3)$ with $t=\pm 1$. For $t=1$, 
one of $(-k_2,-k_2,1,1-k_2)$ or $(-k_3,-k_3,1,1-k_3)$ 
does not give an extremal ray of the cone spanned 
by the columns of $P$. 
For $t=-1$, the point $(0,0,0,1)$ lies in  
$\partial A_X^c(\lambda)^\circ$ and we leave the 
compound du Val case.

Consider the leading block data $(l_{01},2,2;1,0,0)$.
Proposition~\ref{prop:Qfac} allows two choices for 
the submatrix $P''$ of the accordingly adapted $P$.
The first one is
$$
P''
\ = \
\begin{bmatrix}
  -k & 2 & 0 & 0  \\
	-k & 0 & 2 & 0  \\
	1 & 0 & 0 & 0  \\
	1-k & 1 & 1 & 1 
 \end{bmatrix}.
$$
We check the possible further columns of $P$.
A column in $\lambda$ would lead to 
$(0,0,1,1) \in \partial A_X^c(\lambda)^\circ$
and this is impossible. 
For any  $P'''$ sharing the first three columns with $P''$, 
the additional column, due to Proposition~\ref{prop:Qfac},
must be $(1,0,t,1)$ or $(0,1,t,1)$, where $t=0,1$. 
For $t=0$, such column would not generate an extremal 
ray of the cone spanned by the columns of $P$.
For $t=1$, we obtain $(0,0,1,1) \in \partial A_X^c(\lambda)^\circ$ 
and we leave the compound du Val case. 
The second choice is 
$$
P''
\ = \
\begin{bmatrix}
  -k & 2 & 1 & 0  \\
	-k & 0 & 0 & 2  \\
	1 & 0 & 0 & 0  \\
	1-k & 1 & 1 & 1 
 \end{bmatrix}.
$$
Proposition~\ref{prop:Qfac} tells us that 
$(1,0,t,1)$ or $(0,1,t,1)$ with $t=0,1$ 
are the only possible further columns of $P$.
But $(1,0,0,1)$ is impossible, since this 
column already exists in $P$ and for $(1,0,1,1)$, 
we obtain $(0,0,1,1) \in \partial A_X^c(\lambda)^\circ$. 
The same holds for $(0,1,1,1)$. 
For $(0,1,0,1)$, the line segment $\partial A_X^c(\lambda)$ has, 
in addition to $w_1=(0,0,1,1)$, the vertex
$$
w_2 \ = \ \left( 0,0,\frac{1}{1+k} ,1\right).
$$
If we have a look at the leaves, we see that we get a 
compound du Val singularity with defining matrix~(10-e):
\begin{center}

\hfill
\begin{tikzpicture}[scale=0.4]
\fill[fill=gray!30!, fill opacity=0.90](1,0)--(1,4)--(1/5,0);
\draw[gray, very thin] (-0.5,0) grid (4.5,4.5);
\draw[line width=1pt] (1,0)--(1,4)--(1/5,0)--cycle;
\fill (1,4) circle (0.2);

\node at (2.5,-1) {\tiny $\partial A_X^c(\lambda_0)$};
\end{tikzpicture}
\hfill
\begin{tikzpicture}[scale=0.4]
\fill[fill=gray!30!, fill opacity=0.90](1,0)--(0,2)--(0,1)--(1/5,0);
\draw[gray, very thin] (-0.5,0) grid (4.5,4.5);
\draw[line width=1pt] (1,0)--(0,2)--(0,1)--(1/5,0)--cycle;
\fill (0,2) circle (0.2);
\fill (0,1) circle (0.2);
\node at (2.5,-1) {\tiny $\partial A_X^c(\lambda_1)$};
\end{tikzpicture}
\hfill 
\begin{tikzpicture}[scale=0.4]
\fill[fill=gray!30!, fill opacity=0.90](1,0)--(0,2)--(0,1)--(1/5,0);
\draw[gray, very thin] (-0.5,0) grid (4.5,4.5);
\draw[line width=1pt] (1,0)--(0,2)--(0,1)--(1/5,0)--cycle;
\fill (0,2) circle (0.2);
\fill (0,1) circle (0.2);
\node at (2.5,-1) {\tiny $\partial A_X^c(\lambda_2)$};
\end{tikzpicture}
\hfill \

\end{center}

Consider the leading block  data $(l_{01},2,2;0,1,0)$. 
Proposition~\ref{prop:Qfac} allows four possible 
submatrices $P''$ of the suitably adapted $P$.
We distinguish the following cases.

\medskip

\noindent
\emph{Case 1:}
The exponent $l_{01}$ is odd. 
First assume $P$ has after suitable admissible operations a submatrix
\begin{equation*}
P''
\ = \ 
 \begin{bmatrix}
  -2k_1-1 & - 2k_2 & 2  & 0  \\
	-2k_1-1 & - 2k_2  & 0  & 2  \\
	0 &  \frac{k_1-k_2 +1}{2}  & 1  & 0  \\
	-2k_1 & 1-2k_2 & 1 & 1 
 \end{bmatrix}.
\end{equation*}
Assume the matrix $P$ has a further column 
$(-k,-k,t,1-k)$ in $\lambda_0$. 
We regard the submatrix containing this further column
as well as the last two columns of $P''$ 
and either the first (if $k$ odd) 
or the second (if $k$ even) of $P''$. 
This matrix does not show up in Proposition~\ref{prop:Qfac} 
and we leave the compound du Val case.  
So $P$ can have no further column $(-k,-k,t,1-k)$.

Also an additional column $(0,0,t,1)$ 
in the lineality part is impossible,
 because due to Proposition~\ref{prop:Qfac}, 
 the only possibilities are $t=k_1$ and $t=k_1+1$.
But these would either not give an extremal ray
of the cone spanned by the columns of $P$ (for  $t=k_1+1$) 
or $(-1,-1,k_1,0)$ would show up in $\partial A_X^c(\lambda_0)^\circ$. 
Now the last possibility is an additional column 
$(1,0,t,1)$ in $\lambda_1$ or $(0,1,t,1)$ in $\lambda_2$. 
But the possible values of $t$, 
i.e. those giving a compound du Val submatrix 
of type (10-o) from Proposition~\ref{prop:Qfac}, 
either generate no extremal ray of the cone 
spanned by the columns of $P$ 
or $(-1,-1,k_1,0)$ is an interior point 
of $\partial A_X^c(\lambda_0)$.
Thus assume $P$ has, 
after suitable admissible operations, 
no submatrix of the above form and one
$$
P''=
\begin{bmatrix}
  -2k-1   & 2  & 1 & 0  \\
	-2k-1 & 0  & 0 & 2   \\
	0     & 1  & \left\lceil \frac{2k+1}{4} \right\rceil & 0  \\
	-2k   & 1  & 1 & 1
 \end{bmatrix}.
$$
Now, the submatrix of $P$ given by 
the first, second and third column 
of this submatrix and one further column 
must as well be of this form 
after suitable admissible operations. 
So the only possible additional column 
is $(0,1,\left\lceil (2k+1)/4 \right\rceil -1 ,1)$ in $\lambda_2$, 
but then $(-1,-1,k_1,0)$ is 
an inner point of $\partial A_X^c(\lambda_0)$ 
and we leave the compound du Val case.

\medskip

\noindent
\emph{Case  2:} The exponent $l_{01}$ equals $4$. 
After suitable admissible operations, 
the matrix $P$ has a submatrix
$$
P''=
\begin{bmatrix}
  -4 & 2 & 1 & 0  \\
	-4 & 0 & 0 & 2  \\
	0 & 1 & 2 & 0  \\
	-3 & 1 & 1 & 1 
 \end{bmatrix}.
$$
A further column must, together with the first two 
and the last row of $P''$,
give a compound du Val submatrix $P'''$ of $P$ as well.
So due to Proposition~\ref{prop:Qfac}, 
the only possible further column is $(1,0,1,1)$. 
But with this, the point $(0,0,2,1)$ 
is an inner point of $\partial A_X^c(\lambda)$ 
and we leave the compound du Val case.

Consider the leading block data 
$\left(l_{01},l_{11},1;\mathfrak{d}_0,0,0\right)$. 
Note that here, we also have to take care about 
redundant matrices $P''$.
Proposition~\ref{prop:Qfac} provides us with one 
irredundant matrix
$$
P''
\ = \
\begin{bmatrix}
  -2 & 2 & 0 & 0 \\
  -2 & 0 & 1 & 1 \\
  0 & 0 & 0 & 1 \\
  -1 & 1 & 0 & 0 
 \end{bmatrix}.
$$
The only possible further columns of $P$ are of 
the form 
$(-2,-2,t_0,-1)$, $(2,0,t_1,1)$ or $(0,1,t_2,0)$.
Each of them would stretch the segment 
$\partial A_X^c(\lambda)$ which already has 
the vertices $(0,0,0,1)$ and $(0,0,1,1)$.

Now we treat the redundant $P''$, which means 
to deal with $l_{11}=1$.
Due to Lemma~\ref{le:nonQfact}~(iv)~(b), 
after suitable admissible operations, 
the matrix $P$ has a submatrix 
$$
P''=
\begin{bmatrix}
 -l_{01} & 1  & 0 & 0 \\
 -l_{01} & 0 &  1 & 1 \\
 \mathfrak{d}_0  & 0 & 0 & t_2 \\
 1-l_{01} & 1  & 1 & 1 
 \end{bmatrix}.
$$
But since $P$ is irredundant, 
it must have a further submatrix
$$
P'''=
\begin{bmatrix}
 -l_{01} & 1 & 1 & 0 & 0 \\
 -l_{01} & 0 & 0 & 1 & 1 \\
 \mathfrak{d}_0  & 0 & t_1& 0 & t_2 \\
 1-l_{01} & 1 & 1 & 1 & 1 
 \end{bmatrix}
$$
comprising $P''$ and one further column in $\lambda_1$.
For this matrix and the vertices 
of the respective $\partial A_{X'''}^c(\lambda)$, we have
$$
w_1
\ = \ 
\left(0,0, \frac{\mathfrak{d}_0}{l_{01}+1},1\right),
\qquad
w_2
\ = \ 
\left(0,0, \frac{\mathfrak{d}_0+(t_1+t_2)l_{01}}{l_{01}+1},1\right),
%\qquad
%w_2=w_1 + (t_1+t_2)\frac{l_{01}}{l_{01}+1}e_3.
$$
But $(t_1+t_2)l_{01}/(l_{01}+1)\leq 1$ 
only for $t_1=t_2=l_{01}=1$. 
But as $P$ is irredunbdant, it must have
a sixth column 
$(-1,-1,\mathfrak{d}_0+t_0,0)$ in $P$.
The distance between then
the vertices of $\partial A_X^c(\lambda)$ becomes
$$
\frac{t_0+t_1+t_2}{2} \ \geq \ \frac{3}{2}.
$$
Thus, $\partial A_X^c (\lambda)^\circ$ 
contains an integral point. 
So we obtain no compound du Val singularity in this case. 
\end{proof}

Finally, we have to deal with the non-toric 
threefolds of canonical multiplicity 
greater than one.

\setcounter{MaxMatrixCols}{20}
\begin{proposition} \label{prop:zeta>1}
Let $X  = X(A,P)$ be a non-toric affine threefold 
of Type~2.
Assume that~$X$ is of canonical multiplicity 
greater than one and has at most compound du Val 
singularities. 
Then one may assume $P$ to be one of the following 
matrices: 
$$
\text{(9)}
\quad
{\tiny\begin{bmatrix}
 -k     & -k      & \zeta_X-k & \zeta_X-k & 0 & 0 & \cdots &  0 & 0\\
 -k     & -k      & 0         & 0         & 1 & 1  & & 0 & 0\\
 \vdots & \vdots  & \vdots    & \vdots    &   &   & \ddots \\
 -k     & -k 	  & 0 		  & 0         &0  &0   &        & 1 & 1 \\
 0      & \mathfrak{d}_0     & 0         & \mathfrak{d}_1       & 0 & \mathfrak{d}_2& \cdots & 0 & \mathfrak{d}_r \\
 \frac{1-\mu k}{\zeta_X} & \frac{1-\mu k}{\zeta_X} & \frac{1-\mu k}{\zeta_X} + \mu & \frac{1-\mu k}{\zeta_X} + \mu & 0 & 0   &\cdots & 0 & 0
\end{bmatrix}}
\\
$$
$$
\text{(13-e)}
\
{\tiny\begin{bmatrix}
   -2\zeta_X +1 & 1 & 1  &0&0\\
-2\zeta_X +1 &	0 & 0   & 1 & 1 \\
	0 & 0 & 1  & 0  & 1 \\
	2 & 0 & 0  & 0 & 0 
 \end{bmatrix}}
 \quad
 \text{(13-o)}
 \
{ \tiny{
\begin{bmatrix}
 -2\zeta_X +2 & 2 &0&0\\
 -2\zeta_X +2 & 0  & 1 & 1 \\
 0 & 0 & 0  & 1 \\
 \zeta_X &  -1 & 0 & 0 
 \end{bmatrix}
}}
 \quad
 \text{(14)}
 \
 \tiny{\begin{bmatrix}
  -3  & 3 &0&0\\
	-3 & 0  & 1 & 1 \\
	0 & 0 & 0  & 1 \\
	-1 & 2 & 0 & 0 
 \end{bmatrix}}.
$$
In (9), $r\geq 2$ holds, the integers $\zeta_X \ge 2$ 
and $k \geq 1$ are coprime and $\mu$ 
is the unique integer $1 \leq \mu < \zeta_X$ 
with $\zeta_X \mid ( 1-\mu k)$.
Moreover $\mathfrak{d}_i \in \ZZ_{\ge 1}$ holds for $i \ge 0$
and if $k \geq 2$ ($\zeta_X-k \geq 2$), then one may erase the 
second (fourth) column of the matrix.
In (13-e), we have $\zeta_X \geq 2$. 
In (13-o), we have $\zeta_X \geq 3$ odd. 
In (14), we have $\zeta_X=2$.
\end{proposition}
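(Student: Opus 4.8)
The plan is to first pin down the leading platonic triple by an arithmetic argument and then to run a hollowness analysis of the anticanonical complex, exactly as in the proofs of Propositions~\ref{prop:Qfac} and~\ref{prop:nonQfac}, but now carrying the extra parameter $\zeta_X$. Since a compound du Val singularity is Gorenstein, we have $\imath_X = 1$. Feeding $\imath_X = 1$ into Proposition~\ref{prop:zetaexcep} immediately discards Cases~(i)--(v): each of them forces $\imath_X$ to be even, divisible by three, or congruent to $2$ modulo $4$, all incompatible with $\imath_X = 1$. Hence only Case~(vi) survives, so after admissible operations $P$ has leading platonic triple $(l_{01},l_{11},l_{21}) = (k_0,k_1,1)$, satisfies $l_{ij}=1$ and $\nu_{ij}=0$ for $i\ge 2$, and there is an integer $\mu$ coprime to $\zeta_X$ with $\mu\, l_{0j} \equiv 1$ and $\mu\, l_{1j} \equiv -1 \pmod{\zeta_X}$ for all $j$. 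This modular constraint, placing the block-$0$ and block-$1$ exponents in the two residue classes $\pm\mu^{-1}$, is the backbone of the whole classification: it produces the entries $\zeta_X-k$ in~(9), $2\zeta_X-1$ and $2\zeta_X-2$ in~(13), and the value $3\equiv 1 \bmod 2$ in~(14). Note that, since the triple is $(x,y,1)$, Lemma~\ref{le:nonQfact}~(iii) does not force $r=2$, which is exactly why~(9) carries arbitrarily many leaves.

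Next I would treat the $\QQ$-factorial threefolds. Here $P$ has exactly $r+2$ columns, and by Lemma~\ref{le:nonQfact} (valid for arbitrary $\zeta_X$) one reduces to a $4\times 4$ matrix with $r=2$, $n_0=n_1=1$ and $n_2=2$. On such a matrix I would compute the segment $\partial A_X^c(\lambda)$ with its vertices $w_1,w_2$, together with the planar polygons $\partial A_X^c(\lambda_i)$, via Proposition~\ref{prop:roofacan}, and impose through Proposition~\ref{prop:cpl1hollow} that their relative interiors contain no lattice points. Combined with the modular constraint above and the irredundancy normalisation of Remark~\ref{rem:redundant}, this should leave precisely the matrices~(13-o) (where hollowness forces $\zeta_X$ odd), (14) (forcing $\zeta_X=2$ and triple $(3,3,1)$), and the $\QQ$-factorial instances of~(9) (where the existence of $\mu$ forces $\gcd(k,\zeta_X)=1$).

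For the non-$\QQ$-factorial threefolds I would use Lemma~\ref{le:nonQfact}~(i) to pass to a $\QQ$-factorial submatrix $P''$, which by the previous step is one of the matrices just found, and then determine exactly which further columns may be appended without creating an interior lattice point of $\partial A_X^c$. As an appended column lies in one of the leaves $\lambda_i$ or in the lineality $\lambda$, and since by Proposition~\ref{prop:acancompstruct} the complex $A_{X''}^c$ is a subcomplex of $A_X^c$, this is again a finite check on where $v_\tau$, $v_\tau'$ and the vertices $w_1,w_2$ of $\partial A_X^c(\lambda)$ fall, in the spirit of Proposition~\ref{prop:nonQfac}. The expected outcome is~(13-e) together with the full family~(9) with its parameters $\mathfrak{d}_i$; the lower bounds $\mathfrak{d}_i\ge 1$ come from the requirement that the columns of $P$ be pairwise different primitive vectors (Construction~\ref{constr:RAPdown}), and the optional erasure of the second or fourth column when $k\ge 2$ or $\zeta_X-k\ge 2$ is a normalisation of redundant data in the sense of Remark~\ref{rem:redundant}.

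The main obstacle I anticipate is not conceptual but the sheer bookkeeping of this hollowness verification. For each admissible triple $(k_0,k_1,1)$ and each residue class of $\zeta_X$ one must compute the relevant vertices of the now $\zeta_X$-dependent anticanonical complex and exclude interior lattice points, and in the non-$\QQ$-factorial step one must check block by block that appending columns never destroys the cDV property while still recovering every member of~(9). Keeping the admissible-operation normalisations consistent across all these sub-cases, so that the final matrices land exactly in the stated forms, will be the delicate part of the argument.
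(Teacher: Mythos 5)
Your overall architecture --- forcing Case~(vi) of Proposition~\ref{prop:zetaexcep} via $\imath_X=1$, a hollowness analysis of $\partial A_X^c$ on a $4\times4$ building block with $r=2$, $n_0=n_1=1$, $n_2=2$, and then an extension step for the non-$\QQ$-factorial varieties --- is the paper's. There is, however, one genuine gap in the hand-off between your second and third steps. You classify the irredundant $\QQ$-factorial threefolds first and then, for non-$\QQ$-factorial $X=X(A,P)$, pass via Lemma~\ref{le:nonQfact}~(i) to a $\QQ$-factorial submatrix $P''$ ``which by the previous step is one of the matrices just found.'' That claim fails: the $4\times4$ submatrix supplied by Lemma~\ref{le:nonQfact} may be \emph{redundant} in the sense of Remark~\ref{rem:redundant}, in which case the variety it presents is isomorphic to one with smaller data (in fact toric here), so it does not appear in your step-two list --- yet the column-appending analysis must be performed on exactly that redundant presentation. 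This is not hypothetical: every $4\times4$ submatrix of the matrix (13-e) retaining one column per block has some block with $n_i=1$ and $l_{i1}=1$, so (13-e) is invisible to a procedure that only appends columns to irredundant $\QQ$-factorial matrices, and your plan as written would omit it. The paper sidesteps this by classifying the \emph{not necessarily irredundant} matrices $P''$ with $r=2$ and $n_2''=2$ directly; the subcase $k_0=0$, $\delta=1$, $k_1\ge 2$ of its proof is precisely the redundant one, explicitly forces an additional column in $\lambda_0$, and is where (13-e) is born.

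A secondary, minor point: the clause that one may erase the second (fourth) column of (9) when $k\ge 2$ (resp.\ $\zeta_X-k\ge 2$) is not a normalisation of redundant data in the sense of Remark~\ref{rem:redundant}; it records that a one-column block with exponent at least two is already irredundant, so both the one- and two-column versions occur as genuine cases, whereas for $k=1$ the second column is forced. Apart from these two issues, your reduction to the leading triples $(k_0,k_1,1)$, the congruences $\mu l_{0j}\equiv 1$ and $\mu l_{1j}\equiv -1$ modulo $\zeta_X$, and the lattice-point exclusion on the segments in $\lambda$ and in the leaves all match the paper's argument.
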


\begin{proof}
The strategy is similar to that of the proof
of Proposition~\ref{prop:nonQfac}.
We look first for 
not necessarily irredundant
matrices $P''$ with $r=2$ and $n_2''= 2$
defining a $\QQ$-factorial 
$X'' = X(A'',P'')$ 
with at most compound du Val 
singularities and of canonical 
multiplicity bigger than one.
Lemma~\ref{le:nonQfact} then 
ensures that for~$X=X(A,P)$ 
satisfying the assumptions of the proposition,
the matrix $P$ contains, after suitable 
admissible operations, 
one of our $P''$ as a submatrix with the 
same leading platonic triple as $P$.
In other words, we can construct the possible 
$P$ by suitably enlarging~$P''$.

The matrix $P''$ we are looking for 
is $4 \times 4$.
Since $\zeta_{X''}>1$ holds,
we are in the setting 
of Proposition~\ref{prop:zetaexcep} 
and because of $\imath_{X''}=1$, we end up in 
Case~\ref{prop:zetaexcep}~(vi).
In addition to the leading block, we have 
the extra column $v_{22}$ in $P''$.
Moreover, the integer $\mu:=(1-\nu_{01}\zeta_{X''})/l_{01}$ 
as well as $l_{01}$ and $l_{11}$ must all be coprime to $\zeta_{X''}$, 
since we have the integer entries 
$\nu_{01} = (1-\mu l_{01})/\zeta_{X''}$ and 
$\nu_{11} = (1+\mu l_{11})/\zeta_{X''}$.
We also see that $\zeta_{X''}$ divides 
$l_{01} + l_{11}$ by subtracting $\nu_{01}$ and $\nu_{11}$ from 
each other. Now let 
$$
k_0 := \left\lfloor l_{01} / \zeta_{X''} \right\rfloor,
\qquad
k_1 := \left\lceil l_{11} / \zeta_{X''} \right\rceil,
\qquad
\delta := l_{01}-k_0\zeta_{X''}.
$$
Furthermore, let in this proof $\mathfrak{d}_{ij}$ be the 
third entry of the column $v_{ij}$ of $P''$.
With these definitions, our  matrix has the following shape
\begin{equation}
P''
\ = \ 
\label{eq:p-shape}
\begin{bmatrix}
  -( k_0 \zeta_{X''} + \delta)  & k_1\zeta_{X''} -\delta &0&0\\
	-( k_0 \zeta_{X''} + \delta) & 0  & 1 & 1 \\
	\mathfrak{d}_{01} & \mathfrak{d}_{11} & 0  & \mathfrak{d}_{22} \\
	\frac{1-\mu\delta}{\zeta_{X''}}-\mu k_0 & \frac{1-\mu\delta}{\zeta_{X''}}+\mu k_1 & 0 & 0 
 \end{bmatrix},
\end{equation}
where we  achieve $1 \leq \mu < \zeta_{X''}$ by subtracting 
the $\lfloor \mu / \zeta_{X''} \rfloor$-fold of the first 
from the last row, simultaneously. 
Moreover, we achieve $\mathfrak{d}_{01}=0$ by subtracting 
the $\mathfrak{d}_{01}\zeta_{X''}$-fold of the last and 
the $\mathfrak{d}_{01}\mu$-fold of the first from the 
penultimate row.
Exchanging, if necessary, the data of column blocks~0 and 1, 
we achieve $k_1 > k_0 \geq 0$. 
We now figure out those $P''$ defining a compound du Val 
singularity.
For this, we consider several constellations of $k_0$ 
and $k_1$.

\medskip

\noindent
\emph{Case 1:} We have $k_0=0$ and $k_1=1$. 
Here we can also achieve $\mathfrak{d}_{11}=0$ 
by subtracting the $\mathfrak{d}_{11}(1-\mu\delta)/\zeta_{X''}$-fold 
of the first and the $\mathfrak{d}_{11}\delta$-fold 
of the last from the penultimate row. 
The vertices of $\partial {A}_{X''}^c (\lambda)$ are
$$
w_1 
\ = \ 
\left(0,0,0,\frac{1}{\zeta_{X''}}\right),
\qquad\qquad
w_2 
\ = \ 
\left(0,0,\frac{\mathfrak{d}_{22}\delta(\zeta_{X''}-\delta)}{\zeta_{X''}}, \frac{1}{\zeta_{X''}}\right).
$$

We illustrate the situation for the case  
$\delta=2$, $\zeta_{X''}=5$, $\mathfrak{d}_{22}=2$ 
below; 
observe that the lineality part $\lambda$ contains 
no integer points and the union of the 
$\lambda_i \cap \mathcal{H}_i \cap \ZZ^{4}$ for $i=0,1$ 
is a sublattice

\begin{center}

\hfill
\begin{tikzpicture}[scale=1.2]
\fill[fill=gray!30!, fill opacity=0.90](0,1)--(0,0)--(12/5,3/5);
\draw[gray, very thin] (-0.5,-0.5) grid (2.5,1.5);
\draw[line width=1pt] (0,1)--(0,0)--(12/5,3/5)--(0,1);
\draw[line width=0.5pt, gray] (0,3/5)--(12/5,3/5);

\coordinate[label=above left: $v_{01}$] (A) at (0,1);
\fill (0,1) circle (0.07);

\coordinate[label=below left: $v_{11}$] (A) at (0,0);
\fill (0,0) circle (0.07);

\coordinate[label=below left: $w_{1}$] (A) at (0,3/5);
\fill (0,3/5) circle (0.07);

\coordinate[label=below right: $w_{2}$] (A) at (12/5,3/5);
\fill (12/5,3/5) circle (0.07);

\node at (-1,3/5) {\tiny $\partial A_{X''}^c(\lambda)$};
\node at (-1,6/5) {\tiny $\partial A_{X''}^c(\lambda_0)$};
\node at (-1,0/5) {\tiny $\partial A_{X''}^c(\lambda_1)$};
\end{tikzpicture}
\hfill 
\begin{tikzpicture}[scale=1.2]
\fill[fill=gray!30!, fill opacity=0.90](0,4/5)--(0,1)--(2,1)--(12/5,4/5);
\draw[gray, very thin] (-0.5,-0.5) grid (2.5,1.5);
\draw[line width=1pt] (0,4/5)--(0,1)--(2,1)--(12/5,4/5);
\draw[line width=0.5pt, gray] (0,4/5)--(12/5,4/5);

\coordinate[label=above left: $v_{21}$] (A) at (0,1);
\fill (0,1) circle (0.07);

\coordinate[label=above left: $v_{22}$] (A) at (2,1);
\fill (2,1) circle (0.07);

\coordinate[label=below left: $w_{1}$] (A) at (0,4/5);
\fill (0,4/5) circle (0.07);

\coordinate[label=below right: $w_{2}$] (A) at (12/5,4/5);
\fill (12/5,4/5) circle (0.07);

\node at (3.2,4/5) {\tiny $\partial A_{X''}^c(\lambda)$};
\node at (3.2,6/5) {\tiny $\partial A_{X''}^c(\lambda_2)$};
\end{tikzpicture}
\hfill \

\end{center}

The polytope $\partial A_{X''}^c(\lambda_0)$ does not contain integer points $(-k,-k,t,(1-\mu k)/\zeta_{X''})$ in its relative interior as for such integer points $k<\delta$ and $(1-\mu k)/\zeta_{X''}$ integral must hold, but $\delta$ is minimal with the second property. 
The same holds for $\partial A_{X''}^c(\lambda_1)$ and $\partial A_{X''}^c(\lambda_2)$ respectively. 
All points in $\partial A_{X''}^c (\lambda)$ have $1 / \zeta_{X''}$ as last coordinate, thus are not integral.
So, there is no integral point in the relative interior of $\partial A_{X''}^c$. 
Thus $P''$ defines a $\QQ$-factorial compound du Val singularity and meanwhile looks
as follows: 
\begin{equation}
\label{eq:p2}
\begin{bmatrix}
  -\delta  & \zeta_{X''} -\delta &0&0\\
	-\delta & 0  & 1 & 1 \\
	0 & 0 & 0  & \mathfrak{d}_{22} \\
	\frac{1-\mu\delta}{\zeta_{X''}} & \frac{1-\mu\delta}{\zeta_{X''}}+\mu & 0 & 0 
 \end{bmatrix},
\qquad
\gcd(\delta,\zeta_{X''})=1,
\quad
\mathfrak{d}_{22} \in \ZZ_{>0}.
\end{equation}

Now we check the possibilities of enlarging $P''$ 
in the sense of Lemma~\ref{le:nonQfact}
to a matrix $P$ defining a non-$\QQ$-factorial $X(A,P)$ 
as in the proposition.
As further columns we can insert one or both of
$$
v_{02}=\left(-\delta,-\delta,\mathfrak{d}_{02},\frac{1-\mu\delta}{\zeta_{X''}}\right),
\qquad
v_{12}=\left(\zeta_{X''}-\delta,0,\mathfrak{d}_{12},\frac{1-\mu\delta}{\zeta_{X''}} +\mu \right),
$$
with $\mathfrak{d}_{i2} \in \ZZ_{>0}$ arbitrary. 
We can not add other columns 
$(-k,-k,0,(1-\mu k)/\zeta_{X''})$ in $\lambda_0$. 
This is because first, $k \leq \delta$ must hold 
since $(\delta,\zeta_{X''}-\delta,1)$ 
is the leading platonic triple.
Second, $k = k' \zeta_{X''} +  \delta$ 
with $k' \geq 0 $ must hold. 
So we get $k=\delta$. 
But then one of the columns
$$
\left(-\delta,-\delta,\mathfrak{d}_{01},\frac{1-\mu\delta}{\zeta_{X''}}\right),
\qquad
\left(-\delta,-\delta,\mathfrak{d}_{02},\frac{1-\mu\delta}{\zeta_{X''}}\right),
\qquad
\left(-\delta,-\delta,\mathfrak{d}_{03},\frac{1-\mu\delta}{\zeta_{X''}}\right)
$$
lies in the cone spanned by the other two.
It can give no extremal ray of the cone 
spanned by the columns of $P$; a contradiction.
Exactly the same argument shows 
that no more columns can be added 
in $\lambda_1$ and $\lambda_2$.

Moreover, we can increase $r$ 
from two to arbitrary 
to get $P$ from $P''$. 
The leaves $\lambda_0,\ldots,\lambda_2$ 
stay untouched, we add new columns 
in leaves $\lambda_3,\ldots,\lambda_r$.
First we have $l_{ij}=1$, $n_i\geq 2$ 
for $i\geq 3$ due to log-terminality 
and irredundancy. 
Second, by the same argument as above 
for $\lambda_0,\ldots,\lambda_2$, 
we have $n_i\leq 2$. 
Thus $n_i=2$ holds for $i \geq 3$.
So $\lambda_i$ for $i\geq 3$ 
must have the same structure as $\lambda_2$ 
with two columns $e_i$ and $e_i+\mathfrak{d}_{i2} e_{r+1}$. 
Here $\mathfrak{d}_{i2} \in \ZZ_{>0}$ arbitrary 
and $e_j$ denotes the $j$-th basis vector. 
The distances $\mathfrak{d}_{i2}$ 
between $v_{i1}$ and $v_{i2}$ 
for $0 \leq i \leq r$ 
and in consequence between 
$w_1$ and $w_2$ may vary. 
Nevertheless, all polytopes 
$\partial A_X^c (\lambda_i)$ 
are subsets of polytopes 
of the second type 
of Proposition~\ref{prop:toriccDV} 
as also the following exemplary picture shows:
\begin{center}

\hfill
\begin{tikzpicture}[scale=1.2]
\fill[fill=gray!30!, fill opacity=0.90](0,1)--(0,0)--(1,0)--(12/5,3/5)--(1,1);
\draw[gray, very thin] (-0.5,-0.5) grid (2.5,1.5);
\draw[line width=1pt] (0,1)--(0,0)--(1,0)--(12/5,3/5)--(1,1)--(0,1);
\draw[line width=0.5pt, gray] (0,3/5)--(12/5,3/5);

\coordinate[label=above left: $v_{01}$] (A) at (0,1);
\fill (0,1) circle (0.07);

\coordinate[label=above right: $v_{02}$] (A) at (1,1);
\fill (1,1) circle (0.07);

\coordinate[label=below left: $v_{11}$] (A) at (0,0);
\fill (0,0) circle (0.07);

\coordinate[label=below right: $v_{12}$] (A) at (1,0);
\fill (1,0) circle (0.07);

\coordinate[label=below left: $w_{1}$] (A) at (0,3/5);
\fill (0,3/5) circle (0.07);

\coordinate[label=below right: $w_{2}$] (A) at (12/5,3/5);
\fill (12/5,3/5) circle (0.07);

\node at (-1,3/5) {\tiny $\partial A_X^c(\lambda)$};
\node at (-1,6/5) {\tiny $\partial A_X^c(\lambda_0)$};
\node at (-1,0/5) {\tiny $\partial A_X^c(\lambda_1)$};
\node at (1,-0.75) {\tiny $\color{white} i \geq 2 $};
\end{tikzpicture}
\hfill 
\begin{tikzpicture}[scale=1.2]
\fill[fill=gray!30!, fill opacity=0.90](0,4/5)--(0,1)--(2,1)--(12/5,4/5);
\draw[gray, very thin] (-0.5,-0.5) grid (2.5,1.5);
\draw[line width=1pt] (0,4/5)--(0,1)--(2,1)--(12/5,4/5);
\draw[line width=0.5pt, gray] (0,4/5)--(12/5,4/5);

\coordinate[label=above left: $v_{i1}$] (A) at (0,1);
\fill (0,1) circle (0.07);

\coordinate[label=above left: $v_{i2}$] (A) at (2,1);
\fill (2,1) circle (0.07);

\coordinate[label=below left: $w_{1}$] (A) at (0,4/5);
\fill (0,4/5) circle (0.07);

\coordinate[label=below right: $w_{2}$] (A) at (12/5,4/5);
\fill (12/5,4/5) circle (0.07);

\node at (3.2,4/5) {\tiny $\partial A_X^c(\lambda)$};
\node at (3.2,6/5) {\tiny $\partial A_X^c(\lambda_i)$};

\node at (1,-0.75) { $i \geq 2$};
\end{tikzpicture}
\hfill \

\end{center}
So for any $P$ of this form, 
there are no integral points 
in the relative interior of $\partial A_X^c$. 
Furthermore, as we have seen above, 
no more columns can be added in any leaf. 
In total, we get the series $(9)$ 
of defining matrices $P$
of compound du Val singularities.

\medskip

\noindent
\emph{Case 2:} We have $k_1 \geq 2$. 
Recall that we have $P''$ of shape~(\ref{eq:p-shape}) 
with $\mathfrak{d}_{01}=0$. 
Let $x_1, \ldots, x_4$ be the standard coordinates 
on the column space $\QQ^4$ of $P''$.  
Consider the line segments 
$\partial A_{X''}^c(\lambda)$ and
$$
L_{0,X''} 
 :=  
\partial A_{X''}^c(\lambda_0) \cap \{x_1=x_2=-\delta\},
\
L_{1,X''}
 :=  
\partial A_{X''}^c(\lambda_1) \cap \{x_1=\zeta_X-\delta, x_2=0 \},
$$
Let $w_1$, $w_2$ denote the vertices 
of $\partial A_{X''}^c(\lambda)$. 
Moreover, let $\omega_{01}$, $\omega_{02}$ 
be the vertices of $L_{0,X''}$
and $\omega_{11}$, $\omega_{12}$ 
the vertices of $L_{1,X''}$.
Then we have
\begin{align*}
w_1&=\left(0,0,\frac{\mathfrak{d}_{11}(k_0\zeta_{X''}+\delta)}{\zeta_{X''}(k_1+k_0)},\frac{1}{\zeta_{X''}}\right), \\
w_2&=w_1+ \mathfrak{d}_{22}\frac{(k_0\zeta_{X''}+\delta)(k_1\zeta_{X''}-\delta)}{\zeta_{X''}(k_1+k_0)} e_3, \\
\omega_{01}&=\left(-\delta,-\delta,\frac{\mathfrak{d}_{11}k_0\zeta_{X''}}{\zeta_{X''}(k_1+k_0)},\frac{1-\mu\delta}{\zeta_{X''}}\right), \\
\omega_{02}&=\omega_{01} +\mathfrak{d}_{22}\frac{(k_1\zeta_{X''}-\delta)k_0}{k_1+k_0} e_3 , \\
\omega_{11}&=\left(\zeta_{X''}-\delta,0, \mathfrak{d}_{11}\frac{\zeta_{X''} k_1-\delta k_0+\zeta_{X''} k_1 k_0-\delta}{(k_1\zeta_{X''}-\delta)(k_1+k_0)},\frac{1-\mu\delta}{\zeta_{X''}}+ \mu \right), \\
\omega_{12}&=\omega_{11} +  \mathfrak{d}_{22}\frac{(k_0\zeta_{X''}+\delta)(k_1-1)}{k_1+k_0} e_3 .
\end{align*}
Since there must be no integral point 
in the relative interior 
of the line segments $L_{0,X''}$ and $L_{1,X''}$, 
we at least require 
\begin{equation}
\label{eq:interval}
\mathfrak{d}_{22}\frac{(k_1\zeta_{X''}-\delta)k_0}{k_1+k_0} \leq 1,
\qquad 
\mathfrak{d}_{22}\frac{(k_0\zeta_{X''}+\delta)(k_1-1)}{k_1+k_0} \leq 1.
\end{equation}
These inequalities will be observed 
in the following different cases.

\smallskip

\noindent
\emph{Case 2.1:} We have $k_0=0$. 
Here, the inequalities~(\ref{eq:interval}) 
ease to $\mathfrak{d}_{22}\delta (k_1-1)/k_1 \leq 1$. 
We distinguish between $\delta=1$ and $\delta>1$.

\smallskip

\noindent
\emph{Case 2.1.1:} We have $\delta=1$. 
Here the matrix $P''$ is redundant. 
So any matrix $P$ with such submatrix 
must have an additional column in $\lambda_0$. 
We move on to a matrix $P$ 
also containing this additional column. 
Such matrix is of the form
$$
P=
\begin{bmatrix}
  -1  & -1 & k_1\zeta_X -1 &0&0\\
	-1 & -1 & 0  & 1 & 1 \\
	0 & \mathfrak{d}_{02} & \mathfrak{d}_{11} & 0  & \mathfrak{d}_{22} \\
	0 & 0 &  k_1 & 0 & 0 
 \end{bmatrix},
$$
where we can assume $\mathfrak{d}_{02}>0$. 
But here the length of the line segment $L_{1,X}$ is 
$$
(\mathfrak{d}_{2}+\mathfrak{d}_{02})(k_1-1)/k_1,
$$
which is less or equal to one
- which must hold if it 
does not contain an integral point - 
only for $\mathfrak{d}_{02}=\mathfrak{d}_{22}=1$ 
and $k_1=2$. 
Thus by adding multiples 
of the last to the penultimate row, 
we can assume that $\mathfrak{d}_{11}$ 
equals one or zero. If $\mathfrak{d}_{11}=1$, 
then the line segment $L_{1,X}$ has the vertices 
$$
\left(\zeta_X-1,0,\frac{2\zeta_X+1}{4\zeta_X-2},1\right),
\qquad
\left(\zeta_X-1,0,\frac{2\zeta_X+1}{4\zeta_X-2}+1,1\right).
$$
So it contains an integer point 
in its relative interior, 
since $(2\zeta_X+1)/(4\zeta_X-2)$ is not integral.
If $\mathfrak{d}_{11}=0$, 
then $L_{1,X}$ has the vertices 
$$
\left(\zeta_X-1,0,0,1\right),
\qquad
\left(\zeta_X-1,0,1,1\right)
$$
and thus contains no integer points. 
Since $L_{1,X}^\circ$ is the only subset 
of $\partial {A_X^c}^\circ$
that may contain integer points, 
we get the series of defining matrices 
$(13e)$ with arbitrary $\zeta_X$ from this. 

Such $P$ cannot again be the submatrix 
of a non-$\QQ$-factorial matrix 
with possibly larger $r$. 
This is because for any additional column 
in $\lambda_0,\ldots,\lambda_2$, 
the line segment $L_{1,X}$ 
would be stretched and then contain 
one its the former integral vertices 
$\left(\zeta_X-1,0,0,1\right)$ and 
$\left(\zeta_X-1,0,1,1\right)$. 
The same holds for additional leaves, 
which by irredundancy must contain 
at least two columns and also 
would lead to a stretching of $L_{1,X}$.

\smallskip

\noindent
\emph{Case 2.1.2:} We have $\delta>1$. 
Here  $\mathfrak{d}_{22}\delta (k_1-1)/k_1 \geq 2  (k_1-1)/k_1$ holds. Thus~(\ref{eq:interval}) is fulfilled 
only for $k_1=\delta=2$ and $\mathfrak{d}_{22}=1$. 
Moreover $\zeta_{X''}$ must be odd 
since $l_{01}=2$ is even. 
Also $\mu=(\zeta_{X''}+1)/2$ holds, 
i.e. we have the  matrix
$$
P''=
\begin{bmatrix}
 -2 & 2\zeta_{X''} -2 &0&0\\
 -2 & 0  & 1 & 1 \\
 0 & \mathfrak{d}_{11} & 0  & 1 \\
 -1 &  \zeta_{X''} & 0 & 0 
 \end{bmatrix}.
$$
By admissible operations, 
again $\mathfrak{d}_{11}$ 
can be assumed to be equal 
to zero or one. 
For $\mathfrak{d}_{11}=1$, 
the line segment $L_{1,X''}$ 
has the vertices
$$
\left(\zeta_{X''}-2,0,\frac{\zeta_{X''}-1}{\zeta_{X''}-2},\frac{\zeta_{X''}-1}{2}\right),
\qquad
\left(\zeta_{X''}-2,0,\frac{\zeta_{X''}-1}{\zeta_{X''}-2}+1,\frac{\zeta_{X''}-1}{2}\right),
$$
which have an integer point inbetween 
due to $(\zeta_{X''}-1)/(\zeta_{X''}-2)$ 
not being integral. 
In case $\mathfrak{d}_{11}$ equals zero, 
the segment $L_{1,X''}$ has the vertices 
$$
\left(\zeta_{X''}-2,0,0,\frac{\zeta_{X''}-1}{2}\right),
\qquad
\left(\zeta_{X''}-2,0,1,\frac{\zeta_{X''}-1}{2}\right).
$$
Since again $L_{1,X''}^\circ$ 
is the only subset of $\partial {A_{X''}^c}^\circ$ 
that may contain integer points, 
we get a compound du Val series 
with defining matrices (13-o) and odd $\zeta_{X''}$. 
With exactly the same argument as in Case 2.1.1, 
these matrices cannot serve as submatrices 
for other compound du Val defining matrices.

\smallskip

\noindent
\emph{Case 2.2:} We have $k_0 \geq 1$. 
Here, the first inequality 
of~(\ref{eq:interval}) leads to
\begin{equation}
\label{eq:ineq}
1 \leq k_0 \leq \frac{k_1}{k_1\zeta_{X''}-\delta - 1}
\ \Rightarrow \
0 \geq k_1(\zeta_{X''}-1)-\delta-1. 
\end{equation}
\smallskip

\noindent
\emph{Case 2.2.1:} We have $k_1 \geq 3$. 
Remembering $\delta<\zeta_{X''}$, 
we in total require 
$\delta < \zeta_{X''} \leq (\delta+4)/3$ 
from the above inequality~(\ref{eq:ineq}), 
leading to $1 = \delta < \zeta \leq 5/3$. 
This gives a contradiction, 
since  $\zeta_{X''}$ is integral.

\smallskip

\noindent
\emph{Case 2.2.2:} We have $k_1 =2$. 
The inequality~(\ref{eq:ineq}) gives 
$\delta < \zeta_{X''} \leq (\delta+3)/2$ here,
leading to $\delta<3$. 
While $\delta=2$ leads to $\zeta_{X''} \leq 5/2$, 
which contradicts $\delta <\zeta_{X''}$, 
the case $\delta=1$ allows $\zeta_{X''}=2$. 
The first inequality of~(\ref{eq:interval}) 
can only be fulfilled 
for $\mathfrak{d}_{2}=k_0=1$ here. 
Furthermore, $\mu=1$ must hold 
and inserting everything in~(\ref{eq:p-shape}), 
we get a defining matrix
$$
P''=
\begin{bmatrix}
  -3  & 3 &0&0\\
	-3 & 0  & 1 & 1 \\
	0 & \mathfrak{d}_{11} & 0  & 1 \\
	-1 & 2 & 0 & 0 
 \end{bmatrix}.
$$
Here in a first step, 
by admissible operations 
we can assume 
$\mathfrak{d}_{11} \in \{0,1,2\}$. 
In a second step, the vertices 
$$
\omega_{11}=\left(1,0,\mathfrak{d}_{11}\frac{2}{3},1 \right),
\qquad
\omega_{12}=\left(1,0,\mathfrak{d}_{11}\frac{2}{3}+1,1 \right)
$$
of the line segment $L_{1,X''}$ 
are integer only for $\mathfrak{d}_{11}=0$. 
Exactly the same holds for $L_{0,X''}$. 
So in this case, $P''$ itself gives 
the compound du Val defining matrix $(14)$. 
By the same arguments as in Case 2.1.1, 
these matrix cannot serve as submatrix 
for other compound du Val defining matrices.
\end{proof}

We now provide the necessary input for establishing
the defining equation in~$\CC^4$ of our 
compound du Val singularities.
Recall that the Cox ring $\mathcal{R}(X)$ of $X = X(A,P)$ 
is determined by the defining data,
where generators and relations are read off directly
and the degree matrix $Q$ of $\mathcal{R}(X)$, listing 
the generator degrees in $\Cl(X) = K$, needs to be computed.

\begin{proposition}
\label{prop:CLXQ9}
Consider $X = X(A,P)$ as in Case~(9) of 
Proposition~\ref{prop:zeta>1} with 
the defining matrix $P$ and the parameters 
therein.
As indicated there, we have four subcases:
$$ 
\setlength{\arraycolsep}{1.9pt}
\text{(9a)} \quad
P 
\ = \ 
{\tiny
\begin{bmatrix}
k & \zeta-k &0&0\\
k & 0  & 1 & 1 \\
0 & 0 & 0  & \mathfrak{d} \\
\frac{1-\mu k}{\zeta} &  \frac{1-\mu k}{\zeta} + \mu & 0 & 0 
\end{bmatrix}
},
\qquad
\text{(9b)} \quad
P 
\ = \ 
{\tiny
\begin{bmatrix}
 -k           & \zeta_X-k  & 0 & 0 & \cdots &  0 & 0\\
 -k           & 0                  & 1 & 1  & & 0 & 0\\
 \vdots   & \vdots        &   &   & \ddots \\
 -k     	  & 0 		           &0  &0   &        & 1 & 1 \\
 0           & 0                & 0 & \mathfrak{d}_2& \cdots & 0 & \mathfrak{d}_r \\
 \frac{1-\mu k}{\zeta_X}   & \frac{1-\mu k}{\zeta_X} + \mu & 0 & 0   &\cdots & 0 & 0
\end{bmatrix}
},
$$
$$
\setlength{\arraycolsep}{1.9pt}
\text{(9c)} \quad
P 
\ = \ 
{\tiny
\begin{bmatrix}
 -k           & \zeta_X-k & \zeta_X-k & 0 & 0 & \cdots &  0 & 0\\
 -k           & 0         & 0         & 1 & 1  & & 0 & 0\\
 \vdots   & \vdots    & \vdots    &   &   & \ddots \\
 -k     	  & 0 		  & 0         &0  &0   &        & 1 & 1 \\
 0           & 0         & \mathfrak{d}_1       & 0 & \mathfrak{d}_2& \cdots & 0 & \mathfrak{d}_r \\
 \frac{1-\mu k}{\zeta_X}  & \frac{1-\mu k}{\zeta_X} + \mu & \frac{1-\mu k}{\zeta_X} + \mu & 0 & 0   &\cdots & 0 & 0
\end{bmatrix}
},
$$
$$
\setlength{\arraycolsep}{1.9pt}
\text{(9d)} \quad
P 
\ = \ 
{\tiny
\begin{bmatrix}
 -k     & -k      & \zeta_X-k & \zeta_X-k & 0 & 0 & \cdots &  0 & 0\\
 -k     & -k      & 0         & 0         & 1 & 1  & & 0 & 0\\
 \vdots & \vdots  & \vdots    & \vdots    &   &   & \ddots \\
 -k     & -k 	  & 0 		  & 0         &0  &0   &        & 1 & 1 \\
 0      & \mathfrak{d}_0     & 0         & \mathfrak{d}_1       & 0 & \mathfrak{d}_2& \cdots & 0 & \mathfrak{d}_r \\
 \frac{1-\mu k}{\zeta_X} & \frac{1-\mu k}{\zeta_X} & \frac{1-\mu k}{\zeta_X} + \mu & \frac{1-\mu k}{\zeta_X} + \mu & 0 & 0   &\cdots & 0 & 0
\end{bmatrix}
}.
$$
According to these subcases, 
the divisor class group $\Cl(X)$ 
and the degree matrix $Q$ of the 
Cox ring $\mathcal{R}(X)$ are 
given as  follows:
\begin{itemize}
\item[\emph{(9a)}\enspace]
one has 
$\Cl(X) = \ZZ / \mathfrak{d}\ZZ$ 
and 
$
Q 
=
\begin{bmatrix}
\overline{0} & \overline{0} & \overline{1} & -\overline{1} 
\end{bmatrix}
$,
\item[\emph{(9b)}\enspace]
with $\mathfrak{d} := \gcd(\mathfrak{d}_2,\ldots,\mathfrak{d}_r)$
and integers $\alpha_i$ such that 
$\alpha_2\mathfrak{d}_2 + \ldots  + \alpha_r\mathfrak{d}_r = \mathfrak{d}$
holds, one has 
$\Cl(X) = \ZZ^{r-2} \times \ZZ / \mathfrak{d}\ZZ$ 
and
$$ 
\setlength{\arraycolsep}{1.9pt}
Q 
\ = \ 
{\tiny
\begin{bmatrix}
0  &0&   -\mathfrak{d}_3 & \mathfrak{d}_3 & \mathfrak{d}_2 & -\mathfrak{d}_2 &&0 & 0 \\
\vdots & \vdots & \vdots & \vdots & &&\ddots \\
 &  & -\mathfrak{d}_r & \mathfrak{d}_r & 0 & 0 & & \mathfrak{d}_2 & -\mathfrak{d}_2 \\
0  & 0 & -\overline{\alpha_2} & \overline{\alpha_2} & -\overline{\alpha_3} & \overline{\alpha_3}& \cdots & -\overline{\alpha_r} & \overline{\alpha_r}
 \end{bmatrix}
},
$$
\item[\emph{(9c)}\enspace]
with $\mathfrak{d} := \gcd(\mathfrak{d}_1,\ldots,\mathfrak{d}_r)$
and integers $\alpha_i$ such that 
$\alpha_1\mathfrak{d}_1 + \ldots  + \alpha_r\mathfrak{d}_r = \mathfrak{d}$
holds, one has 
$\Cl(X) = \ZZ^{r-1} \times \ZZ / \mathfrak{d}\ZZ$ 
and
$$ 
\setlength{\arraycolsep}{1.9pt}
Q 
\ = \ 
{\tiny
\begin{bmatrix}
0  & \mathfrak{d}_2 & -\mathfrak{d}_2 & -\mathfrak{d}_1 & \mathfrak{d}_1 \\
 
  &&  & -\mathfrak{d}_3 & \mathfrak{d}_3 & \mathfrak{d}_2 & -\mathfrak{d}_2 &&0 & 0 \\
\vdots &  && \vdots & \vdots & &&\ddots \\
 &&  & -\mathfrak{d}_r & \mathfrak{d}_r & 0 & 0 & & \mathfrak{d}_2 & -\mathfrak{d}_2 \\
0 & -\overline{\alpha_1} & \overline{\alpha_1} & -\overline{\alpha_2} & \overline{\alpha_2} & -\overline{\alpha_3} & \overline{\alpha_3}& \cdots & -\overline{\alpha_r} & \overline{\alpha_r}
 \end{bmatrix}
},
$$
\item[\emph{(9d)}\enspace]
with $\mathfrak{d} := \gcd(\mathfrak{d}_0,\ldots,\mathfrak{d}_r)$
and integers $\alpha_i$ such that 
$\alpha_0\mathfrak{d}_0 + \ldots  + \alpha_r\mathfrak{d}_r = \mathfrak{d}$
holds, one has 
$\Cl(X) = \ZZ^r \times \ZZ / \mathfrak{d}\ZZ$ 
and
$$ 
\qquad
\setlength{\arraycolsep}{1.9pt}
Q 
\ = \ 
{\tiny
\begin{bmatrix}
\mathfrak{d}_2 & -\mathfrak{d}_2 & 0 & 0 &  -\mathfrak{d}_0 & \mathfrak{d}_0  \\
0 & 0 & \mathfrak{d}_2 & -\mathfrak{d}_2 & -\mathfrak{d}_1 & \mathfrak{d}_1 \\
 
  &&&  & -\mathfrak{d}_3 & \mathfrak{d}_3 & \mathfrak{d}_2 & -\mathfrak{d}_2 &&0 & 0 \\
 &  &&& \vdots & \vdots & &&\ddots \\
 &&&  & -\mathfrak{d}_r & \mathfrak{d}_r & 0 & 0 & & \mathfrak{d}_2 & -\mathfrak{d}_2 \\
-\overline{\alpha_0} & \overline{\alpha_0} & -\overline{\alpha_1} & \overline{\alpha_1} & -\overline{\alpha_2} & \overline{\alpha_2} & -\overline{\alpha_3} & \overline{\alpha_3}& \cdots & -\overline{\alpha_r} & \overline{\alpha_r}
 \end{bmatrix}
}.
$$
\end{itemize}
\end{proposition}

\begin{proof}
Let $P^*$ be the transpose of $P$. Then we have 
$\Cl(X) \cong \ZZ^{n+m}/\im(P^*)$ and 
$Q \colon \ZZ^{n+m} \to \ZZ^{n+m}/\im(P^*)$ is 
the projection, 
see Construction~\ref{constr:RAPdown}. 
To describe $\Cl(X)$ and $Q$ explicitly, choose unimodular 
matrices $V$ and $W$ such that 
$S := V \cdot P^* \cdot W$ is in Smith Normal Form,
let $\beta_1,\ldots,\beta_\nu$ denote the elementary 
divisors and $\beta$ the number of zero rows of $S$.
Then 
$$
\Cl(X) \ \cong \ \ZZ^{\beta} 
\oplus 
\ZZ/ \beta_1 \ZZ
\oplus \ldots \oplus
\ZZ/ \beta_\nu \ZZ.
$$
Moreover, the matrix $Q$ is basically the stack of 
the last $\nu+\beta$ rows of $V$.
Now, we elaborate this explicitly for Case~(9d).
Set $\kappa_1:=(k\mu-1)/\zeta_X-\mu$
and $\kappa_2:= \zeta_X-k $. Then
$$
{
\setlength{\arraycolsep}{1.9pt}
V
:=
{\tiny
\begin{bmatrix}
\kappa_1 & 0 & \frac{1-k \mu}{\zeta_X} & 0 &
k\kappa_1 & 0 &k\kappa_1 & 0& \cdots & k\kappa_1 & 0 \\

&&&&1&0 \\
&&&&&&1 & 0  \\
&&&&&&&& \ddots \\
&&&&&&&&& 1 & 0 \\

\kappa_2 & 0 & k & 0 & k\kappa_2 & 0 & k\kappa_2 & 0 & \cdots &  k\kappa_2 & 0 \\

\mathfrak{d}_2 & -\mathfrak{d}_2 & 0 & 0 &  -\mathfrak{d}_0 & \mathfrak{d}_0  \\
0 & 0 & \mathfrak{d}_2 & -\mathfrak{d}_2 & -\mathfrak{d}_1 & \mathfrak{d}_1 \\
 
  &&&  & -\mathfrak{d}_3 & \mathfrak{d}_3 & \mathfrak{d}_2 & -\mathfrak{d}_2 &&0 & 0 \\
 &  &&& \vdots & \vdots & &&\ddots \\
 &&&  & -\mathfrak{d}_r & \mathfrak{d}_r & 0 & 0 & & \mathfrak{d}_2 & -\mathfrak{d}_2 \\
-{\alpha_0} & {\alpha_0} & -{\alpha_1} & {\alpha_1} & -{\alpha_2} & {\alpha_2} & -{\alpha_3} & {\alpha_3}& \cdots & -{\alpha_r} & {\alpha_r}
 \end{bmatrix} 
}
},
\qquad
W
:=
%{\tiny
\begin{bmatrix}
E_r & 0 & 0 \\
0 & 0 & 1 \\
0 & 1 & 0
\end{bmatrix}
%}
$$
are both unimodular matrices and turn the matrix $P^*$ into Smith Normal Form:
we have
$$
V \cdot P^* \cdot W = 
\left[
\begin{array}{cc}
E_{r+1} & 0 
\\
0 & \mathfrak{d}
\\
0 & 0  
\end{array}
\right].
$$
This proves the assertion for Case~9(d).
The other cases run similarly and will be presented 
elsewhere.
\end{proof}

\begin{proposition}
\label{prop:CLXQ}
Consider $X = X(A,P)$ with the defining 
matrix $P$ and the parameters therein as in 
Propositions~\ref{prop:toriccDV}, \ref{prop:Qfac},
\ref{prop:nonQfac} and~\ref{prop:zeta>1},
except Case~\ref{prop:zeta>1}~(9).
Then the divisor class group $\Cl(X)$ 
and the degree matrix $Q$ of the Cox ring 
$\mathcal{R}(X)$ are given as follows.

\renewcommand{\arraystretch}{1.1} 
\setlength{\tabcolsep}{1mm}
\setlength{\arraycolsep}{1.9pt}

\begin{longtable}{c|c|c}
P
&
$\Cl(X)$
& 
$Q$
\\
\hline
\ref{prop:toriccDV}~(1)
&
$\ZZ/k\ZZ$
&
$
\begin{bmatrix}
\overline{1} & \overline{0} & \overline{k-1} 
\end{bmatrix}
$
\\
\hline
\ref{prop:toriccDV}~(2)
&
$\ZZ \times \ZZ/k\ZZ$
&
$
\begin{bmatrix}
 k_2 & -k_1 & -k_2 & k_1 \\
-\overline{\alpha_1} 
& -\overline{\alpha_2} 
& \overline{\alpha_1} 
& \overline{\alpha_2}
\end{bmatrix}$
\\
&
$k := \gcd(k_1,k_2)$
&
$ 
\alpha_1k_1+\alpha_2k_2=k
$
\\
\hline
\ref{prop:toriccDV}~(3)
&
$\ZZ/2\ZZ \times \ZZ/2\ZZ$
&
$
\begin{bmatrix}
\overline{1} & \overline{0} & \overline{1} 
\\
\overline{0} & \overline{1} & \overline{1}
\end{bmatrix}
$
\\
\hline
$
\begin{array}{c}
\text{\ref{prop:Qfac}~(4)} 
\\
k \text{~even}
\end{array}
$
&
$
\ZZ/2\ZZ \times \ZZ/2\ZZ$
 &
$\begin{bmatrix}
    \overline{1} & \overline{0} & \overline{1} & \overline{0} \\
    \overline{0} & \overline{1} & \overline{1} & \overline{0}
 \end{bmatrix}
$
\\
\hline
$
\begin{array}{c}
\text{\ref{prop:Qfac}~(4)} 
\\
k \text{~odd}
\end{array}
$
&
$\ZZ/4\ZZ$ 
&
$
\begin{bmatrix}
\overline{2} & \overline{1} & \overline{3} & \overline{0}
\end{bmatrix}
$
\\
\hline
\text{\ref{prop:Qfac}~(5-e)}
&
$\ZZ/2\ZZ$
&
$
\begin{bmatrix}
\overline{0} & \overline{k+1} & \overline{k} & \overline{1} 
\end{bmatrix}
$
\\
\hline
\text{\ref{prop:Qfac}~(5-o)}
&
$\ZZ/2\ZZ \times \ZZ/2\ZZ$
&
$
\begin{bmatrix}
\overline{0} & \overline{1} & \overline{0} & \overline{1} 
\\
\overline{0} & \overline{1} & \overline{1} & \overline{0}
\end{bmatrix}
$
\\
\hline
\text{\ref{prop:Qfac}~(6)}
&
$\ZZ/3\ZZ$
&
$
\begin{bmatrix}
  \overline{1} & \overline{2} & \overline{0} & \overline{0} 
\end{bmatrix}
$
\\
\hline
\text{\ref{prop:Qfac}~(7)}
&
$\ZZ/2\ZZ$
&
$
\begin{bmatrix}
  \overline{1} & \overline{0} & \overline{1} & \overline{0} 
 \end{bmatrix}
$
\\
\hline
\text{\ref{prop:Qfac}~(8)}
&
$\{0\}$
& ---
\\
\hline
\text{\ref{prop:nonQfac}~(10-e)}
&
$\ZZ$
&
$
\begin{bmatrix}
  0 & 1 & -2 & -1 & 2
\end{bmatrix}
$
   \\
\hline
\text{\ref{prop:Qfac}~(10-o)}
&
$\{0\}$
&
\text{---}
\\
\hline
\text{\ref{prop:Qfac}~(11)}
&
$\ZZ/2\ZZ$
&
$
\begin{bmatrix}
  \overline{0} & \overline{1} & \overline{0} & \overline{1} 
 \end{bmatrix}
$
\\
\hline
$\begin{array}{c}
\text{\ref{prop:Qfac}~(12-e-e)} \\
k_1 \text{~even}
\end{array}
$
&
$\ZZ/2\ZZ \times \ZZ/2\ZZ$
&
$
\begin{bmatrix}
  \overline{1} & \overline{0} & \overline{1} & \overline{0} \\
  \overline{0} & \overline{0} & \overline{1} & \overline{1}
 \end{bmatrix}
$
 \\
\hline
$\begin{array}{c}
\text{\ref{prop:Qfac}~(12-e-e)} \\
k_1 \text{~odd}
\end{array}
$
&
$\ZZ/4\ZZ$
&
$
\begin{bmatrix}
  \overline{2} & \overline{0} & \overline{3} & \overline{1} 
 \end{bmatrix}
$
  \\
\hline
\ref{prop:Qfac}~(12-o-e/o)
&
 $\ZZ/2\ZZ$
&
$
\begin{bmatrix}
  \overline{1} & \overline{0} & \overline{0} & \overline{1} 
 \end{bmatrix}
$
  \\
\hline
\text{\ref{prop:zeta>1}~(13-e)}
 &
 $ \ZZ \times \ZZ/2\ZZ $
 &
 $
\begin{bmatrix}
 0 & 1 &  -1 & -1 & 1 \\
  \overline{1} & \overline{1} & \overline{0} & \overline{1} & \overline{0} 
 \end{bmatrix}
$
\\
\hline
\text{\ref{prop:zeta>1}~(13-o)}
 &
 $
 \ZZ/2\ZZ
 $
 &
 $
\begin{bmatrix}
   \overline{1} & \overline{1} & \overline{0} & \overline{0} 
 \end{bmatrix}
$
\\
\hline
\text{\ref{prop:zeta>1}~(14)}
 &
 $
 \ZZ/3\ZZ
 $
 &
 $
\begin{bmatrix}
   \overline{1} & \overline{2} & \overline{0} & \overline{0} 
 \end{bmatrix}
$
\\
\hline
\text{\ref{prop:Qfac}~(15, 17, 18)}
 &
$\{0\}$
&
\text{---}
\\
\hline
\text{\ref{prop:Qfac}~(16)}
 &
$\ZZ/2\ZZ$
&
$
\begin{bmatrix}
   \overline{1} & \overline{0} & \overline{0} & \overline{1} 
 \end{bmatrix}
$
\end{longtable}
\end{proposition}

\begin{proof}
The arguing is the same as for Proposition~\ref{prop:CLXQ9}
and will be explicitly presented elsewhere.
Note that the cases without parameters can easily be 
settled by computer, e.g.~using~\cite{HaKe}. 
\end{proof}

\begin{proof}[Proof of Theorem~\ref{thm:cdv-class-intro}]
Propositions~\ref{prop:toriccDV}, \ref{prop:Qfac},
\ref{prop:nonQfac} and \ref{prop:zeta>1} provide us 
with the defining matrices $P$ of the compound du 
Val threefold singularities $X$ of complexity one. 
This gives in particular their Cox rings 
$\mathcal{R}(X) = R(A,P)$.
The grading of the Cox ring by $\Cl(X) = K$ is given 
by the degree matrices $Q$ provided in 
Propositions~\ref{prop:CLXQ9} and~\ref{prop:CLXQ}.
We have $X = \Spec \, R(A,P)_0$ and will obtain the 
describing equation for $X \subseteq \CC^4$ from 
a suitable presentation of the degree zero part 
$R(A,P)_0$ of $R(A,P)$ by generators and relations.

We exemplarily carry this procedure out for the 
case~(9d) from Proposition~\ref{prop:CLXQ9}.
A glance at the degree matrix~$Q$  
given in Proposition~\ref{prop:CLXQ9}~(9d)
shows that the following monomials are of 
$K$-degree zero:
$$
x_i \ := \  T_{i1}T_{i2}, \ i = 0,\ldots,r,
\qquad
x_{r+1}  \ := \ T_{01}^{\mathfrak{d}_0} \cdots T_{r1}^{\mathfrak{d}_r},
\qquad 
x_{r+2}  \ := \ T_{02}^{\mathfrak{d}_0} \cdots T_{r2}^{\mathfrak{d}_r}.
$$ 
Obviously, any monomial $h$ in the $T_{ij}$ is 
a product of powers of $x_0, \ldots, x_{r+2}$ and 
a monomial $h'$ depending of at most one variable 
$T_{ij}$ per $i$ and at most on $r-1$ variables in 
total. 
By the shape of $Q$, such a monomial $h'$ is 
of degree zero if and only if it is constant.
We conclude that $x_0, \ldots, x_{r+2}$ generate
$R(A,P)_0$. 
Now consider the morphism
$$ 
\pi \colon \CC^{n} \ \to \ \CC^{r+3},
\qquad
z \ \mapsto \ (x_0(z), \ldots, x_{r+2}(z)). 
$$
Then $X = \Spec \, R(A,P)_0$ is the image of 
$\b{X} = \Spec \, R(A,P)$ under $\pi$. 
We claim that $X = \pi(\b{X}) \subseteq \CC^{r+3}$ 
is contained in the zero set  
of the polynomials
$$ 
x_{r+1}x_{r+2}-x_0^{\mathfrak{d}_0} \cdots x_r^{\mathfrak{d}_r},
$$
$$
x_0^k + x_1^{\zeta-k} +x_2, 
\quad 
x_1^{\zeta-k} + 2x_2 +x_3,
$$
$$
x_2 + 3x_3 + x_4, 
\quad \ldots, \quad 
x_{r-2}+(r-1)x_{r-1} + x_r.
$$  
Indeed, the first polynomial is an obvious relation 
between the $x_i$ and the remaining ones pull back
via $\pi$ to the Cox ring relations given by the 
matrix $P$.
The above relations allow elimination of variables
$x_3, \ldots, x_r$: starting with the last 
relation, we successively plug these into the first 
one and arrive at
$$
x_{r+1}x_{r+2}
-
x_0^{\mathfrak{d}_0}x_1^{\mathfrak{d}_1}
\prod_{i=2}^{r}(a_ix_0^k+b_ix_1^{\zeta-k})^{\mathfrak{d}_i},
$$
where we can by a suitable coordinate change achieve 
that $a_i=i-1$ and $b_i=2i-3$ hold for all $i=2,\ldots,r$.
Thus, $X$ can be realized as a closed subset inside the 
hypersurface $X' \subseteq \CC^4$ defined by the above 
polynomial. 
As the latter is irreducible, we conclude $X = X'$,
which proves the assertion in case (9d).

For the other non-factorial compound du Val singularities,
one argues analogously; we present this elsewhere. 
In the cases~(9a), (9b) and 
(9c), we obtain the following invariants $x_i$ and 
relations among them:

\begin{longtable}{c|c|c}
Case
&
$\begin{array}{c}
\text{Invariant monomials} \\
x_0,\ldots,x_\nu
\end{array}
$
& Relations among $x_i$
\\
\hline
(9a)
&
$T_{21}^{\mathfrak{d}},T_{22}^{\mathfrak{d}},T_{21}T_{22},T_{01},T_{11}$
&
$\begin{array}{c}
x_0x_1-x_2^{\mathfrak{d}} \\
x_3^k+x_4^{\zeta-k}+x_2
\end{array}
$
\\
\hline
(9b)
&
$ \begin{array}{c}
T_{01},T_{11},T_{21}T_{22},\ldots,T_{r1}T_{r2}, \\
T_{21}^{\mathfrak{d}_2} \cdots T_{r1}^{\mathfrak{d}_r}, T_{22}^{\mathfrak{d}_2} \cdots T_{r2}^{\mathfrak{d}_r}
\end{array}
$
&
$\begin{array}{c}
x_{r+1}x_{r+2}-x_2^{\mathfrak{d}_2} \cdots x_r^{\mathfrak{d}_r}  \\
x_{0}^k+x_{1}^{\zeta-k}+x_2 \\
\vdots \\
x_{r-2}+(r-1)x_{r-1}+x_r
\end{array}
$
\\
\hline
(9c)
&
$ \begin{array}{c}
T_{01},T_{11}T_{12},T_{21}T_{22},\ldots,T_{r1}T_{r2}, \\
T_{11}^{\mathfrak{d}_1} \cdots T_{r1}^{\mathfrak{d}_r}, T_{12}^{\mathfrak{d}_1} \cdots T_{r2}^{\mathfrak{d}_r}
\end{array}
$
&
$\begin{array}{c}
x_{r+1}x_{r+2}-x_1^{\mathfrak{d}_1} \cdots x_r^{\mathfrak{d}_r}  \\
x_{0}^k+x_{1}^{\zeta-k}+x_2 \\
\vdots \\
x_{r-2}+(r-1)x_{r-1}+x_r
\end{array}
$

\end{longtable}

For the cases different from 9, we list the relevant data in 
the following table; note that the cases without parameters
can be settled by computer, e.g., using~\cite{HaKe}:

\renewcommand{\arraystretch}{1.2} 
\setlength{\tabcolsep}{.2mm}

\begin{longtable}{c|c|c|c}
Case
&
Relations in $\mathcal{R}(X)$
&
$\begin{array}{c}
\text{Invariant monomials} \\
x_1,\ldots,x_\nu
\end{array}
$
& Relations among $x_i$
\\
\hline
(1)
&

&
$T_1^k,T_3^k,T_1T_3,T_2$
  &
$x_1x_2-x_3^k
 $

\\
\hline
(2)
&

&
$\begin{array}{c}

T_1^{k_1}T_2^{k_2},T_3^{k_1}T_4^{k_2}, \\
T_1T_3, T_2T_4

\end{array}
$
&
$
x_1x_2-x_3^{k_1}x_4^{k_2}
$
\\
\hline
(3)
&

&
$T_1T_2T_3,T_1^2,T_2^2,T_3^2$
&
$x_1^2-x_2x_3x_4$
\\
\hline
(4)
&
$\begin{array}{c}
T_1^k+T_2^2+T_3^2 \\
k \text{~even}
\end{array}$
&
$T_1T_2T_3,T_1^2,T_2^2,T_3^2,T_4$
 &
$\begin{array}{c}
x_1^2-x_2x_3x_4 \\
x_2^{k/2}+x_3+x_4
\end{array}
$
\\
\hline
(4)
&
$\begin{array}{c}
T_1^k+T_2^2+T_3^2 \\
k \text{~odd}
\end{array}$
&
$\begin{array}{c}
T_2^4,T_3^4,T_1T_2^2,T_1T_3^2, \\
T_2T_3,T_1^2,T_4
\end{array}$
&
$\begin{array}{c}
x_1x_2-x_5^4 \\
x_6^{{\frac{k-1}{2}}}x_3 + x_2 + x_5^2 \\
x_6^{{\frac{k-1}{2}}}x_4 + x_1 + x_4^2 \\
x_6^{{\frac{k+1}{2}}} + x_3 + x_4
\end{array}
 $
\\
\hline
(5-e)
&
$T_1^{2k+1}+T_2^2+T_3^2$
 &
 $ 
T_4^2,T_{2}^2,T_{2}T_4,T_1, T_3$
 
 &
 $
\begin{array}{c}
x_1x_2-x_3^2 \\
x_4^{2k+1}+x_5^2+x_2
\end{array}

$
  \\
\hline
(5-o)
&
$T_1^k+T_2^2+T_3^2$
 &
 $
 T_2T_3T_4, T_2^2, T_3^2, T_4^2, T_1 
 $
 &
 $\begin{array}{c}
 x_1^2-x_2x_3x_4 \\
 x_5^k+x_2+x_3
 \end{array}

$
  \\
\hline
(6)
&
$T_1^3+T_2^3+T_3^2$
 &
 $
 T_1^3,T_2^3,T_1T_2,T_3,T_4
 $
 &
 $\begin{array}{c}
x_1x_2-x_3^3 \\
x_1+x_2+x_4^2
\end{array}
$
  \\
\hline
(7)
&
$T_1^4+T_2^3+T_3^2$
 &
 $
T_1^2,T_3^2,T_1T_3,T_2,T_4
 $
 &
 $
\begin{array}{c}
x_1x_2-x_3^2 \\
x_1^2+x_4^3+x_2
\end{array}

$
  \\
\hline
(10-e)
&
$T_1^k+T_2^2T_3+T_4^2T_5$
&
 $\begin{array}{c}

 T_2^2T_3,T_4^2T_5, \\
 T_2T_4,T_3T_5,T_1
 
 \end{array} 
 $
 &
 $
\begin{array}{c}
x_1x_2-x_3^2x_4 \\
x_5^k+x_1+x_2
\end{array}
 $
   \\
\hline
(11)
&
$T_1^k+T_2^2T_3+T_4^2$
&
$T_2^2,T_4^2,T_2T_4,T_1,T_3$
&
$
\begin{array}{c}
x_1x_2-x_3^2 \\
x_4^k+x_1x_5+x_2
\end{array}
$
  \\
\hline
(12-e-e)
&
$\begin{array}{c}
T_1^{k_1}T_2^{k_2}+T_3^2+T_4^2 \\
k_1 \text{~even}
\end{array}

$
&
$T_1T_3T_4,T_1^2,T_3^2,T_4^2,T_2$
&
$
\begin{array}{c}
x_1^2-x_2x_3x_4 \\
x_2^{\frac{k_1}{2}}x_5^{k_2}+x_3+x_4
\end{array}
$
 \\
\hline
(12-e-e)
&
$\begin{array}{c}
T_1^{k_1}T_2^{k_2}+T_3^2+T_4^2 \\
k_1 \text{~odd}
\end{array}
$
&
$\begin{array}{c}

T_3^4,T_4^4,T_3^2T_1,T_4^2T_1, \\
T_3T_4,T_1^2,T_2

\end{array}$
&
$\begin{array}{c}
x_1x_2-x_5^4 \\
x_6^{\frac{k-1}{2}}x_7^{k_2}x_3 + x_2 + x_5^2 \\
x_6^{\frac{k-1}{2}}x_7^{k_2}x_4 + x_1 + x_4^2 \\
x_6^{\frac{k+1}{2}}x_7^{k_2} + x_3 + x_4
\end{array}
 $
  \\
\hline
(12-o-e/o)
&
$T_1^{2k_1}T_2^{2k_2+1}+T_3^2+T_4^2$
 &
 $T_1^2,T_4^2,T_1T_4,T_2,T_3$
&
$
\begin{array}{c}
x_1x_2-x_3^2 \\
x_1^{k_1}x_4^{2k_2+1}+x_5^2+x_2
\end{array}
$
  \\
\hline
(13-e)
&
$T_1^{2\zeta-1}+T_2T_3+T_4T_5$
 &
 $\begin{array}{c}
  T_2^2T_3^2,T_4^2T_5^2,T_1T_2T_3, \\
 T_1T_4T_5,T_1^2,T_2T_4,T_3T_5
 \end{array}$
 &
 $
\begin{array}{c}
x_3x_4-x_5x_6x_7 \\
x_1x_2-x_6^2x_7^2 \\
x_3^2-x_5x_1\\
x_4^2-x_5x_2 \\
x_5^{\zeta}+x_3+x_4 \\
x_5^{\zeta-1}x_3+x_1+x_6x_7 \\
x_5^{\zeta-1}x_4+x_2+x_6x_7
\end{array}
$
\\
\hline
(13-o)
&
$T_1^{2\zeta-2}+T_2^2+T_3T_4$
 &
 $
 T_1^2,T_2^2,T_1T_2,T_3,T_4
 $
 &
 $
\begin{array}{c}
x_1x_2-x_3^2 \\
x_1^{\zeta-1}+x_2+x_4x_5
\end{array}
$
\\
\hline
(14)
&
$T_1^3+T_2^3+T_3T_4$
 &
 $
 T_1^3,T_2^3,T_1T_2,T_3,T_4
 $
 &
 $
\begin{array}{c}
x_1x_2-x_3^3 \\
x_4x_5+x_1+x_2
\end{array}
$
\\
\hline
(16)
&
$T_1^4+T_2^2T_3+T_4^2$
 &
 $
 T_1^2,T_4^2,T_1T_4,T_2,T_3
 $
 &
 $
\begin{array}{c}
x_1x_2-x_3^2 \\
x_1^2+x_4^2x_5+x_2
\end{array}
$

\end{longtable}

\end{proof}

\begin{proof}[Proof of Theorem~\ref{thm:cdv-graph-intro}]
Theorem~\ref{thm:cdv-class-intro} gives us all 
compound du Val singularities of complexity one. 
The respective Cox rings finally can be 
computed using Remark~\ref{rem:ithelp}. 
\end{proof}

\end{document}